\definecolor{uuuuuu}{rgb}{0.27,0.27,0.27}
\definecolor{sqsqsq}{rgb}{0.1255,0.1255,0.1255}
\newtheorem{dfn}{Definition} [section]
\newtheorem{thm}[dfn]{Theorem}
\newtheorem{lemma}[dfn]{Lemma}
\newtheorem{prop}[dfn]{Proposition}
\newtheorem{coro}[dfn]{Corollary}
\newtheorem{claim}[dfn]{Claim}
\newtheorem{prob}[dfn]{Problem}
\newtheorem{obs}[dfn]{Observation}
\newtheorem{fact}[dfn]{Fact}
\def\lc{\left\lceil}
\def\rc{\right\rceil}
\def\lf{\left\lfloor}
\def\rf{\right\rfloor}
\begin{document}
\title{\bf\Large The feasible region of hypergraphs}

\date{\today}

\author{Xizhi Liu\thanks{Department of Mathematics, Statistics, and Computer Science, University of Illinois, Chicago, IL, 60607 USA. Email: xliu246@uic.edu}
\and
Dhruv Mubayi\thanks{Department of Mathematics, Statistics, and Computer Science, University of Illinois, Chicago, IL, 60607 USA. Email: mubayi@uic.edu.
Research partially supported by NSF award DMS-1800832.}
}
\maketitle
\footnote{2010 MSC: 05D05, 05C35, 05C65}
\begin{abstract}
Let $\mathcal{F}$ be a family of $r$-uniform hypergraphs.
The feasible region $\Omega(\mathcal{F})$ of $\mathcal{F}$ is
the set of points $(x,y)$ in the unit square such that there exists a sequence of $\mathcal{F}$-free $r$-uniform hypergraphs
whose edge density approaches $x$ and whose shadow density approaches $y$.
The feasible region provides a lot of combinatorial information, for example,
the supremum of $y$ over all $(x,y) \in \Omega(\mathcal{F})$ is the Tur\'{a}n density $\pi(\mathcal{F})$,
and $\Omega(\emptyset)$ gives the Kruskal-Katona theorem.

We undertake a systematic study of $\Omega(\mathcal{F})$,
and prove that $\Omega(\mathcal{F})$ is completely determined by a left-continuous almost everywhere differentiable function;  and
moreover, there exists an $\mathcal{F}$ for which this function is not continuous.
We also extend some old related theorems.
For example, we generalize a result of Fisher and Ryan to hypergraphs and extend  a classical result of Bollob\'as by almost completely determining the feasible region for cancellative triple systems.
\end{abstract}

\section{Introduction}
Given a set $V$ and  an integer $r > 0$, let $\binom{V}{r} = \left\{ W \subset V: |W| = r \right\}$.
An $r$-uniform hypergraph (henceforth $r$-graph) $\mathcal{H}$ with vertex set $X$ is a subset of $\binom{X}{r}$,
and we denote $X$ by $V(\mathcal{H})$.
Let $v(\mathcal{H}) = |V(\mathcal{H})|$.
The shadow of an $r$-graph $\mathcal{H}$ is
\[
\partial \mathcal{H} = \left\{ A \in \binom{V(\mathcal{H})}{r-1}: \text{$\exists B\in \mathcal{H}$ such that $A\subset B$} \right\}.
\]
The classical Kruskal-Katona theorem gives a tight upper bound for $|\mathcal{H}|$ as a function of $|\partial \mathcal{H}|$.
The following technically simpler version of the Kruskal-Katona theorem serves as a good starting point for the work in this paper.

\begin{thm}[see Lov\'{a}sz \cite{LO93}]
Let $\mathcal{H}$ be an $r$-graph, and suppose that $|\partial\mathcal{H}| = \binom{z}{r-1}$ for some real number $z \ge r$.
Then $|\mathcal{H}| \le \binom{z}{r}$.
\end{thm}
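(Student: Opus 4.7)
My plan is to prove this by induction on $r$, using the vertex-link construction combined with a convexity argument.

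The base case $r=2$ is immediate: $\partial\mathcal{H}$ is the set of non-isolated vertices of $\mathcal{H}$, so if $|\partial\mathcal{H}|=z$, then $\mathcal{H}$ is a graph on $z$ non-isolated vertices and $|\mathcal{H}|\leq \binom{z}{2}$.

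For the inductive step, for each $v\in V(\mathcal{H})$ let $\mathcal{L}_v = \{A\in\binom{V}{r-1} : A\cup\{v\}\in\mathcal{H}\}$ denote the link of $v$, an $(r-1)$-graph. A direct verification shows that $A\in \partial\mathcal{L}_v$ iff $A\cup\{v\}\in\partial\mathcal{H}$, so $|\partial\mathcal{L}_v|=d_v$, the degree of $v$ in the $(r-1)$-graph $\partial\mathcal{H}$. Applying the inductive hypothesis to $\mathcal{L}_v$ with the unique $z_v\geq r-1$ satisfying $\binom{z_v}{r-2}=d_v$, we get $|\mathcal{L}_v|\leq \binom{z_v}{r-1}$. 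Summing over $v$ and using the handshake identity $\sum_v d_v = (r-1)|\partial\mathcal{H}|$, we obtain
\[
r|\mathcal{H}| \;=\; \sum_{v} |\mathcal{L}_v| \;\leq\; \sum_{v} \binom{z_v}{r-1}, \qquad \sum_{v} \binom{z_v}{r-2} \;=\; (r-1)\binom{z}{r-1}.
\]

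The remaining task, and the crux of the proof, is to establish
\[
\sum_{v} \binom{z_v}{r-1} \;\leq\; r\binom{z}{r}
\]
subject to the constraint above. Using the polynomial identity $z_v\binom{z_v}{r-2}=(r-1)\binom{z_v}{r-1}+(r-2)\binom{z_v}{r-2}$, this reduces to showing that the $d_v$-weighted average of $z_v$ is at most $z-1$, i.e.\ $\sum_v d_v z_v \leq (z-1)(r-1)\binom{z}{r-1}$. I would verify this via a convexity/rearrangement argument: the map $y\mapsto z(y)$ implicitly defined by $\binom{z(y)}{r-2}=y$ is concave in the relevant range, and the constraint $d_v\leq |\partial\mathcal{H}|=\binom{z}{r-1}$ controls the size of each $z_v$ so that Jensen/Karamata-type inequalities can be applied. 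The extremal configuration should be the complete hypergraph $K^{(r)}_z$ (interpreted fractionally for non-integer $z$), where all $z_v$ are equal to $z-1$ and equality is attained.

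The main obstacle is exactly this final weighted-average step. The naive bound $z_v\leq n-1$ (where $n$ is the number of non-isolated vertices) does not suffice, because $n$ can be strictly larger than $z$; one has $n\geq z$ but not the reverse. Extracting the correct inequality requires exploiting the inductive bound $|\mathcal{L}_v|\leq \binom{z_v}{r-1}$ together with the algebraic structure of the binomial coefficients as polynomials in a real variable, rather than any elementary max-degree or Cauchy--Schwarz estimate.
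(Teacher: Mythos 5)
The paper states this result as a citation to Lov\'asz's book and does not include a proof, so there is no in-paper argument to compare against; I evaluate your proposal on its own terms.

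Your setup is the right starting point: the vertex-link decomposition, the identification $|\partial\mathcal{L}_v|=d_v$, the handshaking identities, and the reduction to a weighted-average inequality are all correct. But the final step is a genuine gap, and the heuristic you offer to fill it does not work. The map $d\mapsto\binom{z(d)}{r-1}$, where $\binom{z(d)}{r-2}=d$, is \emph{convex} in $d$, not concave (for $r=3$ it is simply $\binom{d}{2}$), so Jensen pushes $\sum_v\binom{z_v}{r-1}$ toward concentration rather than toward the uniform profile, and the constraint $d_v\le|\partial\mathcal{H}|$ is far too weak to prevent this. Concretely, with $r=3$ and the degree profile $d_1=\binom{z}{2}$, $d_v=2$ for $z(z-1)/4$ other vertices, the handshake $\sum d_v=2\binom{z}{2}$ is satisfied but $\sum\binom{d_v}{2}\approx z^4/8$ exceeds $3\binom{z}{3}\approx z^3/2$ once $z$ is moderately large. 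So the inequality you reduce to is simply false given only the handshake identity and $d_v\le|\partial\mathcal{H}|$; this profile happens not to be realizable by a $3$-graph, but nothing in your argument rules it out.

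The structural fact you are missing is that $\mathcal{L}_v$ and the $v$-star of $\partial\mathcal{H}$ are disjoint subsets of $\partial\mathcal{H}$: every set in $\mathcal{L}_v$ is an $(r-1)$-set of $\partial\mathcal{H}$ avoiding $v$, while $d_v$ counts the $(r-1)$-sets of $\partial\mathcal{H}$ containing $v$. Hence $|\mathcal{L}_v|\le\binom{z}{r-1}-d_v$ in addition to the inductive bound $|\mathcal{L}_v|\le\binom{z_v}{r-1}$. The two upper bounds coincide exactly at $d_v=\binom{z-1}{r-2}$ (where $z_v=z-1$), and their pointwise minimum is dominated by the line through the origin and that crossing point, giving $|\mathcal{L}_v|\le\frac{z-r+1}{r-1}\,d_v$. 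Summing and using $\sum_v d_v=(r-1)\binom{z}{r-1}$ yields $r|\mathcal{H}|\le(z-r+1)\binom{z}{r-1}=r\binom{z}{r}$, which closes the induction. As it stands your write-up stops at the reduction and appeals to a convexity direction that goes the wrong way; the disjointness observation is the missing ingredient that makes the argument go through.
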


Let $\mathcal{F}$ be a family of $r$-graphs.
Then $\mathcal{H}$ is $\mathcal{F}$-free if it does not contain any member of $\mathcal{F}$ as a (not necessarily induced) subgraph.
The Tur\'{a}n number $\textrm{ex}(n,\mathcal{F})$ of $\mathcal{F}$ is the maximum number of edges in an $\mathcal{F}$-free $r$-graph on $n$ vertices.
The Tur\'{a}n density of $\mathcal{F}$ is $\pi(\mathcal{F}) = \lim_{n\to \infty} \textrm{ex}(n,\mathcal{F}) / \binom{n}{r}$.
Determining $\pi(\mathcal{F})$ for $r\ge 3$ is known to be notoriously hard in general,
and we refer the reader to a survey by Keevash $\cite{KE11}$ for results before 2011.

In this paper, we combine the Kruskal-Katona theorem and the hypergraph Tur\'{a}n problem by considering the following more general question.
\begin{align}\label{feasible-region-problem}
&\mbox{If $\mathcal{H}$ is $\mathcal{F}$-free, what are the possible values of $|\mathcal{H}|$ for fixed $|\partial\mathcal{H}|$?} \tag{$\ast$}
\end{align}
In particular, if we let $\mathcal{F} = \emptyset$,
then the upper bound for $|\mathcal{H}|$ in $(\ast)$ follows from the Kruskal-Katona theorem.
If $\mathcal{F} \neq \emptyset$, then $(\ast)$ is closely related to the hypergraph Tur\'{a}n problem.
In fact, $\textrm{ex}(n,\mathcal{F})$ gives a universal upper bound for $|\mathcal{H}|$ no matter what $|\partial \mathcal{H}|$ is,
and it is tight for some (at least one) values of $|\partial \mathcal{H}|$.
However, the upper bound given by $\textrm{ex}(n,\mathcal{F})$
gives us a rather limited picture of the relationship between the shadow and size  of an $\mathcal{F}$-free hypergraph. Our objective in this work is to provide a much more detailed view of this relationship.

An analogous question has been studied extensively in extremal graph theory.
Given two graphs $H$ and $G$, let $n(H;G)$ denote the number of copies of $H$ in $G$.
The density of $H$ in $G$ is $\rho(H;G) = n(H;G)/\binom{v(G)}{v(H)}$. For fixed graphs $H_1$ and $H_2$ and (large) graph $G$, the following problem is a cornerstone of extremal graph theory:
\begin{align}
&\text{\indent What are the possible values of $\rho(H_2;G)$ if $\rho(H_1;G)$ is fixed?} \tag{$\star$}
\end{align}
Even for $(H_1,H_2) = (K_2,K_t)$ with $t \ge 3$, question $(\star)$ is known to be highly nontrivial and
was asymptotically solved for $t=3$ by Razbarov \cite{RA08}, $t=4$ by Nikiforov \cite{NI11},
and for all $t$ only recently by Reiher $\cite{RE16}$.
We refer the reader to $\cite{LS83,BO75,RA07}$ for the history of $(\star)$.

The main difficulty in $(\star)$ is to determine the lower bound for $\rho(H_2;G)$.
However, it will be shown later that the main difficulty in $(\ast)$ is to determine the upper bound for  $|\mathcal{H}|$.
In order to state our results formally we need some definitions.

\begin{dfn}[Feasible Region]\label{def-feasible-region}
Fix $r \ge 3$.
\begin{enumerate}[label=(\alph*)]
\item Given an $r$-graph $\mathcal{H}$, its edge density $d(\mathcal{H}) = |\mathcal{H}| / \binom{v(\mathcal{H})}{r}$
and its shadow density $d(\partial\mathcal{H}) = |\partial\mathcal{H}| / \binom{v(\mathcal{H})}{r-1}$.
\item An $r$-graph sequence $\left(\mathcal{H}_{k}\right)_{k=1}^{\infty}$ is good if $v(\mathcal{H}) \to \infty$ as $k \to \infty$ and
both $\lim_{k \to \infty}d(\mathcal{H}_{k})$ and $\lim_{k \to \infty}d(\partial \mathcal{H}_{k})$ exist.
\item Let $\left(\mathcal{H}_{k}\right)_{k=1}^{\infty}$ be a good sequence of $\mathcal{F}$-free $r$-graphs, and $(x,y) \in [0,1]^{2}$.
Then $\left(\mathcal{H}_{k}\right)_{k=1}^{\infty}$ realizes $(x,y)$ if
$\lim_{k \to \infty}d(\partial \mathcal{H}_{k}) =x  \text{ and } \lim_{k \to \infty}d(\mathcal{H}_{k}) = y$.
\item The feasible region $\Omega(\mathcal{F})$ of $\mathcal{F}$ is
the collection of all points $(x,y)\in [0,1]^2$ that can be realized by a good sequence of $\mathcal{F}$-free $r$-graphs.
\end{enumerate}
\end{dfn}

As mentioned earlier, the upper bound given by $\textrm{ex}(n,\mathcal{F})$
gives us a rather limited picture of $\Omega(\mathcal{F})$, since it only determines
$$\sup\{y : \exists x\in [0,1] \text{ such that } (x,y) \in \Omega(\mathcal{F})\}.$$
As indicated by $(\ast)$, in this paper we study $\Omega(\mathcal{F})$.
Our results are of two flavors.
\begin{itemize}
\item
We prove some general results about the shape of $\Omega(\mathcal{F})$.
Our main results here are Theorems \ref{left-cont-and-diff} and \ref{example-discont}
which state that the boundary of $\Omega(\mathcal{F})$ is completely determined by a
left-continuous  almost everywhere differentiable
function $g(\mathcal{F})$ with at most countably many jump discontinuities,
and give examples showing that $g(\mathcal{F})$ can indeed be discontinuous.

\item
We study $\Omega(\mathcal{F})$ for some specific choices of $\mathcal{F}$ for which
$\textrm{ex}(n,\mathcal{F})$ has been investigated by many researchers.
We focus on two specific families: cancellative hypergraphs and hypergraphs without expansions of cliques.
Our results, which go  beyond  determining just the Tur\'an density,  are summarized in Corollaries \ref{cancellative-3-all} and \ref{feasible-region-K} (see Figures 6 and 7).
\end{itemize}
Regarding our  results on the shape of $\Omega({\cal F})$, there are (at least) two  previous works of a similar flavor: Razborov~\cite{RA08} determined the closure
of the set of points defined by the homomorphism density of the edge and the triangle in finite graphs
(and showed that the boundary is  almost everywhere differentiable)
and Hatami-Norine~\cite{HN19} constructed examples which show that the restrictions of the boundary to certain hyperplanes of the region defined by the homomorphism densities of a list of given graphs can have nowhere differentiable parts.

Our work can be viewed as a continuation of a long line of research in asymptotic extremal combinatorics perhaps beginning with the seminal work of
Erd\H{o}s-Lov\'{a}sz-Spencer \cite{ELS79} and continuing today in different guises such as the graph limits paradigm of Lov\'{a}sz \cite{LO12}
or the method of Flag algebras of Razborov \cite{RA07}.

\subsection{General results about $\Omega(\mathcal{F})$}
In this section we state some general results about feasible regions.

\begin{prop}\label{closed}
The region $\Omega(\mathcal{F})$ is closed for all $r \ge 3$ and all (possibly infinite) families $\mathcal{F}$ of $r$-graphs.
\end{prop}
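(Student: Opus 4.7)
The plan is to show $\Omega(\mathcal{F})$ is sequentially closed in $[0,1]^2$, which suffices since $[0,1]^2$ is a metric space. So I take an arbitrary convergent sequence of points $(x_n,y_n) \in \Omega(\mathcal{F})$ with $(x_n,y_n) \to (x,y) \in [0,1]^2$, and I aim to produce a good sequence of $\mathcal{F}$-free $r$-graphs that realizes $(x,y)$.

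The tool is a standard diagonalization. By hypothesis, for each fixed $n \ge 1$ there is a good sequence $(\mathcal{H}_k^{(n)})_{k=1}^{\infty}$ of $\mathcal{F}$-free $r$-graphs with $v(\mathcal{H}_k^{(n)}) \to \infty$, $d(\partial\mathcal{H}_k^{(n)}) \to x_n$, and $d(\mathcal{H}_k^{(n)}) \to y_n$ as $k \to \infty$. I would therefore choose, for each $n$, an index $k_n$ large enough to guarantee simultaneously
\[
v(\mathcal{H}_{k_n}^{(n)}) \ge n, \qquad
\bigl|d(\partial\mathcal{H}_{k_n}^{(n)}) - x_n\bigr| < \tfrac{1}{n}, \qquad
\bigl|d(\mathcal{H}_{k_n}^{(n)}) - y_n\bigr| < \tfrac{1}{n}.
\]
Setting $\mathcal{G}_n := \mathcal{H}_{k_n}^{(n)}$ produces an $\mathcal{F}$-free $r$-graph for every $n$ (since every member of the original doubly indexed family is $\mathcal{F}$-free), and by the first condition $v(\mathcal{G}_n) \to \infty$.

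Finally, the triangle inequality gives
\[
\bigl|d(\partial \mathcal{G}_n) - x\bigr| \le \bigl|d(\partial \mathcal{G}_n) - x_n\bigr| + |x_n - x| < \tfrac{1}{n} + |x_n - x| \longrightarrow 0,
\]
and identically $d(\mathcal{G}_n) \to y$. In particular both of these limits exist, so $(\mathcal{G}_n)_{n=1}^{\infty}$ is a good sequence of $\mathcal{F}$-free $r$-graphs that realizes $(x,y)$, placing $(x,y) \in \Omega(\mathcal{F})$.

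There is no genuine obstacle here: the only mildly delicate point is to insist on the auxiliary condition $v(\mathcal{G}_n) \ge n$ (so that the resulting diagonal sequence is still good in the sense of Definition \ref{def-feasible-region}(b)), and the existence of such $k_n$ is immediate because each inner sequence has $v(\mathcal{H}_k^{(n)}) \to \infty$. No assumption on $\mathcal{F}$ (finite or infinite, uniformity $r$) is used beyond the fact that being $\mathcal{F}$-free is preserved under choosing subsequences.
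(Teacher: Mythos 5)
Your proof is correct and follows essentially the same diagonalization argument as the paper; the only stylistic difference is that you fold the requirement $v(\mathcal{G}_n)\ge n$ directly into the choice of $k_n$, whereas the paper normalizes so that $v(\mathcal{H}_{m,k+1})\ge v(\mathcal{H}_{m,k})+1$ and then extracts $k_m\to\infty$ via a separate auxiliary lemma (Lemma \ref{lemma-close-analysis}). Your version is marginally more self-contained but mathematically the same.
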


\begin{dfn}[Projection of the feasible region]
The projection of $\Omega(\mathcal{F})$ on the $x$-axis is
\[
{\rm proj}\Omega(\mathcal{F}) = \left\{ x : \text{$\exists y \in [0,1]$ such that $(x,y) \in \Omega(\mathcal{F})$} \right\}.
\]
\end{dfn}

Note that it is not necessarily true that ${\rm proj}\Omega(\mathcal{F}) = [0,1]$ in general.
Later we will present an example of $\mathcal{F}$, which shows ${\rm proj}\Omega(\mathcal{F}) = [0,(\ell)_{r-1}/\ell^{r-1}]$ for $\ell \ge 3$.
On the other hand, by removing edges one by one from $\mathcal{H}$ one can reduce the edge density of $\partial \mathcal{H}$
continuously (in the limit sense) to $0$. This yields the following observation.

\begin{obs}\label{projection-x-interval}
For every family $\mathcal{F}$ of $r$-graphs with $r \ge 3$
there exists $\hat{c} \in [0,1]$ such that ${\rm proj}\Omega(\mathcal{F}) = [0,\hat{c}]$.
\end{obs}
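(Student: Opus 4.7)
The plan is to show that ${\rm proj}\Omega(\mathcal{F})$ is both closed in $[0,1]$ and downward closed toward $0$, which together force it to equal $[0,\hat{c}]$ with $\hat{c} = \max {\rm proj}\Omega(\mathcal{F})$.

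First I would observe that closedness comes for free from Proposition \ref{closed}: since $\Omega(\mathcal{F}) \subseteq [0,1]^2$ is closed and bounded, it is compact, and its continuous image under projection to the $x$-axis is again compact, hence closed in $[0,1]$. I would also note that $(0,0) \in \Omega(\mathcal{F})$ by taking the sequence of empty $r$-graphs on $k$ vertices, which is $\mathcal{F}$-free provided every member of $\mathcal{F}$ has at least one edge (the natural working assumption). Hence $0 \in {\rm proj}\Omega(\mathcal{F})$ and $\hat{c}$ is well-defined.

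The main step is the downward-closure claim: if $(x,y) \in \Omega(\mathcal{F})$ is realized by a good sequence $(\mathcal{H}_k)$ and $x' \in [0,x]$, then some $(x',y')$ lies in $\Omega(\mathcal{F})$ as well. Here I would delete the edges of $\mathcal{H}_k$ one at a time to produce a nested chain
\[
\mathcal{H}_k = \mathcal{H}_k^{(0)} \supseteq \mathcal{H}_k^{(1)} \supseteq \cdots \supseteq \mathcal{H}_k^{(m_k)} = \emptyset
\]
on the common vertex set $V(\mathcal{H}_k)$. The crucial quantitative observation is that removing a single edge changes $|\partial \mathcal{H}_k^{(j)}|$ by at most $r$, so consecutive shadow densities differ by at most $r/\binom{v(\mathcal{H}_k)}{r-1}$. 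Since this chain starts near $x \ge x'$ and ends at $0$, some index $j_k$ gives $|d(\partial \mathcal{H}_k^{(j_k)}) - x'| = O\bigl(v(\mathcal{H}_k)^{-(r-1)}\bigr) \to 0$. Each $\mathcal{H}_k^{(j_k)}$ is $\mathcal{F}$-free as a subhypergraph of $\mathcal{H}_k$, the vertex count is unchanged, and passing to a subsequence along which $d(\mathcal{H}_k^{(j_k)}) \to y' \in [0,1]$ (Bolzano--Weierstrass) yields a good $\mathcal{F}$-free sequence realizing $(x',y')$, so $x' \in {\rm proj}\Omega(\mathcal{F})$.

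I do not anticipate a serious obstacle. The only subtle point worth highlighting is that shadow size is a discrete quantity, so to move its density continuously one must delete edges one at a time rather than use a continuous parameter; the ``at most $r$ shadow faces per hyperedge'' bound is exactly what makes each discrete step negligible as $v(\mathcal{H}_k) \to \infty$, enabling the intermediate-value-style selection of $j_k$.
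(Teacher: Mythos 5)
Your proof is correct and takes essentially the same route the paper sketches: the paper's one-line justification ("by removing edges one by one from $\mathcal{H}$ one can reduce the edge density of $\partial\mathcal{H}$ continuously (in the limit sense) to $0$") is exactly your observation that each deletion moves the shadow density by at most $r/\binom{v(\mathcal{H}_k)}{r-1}$, which vanishes as $v(\mathcal{H}_k)\to\infty$. You fill in the details (compactness for closedness, Bolzano--Weierstrass for the $y$-coordinate, intermediate-value-style selection of $j_k$) that the paper leaves implicit, but the underlying idea is identical.
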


Proposition \ref{closed} enables us to define the following function.
\begin{dfn}[Boundary of the feasible region]
Given a family $\mathcal{F}$ of $r$-graphs with $r \ge 3$,
let $g(\mathcal{F}) : {\rm proj}\Omega(\mathcal{F}) \to [0,1]$ be defined by
\[
g({\mathcal{F}})(x) = \max\left\{y: (x,y) \in \Omega(\mathcal{F}) \right\},
\]
for all $x \in {\rm proj}\Omega(\mathcal{F})$.
\end{dfn}

Here we abuse notation by writing $g(\mathcal{F},x)$ for $g(\mathcal{F})(x)$.
Our next result shows that $\Omega(\mathcal{F})$ is determined by ${\rm proj}\Omega(\mathcal{F})$ and $g(\mathcal{F})$.
\begin{prop}\label{only-boundary-matters}
Let $r\ge 3$ and let $\mathcal{F}$ be a family of $r$-graphs.
If $(x_0,y_0) \in \Omega(\mathcal{F})$, then $(x_0, y) \in \Omega(\mathcal{F})$ for all $y \in [0,y_0]$.
\end{prop}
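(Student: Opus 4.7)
The plan is to start with a good $\mathcal{F}$-free sequence $(\mathcal{H}_k)_{k=1}^\infty$ realizing $(x_0,y_0)$ (with $n_k := v(\mathcal{H}_k)\to\infty$) and, for each target $y\in [0,y_0]$, construct a sub-hypergraph $\mathcal{H}''_k\subseteq\mathcal{H}_k$ whose edge density tends to $y$ but whose shadow density still tends to $x_0$. Passing to subgraphs automatically preserves $\mathcal{F}$-freeness, so the only real constraint is that shadow be retained while the edge count is tuned down.

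The key observation is that the shadow of $\mathcal{H}_k$ can be realized by a very sparse subhypergraph. Concretely, for every $A\in\partial\mathcal{H}_k$ pick one edge $e_A\in\mathcal{H}_k$ with $A\subset e_A$, and set
\[
\mathcal{H}'_k \;=\; \{\, e_A : A\in\partial\mathcal{H}_k\,\}\subseteq \mathcal{H}_k .
\]
Then clearly $\partial\mathcal{H}'_k=\partial\mathcal{H}_k$ and
\[
|\mathcal{H}'_k|\;\le\;|\partial\mathcal{H}_k|\;\le\;\binom{n_k}{r-1}\;=\;O(n_k^{r-1})\;=\;o\!\left(\binom{n_k}{r}\right),
\]
so the edge density of $\mathcal{H}'_k$ tends to $0$ while its shadow density still tends to $x_0$. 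This already proves the case $y=0$.

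For an arbitrary $y\in(0,y_0]$, choose any $\mathcal{H}''_k$ with
\[
\mathcal{H}'_k\;\subseteq\;\mathcal{H}''_k\;\subseteq\;\mathcal{H}_k
\qquad\text{and}\qquad
|\mathcal{H}''_k|\;=\;\min\!\left(|\mathcal{H}_k|,\;\max\!\left(|\mathcal{H}'_k|,\;\lfloor y\tbinom{n_k}{r}\rfloor\right)\right).
\]
For large $k$ this forces $|\mathcal{H}''_k|=\lfloor y\binom{n_k}{r}\rfloor$, because $|\mathcal{H}'_k|=o(\binom{n_k}{r})$ and $|\mathcal{H}_k|/\binom{n_k}{r}\to y_0\ge y$. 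Three checks then finish the argument: (i) $\mathcal{H}''_k\subseteq\mathcal{H}_k$ is $\mathcal{F}$-free; (ii) $|\mathcal{H}''_k|/\binom{n_k}{r}\to y$ by construction; and (iii) the sandwich $\partial\mathcal{H}_k=\partial\mathcal{H}'_k\subseteq\partial\mathcal{H}''_k\subseteq\partial\mathcal{H}_k$ gives $\partial\mathcal{H}''_k=\partial\mathcal{H}_k$, so the shadow density tends to $x_0$. Hence $(\mathcal{H}''_k)$ realizes $(x_0,y)$.

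The only conceptual obstacle is that naively thinning $\mathcal{H}_k$ (e.g.\ by random edge deletion) will in general destroy part of the shadow, since many $(r-1)$-sets could have small codegree. The device above bypasses this entirely by separating the hypergraph into a thin ``shadow skeleton'' $\mathcal{H}'_k$ of vanishing edge density and a pool $\mathcal{H}_k\setminus\mathcal{H}'_k$ of redundant edges that may be added back in any desired proportion, which is exactly what gives free control of the edge density across the whole interval $[0,y_0]$.
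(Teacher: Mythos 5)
Your proof is correct, and it reaches the same conclusion as the paper by a genuinely different, and arguably cleaner, device. The paper (Proof of Proposition~\ref{only-boundary-matters}) relies on Algorithm~1, which in turn relies on Fact~\ref{unique-subset}: as long as $|\mathcal{H}_k| > \binom{n_k}{r-1}$, some edge has all of its $(r-1)$-subsets covered by other edges, so that edge can be deleted without shrinking the shadow; iterating this brings the edge density down to any prescribed threshold $y$. You instead go bottom-up: you build an explicit ``shadow skeleton'' $\mathcal{H}'_k$ of size at most $|\partial\mathcal{H}_k| \le \binom{n_k}{r-1}$ by choosing one covering edge per $(r-1)$-set in the shadow, observe that $\partial\mathcal{H}'_k = \partial\mathcal{H}_k$, and then pad $\mathcal{H}'_k$ with arbitrary extra edges of $\mathcal{H}_k$ to hit the target size $\lfloor y\binom{n_k}{r}\rfloor$. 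Both arguments exploit the same quantitative fact --- that preserving the shadow costs only $O(n^{r-1}) = o(n^r)$ edges --- but your version avoids the iterative deletion and the pigeonhole step of Fact~\ref{unique-subset} entirely, replacing them with a one-shot construction and a sandwich argument $\partial\mathcal{H}_k = \partial\mathcal{H}'_k \subseteq \partial\mathcal{H}''_k \subseteq \partial\mathcal{H}_k$. The trade-off is that the paper's Algorithm~1 is reused elsewhere in the paper, so building that tool has side benefits, whereas your approach is self-contained and slightly shorter for this particular proposition.
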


Combining the Kruskal-Katona theorem with some further observations yields the following universal upper bound for $g(\mathcal{F},x)$.
\begin{prop}\label{universal-upper-bound}
Let $r \ge 3$ and $\mathcal{F}$ be a family of $r$-graphs.
Then $g({\mathcal{F}},x) \le x^{r/(r-1)}$ for all $x \in {\rm proj}\Omega(\mathcal{F})$.
In particular, ${\rm proj}\Omega(\emptyset) = [0,1]$ and
$g(\emptyset, x) = x^{r/(r-1)}$ for all $x \in [0,1]$.
\end{prop}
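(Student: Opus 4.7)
The plan is to apply the Kruskal–Katona theorem (the real-variable form stated at the beginning of the excerpt) term-by-term along a good sequence that realizes $g(\mathcal{F},x)$, and then pass to the limit. Fix $x \in {\rm proj}\Omega(\mathcal{F})$ and let $(\mathcal{H}_k)_{k\ge 1}$ be an $\mathcal{F}$-free good sequence realizing $(x, g(\mathcal{F},x))$; set $n_k = v(\mathcal{H}_k)$ and $y = g(\mathcal{F},x)$. Assume first that $x > 0$, so $|\partial \mathcal{H}_k| \to \infty$, and for large $k$ there is a unique real $z_k \ge r$ with $\binom{z_k}{r-1} = |\partial \mathcal{H}_k|$. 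The Kruskal–Katona theorem then gives $|\mathcal{H}_k| \le \binom{z_k}{r}$.

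Since $n_k \to \infty$, the identity
\[
d(\partial \mathcal{H}_k) \;=\; \frac{z_k(z_k-1)\cdots(z_k-r+2)}{n_k(n_k-1)\cdots(n_k-r+2)} \;\longrightarrow\; x
\]
forces $z_k/n_k \to x^{1/(r-1)}$, and substituting this into the analogous ratio for $\binom{z_k}{r}/\binom{n_k}{r}$ gives
\[
y \;=\; \lim_{k\to\infty} d(\mathcal{H}_k) \;\le\; \lim_{k\to\infty} \frac{\binom{z_k}{r}}{\binom{n_k}{r}} \;=\; x^{r/(r-1)},
\]
as required. The boundary case $x = 0$ is handled by the elementary double-counting inequality $r|\mathcal{H}_k| \le (n_k - r + 1)|\partial \mathcal{H}_k|$, which after normalization gives $d(\mathcal{H}_k) \le d(\partial \mathcal{H}_k) \to 0$, so $y = 0 = 0^{r/(r-1)}$.

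For the second assertion, it suffices to exhibit for every $x \in [0,1]$ a good sequence of (trivially $\emptyset$-free) $r$-graphs that realizes the point $(x, x^{r/(r-1)})$. Pick integers $n_k \to \infty$ and $m_k \le n_k$ with $m_k/n_k \to x^{1/(r-1)}$, and let $\mathcal{H}_k = \binom{[m_k]}{r}$, viewed as an $r$-graph on the vertex set $[n_k]$. Then $|\partial \mathcal{H}_k| = \binom{m_k}{r-1}$, and a direct calculation gives $d(\partial \mathcal{H}_k) \to x$ and $d(\mathcal{H}_k) \to x^{r/(r-1)}$. Combined with the upper bound already proved, this yields both ${\rm proj}\Omega(\emptyset) = [0,1]$ and $g(\emptyset,x) = x^{r/(r-1)}$. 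No step presents a serious obstacle: the whole argument is a bookkeeping exercise translating the real-variable form of Kruskal–Katona into limits of ratios with the correct exponents, the only subtlety being the separate treatment of $x = 0$, where the parameter $z_k$ in Kruskal–Katona need not satisfy $z_k \ge r$.
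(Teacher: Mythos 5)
Your proof is correct and follows essentially the same approach as the paper: apply the real-variable Kruskal--Katona bound termwise along a realizing sequence, extract $z_k/n_k \to x^{1/(r-1)}$, pass to the limit, and for the lower bound use the same construction of a clique on $\approx x^{1/(r-1)}n$ vertices plus isolated vertices. The only cosmetic difference is that the paper first invokes the monotonicity $\Omega(\mathcal{F})\subset\Omega(\emptyset)$ to reduce to $\mathcal{F}=\emptyset$, whereas you prove the upper bound directly for general $\mathcal{F}$ and also spell out the $x=0$ edge case, which the paper glosses over.
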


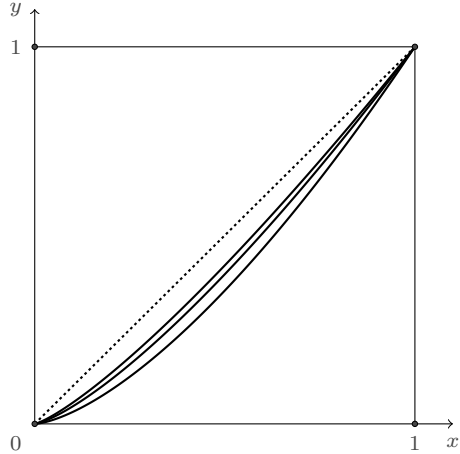
\begin{figure}[htbp]
\centering
\begin{tikzpicture}[xscale=5,yscale=5]
\draw [line width=0.8pt,dash pattern=on 1pt off 1.2pt,domain=0:1] plot(\x,\x);
\draw [->] (0,0)--(1.1,0);
\draw [->] (0,0)--(0,1.1);
\draw (0,1)--(1,1);
\draw (1,0)--(1,1);
\draw[line width=0.8pt]
(0.,0.)--(0.01,0.001)--(0.02,0.00282843)--(0.03,0.00519615)--(0.04,0.008)--(0.05,0.0111803)--
(0.06,0.0146969)--(0.07,0.0185203)--(0.08,0.0226274)--(0.09,0.027)--(0.1,0.0316228)--(0.11,0.0364829)--
(0.12,0.0415692)--(0.13,0.0468722)--(0.14,0.0523832)--(0.15,0.0580948)--(0.16,0.064)--(0.17,0.0700928)--
(0.18,0.0763675)--(0.19,0.0828191)--(0.2,0.0894427)--(0.21,0.0962341)--(0.22,0.103189)--(0.23,0.110304)--
(0.24,0.117576)--(0.25,0.125)--(0.26,0.132575)--(0.27,0.140296)--(0.28,0.148162)--(0.29,0.15617)--
(0.3,0.164317)--(0.31,0.172601)--(0.32,0.181019)--(0.33,0.189571)--(0.34,0.198252)--(0.35,0.207063)--
(0.36,0.216)--(0.37,0.225062)--(0.38,0.234248)--(0.39,0.243555)--(0.4,0.252982)--(0.41,0.262528)--
(0.42,0.272191)--(0.43,0.28197)--(0.44,0.291863)--(0.45,0.301869)--(0.46,0.311987)--(0.47,0.322216)--
(0.48,0.332554)--(0.49,0.343)--(0.5,0.353553)--(0.51,0.364213)--(0.52,0.374977)--(0.53,0.385846)--
(0.54,0.396817)--(0.55,0.407891)--(0.56,0.419066)--(0.57,0.430341)--(0.58,0.441715)--(0.59,0.453188)--
(0.6,0.464758)--(0.61,0.476425)--(0.62,0.488188)--(0.63,0.500047)--(0.64,0.512)--(0.65,0.524047)--
(0.66,0.536187)--(0.67,0.548419)--(0.68,0.560742)--(0.69,0.573157)--(0.7,0.585662)--(0.71,0.598257)--
(0.72,0.61094)--(0.73,0.623712)--(0.74,0.636572)--(0.75,0.649519)--(0.76,0.662553)--(0.77,0.675672)--
(0.78,0.688877)--(0.79,0.702167)--(0.8,0.715542)--(0.81,0.729)--(0.82,0.742542)--(0.83,0.756166)--
(0.84,0.769873)--(0.85,0.783661)--(0.86,0.797531)--(0.87,0.811482)--(0.88,0.825513)--(0.89,0.839624)--
(0.9,0.853815)--(0.91,0.868085)--(0.92,0.882433)--(0.93,0.89686)--(0.94,0.911364)--(0.95,0.925945)--
(0.96,0.940604)--(0.97,0.955339)--(0.98,0.970151)--(0.99,0.985038)--(1.,1.);
\draw[line width=0.8pt]
(0.,0.)--(0.01,0.00215443)--(0.02,0.00542884)--(0.03,0.0093217)--(0.04,0.0136798)--(0.05,0.0184202)--
(0.06,0.0234892)--(0.07,0.028849)--(0.08,0.034471)--(0.09,0.0403326)--(0.1,0.0464159)--(0.11,0.0527056)--
(0.12,0.0591891)--(0.13,0.0658554)--(0.14,0.0726949)--(0.15,0.0796994)--(0.16,0.0868614)--(0.17,0.0941742)--
(0.18,0.101632)--(0.19,0.109229)--(0.2,0.116961)--(0.21,0.124822)--(0.22,0.13281)--(0.23,0.140919)--
(0.24,0.149147)--(0.25,0.15749)--(0.26,0.165945)--(0.27,0.174509)--(0.28,0.18318)--(0.29,0.191954)--
(0.3,0.20083)--(0.31,0.209805)--(0.32,0.218877)--(0.33,0.228044)--(0.34,0.237304)--(0.35,0.246655)--
(0.36,0.256096)--(0.37,0.265625)--(0.38,0.27524)--(0.39,0.28494)--(0.4,0.294723)--(0.41,0.304587)--
(0.42,0.314533)--(0.43,0.324557)--(0.44,0.33466)--(0.45,0.344839)--(0.46,0.355094)--(0.47,0.365424)--
(0.48,0.375827)--(0.49,0.386303)--(0.5,0.39685)--(0.51,0.407468)--(0.52,0.418155)--(0.53,0.428912)--
(0.54,0.439736)--(0.55,0.450627)--(0.56,0.461584)--(0.57,0.472607)--(0.58,0.483694)--(0.59,0.494845)--
(0.6,0.50606)--(0.61,0.517336)--(0.62,0.528675)--(0.63,0.540075)--(0.64,0.551535)--(0.65,0.563055)--
(0.66,0.574635)--(0.67,0.586273)--(0.68,0.597969)--(0.69,0.609722)--(0.7,0.621533)--(0.71,0.6334)--
(0.72,0.645322)--(0.73,0.6573)--(0.74,0.669333)--(0.75,0.68142)--(0.76,0.693561)--(0.77,0.705756)--
(0.78,0.718003)--(0.79,0.730303)--(0.8,0.742654)--(0.81,0.755057)--(0.82,0.767512)--(0.83,0.780017)--
(0.84,0.792573)--(0.85,0.805178)--(0.86,0.817833)--(0.87,0.830537)--(0.88,0.84329)--(0.89,0.856091)--
(0.9,0.86894)--(0.91,0.881837)--(0.92,0.894782)--(0.93,0.907773)--(0.94,0.920811)--(0.95,0.933895)--
(0.96,0.947025)--(0.97,0.960201)--(0.98,0.973423)--(0.99,0.986689)--(1.,1.);
\draw[line width=0.8pt]
(0.,0.)--(0.01,0.00316228)--(0.02,0.00752121)--(0.03,0.0124854)--(0.04,0.0178885)--(0.05,0.0236435)--
(0.06,0.0296954)--(0.07,0.0360058)--(0.08,0.0425464)--(0.09,0.049295)--(0.1,0.0562341)--(0.11,0.0633492)--
(0.12,0.0706279)--(0.13,0.0780601)--(0.14,0.0856367)--(0.15,0.0933499)--(0.16,0.101193)--(0.17,0.109159)--
(0.18,0.117244)--(0.19,0.125442)--(0.2,0.133748)--(0.21,0.142159)--(0.22,0.150671)--(0.23,0.159279)--
(0.24,0.167983)--(0.25,0.176777)--(0.26,0.185659)--(0.27,0.194628)--(0.28,0.20368)--(0.29,0.212813)--
(0.3,0.222025)--(0.31,0.231314)--(0.32,0.240679)--(0.33,0.250117)--(0.34,0.259626)--(0.35,0.269206)--
(0.36,0.278855)--(0.37,0.288571)--(0.38,0.298352)--(0.39,0.308199)--(0.4,0.318108)--(0.41,0.32808)--(0.42,0.338113)--
(0.43,0.348205)--(0.44,0.358357)--(0.45,0.368566)--(0.46,0.378833)--(0.47,0.389155)--(0.48,0.399532)--(0.49,0.409963)--
(0.5,0.420448)--(0.51,0.430986)--(0.52,0.441575)--(0.53,0.452215)--(0.54,0.462905)--(0.55,0.473645)--(0.56,0.484434)--
(0.57,0.495272)--(0.58,0.506157)--(0.59,0.517089)--(0.6,0.528067)--(0.61,0.539091)--(0.62,0.550161)--(0.63,0.561275)--
(0.64,0.572433)--(0.65,0.583635)--(0.66,0.594881)--(0.67,0.606169)--(0.68,0.617499)--(0.69,0.628871)--(0.7,0.640284)--
(0.71,0.651738)--(0.72,0.663232)--(0.73,0.674767)--(0.74,0.686341)--(0.75,0.697954)--(0.76,0.709606)--(0.77,0.721296)--
(0.78,0.733024)--(0.79,0.74479)--(0.8,0.756593)--(0.81,0.768433)--(0.82,0.78031)--(0.83,0.792223)--(0.84,0.804172)--
(0.85,0.816157)--(0.86,0.828177)--(0.87,0.840232)--(0.88,0.852321)--(0.89,0.864445)--(0.9,0.876603)--(0.91,0.888795)--
(0.92,0.901021)--(0.93,0.913279)--(0.94,0.925571)--(0.95,0.937896)--(0.96,0.950253)--(0.97,0.962642)--(0.98,0.975063)--(0.99,0.987516)--(1.,1.);
\begin{scriptsize}
\draw [fill=uuuuuu] (1,0) circle (0.2pt);
\draw[color=uuuuuu] (1,0-0.05) node {$1$};
\draw [fill=uuuuuu] (0,0) circle (0.2pt);
\draw[color=uuuuuu] (0-0.05,0-0.05) node {$0$};
\draw [fill=uuuuuu] (0,1) circle (0.2pt);
\draw[color=uuuuuu] (0-0.05,1) node {$1$};
\draw [fill=uuuuuu] (1,1) circle (0.2pt);
\draw[color=uuuuuu] (0-0.05,1+0.1) node {$y$};
\draw[color=uuuuuu] (1+0.1,0-0.05) node {$x$};
\end{scriptsize}
\end{tikzpicture}
\caption{Upper bounds for $g(\mathcal{F},x)$ when $r = 3,4,5$ given by Proposition \ref{universal-upper-bound}.}
\end{figure}

In $\cite{HN19}$, Hatami and Norin considered the region defined by the homomorphism densities of a list of given graphs,
which is a more general version of $(\star)$ (that generalizes $(\star)$ from two graphs $H_1,H_2$ to more graphs).
They constructed examples which show that the restrictions of the boundary to certain hyperplanes can have nowhere differential parts.
However, we will show in the next result that $g(\mathcal{F})$ is well-behaved.

\begin{dfn}[Left/right continuity]
Let $f: \mathbb{R} \to \mathbb{R}$.
Then $f$ is left-continuous (resp. right-continuous) at $x$ if for any $\epsilon > 0$ there exists $\delta > 0$
such that $|f(x') - f(x)| < \epsilon$ for all $x' \in (x-\delta, x)$ (resp. $|f(x') - f(x)| < \epsilon$ for all $x' \in (x,x+\delta)$).
If $f$ is  left-continuous (resp. right-continuous) at all $x \in \mathbb{R}$,
then we say $f$ is left-continuous (resp. right-continuous).
\end{dfn}

\begin{dfn}[Types of discontinuities]
Let $f: \mathbb{R} \to \mathbb{R}$ and $x \in \mathbb{R}$ be a discontinuity of $f$.
If $\lim_{x \to x^{-}} f(x)$ and $\lim_{x \to x^{+}}f(x)$ exist,
then $f$ is said to have the discontinuity of the first kind at $x$.
Otherwise, the discontinuity is said to be of the second kind.
Furthermore, suppose that $x$ is a discontinuity of the first kind of $f$.
Then $x$ is a removable discontinuity if $\lim_{x \to x^{-}} f(x) = \lim_{x \to x^{+}} f(x)$.
Otherwise, $x$ is a jump discontinuity.
\end{dfn}

\begin{thm}\label{left-cont-and-diff}
For any $r\ge 3$ and any family $\mathcal{F}$ of $r$-graphs,
$g({\mathcal{F}})$ is left-continuous, has at most countably many jump discontinuities,
and is almost everywhere differentiable.
\end{thm}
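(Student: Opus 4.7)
The plan is to reduce everything to the standard theory of monotone real functions by exhibiting a monotone rescaling of $g(\mathcal{F})$, and then extract left-continuity from the topological closedness of $\Omega(\mathcal{F})$ provided by Proposition \ref{closed}.

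The main lemma I would prove first is the following scaling statement: if $(x_0,y_0)\in\Omega(\mathcal{F})$, then $(x_0\lambda^{r-1},\,y_0\lambda^r)\in\Omega(\mathcal{F})$ for every $\lambda\in[0,1]$. The construction is to take a good sequence $(\mathcal{H}_k)$ realizing $(x_0,y_0)$ and add $m_k$ isolated vertices to each $\mathcal{H}_k$ with $n_k/(n_k+m_k)\to\lambda$, where $n_k=v(\mathcal{H}_k)$. Adding isolated vertices trivially preserves $\mathcal{F}$-freeness and the values $|\mathcal{H}_k|,|\partial\mathcal{H}_k|$, while the edge and shadow densities rescale by $\binom{n_k}{r}/\binom{n_k+m_k}{r}\to\lambda^r$ and $\binom{n_k}{r-1}/\binom{n_k+m_k}{r-1}\to\lambda^{r-1}$ respectively. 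Applied with $y_0=g(\mathcal{F},x_0)$ and $x=x_0\lambda^{r-1}\le x_0$, this gives $g(\mathcal{F},x)\ge g(\mathcal{F},x_0)\,(x/x_0)^{r/(r-1)}$, so the auxiliary function
$$h(x):=g(\mathcal{F},x)\big/x^{r/(r-1)},\qquad x\in{\rm proj}\,\Omega(\mathcal{F})\setminus\{0\},$$
is non-increasing.

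At this point the classical theory of monotone functions on an interval supplies, for free, that $h$ has at most countably many discontinuities, each a jump, and is differentiable almost everywhere. Multiplying by the smooth factor $x^{r/(r-1)}$ transfers all three properties to $g(\mathcal{F})=h(x)\cdot x^{r/(r-1)}$, except that monotonicity alone does not pin down the side from which $g(\mathcal{F})$ is continuous.

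Left-continuity is where I would invoke closedness. Fix $x\in{\rm proj}\,\Omega(\mathcal{F})$ and take $x_n\uparrow x$. Since $h$ is monotone, $L:=\lim_n g(\mathcal{F},x_n)=\bigl(\lim_n h(x_n)\bigr)\cdot x^{r/(r-1)}$ exists and is $\ge g(\mathcal{F},x)$ (limits of a non-increasing $h$ from the left are $\ge h(x)$). Conversely, $(x_n,g(\mathcal{F},x_n))\in\Omega(\mathcal{F})$ for every $n$, so by Proposition \ref{closed} the limit $(x,L)$ lies in $\Omega(\mathcal{F})$, forcing $L\le g(\mathcal{F},x)$. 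Hence $L=g(\mathcal{F},x)$. The boundary case $x=0$ is immediate from Proposition \ref{universal-upper-bound}, which gives $g(\mathcal{F},0)=0$ and $g(\mathcal{F},x_n)\le x_n^{r/(r-1)}\to 0$. With left-continuity in hand, each discontinuity $x$ of $g(\mathcal{F})$ satisfies $\lim_{t\to x^-}g(\mathcal{F},t)=g(\mathcal{F},x)>\lim_{t\to x^+}g(\mathcal{F},t)$, so it is automatically a jump, completing the theorem.

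The only genuinely new ingredient is the isolated-vertex scaling lemma; the rest is a clean packaging of monotone-function theory with the topological closedness. I do not expect a serious obstacle: the lemma is a direct computation, and the only subtle point to be careful about is that $h$ is defined only away from $0$, which is why the case $x=0$ must be handled separately via the Kruskal–Katona bound.
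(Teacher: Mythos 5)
Your proof is correct and follows essentially the same route as the paper: the isolated-vertex scaling argument yields the paper's key inequality (Lemma \ref{main-inequality-monotone}, equivalently inequality (\ref{general-continuious-key-lemma})), which is then recast as a monotonicity statement for an auxiliary function, after which the standard theory of monotone functions and the closedness of $\Omega(\mathcal{F})$ finish the job. The only cosmetic difference is the choice of rescaling --- you use $h(x)=g(\mathcal{F},x)/x^{r/(r-1)}$ where the paper uses $f(x)=g(\mathcal{F},x)^{(r-1)/r}-x$ --- but both are monotone for the same underlying reason (yours is arguably a touch cleaner, as the paper's $f$ additionally needs the Kruskal--Katona upper bound from Proposition \ref{universal-upper-bound}), and the remaining steps coincide.
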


Furthermore, the next result shows that $g({\mathcal{F}})$ can indeed be discontinuous.
\begin{thm}\label{example-discont}
There exists a family $\mathcal{D}$ of $3$-graphs with ${\rm proj}\Omega(\mathcal{D}) = [0,1]$ and $g(\mathcal{D}, 2/3) = 2/9$,
but there exists an absolute constant $\delta_{0}>0$ such that $g(\mathcal{D}, 2/3 + \epsilon) < 2/9 - \delta_{0}$ for all $\epsilon \in (0, 10^{-8})$.
\end{thm}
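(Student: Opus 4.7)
The plan is to define a family $\mathcal{D}$ whose $\mathcal{D}$-freeness sharply dichotomizes $3$-graphs into two structurally incompatible regimes. Take
\[
\mathcal{D} := \bigl\{F : F \text{ is a } 3\text{-graph that is not } 3\text{-partite and has some pair of vertices lying in at least two edges of } F\bigr\}.
\]
The central combinatorial claim is the following dichotomy: every $\mathcal{D}$-free $3$-graph $\mathcal{H}$ is either $3$-partite or linear (no pair is in two edges). Suppose otherwise: then $F' := \mathcal{H}$ is itself not $3$-partite, and some pair $\{u,v\}$ lies in two edges $\{u,v,w_1\}, \{u,v,w_2\}$ of $\mathcal{H}$. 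The sub-$3$-graph $F := F' \cup \{\{u,v,w_1\},\{u,v,w_2\}\} \subseteq \mathcal{H}$ then contains the non-$3$-partite $F'$, so $F$ itself is not $3$-partite; and $F$ has the pair $\{u,v\}$ in two edges. Hence $F \in \mathcal{D}$, contradicting $\mathcal{D}$-freeness.

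Given the dichotomy, the remaining properties follow from standard constructions. First, the balanced complete tripartite $3$-graph $T_n$ is $3$-partite, hence $\mathcal{D}$-free, and realizes $(2/3, 2/9)$; so $g(\mathcal{D}, 2/3) \ge 2/9$. Conversely, any good $\mathcal{D}$-free sequence realizing $(2/3, y)$ contains a subsequence that is either entirely $3$-partite (in which case $d(\partial\mathcal{H}_k)\to 2/3$ combined with the equality case of $ab+bc+ca \le (a+b+c)^2/3$ forces asymptotically balanced parts, so $y \le 2/9$) or entirely linear ($|\mathcal{H}_k| \le \binom{v(\mathcal{H}_k)}{2}/3$, so $y = 0$); hence $g(\mathcal{D}, 2/3) = 2/9$. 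Second, for each $x \in [0,1]$, a greedy partial Steiner triple system on $n$ vertices with $\lfloor x\binom{n}{2}/3\rfloor$ triples is linear and therefore $\mathcal{D}$-free, with shadow density tending to $x$, so ${\rm proj}\,\Omega(\mathcal{D}) = [0,1]$.

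The jump is now immediate. Any good $\mathcal{D}$-free sequence realizing $(2/3 + \epsilon, y)$ with $\epsilon > 0$ cannot be eventually $3$-partite, since every $3$-partite $3$-graph has shadow density at most $2/3$; by the dichotomy it must be eventually linear, forcing $d(\mathcal{H}_k) \le 2/(v(\mathcal{H}_k)-2) \to 0$ and hence $y = 0$. Thus $g(\mathcal{D}, 2/3 + \epsilon) = 0$ for every $\epsilon \in (0, 1/3]$, and taking $\delta_0 := 1/9$ proves the theorem (the bound $10^{-8}$ on $\epsilon$ is merely a convenient explicit value, far from tight). The only nontrivial step is the structural dichotomy, whose key insight is the gluing trick $F' \cup \{\{u,v,w_1\},\{u,v,w_2\}\}$ that simultaneously embeds both defining features of $\mathcal{D}$ inside $\mathcal{H}$; once the dichotomy is established, everything else is routine.
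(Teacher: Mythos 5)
Your proof is correct, but it proves the theorem by a genuinely different construction than the paper's. The paper takes $\mathcal{D}$ to be the finite family of all $F\in\mathcal{K}_4^3$ that are not subgraphs of a star; establishing the discontinuity then requires Pikhurko's stability theorem for $H_4^3$-free $3$-graphs plus a lengthy counting argument involving bad pairs, missing pairs, and the Erd\H{o}s--Ko--Rado theorem (proof of Theorem \ref{nonstable-D}), yielding only a quantitatively modest jump with $\epsilon<10^{-8}$. You instead define an infinite family $\mathcal{D}$ that encodes the dichotomy directly: a $3$-graph avoids your $\mathcal{D}$ iff it is itself $3$-partite or linear (the contrapositive of ``$\mathcal{H}\in\mathcal{D}$'' is exactly this, so your ``gluing'' step, while correctly stated, is cosmetic --- the $F$ you build equals $\mathcal{H}$, and the real observation is simply that a $\mathcal{D}$-free $\mathcal{H}$ cannot belong to $\mathcal{D}$). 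From the dichotomy everything is immediate: $3$-partite $3$-graphs have shadow density at most $2/3+o(1)$ and edge density at most $2/9+o(1)$, linear $3$-graphs have edge density $O(1/n)$, so $g(\mathcal{D},x)=0$ for every $x>2/3$ and the jump is from $2/9$ all the way to $0$. What each approach buys: yours is far shorter, avoids all stability machinery, and gives a much larger discontinuity; the paper's buys a \emph{finite} forbidden family drawn from the standard clique-expansion families $\mathcal{K}_{\ell+1}^r$, so the discontinuity is exhibited as a genuine extremal phenomenon rather than being hard-coded into the definition of $\mathcal{D}$.

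Two small slips worth fixing, neither of which affects correctness: the edge density bound for a linear $3$-graph is $|\mathcal{H}|/\binom{n}{3}\le \tfrac{1}{3}\binom{n}{2}/\binom{n}{3} = 1/(n-2)$, not $2/(n-2)$; and in the ``$(2/3,y)$'' case your appeal to the equality case of $ab+bc+ca\le(a+b+c)^2/3$ is unnecessary --- a $3$-partite $3$-graph has at most $abc\le(n/3)^3$ edges regardless of the shadow density, which already gives $y\le 2/9$ directly.
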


Actually, Theorem \ref{example-discont} can be extended to $r \ge 4$.
However, since the proofs for $r = 3$ and $r \ge 4$ share a similar idea and the proof for $r \ge 4$ is rather technical,
we give the proof for $r \ge 4$ in the Appendix.
Also, the condition that $\epsilon < 10^{-8}$ in Theorem \ref{example-discont} is not necessary, but we include it to keep our proof simple.

\begin{figure}[htbp]
\centering
\begin{tikzpicture}[xscale=5,yscale=5]
\draw [->] (0,0)--(1.1,0);
\draw [->] (0,0)--(0,0.6);
\draw (0,0.5)--(1,0.5);
\draw (1,0)--(1,0.5);
\draw [line width=1pt,dash pattern=on 1pt off 1.2pt,domain=0:1] plot(\x,{2/9});
\draw[line width=1pt]
(2/3,2/9)--(0.6630236470626231,0.2204032031146704)
--(0.6585443512860356,0.21817345951646344)--(0.654065055509448,0.21595128623168966)
--(0.6495857597328606,0.21373670913866868)--(0.645106463956273,0.21152975438293659)
--(0.6406271681796856,0.20933044838187664)--(0.636147872403098,0.2071388178294629)
--(0.6316685766265105,0.20495488970112097)--(0.6271892808499231,0.2027786912587088)
--(0.6227099850733355,0.2006102500556217)--(0.6182306892967481,0.19844959394202646)
--(0.6137513935201605,0.19629675107022665)--(0.609272097743573,0.19415174990016584)
--(0.6047928019669855,0.19201461920507196)--(0.600313506190398,0.1898853880772468)
--(0.5958342104138105,0.1877640859340081)--(0.591354914637223,0.1856507425237863)
--(0.5868756188606356,0.18354538793238254)--(0.582396323084048,0.18144805258939525)
--(0.5779170273074605,0.17935876727481742)--(0.573437731530873,0.1772775631258134)
--(0.5689584357542855,0.17520447164368083)--(0.5644791399776979,0.17313952470100336)
--(0.5599998442011105,0.17108275454900254)--(0.5555205484245229,0.16903419382509416)
--(0.5510412526479355,0.1669938755606583)--(0.5465619568713479,0.164961833189029)
--(0.5420826610947604,0.16293810055371402)--(0.537603365318173,0.16092271191685042)
--(0.5331240695415854,0.15891570196790752)--(0.528644773764998,0.1569171058326439)
--(0.5241654779884104,0.1549269590823307)--(0.519686182211823,0.1529452977432499)
--(0.5152068864352354,0.15097215830647748)--(0.5107275906586479,0.14900757773796589)
--(0.5062482948820604,0.14705159348893285)--(0.5017689991054729,0.14510424350657278)
--(0.4972897033288854,0.1431655662451021)--(0.4928104075522979,0.14123560067715077)
--(0.48833111177571037,0.13931438630551737)--(0.48385181599912286,0.1374019631752988)
--(0.4793725202225354,0.13549837188641337)--(0.4748932244459478,0.1336036536065323)
--(0.4704139286693603,0.1317178500844364)--(0.46593463289277287,0.12984100366381837)
--(0.46145533711618536,0.1279731572975465)--(0.45697604133959785,0.12611435456241368)
--(0.45249674556301034,0.12426463967439019)--(0.44801744978642283,0.12242405750440374)
--(0.4435381540098353,0.12059265359467064)--(0.43905885823324786,0.11877047417560284)
--(0.43457956245666024,0.11695756618331694)--(0.43010026668007284,0.11515397727777338)
--(0.4256209709034853,0.1133597558615753)--(0.4211416751268978,0.11157495109945828)
--(0.4166623793503103,0.1097996129385029)--(0.4121830835737228,0.10803379212910673)
--(0.4077037877971353,0.10627754024675189)--(0.4032244920205478,0.10453090971460692)
--(0.39874519624396026,0.10279395382700543)--(0.3942659004673728,0.10106672677384512)
--(0.3897866046907853,0.09934928366595529)--(0.38530730891419773,0.09764168056148004)
--(0.3808280131376103,0.09594397449333411)--(0.3763487173610227,0.09425622349778398)
--(0.37186942158443526,0.0925784866442176)--(0.36739012580784774,0.0909108240661645)
--(0.36291083003126023,0.0892532969936367)--(0.3584315342546727,0.08760596778686161)
--(0.35395223847808527,0.08596889997148711)--(0.34947294270149776,0.0843421582753402)
--(0.3449936469249102,0.08272580866683046)--(0.34051435114832274,0.08111991839509318)
--(0.33603505537173517,0.07952455603197399)--(0.3315557595951477,0.07793979151596783)
--(0.3270764638185602,0.07636569619822708)--(0.3225971680419727,0.0748023428907691)
--(0.3181178722653852,0.07324980591701827)--(0.31363857648879767,0.07170816116483066)
--(0.3091592807122102,0.07017748614216043)--(0.30467998493562265,0.06865786003553928)
--(0.3002006891590352,0.06714936377155435)--(0.2957213933824477,0.0656520800815259)
--(0.2912420976058602,0.06416609356960143)--(0.28676280182927266,0.06269149078450184)
--(0.28228350605268515,0.06122836029517568)--(0.27780421027609764,0.05977679277063996)
--(0.27332491449951013,0.058336881064308725)--(0.2688456187229227,0.05690872030314095)
--(0.26436632294633516,0.05549240798196618)--(0.25988702716974765,0.05408804406338191)
--(0.25540773139316014,0.05269573108365302)--(0.25092843561657263,0.05131557426508493)
--(0.24644913983998512,0.04994768163538859)--(0.2419698440633976,0.04859216415460634)
--(0.23749054828681013,0.04724913585022642)--(0.23301125251022262,0.045918713961177304)
--(0.2285319567336351,0.04460101909146803)--(0.2240526609570476,0.04329617537431961)
--(0.21957336518046008,0.042004310647729165)--(0.21509406940387255,0.04072555664250949)
--(0.2106147736272851,0.03946004918396895)--(0.20613547785069758,0.03820792840853079)
--(0.20165618207411007,0.03696933899674451)--(0.19717688629752256,0.035744430424320926)
--(0.19269759052093502,0.03453335723302326)--(0.18821829474434756,0.03333627932348077)
--(0.18373899896776005,0.03215336227225981)--(0.17925970319117254,0.03098477767583987)
--(0.17478040741458506,0.02983070352450204)--(0.17030111163799752,0.028691324609560667)
--(0.16582181586141004,0.027566832967862585)--(0.16134252008482253,0.02645742836805515)
--(0.15686322430823502,0.025363318843811742)--(0.15238392853164753,0.024284721280009886)
--(0.14790463275506,0.023221862058822252)--(0.1434253369784725,0.0221749777738319)
--(0.138946041201885,0.02114431602166764)--(0.1344667454252975,0.020130136282327293)
--(0.12998744964871,0.019132710901390154)--(0.1255081538721225,0.01815232618980085)
--(0.12102885809553499,0.017189283659967112)--(0.11654956231894749,0.01624390142069605)
--(0.11207026654235998,0.015316515758221273)--(0.10759097076577247,0.014407482936513701)
--(0.10311167498918497,0.013517181257608283)--(0.09863237921259746,0.01264601343232816)
--(0.09415308343600996,0.01179440932425909)--(0.08967378765942245,0.010962829146125673)
--(0.08519449188283494,0.01015176720926047)--(0.08071519610624744,0.009361756355687817)
--(0.07623590032965993,0.008593373241477962)--(0.07175660455307244,0.007847244693951734)
--(0.06727730877648493,0.00712405544089494)--(0.06279801299989743,0.006424557617874988)
--(0.05831871722330992,0.0057495826171624865)--(0.053839421446722414,0.005100056076844675)
--(0.04936012567013491,0.0044770171693905856)--(0.04488082989354741,0.003881643919684239)
--(0.040401534116959896,0.0033152872187856555)--(0.03592223834037239,0.0027795178016977455)
--(0.03144294256378489,0.0022761933405199843)--(0.026963646787197385,0.0018075583218622703)
--(0.02248435101060988,0.0013764007788228095)--(0.018005055234022373,0.0009863159625249872)
--(0.01352575945743487,0.000642194917737317)--(0.009046463680847366,0.00035127127796605074)
--(0.00456716790425986,0.00012600704929554806);
\draw[line width=1pt,dash pattern=on 1pt off 1.2pt] (2/3,2/9-0.05) to [out=315,in=170] (1,0.05);
\begin{scriptsize}
\draw [fill=uuuuuu] (2/3,2/9) circle (0.2pt);
\draw [fill=white] (2/3,2/9-0.05) circle (0.2pt);
\draw [fill=uuuuuu] (2/3,0) circle (0.2pt);
\draw[color=uuuuuu] (2/3,0-0.07) node {$\frac{2}{3}$};
\draw [fill=uuuuuu] (1,0) circle (0.2pt);
\draw[color=uuuuuu] (1,0-0.07) node {$1$};
\draw [fill=uuuuuu] (0,0) circle (0.2pt);
\draw[color=uuuuuu] (0-0.05,0-0.05) node {$0$};
\draw [fill=uuuuuu] (0,2/9) circle (0.2pt);
\draw[color=uuuuuu] (0-0.08,2/9) node {$2/9$};
\draw [fill=uuuuuu] (0,1/2) circle (0.2pt);
\draw[color=uuuuuu] (0-0.08,1/2) node {$1/2$};
\draw[color=uuuuuu] (0-0.05,1/2+0.1) node {$y$};
\draw[color=uuuuuu] (1+0.1,0-0.05) node {$x$};
\end{scriptsize}
\end{tikzpicture}
\caption{The function $g(\mathcal{D})$ is discontinuous at $x =2/3$.}
\end{figure}
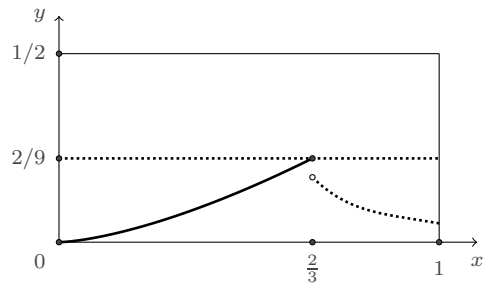

\subsection{Cancellative hypergraphs}
In this section we consider the feasible region of cancellative hypergraphs,
which is perhaps the first example of an extremal hypergraph problem that was well understood.
Our results are summarized in Corollary \ref{cancellative-3-all} stated at the end of this section.

\begin{dfn}
Let $\mathcal{T}_{r}$ be the collection of all $r$-graphs on at most $2r-1$ vertices with $3$ edges $A,B,C$ such that $A \triangle B \subset C$.
An $r$-graph is cancellative iff it is $\mathcal{T}_{r}$-free.
\end{dfn}

For $r = 2$ the family $\mathcal{T}_{2}$ comprises only one graph $K_3$.
For $r = 3$ the family $\mathcal{T}_{3}$ comprises two hypergraphs $K_{4}^{3-}$ and $F_5$,
where $K_{4}^{3-}$ is the $3$-graph on $4$ vertices with exactly $3$ edges,
and $F_{5}$ is the $3$-graph on $5$ vertices with edge set $\{123, 124, 345\}$.

Let $[n] = \{1,2,...,n\}$.
Fix $\ell \ge r\ge 2$.
Let $V_1\cup \cdots \cup V_{\ell}$ be a partition of $[n]$ with each part of size either $\lf n/{\ell} \rf$ or $\lc n/{\ell} \rc$.
The generalized Tur\'{a}n graph $T_{r}(n,{\ell})$ is the collection of all $r$-sets that intersect each $V_i$ on at most one vertex.
Notice that $T_{2}(n, \ell)$ is just the ordinary Tur\'{a}n graph.
Let
\[
t_{r}(n,{\ell}) = |T_{r}(n,{\ell})| \approx \binom{\ell}{r}\left(\frac{n}{\ell}\right)^{r}.
\]

In $\cite{BO74}$, Bollob\'{a}s proved that $\textrm{ex}(n,\mathcal{T}_{3}) \le t_{3}(n,3)$ and $T_{3}(n,3)$ is the unique $\mathcal{T}_{3}$-free
$3$-graph on $n$ vertices with exactly $t_{3}(n,3)$ edges.
Therefore, $g(\mathcal{T}_3 ,x) \le 2/9$ for all $x\in {\rm proj}\Omega(\mathcal{T}_3)$.
Later, Keevash and the second author $\cite{KM04}$ proved a stability theorem for $\mathcal{T}_{3}$-free hypergraphs.
The first author $\cite{LIU19}$ gave a new proof to both the exact and the stability result for $\mathcal{T}_{3}$-free hypergraphs.
Moreover, $\cite{LIU19}$ proves that a $\mathcal{T}_{3}$-free $3$-graph $\mathcal{H}$ on $n$-vertices satisfies the inequality
\[
4\left(\frac{ 3|\mathcal{H}|/|\partial \mathcal{H}| }{ n-3|\mathcal{H}|/|\partial \mathcal{H}| } \right)^2 |\partial \mathcal{H}|\le n^2-2| \partial \mathcal{H}|,
\]
which implies
\begin{equation}\label{cancellative-3-XL19}
g(\mathcal{T}_{3},x) \le \frac{ \sqrt{2(1-x) x^3}+x^2-x}{3 x-1}, \text{ for all } x \in {\rm proj}\Omega(\mathcal{T}_3).
\end{equation}

\begin{figure}[htbp]
\centering
\begin{tikzpicture}[xscale=5,yscale=5]
\draw [->] (0,0)--(1.1,0);
\draw [->] (0,0)--(0,0.6);
\draw (0,0.5)--(1,0.5);
\draw (1,0)--(1,0.5);
\draw [line width=1pt,dash pattern=on 1pt off 1.2pt,domain=0:1] plot(\x,{2/9});
\draw [line width=1pt,dash pattern=on 1pt off 1.2pt] (2/3,0) -- (2/3,2/9);
\draw[line width=1pt,color=sqsqsq,fill=sqsqsq,fill opacity=0.25]
(0., 0.)--(0.01, 0.00875554)--(0.02, 0.0166385)--(0.03, 0.0240248)--
(0.04, 0.0310396)--(0.05, 0.0377517)--(0.06, 0.0442056)--(0.07, 0.0504326)--
(0.08, 0.0564562)--(0.09, 0.0622945)--(0.1, 0.0679623)--
(0.11, 0.0734713)--(0.12, 0.0788316)--(0.13, 0.0840514)--(0.14,0.0891381)--
(0.15, 0.0940977)--(0.16, 0.0989356)--(0.17, 0.103657)--
(0.18, 0.108265)--(0.19, 0.112764)--(0.2, 0.117157)--
(0.21, 0.121448)--(0.22, 0.125638)--(0.23, 0.12973)--(0.24, 0.133726)--
(0.25, 0.137628)--(0.26, 0.141437)--(0.27, 0.145156)--(0.28,0.148784)--
(0.29, 0.152325)--(0.3, 0.155778)--(0.31, 0.159144)--
(0.32, 0.162425)--(0.33, 0.16562)--(0.34, 0.168731)--(0.35,0.171758)--
(0.36, 0.174701)--(0.37, 0.177561)--(0.38, 0.180337)--
(0.39, 0.183031)--(0.4, 0.185641)--(0.41, 0.188167)--
(0.42, 0.190611)--(0.43, 0.19297)--(0.44, 0.195246)--
(0.45, 0.197437)--(0.46, 0.199544)--(0.47, 0.201564)--(0.48, 0.203499)--
(0.49, 0.205347)--(0.5, 0.207107)--(0.51, 0.208778)--(0.52, 0.210359)--
(0.53, 0.2118)--(0.54, 0.213248)--(0.55, 0.214553)--(0.56, 0.215762)--
(0.57, 0.216875)--(0.58, 0.21789)--(0.59, 0.218804)--(0.6, 0.219615)--
(0.61, 0.220322)--(0.62, 0.220922)--(0.63, 0.221412)--(0.64, 0.22179)--
(0.65, 0.222052)--(0.66, 0.222195)--(0.67, 0.222215)--
(0.68, 0.22211)--(0.69, 0.221873)--(0.7, 0.221502)--(0.71, 0.22099)--
(0.72, 0.220333)--(0.73, 0.219524)--(0.74, 0.218556)--
(0.75, 0.217423)--(0.76, 0.216117)--(0.77, 0.214628)--(0.78, 0.212947)--
(0.79, 0.211063)--(0.8, 0.208963)--(0.81, 0.206633)--(0.82, 0.204058)--
(0.83, 0.201219)--(0.84, 0.198096)--(0.85, 0.194664)--(0.86, 0.190895)--
(0.87, 0.186755)--(0.88, 0.182206)--(0.89, 0.177197)--(0.9, 0.171669)--
(0.91, 0.165547)--(0.92, 0.158735)--(0.93, 0.151103)--(0.94, 0.142476)--
(0.95, 0.1326)--(0.96, 0.121087)--(0.97, 0.107282)--(0.98, 0.0899124)--(0.99, 0.065688)--(1., 0.)--(0,0);
\begin{scriptsize}
\draw [fill=uuuuuu] (2/3,2/9) circle (0.2pt);
\draw [fill=uuuuuu] (2/3,0) circle (0.2pt);
\draw[color=uuuuuu] (2/3,0-0.07) node {$\frac{2}{3}$};
\draw [fill=uuuuuu] (1,0) circle (0.2pt);
\draw[color=uuuuuu] (1,0-0.07) node {$1$};
\draw [fill=uuuuuu] (0,0) circle (0.2pt);
\draw[color=uuuuuu] (0-0.05,0-0.05) node {$0$};
\draw [fill=uuuuuu] (0,2/9) circle (0.2pt);
\draw[color=uuuuuu] (0-0.08,2/9) node {$2/9$};
\draw [fill=uuuuuu] (0,1/2) circle (0.2pt);
\draw[color=uuuuuu] (0-0.08,1/2) node {$1/2$};
\draw[color=uuuuuu] (1+0.1,0-0.07) node {$x$};
\draw[color=uuuuuu] (0-0.08,1/2+0.1) node {$y$};
\end{scriptsize}
\end{tikzpicture}
\caption{$\Omega(\mathcal{T}_3)$ is contained in the dark area above according to $(\ref{cancellative-3-XL19})$.}
\end{figure}
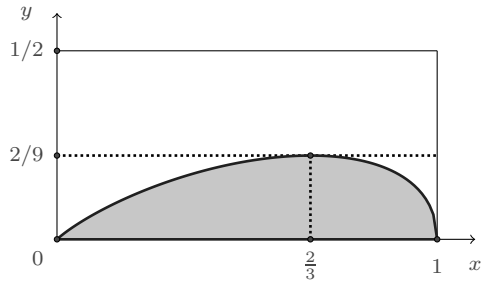

Our next result concerns cancellative $r$-graphs for $r \ge 3$,
and improves the bound in Proposition \ref{universal-upper-bound} as well as that in $(\ref{cancellative-3-XL19})$ for $x\in [0,2/3]$.
\begin{thm}\label{cancellative-r-left}
Let $r \ge 3$ and $x \in {\rm proj}\Omega(\mathcal{T}_{r})$. Then
\[
g(\mathcal{T}_{r},x) \le \left( \frac{x^{r}}{r!} \right)^{\frac{1}{r-1}}.
\]
Moreover, equality holds for all $x \in [0,(r-1)! / r^{r-2}]$.
\end{thm}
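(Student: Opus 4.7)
The plan is to prove the bound by induction on $r$, matched by an explicit Tur\'an-type construction.

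\textbf{Construction (equality on $[0,(r-1)!/r^{r-2}]$).} For $\alpha\in[0,1]$, let $\mathcal{H}_\alpha$ be the complete balanced $r$-partite $r$-graph $T_r(\lfloor\alpha n\rfloor,r)$ embedded as a subhypergraph of $[n]$, with the remaining vertices isolated. Since every edge is a transversal of the $r$-partition, $\mathcal{H}_\alpha$ is cancellative. Elementary counting yields
\[
d(\mathcal{H}_\alpha)\to \frac{\alpha^r\, r!}{r^r},\qquad d(\partial\mathcal{H}_\alpha)\to \frac{\alpha^{r-1}\, r!}{r^{r-1}},
\]
and one checks directly that $d(\mathcal{H}_\alpha)^{r-1}=d(\partial\mathcal{H}_\alpha)^r/r!$. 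As $\alpha$ varies over $[0,1]$, $d(\partial\mathcal{H}_\alpha)$ sweeps out $[0,(r-1)!/r^{r-2}]$, which combined with the upper bound gives equality on this interval.

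\textbf{Upper bound.} I would proceed by induction on $r$. The base case $r=2$ is Mantel's theorem in density form: a triangle-free graph $G$ satisfies $d(G)\le d(\partial G)^2/2$. The key structural observation for the inductive step is that if $\mathcal{H}$ is cancellative and $v\in V(\mathcal{H})$, then the link $L(v)$ is a cancellative $(r-1)$-graph, since a $\mathcal{T}_{r-1}$ configuration $A',B',C'$ in $L(v)$ lifts to the $\mathcal{T}_r$ configuration $A'\cup\{v\},B'\cup\{v\},C'\cup\{v\}$ in $\mathcal{H}$. Applying the inductive hypothesis to each $L(v)$ yields the vertex-local inequality
\[
\left(\frac{d_{\mathcal{H}}(v)}{\binom{n-1}{r-1}}\right)^{r-2}\le \frac{1}{(r-1)!}\left(\frac{d_{\partial\mathcal{H}}(v)}{\binom{n-1}{r-2}}\right)^{r-1}.
\]
One then sums over $v$ using the degree identities $\sum_v d_{\mathcal{H}}(v)=r|\mathcal{H}|$ and $\sum_v d_{\partial\mathcal{H}}(v)=(r-1)|\partial\mathcal{H}|$ and invokes a suitable power-mean/H\"older argument to deduce the global bound $d(\mathcal{H})^{r-1}\le d(\partial\mathcal{H})^r/r!$.

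\textbf{Main obstacle.} The aggregation step is the delicate part: a crude Jensen/H\"older bound from the vertex-local inequality combined with the trivial bound $d_{\partial\mathcal{H}}(v)/\binom{n-1}{r-2}\le 1$ yields only $d(\mathcal{H})\le d(\partial\mathcal{H})/((r-1)!)^{1/(r-2)}$, which is strictly weaker than the target on $[0,(r-1)!/r^{r-2}]$. To close the gap I would bring in the auxiliary observation that for any $(r-2)$-set $T\subseteq V$, the pair graph $G_T=\{\{u,v\}:T\cup\{u,v\}\in\mathcal{H}\}$ is triangle-free (a triangle $\{u_1,u_2\},\{u_1,u_3\},\{u_2,u_3\}$ in $G_T$ would give the $\mathcal{T}_r$ configuration $T\cup\{u_1,u_2\},T\cup\{u_1,u_3\},T\cup\{u_2,u_3\}$), and combine Mantel applied to each $G_T$ with the AM--GM identity
\[
\sum_{i=1}^{r}\prod_{j\ne i}a_j\ge r\Big(\prod_{i=1}^r a_i\Big)^{(r-1)/r},
\]
which is precisely the inequality $|\partial\mathcal{H}|^r\ge r^r|\mathcal{H}|^{r-1}$ for complete $r$-partite $\mathcal{H}$ with part sizes $a_1,\ldots,a_r$ and is the asymptotic form of the desired bound. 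The sharpness of AM--GM on balanced $r$-partite configurations (matching the extremal $T_r(n,r)$) together with triangle-freeness across all $\binom{n}{r-2}$ choices of $T$ are the two ingredients needed to force the inductive aggregation to the tight bound.
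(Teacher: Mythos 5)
Your construction for the lower bound and your reduction to links (showing that $L(v)$ is cancellative whenever $\mathcal{H}$ is) both match the paper, and you correctly identify that a naive Jensen/H\"older aggregation from the vertex-local inequality falls short of the target. The gap is that you stop there: you name two ``ingredients'' — triangle-freeness of the co-link graphs $G_T$, and the AM--GM inequality $\sum_i\prod_{j\ne i}a_j\ge r(\prod_i a_i)^{(r-1)/r}$ — and assert these ``are the two ingredients needed to force the inductive aggregation to the tight bound,'' but you never assemble them into an argument. The AM--GM identity merely verifies the bound on the complete $r$-partite examples; it is a calibration check on the extremal configuration, not a device for bounding an arbitrary cancellative $\mathcal{H}$. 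And you give no mechanism by which Mantel on the $\binom{n}{r-2}$ graphs $G_T$, combined with the link bound $d(v)\le (|\partial L(v)|/(r-1))^{(r-1)/(r-2)}$, produces the global inequality $|\mathcal{H}|\le(|\partial\mathcal{H}|/r)^{r/(r-1)}$. As written, the upper-bound argument is incomplete at precisely the step you yourself flag as the crux.

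The paper closes this gap with a different structural lemma and a careful double count. The key fact is not the triangle-freeness of $G_T$ (though that is a corollary of it) but that for any pair $\{u,v\}$ covered by an edge of $\mathcal{H}$, the links $L(u)$ and $L(v)$ are \emph{disjoint}: otherwise $E\cup\{u\}$ and $E\cup\{v\}$ would have symmetric difference $\{u,v\}$ lying inside a third edge. Consequently, for any edge $E\in\mathcal{H}$, the links of its vertices are pairwise disjoint, so $\sum_{v\in E}d(v)\le|\partial\mathcal{H}|$. The aggregation then writes $r|\mathcal{H}|=\sum_v d(v)^{1/(r-1)}d(v)^{(r-2)/(r-1)}$, applies the inductive bound to the second factor, swaps the order of summation to get $\sum_{S\in\partial\mathcal{H}}\sum_{v\in S}d(v)^{1/(r-1)}$, applies Jensen, and then estimates $\sum_{S\in\partial\mathcal{H}}\sigma(S)$ (where $\sigma(S)=\sum_{v\in S}d(v)$) via the disjointness of links on a maximizing edge $E$: splitting $\partial\mathcal{H}$ into $\bigcup_{v\in E}L(v)$ and its complement, and exploiting $\sigma(S)\le\hat\sigma-d(v)$ for $S\in L(v)$, yields
\[
\sum_{S\in\partial\mathcal{H}}\sigma(S)\le\Big(|\partial\mathcal{H}|-\tfrac{\hat\sigma}{r}\Big)\hat\sigma\le\tfrac{r-1}{r}|\partial\mathcal{H}|^2,
\]
which is exactly what makes the exponents come out right. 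This max-$\sigma$ argument with disjoint links is the missing ingredient; your pair-graph triangle-freeness alone does not deliver it.
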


\begin{figure}[htbp]
\centering
\begin{tikzpicture}[xscale=8,yscale=8]
\draw [->] (0,0)--(1+0.05,0);
\draw [->] (0,0)--(0,0.5+0.05);
\draw (0,0.5)--(1,0.5);
\draw (1,0)--(1,0.5);
\draw [line width=0.8pt,dash pattern=on 1pt off 1.2pt,domain=0:1] plot(\x,2/9);
\draw [line width=0.8pt,dash pattern=on 1pt off 1.2pt,domain=0:1] plot(\x,3/32);
\draw [line width=0.5pt,dash pattern=on 1pt off 1.2pt] (2/3,0) -- (2/3,2/9+0.05);
\draw [line width=0.5pt,dash pattern=on 1pt off 1.2pt] (3/8,0) -- (3/8,3/32+0.05);
\draw [line width=0.5pt]
(0.,0.)--(0.01,0.000408248)--(0.02,0.0011547)--(0.03,0.00212132)--(0.04,0.00326599)--(0.05,0.00456435)--
(0.06,0.006)--(0.07,0.00756086)--(0.08,0.0092376)--(0.09,0.0110227)--(0.1,0.0129099)--(0.11,0.0148941)--
(0.12,0.0169706)--(0.13,0.0191355)--(0.14,0.0213854)--(0.15,0.0237171)--(0.16,0.0261279)--(0.17,0.0286153)--
(0.18,0.0311769)--(0.19,0.0338107)--(0.2,0.0365148)--(0.21,0.0392874)--(0.22,0.0421268)--(0.23,0.0450315)--
(0.24,0.048)--(0.25,0.051031)--(0.26,0.0541233)--(0.27,0.0572756)--(0.28,0.0604869)--(0.29,0.063756)--(0.3,0.067082)--
(0.31,0.0704639)--(0.32,0.0739008)--(0.33,0.0773919)--(0.34,0.0809362)--(0.35,0.084533)--(0.36,0.0881816)--
(0.37,0.0918813)--(0.38,0.0956312)--(0.39,0.0994309)--(0.4,0.10328)--(0.41,0.107177)--(0.42,0.111122)--(0.43,0.115114)--
(0.44,0.119153)--(0.45,0.123238)--(0.46,0.127368)--(0.47,0.131544)--(0.48,0.135765)--(0.49,0.140029)--(0.5,0.144338)--
(0.51,0.148689)--(0.52,0.153084)--(0.53,0.157521)--(0.54,0.162)--(0.55,0.166521)--(0.56,0.171083)--(0.57,0.175686)--
(0.58,0.180329)--(0.59,0.185013)--(0.6,0.189737)--(0.61,0.1945)--(0.62,0.199302)--(0.63,0.204143)--(0.64,0.209023)--
(0.65,0.213941)--(0.66,0.218897)--(0.67,0.223891)--(0.68,0.228922)--(0.69,0.23399)--(0.7,0.239096)--(0.71,0.244237)--
(0.72,0.249415)--(0.73,0.254629)--(0.74,0.259879)--(0.75,0.265165)--(0.76,0.270486)--(0.77,0.275842)--(0.78,0.281233)--
(0.79,0.286659)--(0.8,0.292119)--(0.81,0.297613)--(0.82,0.303141)--(0.83,0.308703)--(0.84,0.314299)--(0.85,0.319928)--
(0.86,0.325591)--(0.87,0.331286)--(0.88,0.337014)--(0.89,0.342775)--(0.9,0.348569)--(0.91,0.354394)--(0.92,0.360252)--
(0.93,0.366141)--(0.94,0.372063)--(0.95,0.378016)--(0.96,0.384)--(0.97,0.390016)--(0.98,0.396062)--(0.99,0.40214)--(1.,0.408248);
\draw [line width=0.5pt]
(0.,0.)--(0.01,0.000746901)--(0.02,0.00188207)--(0.03,0.00323165)--(0.04,0.00474252)--(0.05,0.00638591)--(0.06,0.00814325)--
(0.07,0.0100014)--(0.08,0.0119504)--(0.09,0.0139825)--(0.1,0.0160915)--(0.11,0.018272)--(0.12,0.0205197)--(0.13,0.0228308)--
(0.14,0.0252019)--(0.15,0.0276302)--(0.16,0.0301132)--(0.17,0.0326484)--(0.18,0.0352338)--(0.19,0.0378676)--(0.2,0.040548)--
(0.21,0.0432735)--(0.22,0.0460426)--(0.23,0.048854)--(0.24,0.0517064)--(0.25,0.0545988)--(0.26,0.05753)--(0.27,0.060499)--
(0.28,0.0635049)--(0.29,0.0665468)--(0.3,0.0696238)--(0.31,0.0727353)--(0.32,0.0758804)--(0.33,0.0790584)--(0.34,0.0822687)--
(0.35,0.0855107)--(0.36,0.0887836)--(0.37,0.092087)--(0.38,0.0954204)--(0.39,0.098783)--(0.4,0.102175)--(0.41,0.105595)--
(0.42,0.109042)--(0.43,0.112518)--(0.44,0.11602)--(0.45,0.119549)--(0.46,0.123104)--(0.47,0.126685)--(0.48,0.130292)--
(0.49,0.133924)--(0.5,0.13758)--(0.51,0.141261)--(0.52,0.144966)--(0.53,0.148695)--(0.54,0.152448)--(0.55,0.156224)--(0.56,0.160022)--
(0.57,0.163844)--(0.58,0.167687)--(0.59,0.171553)--(0.6,0.175441)--(0.61,0.179351)--(0.62,0.183281)--(0.63,0.187234)--(0.64,0.191207)--
(0.65,0.1952)--(0.66,0.199215)--(0.67,0.203249)--(0.68,0.207304)--(0.69,0.211379)--(0.7,0.215473)--(0.71,0.219587)--(0.72,0.223721)--
(0.73,0.227873)--(0.74,0.232045)--(0.75,0.236235)--(0.76,0.240444)--(0.77,0.244672)--(0.78,0.248918)--(0.79,0.253182)--(0.8,0.257464)--
(0.81,0.261764)--(0.82,0.266082)--(0.83,0.270417)--(0.84,0.27477)--(0.85,0.27914)--(0.86,0.283527)--(0.87,0.287931)--(0.88,0.292352)--
(0.89,0.29679)--(0.9,0.301245)--(0.91,0.305716)--(0.92,0.310203)--(0.93,0.314707)--(0.94,0.319227)--(0.95,0.323763)--(0.96,0.328315)--
(0.97,0.332883)--(0.98,0.337467)--(0.99,0.342066)--(1.,0.346681);
\begin{scriptsize}
\draw [fill=uuuuuu] (2/3,2/9) circle (0.2pt);
\draw [fill=uuuuuu] (3/8,3/32) circle (0.2pt);
\draw [fill=uuuuuu] (2/3,0) circle (0.2pt);
\draw[color=uuuuuu] (2/3,0-0.07) node {$\frac{2}{3}$};
\draw [fill=uuuuuu] (3/8,0) circle (0.2pt);
\draw[color=uuuuuu] (3/8,0-0.07) node {$\frac{3}{8}$};
\draw [fill=uuuuuu] (1,0) circle (0.2pt);
\draw[color=uuuuuu] (1,0-0.07) node {$1$};
\draw [fill=uuuuuu] (0,0) circle (0.2pt);
\draw[color=uuuuuu] (0-0.07,0-0.07) node {$0$};
\draw [fill=uuuuuu] (0,2/9) circle (0.2pt);
\draw[color=uuuuuu] (0-0.08,2/9) node {$2/9$};
\draw [fill=uuuuuu] (0,3/32) circle (0.2pt);
\draw[color=uuuuuu] (0-0.08,3/32) node {$3/32$};
\draw [fill=uuuuuu] (0,1/2) circle (0.2pt);
\draw[color=uuuuuu] (0-0.08,1/2) node {$1/2$};
\draw[color=uuuuuu] (0-0.07,1/2+0.07) node {$y$};
\draw[color=uuuuuu] (1+0.07,0-0.07) node {$x$};
\end{scriptsize}
\end{tikzpicture}
\caption{Upper bounds for $g(\mathcal{T}_{r},x)$ when $r= 3,4$ given by Theorem \ref{cancellative-r-left}.}
\end{figure}
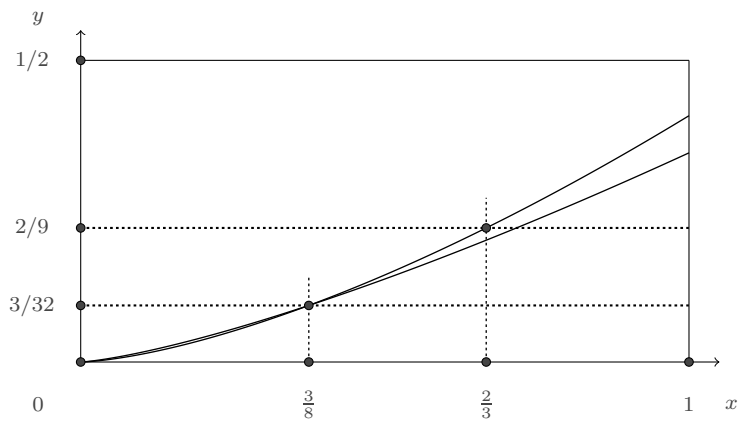

For $r = 3$, the bound given by Theorem \ref{cancellative-r-left} is not tight for any $x\in (2/3, 1]$ according to Bollob\'{a}s' theorem \cite{BO74}.
Our next result will present an improved bound for $g(\mathcal{T}_{3}, x)$ for $x \in (2/3, 1]$.
\begin{thm}\label{cancellative-3-right}
The inequality $g(\mathcal{T}_{3}, x) \le x(1-x)$ holds for all $x \in [0,1]$.
In particular, $g(\mathcal{T}_3,(k-1)/k) = (k-1)/k^2$ when $k \equiv 1 \text{ or } 3$ (mod $6$).
\end{thm}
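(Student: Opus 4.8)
The plan is to prove the inequality $g(\mathcal{T}_3,x)\le x(1-x)$ by a double-counting argument on the shadow graph, and separately to realize equality at the points $x=(k-1)/k$ by blowing up Steiner triple systems. Throughout, let $\mathcal{H}$ be a cancellative $3$-graph on $n$ vertices, write $G=\partial\mathcal{H}$ and $M=|\mathcal{H}|$, let $d_{\mathcal{H}}(v),d_G(v)$ denote the degrees of a vertex $v$ in $\mathcal{H}$ and $G$, and for a pair $f\in G$ set $N(f)=\{c:f\cup\{c\}\in\mathcal{H}\}$ and $d(f)=|N(f)|$, so $\sum_{f\in G}d(f)=3M$. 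First I would note that for $x\le 2/3$ the bound $x(1-x)$ is already implied by Theorem~\ref{cancellative-r-left}, since there $g(\mathcal{T}_3,x)=x^{3/2}/\sqrt6=x\sqrt{x/6}\le x(1-x)$; so only $x\in(2/3,1]$ needs work. From $\mathcal{T}_3$-freeness I would extract three local facts: (a) for every $abc\in\mathcal{H}$ the sets $N(ab),N(ac),N(bc)$ are pairwise disjoint (e.g.\ $u\in N(ab)\cap N(ac)$ yields the copy $\{abc,abu,acu\}$ of $K_4^{3-}$), and since $N(ab),N(ac)\subseteq N_G(a)$ this also gives $d(ab)+d(ac)\le d_G(a)$ together with its cyclic variants; (b) for every $f=pq\in G$ the set $N(f)$ is independent in $G$ (else $u,w\in N(pq)$ with $uw\in G$, witnessed by $uwz\in\mathcal{H}$, yields a copy of $F_5$ if $z\notin\{p,q\}$ and of $K_4^{3-}$ otherwise), whence $N(f)\cap N_G(c)=\emptyset$ for each $c\in N(f)$ and so $d(f)+d_G(c)\le n$; and (c) $N(pq)\subseteq N_G(p)\cap N_G(q)$, so $d_G(p),d_G(q)\ge d(pq)$.

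Next I would double count. Summing $d(f)+d_G(c)\le n$ over the $3M$ incidences $(f,c)$ with $c\in N(f)$ and regrouping gives
\[
\sum_{f\in G}d(f)^2+\sum_{v}d_G(v)\,d_{\mathcal{H}}(v)\ \le\ 3nM,
\]
while summing the disjointness bound $d(ab)+d(ac)+d(bc)\le n$ over $abc\in\mathcal{H}$ gives $\sum_{f\in G}d(f)^2\le nM$. Writing $|G|=x\binom n2$ and applying Cauchy--Schwarz, $\sum_{f\in G}d(f)^2\ge (3M)^2/|G|$, so the first display becomes $9M^2/|G|+\sum_v d_G(v)\,d_{\mathcal{H}}(v)\le 3nM$. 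Hence it suffices to prove the positive-correlation estimate $\sum_{v}d_G(v)\,d_{\mathcal{H}}(v)\ge \tfrac{2|G|}{n}\cdot 3M-o(n^4)$: this yields $3M/|G|+2|G|/n\le n+o(n)$, which rearranges to $M\le x(1-x)\binom n3(1+o(1))$ and, in the limit, to $g(\mathcal{T}_3,x)\le x(1-x)$.

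The hard part will be this correlation estimate, and I expect the obvious tools to be provably too weak near $x=1$: using only that each link $L_v$ is triangle-free (so $d_G(v)\ge 2\sqrt{d_{\mathcal{H}}(v)}$) and convexity gives $\sum_v d_G(v)\,d_{\mathcal{H}}(v)\ge 6\sqrt3\,M^{3/2}/\sqrt n$, which yields only $g(\mathcal{T}_3,x)\le\tfrac12(\sqrt{x^2+2x}-x)^2$ — equal to $2/9$ at $x=2/3$ but strictly above $x(1-x)$ for $x\in(2/3,1)$ — and the complementary bound $d(f)\le\alpha(G)\le\sqrt{1-x}\,n$ gives only $g(\mathcal{T}_3,x)\le x\sqrt{1-x}$. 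What must be used is that $G=\partial\mathcal{H}$ is itself built from $\mathcal{H}$, so $d_G$ and $d_{\mathcal{H}}$ cannot be anticorrelated over the vertex set. I would try to obtain this either through a Chebyshev-type argument applying facts (a)--(c) vertex by vertex, or through a stability reduction showing that a near-extremal $\mathcal{H}$ has shadow close to a balanced complete multipartite graph (where the correlation estimate is an equality), or through a direct optimization over the admissible degree sequences of $G$ subject to the constraints $d(f)+d_G(c)\le n$, $d(ab)+d(ac)\le d_G(a)$, and independence of each $N(f)$.

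For the second assertion I would use that a Steiner triple system $S=S(2,3,k)$ exists precisely when $k\equiv 1$ or $3\pmod 6$; it has $k(k-1)/6$ triples, and the link of each point in $S$ is a perfect matching on the remaining $k-1$ points. Let $\mathcal{H}_m$ be the balanced blow-up of $S$ on $m$ vertices (replace each $i\in[k]$ by a part $V_i$ with $|V_i|\in\{\lfloor m/k\rfloor,\lceil m/k\rceil\}$ and take all triples meeting $V_i,V_j,V_\ell$ in one vertex each for $\{i,j,\ell\}\in S$). Then $\mathcal{H}_m$ is cancellative: each link is the blow-up of a perfect matching, hence triangle-free, so $\mathcal{H}_m$ is $K_4^{3-}$-free; and $\mathcal{H}_m$ is $F_5$-free because two edges sharing a pair $\{u,w\}$ with $u\in V_i$, $w\in V_j$ force their third vertices into the unique part $V_\ell$ with $\{i,j,\ell\}\in S$, and no edge of $\mathcal{H}_m$ meets a part twice, so no third edge can contain both of those vertices. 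Here $\partial\mathcal{H}_m$ is the complete $k$-partite graph on the $V_i$, so a routine count gives $d(\partial\mathcal{H}_m)\to (k-1)/k$ and $d(\mathcal{H}_m)\to (k-1)/k^2$ as $m\to\infty$; thus $\bigl((k-1)/k,(k-1)/k^2\bigr)\in\Omega(\mathcal{T}_3)$, and since $(k-1)/k^2=\tfrac{k-1}{k}\bigl(1-\tfrac{k-1}{k}\bigr)$ matches the just-proved upper bound at $x=(k-1)/k$, we conclude $g(\mathcal{T}_3,(k-1)/k)=(k-1)/k^2$.
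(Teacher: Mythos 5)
Your construction for the second assertion (balanced blow-ups of Steiner triple systems) is correct and is exactly the paper's, and your reduction of the range $x\le 2/3$ to Theorem \ref{cancellative-r-left} is also fine. The problem is the upper bound on $(2/3,1]$: your argument is not a proof but a reduction to an unproven lemma, namely the ``positive-correlation'' estimate $\sum_v d_G(v)\,d_{\mathcal H}(v)\ge \tfrac{2|G|}{n}\cdot 3M-o(n^4)$. You state this is the hard part, you show that the natural tools (triangle-freeness of links, independence of $N(f)$, Cauchy--Schwarz) are quantitatively too weak near $x=1$, and you then list three possible strategies without carrying any of them out. A Chebyshev-type inequality between $d_G$ and $d_{\mathcal H}$ requires the two degree sequences to be similarly ordered, which is not automatic for a shadow; the heuristic that ``$G=\partial\mathcal H$ is built from $\mathcal H$, so the degrees cannot be anticorrelated'' is precisely the content that would need a proof, and nothing in facts (a)--(c) supplies it. So the main inequality of the theorem is not established.

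For comparison, the paper closes the argument by an entirely different mechanism: it proves the stronger statement $|\mathcal H|\le (n^2-2|\partial\mathcal H|)|\partial\mathcal H|/(3n)+3n^2$ by induction on the number of vertices (Theorem \ref{induction-thm-cancel}). At each step one uses Tur\'an's theorem (Theorem \ref{Turan-theorem}) to locate a clique $S$ of size at least $1/(1-x)$ in the shadow, bounds the number of edges of $\mathcal H$ meeting $S$ by $|\partial\mathcal H|$ via the disjointness of the links $L(v)$, $v\in S$ (Corollary \ref{cancel-clique-corol} -- the same disjointness you isolate in fact (a), but applied to a clique of the shadow rather than edge by edge), and applies the induction hypothesis to the complement of $S$; the case $x\le 2/3$ is handled by Theorem \ref{fisher-ryan-cancel} as in your first step. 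If you want to salvage your global double-counting framework, you would need to actually prove the correlation estimate, e.g.\ via the stability route you mention; as written, the argument has a gap exactly where the theorem's content lies.
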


The lower bound for $g(\mathcal{T}_{3}, (k-1)/k)$ when $k \equiv 1 \text{ or } 3$ (mod $6$) comes from the balanced blow up
of Steiner triple systems on $k$ vertices, this will be explained in more detail in Section 4.

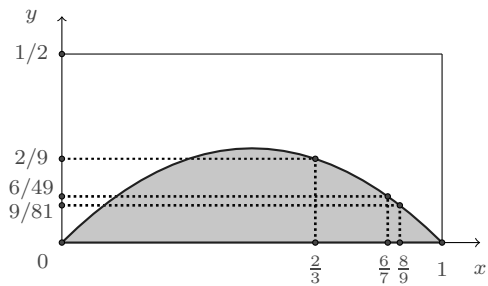
\begin{figure}[htbp]
\centering
\begin{tikzpicture}[xscale=5,yscale=5]
\draw [->] (0,0)--(1.1,0);
\draw [->] (0,0)--(0,0.6);
\draw (0,0.5)--(1,0.5);
\draw (1,0)--(1,0.5);
\draw [line width=0.8pt,dash pattern=on 1pt off 1.2pt,domain=0:2/3] plot(\x,2/9);
\draw [line width=1pt,dash pattern=on 1pt off 1.2pt] (2/3,0) -- (2/3,2/9);
\draw [line width=1pt,dash pattern=on 1pt off 1.2pt] (6/7,0) -- (6/7,6/49);
\draw [line width=1pt,dash pattern=on 1pt off 1.2pt] (8/9,0) -- (8/9,8/81);
\draw [line width=1pt,dash pattern=on 1pt off 1.2pt] (0,6/49) -- (6/7,6/49);
\draw [line width=1pt,dash pattern=on 1pt off 1.2pt] (0,8/81) -- (8/9,8/81);
\draw [line width=0.8pt,color=sqsqsq,fill=sqsqsq,fill opacity=0.25,domain=0:1] plot(\x,{\x*(1-\x)}) -- (0,0);
\begin{scriptsize}
\draw [fill=uuuuuu] (2/3,2/9) circle (0.2pt);
\draw [fill=uuuuuu] (6/7,6/49) circle (0.2pt);
\draw [fill=uuuuuu] (8/9,8/81) circle (0.2pt);
\draw [fill=uuuuuu] (2/3,0) circle (0.2pt);
\draw [fill=uuuuuu] (6/7,0) circle (0.2pt);
\draw [fill=uuuuuu] (8/9,0) circle (0.2pt);
\draw[color=uuuuuu] (2/3,0-0.07) node {$\frac{2}{3}$};
\draw[color=uuuuuu] (6/7-0.01,0-0.07) node {$\frac{6}{7}$};
\draw[color=uuuuuu] (8/9+0.01,0-0.07) node {$\frac{8}{9}$};
\draw [fill=uuuuuu] (1,0) circle (0.2pt);
\draw[color=uuuuuu] (1,0-0.07) node {$1$};
\draw [fill=uuuuuu] (0,0) circle (0.2pt);
\draw[color=uuuuuu] (0-0.05,0-0.05) node {$0$};
\draw [fill=uuuuuu] (0,2/9) circle (0.2pt);
\draw[color=uuuuuu] (0-0.08,2/9) node {$2/9$};
\draw [fill=uuuuuu] (0,1/2) circle (0.2pt);
\draw[color=uuuuuu] (0-0.08,1/2) node {$1/2$};
\draw [fill=uuuuuu] (0,6/49) circle (0.2pt);
\draw[color=uuuuuu] (0-0.08,6/49+0.02) node {$6/49$};
\draw [fill=uuuuuu] (0,8/81) circle (0.2pt);
\draw[color=uuuuuu] (0-0.08,8/81-0.02) node {$9/81$};
\draw[color=uuuuuu] (1+0.1,0-0.07) node {$x$};
\draw[color=uuuuuu] (0-0.08,1/2+0.1) node {$y$};
\end{scriptsize}
\end{tikzpicture}
\caption{$\Omega(\mathcal{T}_{3})$ is contained in the dark area above by Theorem \ref{cancellative-3-right}.}
\end{figure}

Combining Theorems \ref{cancellative-r-left} and \ref{cancellative-3-right} yields the following result for $g(\mathcal{T}_{3},x)$,
which provides a rather comprehensive picture of $\Omega(\mathcal{T}_3)$.

\begin{figure}[htbp]
\centering
\begin{tikzpicture}[xscale=5,yscale=5]
\draw [->] (0,0)--(1.1,0);
\draw [->] (0,0)--(0,0.6);
\draw (0,0.5)--(1,0.5);
\draw (1,0)--(1,0.5);
\draw [line width=1pt,dash pattern=on 1pt off 1.2pt,domain=0:2/3] plot(\x,{2/9});
\draw [line width=1pt,dash pattern=on 1pt off 1.2pt] (2/3,0) -- (2/3,2/9);
\draw [line width=1pt,dash pattern=on 1pt off 1.2pt] (6/7,0) -- (6/7,6/49);
\draw [line width=1pt,dash pattern=on 1pt off 1.2pt] (8/9,0) -- (8/9,8/81);
\draw [line width=1pt,dash pattern=on 1pt off 1.2pt] (0,6/49) -- (6/7,6/49);
\draw [line width=1pt,dash pattern=on 1pt off 1.2pt] (0,8/81) -- (8/9,8/81);
\draw[line width=1pt,color=sqsqsq,fill=sqsqsq,fill opacity=0.25]
(2/3,2/9)--(0.6630236470626231,0.2204032031146704)
--(0.6585443512860356,0.21817345951646344)--(0.654065055509448,0.21595128623168966)
--(0.6495857597328606,0.21373670913866868)--(0.645106463956273,0.21152975438293659)
--(0.6406271681796856,0.20933044838187664)--(0.636147872403098,0.2071388178294629)
--(0.6316685766265105,0.20495488970112097)--(0.6271892808499231,0.2027786912587088)
--(0.6227099850733355,0.2006102500556217)--(0.6182306892967481,0.19844959394202646)
--(0.6137513935201605,0.19629675107022665)--(0.609272097743573,0.19415174990016584)
--(0.6047928019669855,0.19201461920507196)--(0.600313506190398,0.1898853880772468)
--(0.5958342104138105,0.1877640859340081)--(0.591354914637223,0.1856507425237863)
--(0.5868756188606356,0.18354538793238254)--(0.582396323084048,0.18144805258939525)
--(0.5779170273074605,0.17935876727481742)--(0.573437731530873,0.1772775631258134)
--(0.5689584357542855,0.17520447164368083)--(0.5644791399776979,0.17313952470100336)
--(0.5599998442011105,0.17108275454900254)--(0.5555205484245229,0.16903419382509416)
--(0.5510412526479355,0.1669938755606583)--(0.5465619568713479,0.164961833189029)
--(0.5420826610947604,0.16293810055371402)--(0.537603365318173,0.16092271191685042)
--(0.5331240695415854,0.15891570196790752)--(0.528644773764998,0.1569171058326439)
--(0.5241654779884104,0.1549269590823307)--(0.519686182211823,0.1529452977432499)
--(0.5152068864352354,0.15097215830647748)--(0.5107275906586479,0.14900757773796589)
--(0.5062482948820604,0.14705159348893285)--(0.5017689991054729,0.14510424350657278)
--(0.4972897033288854,0.1431655662451021)--(0.4928104075522979,0.14123560067715077)
--(0.48833111177571037,0.13931438630551737)--(0.48385181599912286,0.1374019631752988)
--(0.4793725202225354,0.13549837188641337)--(0.4748932244459478,0.1336036536065323)
--(0.4704139286693603,0.1317178500844364)--(0.46593463289277287,0.12984100366381837)
--(0.46145533711618536,0.1279731572975465)--(0.45697604133959785,0.12611435456241368)
--(0.45249674556301034,0.12426463967439019)--(0.44801744978642283,0.12242405750440374)
--(0.4435381540098353,0.12059265359467064)--(0.43905885823324786,0.11877047417560284)
--(0.43457956245666024,0.11695756618331694)--(0.43010026668007284,0.11515397727777338)
--(0.4256209709034853,0.1133597558615753)--(0.4211416751268978,0.11157495109945828)
--(0.4166623793503103,0.1097996129385029)--(0.4121830835737228,0.10803379212910673)
--(0.4077037877971353,0.10627754024675189)--(0.4032244920205478,0.10453090971460692)
--(0.39874519624396026,0.10279395382700543)--(0.3942659004673728,0.10106672677384512)
--(0.3897866046907853,0.09934928366595529)--(0.38530730891419773,0.09764168056148004)
--(0.3808280131376103,0.09594397449333411)--(0.3763487173610227,0.09425622349778398)
--(0.37186942158443526,0.0925784866442176)--(0.36739012580784774,0.0909108240661645)
--(0.36291083003126023,0.0892532969936367)--(0.3584315342546727,0.08760596778686161)
--(0.35395223847808527,0.08596889997148711)--(0.34947294270149776,0.0843421582753402)
--(0.3449936469249102,0.08272580866683046)--(0.34051435114832274,0.08111991839509318)
--(0.33603505537173517,0.07952455603197399)--(0.3315557595951477,0.07793979151596783)
--(0.3270764638185602,0.07636569619822708)--(0.3225971680419727,0.0748023428907691)
--(0.3181178722653852,0.07324980591701827)--(0.31363857648879767,0.07170816116483066)
--(0.3091592807122102,0.07017748614216043)--(0.30467998493562265,0.06865786003553928)
--(0.3002006891590352,0.06714936377155435)--(0.2957213933824477,0.0656520800815259)
--(0.2912420976058602,0.06416609356960143)--(0.28676280182927266,0.06269149078450184)
--(0.28228350605268515,0.06122836029517568)--(0.27780421027609764,0.05977679277063996)
--(0.27332491449951013,0.058336881064308725)--(0.2688456187229227,0.05690872030314095)
--(0.26436632294633516,0.05549240798196618)--(0.25988702716974765,0.05408804406338191)
--(0.25540773139316014,0.05269573108365302)--(0.25092843561657263,0.05131557426508493)
--(0.24644913983998512,0.04994768163538859)--(0.2419698440633976,0.04859216415460634)
--(0.23749054828681013,0.04724913585022642)--(0.23301125251022262,0.045918713961177304)
--(0.2285319567336351,0.04460101909146803)--(0.2240526609570476,0.04329617537431961)
--(0.21957336518046008,0.042004310647729165)--(0.21509406940387255,0.04072555664250949)
--(0.2106147736272851,0.03946004918396895)--(0.20613547785069758,0.03820792840853079)
--(0.20165618207411007,0.03696933899674451)--(0.19717688629752256,0.035744430424320926)
--(0.19269759052093502,0.03453335723302326)--(0.18821829474434756,0.03333627932348077)
--(0.18373899896776005,0.03215336227225981)--(0.17925970319117254,0.03098477767583987)
--(0.17478040741458506,0.02983070352450204)--(0.17030111163799752,0.028691324609560667)
--(0.16582181586141004,0.027566832967862585)--(0.16134252008482253,0.02645742836805515)
--(0.15686322430823502,0.025363318843811742)--(0.15238392853164753,0.024284721280009886)
--(0.14790463275506,0.023221862058822252)--(0.1434253369784725,0.0221749777738319)
--(0.138946041201885,0.02114431602166764)--(0.1344667454252975,0.020130136282327293)
--(0.12998744964871,0.019132710901390154)--(0.1255081538721225,0.01815232618980085)
--(0.12102885809553499,0.017189283659967112)--(0.11654956231894749,0.01624390142069605)
--(0.11207026654235998,0.015316515758221273)--(0.10759097076577247,0.014407482936513701)
--(0.10311167498918497,0.013517181257608283)--(0.09863237921259746,0.01264601343232816)
--(0.09415308343600996,0.01179440932425909)--(0.08967378765942245,0.010962829146125673)
--(0.08519449188283494,0.01015176720926047)--(0.08071519610624744,0.009361756355687817)
--(0.07623590032965993,0.008593373241477962)--(0.07175660455307244,0.007847244693951734)
--(0.06727730877648493,0.00712405544089494)--(0.06279801299989743,0.006424557617874988)
--(0.05831871722330992,0.0057495826171624865)--(0.053839421446722414,0.005100056076844675)
--(0.04936012567013491,0.0044770171693905856)--(0.04488082989354741,0.003881643919684239)
--(0.040401534116959896,0.0033152872187856555)--(0.03592223834037239,0.0027795178016977455)
--(0.03144294256378489,0.0022761933405199843)--(0.026963646787197385,0.0018075583218622703)
--(0.02248435101060988,0.0013764007788228095)--(0.018005055234022373,0.0009863159625249872)
--(0.01352575945743487,0.000642194917737317)--(0.009046463680847366,0.00035127127796605074)
--(0.00456716790425986,0.00012600704929554806)
--(0,0)--(1,0)
--(0.99,0.0099)--(0.98,0.0196)--(0.97,0.0291)--(0.96,0.0384)--(0.95,0.0475)
--(0.94,0.0564)--(0.93,0.0651)--(0.92,0.0736)--(0.91,0.0819)--(0.9,0.09)--(0.89,0.0979)
--(0.88,0.1056)--(0.87,0.1131)--(0.86,0.1204)--(0.85,0.1275)--(0.84,0.1344)--(0.83,0.1411)
--(0.82,0.1476)--(0.81,0.1539)--(0.8,0.16)--(0.79,0.1659)--(0.78,0.1716)--(0.77,0.1771)
--(0.76,0.1824)--(0.75,0.1875)--(0.74,0.1924)--(0.73,0.1971)--(0.72,0.2016)--(0.71,0.2059)
--(0.7,0.21)--(0.69,0.2139)--(0.68,0.2176)--(0.67,0.2211);
\begin{scriptsize}
\draw [fill=uuuuuu] (2/3,2/9) circle (0.2pt);
\draw [fill=uuuuuu] (6/7,6/49) circle (0.2pt);
\draw [fill=uuuuuu] (8/9,8/81) circle (0.2pt);
\draw [fill=uuuuuu] (2/3,0) circle (0.2pt);
\draw[color=uuuuuu] (2/3,0-0.07) node {$\frac{2}{3}$};
\draw [fill=uuuuuu] (6/7,0) circle (0.2pt);
\draw[color=uuuuuu] (6/7-0.01,0-0.07) node {$\frac{6}{7}$};
\draw [fill=uuuuuu] (8/9,0) circle (0.2pt);
\draw[color=uuuuuu] (8/9+0.01,0-0.07) node {$\frac{8}{9}$};
\draw [fill=uuuuuu] (1,0) circle (0.2pt);
\draw[color=uuuuuu] (1,0-0.07) node {$1$};
\draw [fill=uuuuuu] (0,0) circle (0.2pt);
\draw[color=uuuuuu] (0-0.05,0-0.05) node {$0$};
\draw [fill=uuuuuu] (0,2/9) circle (0.2pt);
\draw[color=uuuuuu] (0-0.08,2/9) node {$2/9$};
\draw [fill=uuuuuu] (0,6/49) circle (0.2pt);
\draw[color=uuuuuu] (0-0.08,6/49+0.02) node {$6/49$};
\draw [fill=uuuuuu] (0,8/81) circle (0.2pt);
\draw[color=uuuuuu] (0-0.08,8/81-0.02) node {$9/81$};
\draw [fill=uuuuuu] (0,1/2) circle (0.2pt);
\draw[color=uuuuuu] (0-0.08,1/2) node {$1/2$};
\draw[color=uuuuuu] (1+0.1,0-0.07) node {$x$};
\draw[color=uuuuuu] (0-0.08,1/2+0.1) node {$y$};
\end{scriptsize}
\end{tikzpicture}
\caption{$\Omega(\mathcal{T}_{3})$ is contained in the dark area above according to Corollary \ref{cancellative-3-all}.}
\end{figure}
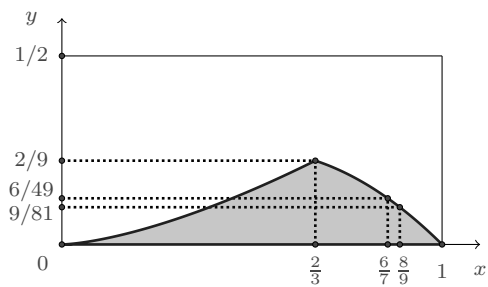

\begin{coro}\label{cancellative-3-all}
	We have $g(\mathcal{T}_{3},x) = x^{3/2}/\sqrt{6}$ for all $x\in [0,2/3]$,
	and $g(\mathcal{T}_{3},x) \le x(1-x)$ for all $x \in (2/3,1]$.
	Moreover, $g(\mathcal{T}_{3},(k-1)/k) = (k-1)/k^2$ for all integers $k \equiv 1$ or $3$ (mod $6$).
\end{coro}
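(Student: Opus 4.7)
The corollary is essentially a packaging of the two preceding theorems plus a matching construction, so my plan is to handle the three assertions in turn, with the only real work being the construction for the last one.

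For the first assertion, I would just invoke Theorem \ref{cancellative-r-left} with $r=3$: its stated bound specializes to $g(\mathcal{T}_{3},x)\le (x^{3}/3!)^{1/2}=x^{3/2}/\sqrt{6}$, and the ``moreover'' clause gives equality on $[0,(r-1)!/r^{r-2}]=[0,2/3]$. Nothing new is needed. For $x\in(2/3,1]$, the bound $g(\mathcal{T}_{3},x)\le x(1-x)$ is literally the first sentence of Theorem \ref{cancellative-3-right}, so again this is automatic. This reduces everything to verifying the equality $g(\mathcal{T}_{3},(k-1)/k)=(k-1)/k^{2}$ for $k\equiv 1,3\pmod 6$.

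The upper bound at $x=(k-1)/k$ is immediate from Theorem \ref{cancellative-3-right}, since $\tfrac{k-1}{k}\cdot\bigl(1-\tfrac{k-1}{k}\bigr)=(k-1)/k^{2}$. For the lower bound, I would exhibit an $\mathcal{F}$-free good sequence realizing the point $((k-1)/k,(k-1)/k^{2})$. The construction is a balanced blow-up $S[m]$ of a Steiner triple system $S$ on $k$ vertices (which exists precisely when $k\equiv 1,3\pmod 6$ by Kirkman's theorem), replacing each vertex by an independent set $V_{i}$ of size $m$ and each triple $\{a,b,c\}\in S$ by the complete tripartite $3$-graph on $V_{a}\cup V_{b}\cup V_{c}$. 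A direct count gives $|S[m]|=\tfrac{k(k-1)}{6}m^{3}$ on $km$ vertices, so $d(S[m])\to (k-1)/k^{2}$. Since $S$ is Steiner, every pair of distinct clusters is covered by exactly one triple, so $\partial S[m]$ consists of all cross-cluster pairs, giving $|\partial S[m]|=\binom{k}{2}m^{2}$ and $d(\partial S[m])\to (k-1)/k$ as $m\to\infty$.

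The only remaining point is to check that $S[m]$ is cancellative, i.e.\ $\mathcal{T}_{3}=\{K_{4}^{3-},F_{5}\}$-free. For $K_{4}^{3-}$, any four vertices whose clusters are all distinct would produce three triples of $S$ on four points of $S$, forcing two of them to share a pair, which contradicts the Steiner property; and if two of the four blow-up vertices lie in the same cluster, then at most two of the four possible triples avoid that pair of clusters, so one cannot find three edges. For $F_{5}$ with edges $\{v_{1}v_{2}v_{3},v_{1}v_{2}v_{4},v_{3}v_{4}v_{5}\}$, the first two edges force the underlying $S$-triples $\{i_{1},i_{2},i_{3}\}$ and $\{i_{1},i_{2},i_{4}\}$ to agree (otherwise two distinct $S$-triples share the pair $i_{1}i_{2}$), so $i_{3}=i_{4}$; but then $\{i_{3},i_{4},i_{5}\}$ has only two distinct coordinates and cannot be an $S$-triple, contradicting the third edge. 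Thus $S[m]$ is $\mathcal{T}_{3}$-free, completing the lower bound. The only potential pitfall here is the case analysis for $F_{5}$, which is the one place I would want to be careful; the rest is bookkeeping.
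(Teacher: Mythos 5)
Your proof is correct and follows essentially the same route as the paper: the first two assertions are read off from Theorems \ref{cancellative-r-left} and \ref{cancellative-3-right} specialized to $r=3$, and the equality at $x=(k-1)/k$ comes from balanced blow-ups of Steiner triple systems, which is exactly the construction the paper uses in the proof of Theorem \ref{cancellative-3-right}. The only thing you do beyond the paper is spell out the verification that the blow-up is $\mathcal{T}_3$-free (the paper asserts this is ``easy to see''), and your case analysis for $K_4^{3-}$ and $F_5$ is correct.
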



\subsection{Hypergraphs without an expansion of a large clique}
In this section we consider the feasible region of hypergraphs without expansion of cliques.
These hypergraphs were introduced by the second author in \cite{MU06} as a way to generalize Tur\'{a}n's theorem to hypergraphs. Another reason for their importance is that they provide the first (and still the only) explicitly defined examples which yield an infinite family of numbers realizable as Tur\'{a}n densities for hypergraphs.

Let  $\mathcal{K}_{\ell+1}^{r}$ be the collection of all $r$-graphs $F$ with at most $\binom{{\ell}+1}{2}$
edges such that for some $({\ell}+1)$-set $S$, which will be called the core of $F$, every pair $\{u,v\}\subset S$ is covered by an edge in $F$.
Let the $r$-graph $H_{\ell+1}^{r}$ be obtained from the complete graph $K_{\ell}$ by adding $r-2$ new vertices into each edge.
The graph $H_{\ell+1}^{r}$ is called the expansion of $K_{\ell}$.
It is an easy observation that $H_{\ell+1}^{r} \in \mathcal{K}_{\ell+1}^{r}$.

It was shown by the second author $\cite{MU06}$ that $\textrm{ex}(n,\mathcal{K}_{\ell+1}^{r}) = t_{r}(n,\ell)$ and $T_{r}(n,\ell)$ is the unique
$\mathcal{K}_{\ell+1}^{r}$-free $r$-graph on $n$ vertices with exactly $t_{r}(n,\ell)$ edges.
In $\cite{PI08}$, Pikhurko improved the result in \cite{MU06} and proved that if $n$ is sufficiently large
then $\textrm{ex}(n,H_{\ell+1}^{r}) = t_{r}(n,\ell)$
and $T_{r}(n,\ell)$ is the unique $H_{\ell+1}^{r}$-free $r$-graph on $n$ vertices with exactly $t_{r}(n,\ell)$ edges.

In order to state our result, we need to extend the definition of shadows.
Let $\mathcal{H}$ be an $r$-graph and $S \subset V(\mathcal{H})$.
Then $\mathcal{H}[S]$ is the induced subgraph of $\mathcal{H}$ on $S$.
For $1 \le i \le r-1$ the $i$-th shadow of $\mathcal{H}$ is
\begin{align}
\partial_{i}\mathcal{H} = \left\{ A\in \binom{V(\mathcal{H})}{r-i}: \exists B \in \mathcal{H} \text{ such that } A\subset B \right\} \notag.
\end{align}
For $i \le 0$ we extend the definition of the $i$-th shadow $\partial_{i}\mathcal{H}$ as follows.
\begin{align}\label{non-normal-shadow}
\partial_{i}\mathcal{H} = \left\{ A\in \binom{V(\mathcal{H})}{r-i}: \text{$\mathcal{H}[A]$ is a complete $r$-graph} \right\}.
\end{align}
In particular, $\partial_{1}\mathcal{H} = \partial\mathcal{H}$ and $\partial_{0}\mathcal{H} = \mathcal{H}$.
By definition, $\partial_{i+1}\mathcal{H} = \partial \left( \partial_{i}\mathcal{H} \right)$ for all $0 \le i \le r-2$,
and $\partial \left( \partial_{i}\mathcal{H} \right) \subset \partial_{i+1}\mathcal{H}$ for all $i \le -1$.

Our first result here relates the sizes of different shadows of a $\mathcal{K}_{\ell+1}^{r}$-free $r$-graph $\mathcal{H}$.
This generalizes an important result of Fisher and Ryan~\cite{FR92} from graphs to hypergraphs.
\begin{thm}\label{fisher-ryan-hygp}
Let $\ell \ge r\ge 2$ and $\mathcal{H}$ be a $\mathcal{K}_{\ell+1}^{r}$-free $r$-graph.
Then
\[
\left(\frac{|\partial_{r-\ell}\mathcal{H}|}{\binom{\ell}{\ell}} \right)^{\frac{1}{\ell}} \le \cdots \le
\left(\frac{|\partial_{-1}\mathcal{H}|}{\binom{\ell}{r+1}} \right)^{\frac{1}{r+1}} \le
\left(\frac{|\mathcal{H}|}{\binom{\ell}{r}} \right)^{\frac{1}{r}} \le
\left(\frac{|\partial_{1}\mathcal{H}|}{\binom{\ell}{r-1}} \right)^{\frac{1}{r-1}} \le \cdots \le
\left(\frac{|\partial_{r-1}\mathcal{H}|}{\binom{\ell}{1}} \right)^{\frac{1}{1}}.
\]
\end{thm}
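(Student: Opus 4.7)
\emph{Plan.} Write $c_s := |\partial_{r-s}\mathcal{H}|$ for $1 \le s \le \ell$ (using the paper's extended definition~\eqref{non-normal-shadow} for $s > r$); the whole chain is then equivalent to the single assertion that $(c_s/\binom{\ell}{s})^{1/s}$ is non-increasing in $s \in \{1,\ldots,\ell\}$. I would prove each consecutive inequality, splitting the chain at the natural pivot $s = r$ (edges of $\mathcal{H}$) into a \emph{shadow half} ($s \le r$) and a \emph{clique half} ($s \ge r$).

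The first input is a structural observation: $\mathcal{H}$ is $\mathcal{K}_{\ell+1}^r$-free if and only if the ordinary graph $G := \partial_{r-2}\mathcal{H}$ on $V(\mathcal{H})$ is $K_{\ell+1}$-free, because an $(\ell+1)$-clique in $G$ is literally an $(\ell+1)$-set whose every pair is covered by some edge of $\mathcal{H}$, i.e.\ exactly the core of a forbidden configuration. This reduction makes the classical Fisher--Ryan theorem (the $r=2$ case) available as an auxiliary tool on $G$, and it also gives the pivot inequality at $s=r$ through the Tur\'an-type bound $|\mathcal{H}| \le t_r(n,\ell)$ from~\cite{MU06}.

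For the shadow half, I would iterate the Lov\'asz form of Kruskal--Katona: the uniform hypergraph $\partial_{r-s}\mathcal{H}$ has shadow $\partial_{r-s+1}\mathcal{H}$, and Kruskal--Katona gives an inequality between these normalized by $\binom{z}{\cdot}$ for some real $z$. To replace $\binom{z}{\cdot}$ by the desired $\binom{\ell}{\cdot}$, I would use the Fisher--Ryan chain applied to $G$, which caps the clique densities at their Tur\'an-graph values and forces the effective ``clique number'' in the cascade to be at most $\ell$. For the clique half I would induct on $r$; the base case $r=2$ is classical Fisher--Ryan. The inductive step uses that for every $v \in V(\mathcal{H})$ the link $\mathcal{H}(v)$ is a $\mathcal{K}_{\ell}^{r-1}$-free $(r-1)$-graph: if $S'$ is a $\mathcal{K}_{\ell}^{r-1}$-core in $\mathcal{H}(v)$, then $S'\cup\{v\}$ is a $\mathcal{K}_{\ell+1}^r$-core in $\mathcal{H}$ because pairs inside $S'$ are covered via edges of $\mathcal{H}$ containing $v$, and pairs $\{u,v\}$ with $u\in S'$ are covered by the edges of $\mathcal{H}$ extending the $(r-1)$-edges of the link incident to $u$. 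Applying the inductive hypothesis to each link and averaging over $v$ via a power-mean (Maclaurin) inequality then yields the inequality for $\mathcal{H}$.

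The main obstacle I anticipate is in this last step: the paper's convention that $\partial_i\mathcal{H}$ counts sets $A$ with $\mathcal{H}[A]$ a complete $r$-graph interacts asymmetrically with vertex links, since a set $A$ being a ``clique in the link $\mathcal{H}(v)$'' only forces the $r$-subsets of $A\cup\{v\}$ containing $v$ to be edges of $\mathcal{H}$, not those avoiding $v$. Reconciling this requires either a symmetric averaging that distributes the ``missing'' $r$-subsets across all vertex links of $\mathcal{H}$, or a dual link indexed by edges rather than vertices, so that each $(s+1)$-clique of $\mathcal{H}$ is counted the same number of times on each side of the identity that closes the induction. Once this bookkeeping is carried out, the two halves combine into the full chain.
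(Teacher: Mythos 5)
Your treatment of the shadow half has a genuine gap. The normalized inequality $\left(|\mathcal{H}|/\binom{\ell}{r}\right)^{1/r} \le \left(|\partial\mathcal{H}|/\binom{\ell}{r-1}\right)^{1/(r-1)}$ is \emph{strictly stronger} than what Kruskal--Katona supplies, and the strengthening comes from the $\mathcal{K}_{\ell+1}^{r}$-free structure at every uniformity in the cascade, not merely from the $K_{\ell+1}$-freeness of $G=\partial_{r-2}\mathcal{H}$. To see that Kruskal--Katona plus the graph Tur\'an/Fisher--Ryan bound cannot suffice, take $\mathcal{H}=T_r(n,\ell)$, for which the whole chain holds with equality. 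Here $|G|=(1+o(1))(1-1/\ell)n^2/2$ is exactly the Tur\'an value, yet already one step of Kruskal--Katona from $G$ overshoots: it gives $|\partial_{r-3}\mathcal{H}|\le\binom{z}{3}$ with $\binom{z}{2}=|G|$, hence $|\partial_{r-3}\mathcal{H}|\le(1+o(1))(1-1/\ell)^{3/2}n^3/6$, whereas the true value is $(1+o(1))(1-1/\ell)(1-2/\ell)n^3/6$, and $(1-2/\ell)<(1-1/\ell)^{1/2}$ for every $\ell\ge2$. The surplus compounds as you cascade up from $\partial_{r-2}\mathcal{H}$ to $\mathcal{H}$, so no amount of bookkeeping recovers the $\binom{\ell}{\cdot}$-normalized chain from Kruskal--Katona alone; you must use $\mathcal{K}_{\ell+1}^{r}$-freeness at each level.

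The two-halves split is also unnecessary. The paper proves a \emph{single} pivot inequality, Theorem~\ref{fisher-ryan-K-middle}, for an arbitrary $\mathcal{K}_{\ell+1}^{m}$-free $m$-graph, and then obtains the full chain at once by applying it to $\partial_i\mathcal{H}$ for every $r-\ell\le i\le r-2$: Observation~\ref{observa-fisher-ryan-K} guarantees that $\partial_i\mathcal{H}$ is $\mathcal{K}_{\ell+1}^{r-i}$-free, and $\partial(\partial_i\mathcal{H})\subset\partial_{i+1}\mathcal{H}$ goes in the useful direction for all $i$, including $i\le-1$. This dissolves the asymmetry you identify in your ``main obstacle'' paragraph; there is no separate induction for the clique half. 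Within the inductive proof of Theorem~\ref{fisher-ryan-K-middle}, links and a power-mean step do appear as you propose, but the essential ingredient you omit is Lemma~\ref{fisher0ryan-K-main-lemma}: a sharp bound on $\sum_{E\in\partial\mathcal{H}}\sigma(E)$ obtained by optimizing over a maximal clique $R_0\in\mathcal{K}_{\mathcal{H}}$. Averaging the inductive hypothesis over vertex links via a power-mean inequality alone does not produce the constant $(\ell-r+1)(r-1)/\ell$ that makes the $\binom{\ell}{\cdot}$ normalization tight.
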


Using Theorem \ref{fisher-ryan-hygp} we are able to determine $g({\mathcal{K}_{\ell+1}^{r}})$ completely via the following result.
We will use $(\ell)_{r}$ to denote $\ell(\ell -1 ) \cdots (\ell -r+1)$.
\begin{coro}\label{feasible-region-K}
Let $\ell \ge r \ge 3$. Then ${\rm proj}\Omega(\mathcal{K}_{\ell+1}^{r}) = [0, (\ell)_{r-1}/\ell^{r-1}]$ and
\[
g({\mathcal{K}_{\ell+1}^{r}},x) = (\ell -r + 1) \left( \frac{x^{r}}{(\ell)_{r}} \right)^{\frac{1}{r-1}}
\]
for all $x\in [0, (\ell)_{r-1}/\ell^{r-1}]$.
\end{coro}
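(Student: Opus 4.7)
The plan is to derive the corollary as a direct consequence of Theorem \ref{fisher-ryan-hygp}, combined with a simple one-parameter family of $\mathcal{K}_{\ell+1}^{r}$-free constructions that realizes the resulting upper bound pointwise. The only work is to extract the two right inequalities from the chain in Theorem \ref{fisher-ryan-hygp} and to exhibit a matching construction.

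For the projection, I would use the rightmost link in the chain, namely $(|\partial\mathcal{H}|/\binom{\ell}{r-1})^{1/(r-1)} \le |\partial_{r-1}\mathcal{H}|/\ell$. Since $|\partial_{r-1}\mathcal{H}| \le v(\mathcal{H}) = n$, this gives $|\partial\mathcal{H}| \le \binom{\ell}{r-1}(n/\ell)^{r-1}$; dividing by $\binom{n}{r-1}$ and sending $n \to \infty$ yields $x \le (\ell)_{r-1}/\ell^{r-1}$, hence ${\rm proj}\,\Omega(\mathcal{K}_{\ell+1}^{r}) \subseteq [0, (\ell)_{r-1}/\ell^{r-1}]$. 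For the formula for $g$, I would apply the middle link
\[
\left(\frac{|\mathcal{H}|}{\binom{\ell}{r}}\right)^{\frac{1}{r}} \le \left(\frac{|\partial\mathcal{H}|}{\binom{\ell}{r-1}}\right)^{\frac{1}{r-1}},
\]
raise it to the $r$-th power, substitute $|\mathcal{H}| = y\binom{n}{r}$ and $|\partial\mathcal{H}| = x\binom{n}{r-1}$, and take $n \to \infty$; the ratio of binomial coefficients collapses to the constant prescribed by the statement and produces the claimed upper bound on $g(\mathcal{K}_{\ell+1}^{r},x)$.

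For the matching lower bound, for each $t \in [0,1]$ I would consider the $r$-graph $\mathcal{H}_{n,t}$ on $n$ vertices whose edges form a copy of the generalized Tur\'{a}n graph $T_{r}(\lfloor tn \rfloor, \ell)$ on a subset of $\lfloor tn \rfloor$ vertices, with the remaining $n - \lfloor tn \rfloor$ vertices isolated. Since $T_{r}(m, \ell)$ is $\ell$-partite, it contains no member of $\mathcal{K}_{\ell+1}^{r}$, and hence so is $\mathcal{H}_{n,t}$. A direct count gives
\[
d(\mathcal{H}_{n,t}) \longrightarrow \frac{(\ell)_{r}}{\ell^{r}}\, t^{r}, \qquad d(\partial\mathcal{H}_{n,t}) \longrightarrow \frac{(\ell)_{r-1}}{\ell^{r-1}}\, t^{r-1} \qquad (n \to \infty),
\]
and eliminating $t$ shows that these pairs trace out exactly the boundary curve derived above as $t$ ranges over $[0,1]$. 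Taking $t=1$ certifies that the full interval $[0, (\ell)_{r-1}/\ell^{r-1}]$ is covered by the projection, and Proposition \ref{only-boundary-matters} fills in the points below the curve, completing the determination of $\Omega(\mathcal{K}_{\ell+1}^{r})$.

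I do not anticipate a substantive obstacle: the corollary is a clean formal consequence of Theorem \ref{fisher-ryan-hygp} matched by the natural sub-Tur\'{a}n construction. The only routine bookkeeping is to verify that $(\mathcal{H}_{n,t})_{n \ge 1}$ is a good $\mathcal{K}_{\ell+1}^{r}$-free sequence in the sense of Definition \ref{def-feasible-region}(b), which is immediate from the displayed limits.
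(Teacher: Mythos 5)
Your proposal follows the paper's proof exactly: the upper bounds on the projection and on $g$ are extracted from the rightmost and middle links, respectively, of Theorem~\ref{fisher-ryan-hygp}, and the matching lower bound is the one-parameter family $T_r(\lfloor tn\rfloor,\ell)$ together with isolated vertices, with Observation~\ref{projection-x-interval} (or, equivalently, varying $t$ continuously) finishing the projection claim. One caution on the step you call routine: if you actually eliminate $t$ from $x = (\ell)_{r-1}t^{r-1}/\ell^{r-1}$ and $y = (\ell)_r t^r/\ell^r$, and likewise simplify the inequality $\left(y n^r/(r!\binom{\ell}{r})\right)^{1/r} \le \left(x n^{r-1}/((r-1)!\binom{\ell}{r-1})\right)^{1/(r-1)}$, both computations produce
\[
y \;=\; (\ell-r+1)\left(\frac{x^r}{(\ell)_{r-1}}\right)^{\frac{1}{r-1}},
\]
with $(\ell)_{r-1}$ rather than the $(\ell)_r$ printed in the corollary (and in the paper's own proof). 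The two agree only when $\ell=r$; for $\ell>r$ the printed expression underestimates: at $x = (\ell)_{r-1}/\ell^{r-1}$ the construction attains the Tur\'an density $(\ell)_r/\ell^r$, whereas the printed formula evaluates to $(\ell-r+1)^{(r-2)/(r-1)}(\ell)_{r-1}/\ell^r$. So carry the arithmetic through explicitly rather than asserting it ``collapses to the constant prescribed''; the approach is sound and the construction is tight, but the constant in the target statement needs the indicated correction.
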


\begin{figure}[htbp]
\centering
\subfigure[$\ell = 3, r = 3$.]{
\begin{minipage}[t]{0.4\linewidth}
\centering
\begin{tikzpicture}[xscale=5,yscale=5]
\draw [->] (0,0)--(1.1,0);
\draw [->] (0,0)--(0,0.6);
\draw (0,0.5)--(1,0.5);
\draw (1,0)--(1,0.5);
\draw [line width=1pt,dash pattern=on 1pt off 1.2pt,domain=0:2/3] plot(\x,{2/9});
\draw [line width=0.5pt,color=sqsqsq,fill=sqsqsq,fill opacity=0.25]
(0.,0.)--(0.01,0.000408248)--(0.02,0.0011547)--(0.03,0.00212132)--(0.04,0.00326599)--(0.05,0.00456435)--
(0.06,0.006)--(0.07,0.00756086)--(0.08,0.0092376)--(0.09,0.0110227)--(0.1,0.0129099)--(0.11,0.0148941)--
(0.12,0.0169706)--(0.13,0.0191355)--(0.14,0.0213854)--(0.15,0.0237171)--(0.16,0.0261279)--(0.17,0.0286153)--
(0.18,0.0311769)--(0.19,0.0338107)--(0.2,0.0365148)--(0.21,0.0392874)--(0.22,0.0421268)--(0.23,0.0450315)--
(0.24,0.048)--(0.25,0.051031)--(0.26,0.0541233)--(0.27,0.0572756)--(0.28,0.0604869)--(0.29,0.063756)--(0.3,0.067082)--
(0.31,0.0704639)--(0.32,0.0739008)--(0.33,0.0773919)--(0.34,0.0809362)--(0.35,0.084533)--(0.36,0.0881816)--
(0.37,0.0918813)--(0.38,0.0956312)--(0.39,0.0994309)--(0.4,0.10328)--(0.41,0.107177)--(0.42,0.111122)--(0.43,0.115114)--
(0.44,0.119153)--(0.45,0.123238)--(0.46,0.127368)--(0.47,0.131544)--(0.48,0.135765)--(0.49,0.140029)--(0.5,0.144338)--
(0.51,0.148689)--(0.52,0.153084)--(0.53,0.157521)--(0.54,0.162)--(0.55,0.166521)--(0.56,0.171083)--(0.57,0.175686)--
(0.58,0.180329)--(0.59,0.185013)--(0.6,0.189737)--(0.61,0.1945)--(0.62,0.199302)--(0.63,0.204143)--(0.64,0.209023)--
(0.65,0.213941)--(0.66,0.218897)--(0.67,0.223891)--(0.67,0)--(0,0);
\begin{scriptsize}
\draw [fill=uuuuuu] (2/3,2/9) circle (0.2pt);
\draw [fill=uuuuuu] (2/3,0) circle (0.2pt);
\draw[color=uuuuuu] (2/3,0-0.07) node {$\frac{2}{3}$};
\draw [fill=uuuuuu] (1,0) circle (0.2pt);
\draw[color=uuuuuu] (1,0-0.07) node {$1$};
\draw [fill=uuuuuu] (0,0) circle (0.2pt);
\draw[color=uuuuuu] (0-0.05,0-0.05) node {$0$};
\draw [fill=uuuuuu] (0,2/9) circle (0.2pt);
\draw[color=uuuuuu] (0-0.08,2/9) node {$2/9$};
\draw [fill=uuuuuu] (0,1/2) circle (0.2pt);
\draw[color=uuuuuu] (0-0.08,1/2) node {$1/2$};
\end{scriptsize}
\end{tikzpicture}
\end{minipage}
}
\subfigure[$\ell = 4, r = 4$.]{
\begin{minipage}[t]{0.4\linewidth}
\centering
\begin{tikzpicture}[xscale=5,yscale=5]
\draw [->] (0,0)--(1.1,0);
\draw [->] (0,0)--(0,0.6);
\draw (0,0.5)--(1,0.5);
\draw (1,0)--(1,0.5);
\draw [line width=0.8pt,dash pattern=on 1pt off 1.2pt,domain=0:3/8] plot(\x,3/32);
\draw [line width=0.5pt,color=sqsqsq,fill=sqsqsq,fill opacity=0.25]
(0.,0.)--(0.01,0.000746901)--(0.02,0.00188207)--(0.03,0.00323165)--(0.04,0.00474252)--(0.05,0.00638591)--(0.06,0.00814325)--
(0.07,0.0100014)--(0.08,0.0119504)--(0.09,0.0139825)--(0.1,0.0160915)--(0.11,0.018272)--(0.12,0.0205197)--(0.13,0.0228308)--
(0.14,0.0252019)--(0.15,0.0276302)--(0.16,0.0301132)--(0.17,0.0326484)--(0.18,0.0352338)--(0.19,0.0378676)--(0.2,0.040548)--
(0.21,0.0432735)--(0.22,0.0460426)--(0.23,0.048854)--(0.24,0.0517064)--(0.25,0.0545988)--(0.26,0.05753)--(0.27,0.060499)--
(0.28,0.0635049)--(0.29,0.0665468)--(0.3,0.0696238)--(0.31,0.0727353)--(0.32,0.0758804)--(0.33,0.0790584)--(0.34,0.0822687)--
(0.35,0.0855107)--(0.36,0.0887836)--(0.37,0.092087)--(0.375,3/32)--(0.375,0)--(0,0);
\begin{scriptsize}
\draw [fill=uuuuuu] (3/8,3/32) circle (0.2pt);
\draw [fill=uuuuuu] (3/8,0) circle (0.2pt);
\draw[color=uuuuuu] (3/8,0-0.07) node {$\frac{3}{8}$};
\draw [fill=uuuuuu] (0,0) circle (0.2pt);
\draw[color=uuuuuu] (0-0.05,0-0.05) node {$0$};
\draw [fill=uuuuuu] (0,3/32) circle (0.2pt);
\draw[color=uuuuuu] (0-0.08,3/32) node {$3/32$};
\draw [fill=uuuuuu] (1,0) circle (0.2pt);
\draw[color=uuuuuu] (1,0-0.07) node {$1$};
\draw [fill=uuuuuu] (0,0) circle (0.2pt);
\draw[color=uuuuuu] (0-0.05,0-0.05) node {$0$};
\draw [fill=uuuuuu] (0,1/2) circle (0.2pt);
\draw[color=uuuuuu] (0-0.08,1/2) node {$1/2$};
\end{scriptsize}
\end{tikzpicture}
\end{minipage}
}
\centering
\caption{The region $\Omega(\mathcal{K}_{\ell+1}^{r})$ determined by Corollary \ref{feasible-region-K}.}
\end{figure}
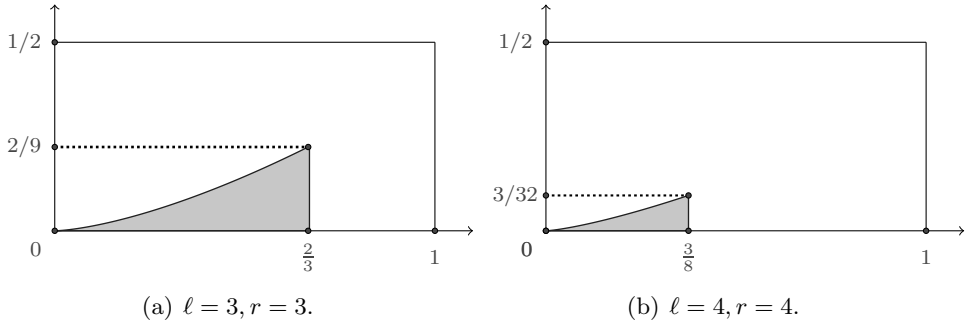

Determining $\Omega(H_{\ell+1}^{r})$ is much more difficult than $\Omega(\mathcal{K}_{\ell+1}^{r})$
because the shadow density of an $H_{\ell+1}^{r}$-free $r$-graph can be greater than $(\ell)_{r-1}/\ell^{r-1}$.
An $r$-graph $\mathcal{S}$ is called a star if all edges in $\mathcal{S}$ contain a fixed vertex, which is called the center of $\mathcal{S}$.
It is easy to see that a star does not contain $H_{\ell+1}^{r}$ as a subgraph, and the shadow density of a star
can be arbitrarily close to $1$.
Still, we are able to determine $g(H_{\ell+1}^{r},x)$ for all $x\in [0, (\ell)_{r-1}/\ell^{r-1}]$.
\begin{thm}\label{feasible-region-H}
Let $\ell \ge r \ge 3$. Then ${\rm proj}\Omega(H_{\ell+1}^{r}) = [0,1]$ and
\[
g(H_{\ell+1}^{r},x) = (\ell -r + 1) \left( \frac{x^{r}}{(\ell)_{r}} \right)^{\frac{1}{r-1}}
\]
for all $x\in [0, (\ell)_{r-1}/\ell^{r-1}]$.
\end{thm}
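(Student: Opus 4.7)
The plan is to establish separately the lower bound, the projection claim, and the upper bound; the first two are short while the upper bound is the main work.

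\textbf{Lower bound.} Since $H_{\ell+1}^{r} \in \mathcal{K}_{\ell+1}^{r}$, every $\mathcal{K}_{\ell+1}^{r}$-free $r$-graph is also $H_{\ell+1}^{r}$-free, so $\Omega(\mathcal{K}_{\ell+1}^{r}) \subseteq \Omega(H_{\ell+1}^{r})$. Thus for $x \in [0, (\ell)_{r-1}/\ell^{r-1}]$, Corollary \ref{feasible-region-K} yields $g(H_{\ell+1}^{r},x) \ge g(\mathcal{K}_{\ell+1}^{r},x) = (\ell-r+1)(x^{r}/(\ell)_{r})^{1/(r-1)}$.

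\textbf{Projection.} Let $\mathcal{S}_n = \{e \in \binom{[n]}{r} : 1 \in e\}$ be the star on $[n]$ centered at $1$. One directly checks that $\partial \mathcal{S}_n = \binom{[n]}{r-1}$, so $d(\partial \mathcal{S}_n) = 1$, while $d(\mathcal{S}_n) = r/n \to 0$. Moreover $\mathcal{S}_n$ is $H_{\ell+1}^{r}$-free: the $\binom{\ell}{2}$ edges of $H_{\ell+1}^{r}$ correspond to edges of the underlying clique, and for $\ell \ge 3$ no vertex lies in all of them (a core vertex lies in only $\ell-1$ of them, and a new vertex in only one), yet every edge of $\mathcal{S}_n$ contains the center $1$. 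Combined with Observation \ref{projection-x-interval}, this yields ${\rm proj}\Omega(H_{\ell+1}^{r}) = [0,1]$.

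\textbf{Upper bound (main step).} It remains to prove $g(H_{\ell+1}^{r},x) \le (\ell-r+1)(x^{r}/(\ell)_{r})^{1/(r-1)}$ for $x \in [0, (\ell)_{r-1}/\ell^{r-1}]$. My plan is to show that the key inequality $(|\mathcal{H}|/\binom{\ell}{r})^{1/r} \le (|\partial \mathcal{H}|/\binom{\ell}{r-1})^{1/(r-1)}$ underlying Theorem \ref{fisher-ryan-hygp} continues to hold for $H_{\ell+1}^{r}$-free $r$-graphs $\mathcal{H}$ provided $d(\partial \mathcal{H}) \le (\ell)_{r-1}/\ell^{r-1}$. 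Given this extension, the same algebraic manipulation as in the proof of Corollary \ref{feasible-region-K} delivers the stated upper bound. To prove the extension, I would combine Pikhurko's exact Tur\'an result \cite{PI08} with a supersaturation argument: the extremal configurations for Fisher-Ryan are (blow-ups of) $T_r(n,\ell)$, and any $H_{\ell+1}^{r}$-free violation of the inequality would have to be structurally far from them; under the shadow-density constraint such structural deviation would produce many copies of $H_{\ell+1}^{r}$, contradicting $H_{\ell+1}^{r}$-freeness.

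The hardest part is making this reduction rigorous: one must show that $H_{\ell+1}^{r}$-freeness together with the cutoff $d(\partial \mathcal{H}) \le (\ell)_{r-1}/\ell^{r-1}$ is enough, asymptotically, to emulate $\mathcal{K}_{\ell+1}^{r}$-freeness in the Fisher-Ryan chain. The cutoff is natural since it equals the shadow density of $T_r(n,\ell)$: the ``bad'' $H_{\ell+1}^{r}$-free examples (such as stars) all have shadow density close to $1$ and lie outside our range, which is precisely why the regions $\Omega(H_{\ell+1}^{r})$ and $\Omega(\mathcal{K}_{\ell+1}^{r})$ coincide on $[0, (\ell)_{r-1}/\ell^{r-1}]$.
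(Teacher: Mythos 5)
Your lower bound and projection arguments are correct and match the paper in spirit (the paper also relies on the observation that stars are $H_{\ell+1}^{r}$-free with shadow density approaching $1$). The problem is the upper bound: the per-graph extension of the Fisher--Ryan inequality that you propose is simply false, and no amount of supersaturation can repair it as a pointwise statement. Take $\ell=r=3$ and let $\mathcal{H}$ be the complete $3$-graph on $4$ vertices together with $n-4$ isolated vertices. Then $\mathcal{H}$ is $H_{4}^{3}$-free (the only nontrivial component has too few vertices to contain $H_4^3$), its shadow density $6/\binom{n}{2}$ is far below $2/3$ for large $n$, yet
\[
\left(\frac{|\mathcal{H}|}{\binom{3}{3}}\right)^{1/3}=4^{1/3}>\sqrt{2}=\left(\frac{|\partial\mathcal{H}|}{\binom{3}{2}}\right)^{1/2}.
\]
So the constraint $d(\partial\mathcal{H})\le(\ell)_{r-1}/\ell^{r-1}$ does not rescue the inequality: a single small copy of a member of $\mathcal{K}_{\ell+1}^{r}\setminus\{H_{\ell+1}^{r}\}$ already kills it, and such copies are invisible to shadow density.

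What you actually need (and what the paper proves) is a comparison at the level of $g$, not of individual hypergraphs. The paper's Lemma~\ref{feasible-region-removal-lemma} is the right tool: if $\mathcal{F}_1\subset\mathcal{F}_2$, every $\mathcal{F}_1$-free $r$-graph can be made $\mathcal{F}_2$-free by deleting $o(n^r)$ edges, and $g(\mathcal{F}_2)$ is \emph{increasing} on $[0,c]$, then $g(\mathcal{F}_1)=g(\mathcal{F}_2)$ on $[0,c]$. One then invokes Pikhurko's removal statement (\cite{PI13}, Lemma~3) for $\mathcal{F}_1=\{H_{\ell+1}^{r}\}$, $\mathcal{F}_2=\mathcal{K}_{\ell+1}^{r}$, and observes from the explicit formula of Corollary~\ref{feasible-region-K} that $g(\mathcal{K}_{\ell+1}^{r})$ is increasing on $[0,(\ell)_{r-1}/\ell^{r-1}]$. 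The monotonicity hypothesis is not a technicality: after deleting $o(n^r)$ edges the shadow density may drop, so you land at some $x'_0\le x_0$ with the same limiting edge density $y_0$; monotonicity of $g(\mathcal{F}_2)$ is exactly what lets you conclude $g(\mathcal{F}_2,x_0)\ge g(\mathcal{F}_2,x_0')\ge y_0=g(\mathcal{F}_1,x_0)$. Your proposal lacks this last, crucial step; if you restructure the upper bound along these lines, the proof goes through.
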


\begin{figure}[htbp]
\centering
\subfigure[$\ell = 3, r = 3$.]{
\begin{minipage}[t]{0.4\linewidth}
\centering
\begin{tikzpicture}[xscale=5,yscale=5]
\draw [->] (0,0)--(1.1,0);
\draw [->] (0,0)--(0,0.6);
\draw (0,0.5)--(1,0.5);
\draw (1,0)--(1,0.5);
\draw [line width=1pt,dash pattern=on 1pt off 1.2pt,domain=0:2/3] plot(\x,{2/9});
\draw [line width=1pt,dash pattern=on 1pt off 1.2pt] (2/3,0)--(2/3,2/9);
\draw [line width=0.5pt,color=sqsqsq,fill=sqsqsq,fill opacity=0.25]
(0.,0.)--(0.01,0.000408248)--(0.02,0.0011547)--(0.03,0.00212132)--(0.04,0.00326599)--(0.05,0.00456435)--
(0.06,0.006)--(0.07,0.00756086)--(0.08,0.0092376)--(0.09,0.0110227)--(0.1,0.0129099)--(0.11,0.0148941)--
(0.12,0.0169706)--(0.13,0.0191355)--(0.14,0.0213854)--(0.15,0.0237171)--(0.16,0.0261279)--(0.17,0.0286153)--
(0.18,0.0311769)--(0.19,0.0338107)--(0.2,0.0365148)--(0.21,0.0392874)--(0.22,0.0421268)--(0.23,0.0450315)--
(0.24,0.048)--(0.25,0.051031)--(0.26,0.0541233)--(0.27,0.0572756)--(0.28,0.0604869)--(0.29,0.063756)--(0.3,0.067082)--
(0.31,0.0704639)--(0.32,0.0739008)--(0.33,0.0773919)--(0.34,0.0809362)--(0.35,0.084533)--(0.36,0.0881816)--
(0.37,0.0918813)--(0.38,0.0956312)--(0.39,0.0994309)--(0.4,0.10328)--(0.41,0.107177)--(0.42,0.111122)--(0.43,0.115114)--
(0.44,0.119153)--(0.45,0.123238)--(0.46,0.127368)--(0.47,0.131544)--(0.48,0.135765)--(0.49,0.140029)--(0.5,0.144338)--
(0.51,0.148689)--(0.52,0.153084)--(0.53,0.157521)--(0.54,0.162)--(0.55,0.166521)--(0.56,0.171083)--(0.57,0.175686)--
(0.58,0.180329)--(0.59,0.185013)--(0.6,0.189737)--(0.61,0.1945)--(0.62,0.199302)--(0.63,0.204143)--(0.64,0.209023)--
(0.65,0.213941)--(0.66,0.218897)--(0.67,0.223891)--(1,0.223891)--(1,0)--(0,0);
\begin{scriptsize}
\draw [fill=uuuuuu] (2/3,2/9) circle (0.2pt);
\draw [fill=uuuuuu] (2/3,0) circle (0.2pt);
\draw[color=uuuuuu] (2/3,0-0.07) node {$\frac{2}{3}$};
\draw [fill=uuuuuu] (1,0) circle (0.2pt);
\draw[color=uuuuuu] (1,0-0.07) node {$1$};
\draw [fill=uuuuuu] (0,0) circle (0.2pt);
\draw[color=uuuuuu] (0-0.05,0-0.05) node {$0$};
\draw [fill=uuuuuu] (0,2/9) circle (0.2pt);
\draw[color=uuuuuu] (0-0.08,2/9) node {$2/9$};
\draw [fill=uuuuuu] (0,1/2) circle (0.2pt);
\draw[color=uuuuuu] (0-0.08,1/2) node {$1/2$};
\end{scriptsize}
\end{tikzpicture}
\end{minipage}
}
\subfigure[$\ell = 4, r = 4$.]{
\begin{minipage}[t]{0.4\linewidth}
\centering
\begin{tikzpicture}[xscale=5,yscale=5]
\draw [->] (0,0)--(1.1,0);
\draw [->] (0,0)--(0,0.6);
\draw (0,0.5)--(1,0.5);
\draw (1,0)--(1,0.5);
\draw [line width=0.8pt,dash pattern=on 1pt off 1.2pt,domain=0:3/8] plot(\x,3/32);
\draw [line width=0.8pt,dash pattern=on 1pt off 1.2pt] (3/8,0)--(3/8,3/32) ;
\draw [line width=0.5pt,color=sqsqsq,fill=sqsqsq,fill opacity=0.25]
(0.,0.)--(0.01,0.000746901)--(0.02,0.00188207)--(0.03,0.00323165)--(0.04,0.00474252)--(0.05,0.00638591)--(0.06,0.00814325)--
(0.07,0.0100014)--(0.08,0.0119504)--(0.09,0.0139825)--(0.1,0.0160915)--(0.11,0.018272)--(0.12,0.0205197)--(0.13,0.0228308)--
(0.14,0.0252019)--(0.15,0.0276302)--(0.16,0.0301132)--(0.17,0.0326484)--(0.18,0.0352338)--(0.19,0.0378676)--(0.2,0.040548)--
(0.21,0.0432735)--(0.22,0.0460426)--(0.23,0.048854)--(0.24,0.0517064)--(0.25,0.0545988)--(0.26,0.05753)--(0.27,0.060499)--
(0.28,0.0635049)--(0.29,0.0665468)--(0.3,0.0696238)--(0.31,0.0727353)--(0.32,0.0758804)--(0.33,0.0790584)--(0.34,0.0822687)--
(0.35,0.0855107)--(0.36,0.0887836)--(0.37,0.092087)--(0.375,3/32)--(1,3/32)--(1,0)--(0,0);
\begin{scriptsize}
\draw [fill=uuuuuu] (3/8,3/32) circle (0.2pt);
\draw [fill=uuuuuu] (3/8,0) circle (0.2pt);
\draw[color=uuuuuu] (3/8,0-0.07) node {$\frac{3}{8}$};
\draw [fill=uuuuuu] (0,0) circle (0.2pt);
\draw[color=uuuuuu] (0-0.05,0-0.05) node {$0$};
\draw [fill=uuuuuu] (0,3/32) circle (0.2pt);
\draw[color=uuuuuu] (0-0.08,3/32) node {$3/32$};
\draw [fill=uuuuuu] (1,0) circle (0.2pt);
\draw[color=uuuuuu] (1,0-0.07) node {$1$};
\draw [fill=uuuuuu] (0,0) circle (0.2pt);
\draw[color=uuuuuu] (0-0.05,0-0.05) node {$0$};
\draw [fill=uuuuuu] (0,1/2) circle (0.2pt);
\draw[color=uuuuuu] (0-0.08,1/2) node {$1/2$};
\end{scriptsize}
\end{tikzpicture}
\end{minipage}
}
\centering
\caption{The region $\Omega(H_{\ell+1}^{r})$ is contained in the dark areas according to Theorem \ref{feasible-region-H} and results in \cite{MU06} and \cite{PI13}.}
\end{figure}
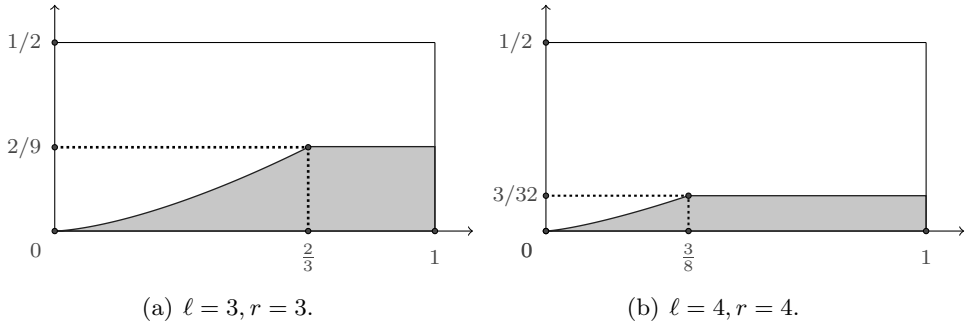

The remainder of this paper is organized as follows.
In Section 2 we will prove Propositions \ref{closed}, \ref{only-boundary-matters}, and \ref{universal-upper-bound},
and Theorem \ref{left-cont-and-diff}.
Section 3 will be devoted to prove Theorem \ref{example-discont}.
Then we will prove Theorems \ref{cancellative-r-left} and \ref{cancellative-3-right} in Section 4.
In Section 5 we will prove Theorem \ref{fisher-ryan-hygp}, Corollary \ref{feasible-region-K}, and Theorem \ref{feasible-region-H}.
In Section 6 we will include some remarks and open problems.
We will omit the floor and ceiling signs when they are not crucial in the proofs.

%
%
%
\section{General theory}
In this section we will prove several general results about the feasible region.
First let us present a simple but useful idea that will be used in our proofs.

\begin{fact}\label{unique-subset}
Let $r \ge 2$.
Suppose that $\mathcal{H}$ is an $r$-graph on $n$ vertices, and every edge in $\mathcal{H}$ contains an $(r-1)$-subset
that is not covered by any other edge in $\mathcal{H}$. Then $|\mathcal{H}| \le \binom{n}{r-1}$.
\end{fact}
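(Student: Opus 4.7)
The plan is a direct injection argument from edges of $\mathcal{H}$ to $(r-1)$-subsets of $V(\mathcal{H})$. By hypothesis, for every edge $E \in \mathcal{H}$ there exists at least one $(r-1)$-subset $S \subset E$ such that $S \not\subset E'$ for every $E' \in \mathcal{H} \setminus \{E\}$. Fix any choice function $\phi : \mathcal{H} \to \binom{V(\mathcal{H})}{r-1}$ that assigns to each edge $E$ such a ``private'' $(r-1)$-subset $\phi(E) \subset E$.

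The key step is to check that $\phi$ is injective. Suppose toward a contradiction that there are distinct edges $E_1, E_2 \in \mathcal{H}$ with $\phi(E_1) = \phi(E_2) = S$. Then $S \subset E_1$ and $S \subset E_2$, so the $(r-1)$-set $S = \phi(E_1)$ is contained in the edge $E_2 \neq E_1$, contradicting the defining property of $\phi(E_1)$. Hence $\phi$ is injective, and therefore
\[
|\mathcal{H}| \;=\; |\phi(\mathcal{H})| \;\le\; \left|\binom{V(\mathcal{H})}{r-1}\right| \;=\; \binom{n}{r-1},
\]
which is the desired bound.

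There is no real obstacle in this proof; the only point worth emphasizing is that the hypothesis is used precisely to guarantee that the choice function $\phi$ exists, and injectivity is then immediate from the definition of ``not covered by any other edge.''
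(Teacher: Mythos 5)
Your proof is correct and is essentially the same injection argument the paper gives: choose for each edge a private $(r-1)$-subset, observe the map is injective by definition of ``private,'' and conclude $|\mathcal{H}| \le \binom{n}{r-1}$. You have merely spelled out the injectivity check more explicitly than the paper does.
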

Indeed, if every edge in $\mathcal{H}$ contains a unique $(r-1)$-subset, then
we can map every edge $E \in \mathcal{H}$ to an $(r-1)$-subset of $E$ that is not covered by any other edge in $\mathcal{H}$.
This map is an injection from $\mathcal{H}$ to $\binom{[n]}{r-1}$ and it implies the upper bound in Fact \ref{unique-subset}.
Actually, it was shown by Bollob\'{a}s \cite{BO65} that $|\mathcal{H}| \le \binom{n-1}{r-1}$.

\medskip

\noindent\textbf{Algorithm 1} Remove edges with the edge density threshold $d$.\\
\noindent\textbf{Input}: An $r$-graph $\mathcal{H}$ and the density threshold $d \in [0,1]$.\\
\noindent\textbf{Operation}: If $d(\mathcal{H}) \le d$ or $|\mathcal{H}| \le \binom{n}{r-1}$, then do nothing and let $\mathcal{H}$ be the output.
Otherwise, by Fact \ref{unique-subset}, there exists $E \in \mathcal{H}$ such that every $(r-1)$-subset of $E$ is covered by another edge in $\mathcal{H}$.
Remove $E$ from the edge set of $\mathcal{H}$, and let $\mathcal{H}$ denote the resulting $r$-graph.
Repeat this operation until $d - 1/\binom{n}{r} < d(\mathcal{H}) \le d$. \\
\noindent\textbf{Output}: Either the original $r$-graph $\mathcal{H}$ or a subgraph $\mathcal{H}' \subset \mathcal{H}$
with $d - 1/\binom{n}{r} < d(\mathcal{H}') \le d$,
and $|\partial \mathcal{H}'| = |\partial \mathcal{H}|$.

\medskip

Notice that the Operation above does not change $|\partial \mathcal{H}|$ since all $(r-1)$-subsets of the removed edge $E$
are covered by some edge in $\mathcal{H}$.
Therefore, the output $r$-graph $\mathcal{H}'$ satisfies $|\partial \mathcal{H}'| = |\partial \mathcal{H}|$.
On the other hand, since each step of the operation reduces $|\mathcal{H}|$ by exactly one,
$d(\mathcal{H})$ can be reduced to some real number $d'$ with $d - 1/\binom{n}{r} < d' \le d$.

\subsection{Basic properties}
In this section we will prove Propositions \ref{closed}, \ref{only-boundary-matters}, and \ref{universal-upper-bound},
and Theorem \ref{left-cont-and-diff}.
First we prove Proposition \ref{closed} and we need the following lemma.

\begin{lemma}\label{lemma-close-analysis}
Let $\left( (x_{m,k}, y_{m,k}) \right)_{k=1}^{\infty}$ be a sequence of pairs with $\lim_{k \to \infty}(x_{m,k}, y_{m,k}) = (x_{m},y_{m})$
for all $m\ge 1$.
Suppose that $\lim_{m \to \infty}(x_{m},y_{m}) = (x,y)$.
Then there exists $(x_{m,k_m},y_{m,k_m}) \in \left( (x_{m,k}, y_{m,k}) \right)_{k=1}^{\infty}$ for all $m \ge 1$
such that $\lim_{m \to \infty}(x_{m,k_m},y_{m,k_m}) = (x,y)$ and $\lim_{m \to \infty}k_m = \infty$.
\end{lemma}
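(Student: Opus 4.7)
My plan is to construct the diagonal subsequence by a standard two-step choice, one step ensuring proximity to $(x_m,y_m)$ and the other ensuring $k_m\to\infty$.

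First I would fix an arbitrary sequence $\epsilon_m\downarrow 0$ (for concreteness, $\epsilon_m=1/m$). For each $m\ge 1$, since $\lim_{k\to\infty}(x_{m,k},y_{m,k})=(x_m,y_m)$, I can choose an index $k_m$ large enough that
\[
\bigl\|(x_{m,k_m},y_{m,k_m})-(x_m,y_m)\bigr\| < \epsilon_m,
\]
and at the same time demand $k_m \ge m$ and $k_m > k_{m-1}$ (the latter is an optional bookkeeping condition, not needed for the conclusion, but it makes $(k_m)$ strictly increasing). Both requirements can be satisfied simultaneously because only finitely many indices $k$ are excluded.

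Next I would verify the two claimed properties. The condition $k_m\ge m$ immediately gives $\lim_{m\to\infty}k_m=\infty$. For the convergence, I apply the triangle inequality:
\[
\bigl\|(x_{m,k_m},y_{m,k_m})-(x,y)\bigr\|
\le \bigl\|(x_{m,k_m},y_{m,k_m})-(x_m,y_m)\bigr\|
+ \bigl\|(x_m,y_m)-(x,y)\bigr\|
< \epsilon_m + \bigl\|(x_m,y_m)-(x,y)\bigr\|.
\]
The first term tends to $0$ by construction, and the second tends to $0$ by the hypothesis $\lim_{m\to\infty}(x_m,y_m)=(x,y)$. Hence $\lim_{m\to\infty}(x_{m,k_m},y_{m,k_m})=(x,y)$, completing the proof.

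There is no real obstacle here: this is the classical diagonal extraction argument, and the only point that requires a tiny bit of care is to make sure we simultaneously impose $k_m\ge m$ (so the extracted indices tend to infinity, avoiding the degenerate case where $k_m$ remains bounded and one merely converges inside a single column). Both constraints cut out only finitely many values of $k$, so a valid $k_m$ exists at every stage.
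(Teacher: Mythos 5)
Your proof is correct and follows essentially the same diagonal extraction argument as the paper: choose $k_m$ so that $(x_{m,k_m},y_{m,k_m})$ lies within $1/m$ of $(x_m,y_m)$, force $k_m\to\infty$ by a monotonicity or growth condition, and conclude by the triangle inequality combined with the hypothesis $(x_m,y_m)\to(x,y)$. The only cosmetic differences are your use of a norm instead of coordinatewise estimates and the condition $k_m\ge m$ in place of the paper's choice of a strictly increasing $(k_m)$; both serve the identical purpose.
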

\begin{proof}
Let $k_1 = 1$. For every $m \ge 2$ since $\lim_{k \to \infty}(x_{m,k}, y_{m,k}) = (x_{m},y_{m})$,
there exists $k_m$ with $k_m \ge k_{m-1} + 1$ such that for all $k \ge k_{m}$
\begin{equation}\label{lemma-bound-x-mk}
|x_{m,k} - x_{m}| < 1/m,
\end{equation}
and
\begin{equation}\label{lemma-bound-y-mk}
|y_{m,k} - y_m| < 1/m.
\end{equation}
Now fix $\epsilon > 0$.
Since $\lim_{m \to \infty}(x_{m},y_{m}) = (x,y)$, there exists $m(\epsilon)$ with $m(\epsilon) > 3/\epsilon$ such that
for all $m \ge m(\epsilon)$
\begin{equation}\label{lemma-bound-x-m}
|x_{m} - x| < \epsilon/3,
\end{equation}
and
\begin{equation}\label{lemma-bound-y-m}
|y_{m} - y| < \epsilon/3.
\end{equation}
Therefore, for all $m \ge m(\epsilon) > 3/\epsilon$
\[
|x_{m,k_m} - x| \le |x_{m,k_m} - x_{m}| + |x_{m} - x| \overset{(\ref{lemma-bound-x-mk}),(\ref{lemma-bound-x-m})}{<} 1/m + \epsilon/3 < \epsilon
\]
and
\[
|y_{m,k_m} - y| \le |y_{m,k_m} - y_{m}| + |y_{m} - y| \overset{(\ref{lemma-bound-y-mk}),(\ref{lemma-bound-y-m})}{<} 1/m + \epsilon/3 < \epsilon,
\]
which implies that $\lim_{m \to \infty}(x_{m,k_m},y_{m,k_m}) = (x,y)$.
Since $\left( k_m \right)_{m=1}^{\infty}$ is a strictly increasing sequence of integers, $\lim_{m \to \infty} k_m = \infty$,
and this completes the proof.
\end{proof}

Now we prove Proposition \ref{closed}.
\begin{proof}[Proof of Proposition \ref{closed}]
Let $\left( (x_{m}, y_{m}) \right)_{k=1}^{\infty}$ be a sequence with  $(x_{m}, y_{m}) \in \Omega(\mathcal{F})$ for all $m \ge 1$
and $\lim_{m \to \infty}(x_{m}, y_{m}) = (x_0,y_0)$.
We need to show that $(x_0,y_0) \in \Omega(\mathcal{F})$ as well.

By the definition of $\Omega(\mathcal{F})$, for every $m \ge 1$
there exists a good sequence $\left( \mathcal{H}_{m,k} \right)_{k = 1}^{\infty}$ of $\mathcal{F}$-free $r$-graphs
that realizes $(x_m,y_m)$.
Without loss of generality we may assume that $v(\mathcal{H}_{m,k+1}) \ge v(\mathcal{H}_{m,k}) + 1$ for all $k \ge 1$ and $m \ge 1$.
Let $x_{m,k} = d(\partial \mathcal{H}_{m,k})$ and $y_{m,k} = d(\mathcal{H}_{m,k})$ for all $m\ge 1$ and all $k\ge 1$.
Since $\lim_{k\to \infty} (x_{m,k}, y_{m,k}) = (x_m,y_m)$,
by Lemma 2.1, there exists $(x_{m,k_m},y_{m,k_m}) \in \left( (x_{m,k}, y_{m,k}) \right)_{k=1}^{\infty}$ for all $m \ge 1$
such that $\lim_{m \to \infty}(x_{m,k_m},y_{m,k_m}) = (x_0,y_0)$ and $\lim_{m \to \infty}k_m = \infty$.
This implies that $\left( \mathcal{H}_{m,k_m} \right)_{k = 1}^{\infty}$ is
a good sequence of $\mathcal{F}$-free $r$-graphs that realizes $(x_0,y_0)$.
So, $(x_0,y_0) \in \Omega(\mathcal{F})$ and this completes the proof.
\end{proof}

Next we prove Proposition \ref{only-boundary-matters}. Its proof uses Algorithm 1.
\begin{proof}[Proof of Proposition \ref{only-boundary-matters}]
Since $(x_0,y_0) \in \Omega(\mathcal{F})$, there exists a good sequence of $\mathcal{F}$-free $r$-graphs
$\left( \mathcal{H}_{k} \right)_{k=1}^{\infty}$ for which
$\lim_{k \to \infty} d(\partial \mathcal{H}_k) = x_0$ and $\lim_{k \to \infty} d(\mathcal{H}_k) = y_0$.
Now fix $y \in [0,y_0)$.
For every $k\ge 1$ apply Algorithm 1 to $\mathcal{H}_{k}$ with edge density threshold $y$
and let $\mathcal{H}_{k}'$ denote the $r$-graph that Algorithm 1 outputs.
We claim that $\left( \mathcal{H}_{k}' \right)_{k=1}^{\infty}$ is a good sequence of $\mathcal{F}$-free $r$-graphs that realizes $(x_0, y)$.
Indeed, choose $\epsilon = (y_0-y)/2 > 0$,
by the assumption that $\lim_{k\to \infty}d(\mathcal{H}_k) = y_0$,
there exists $k_0$ such that $d(\mathcal{H}_k) \in (y_0 - \epsilon, y_0 + \epsilon)$ for all $k \ge k_0$.
Therefore, by Algorithm $1$, $y - 1/\binom{v(\mathcal{H}_k)}{r} < d(\mathcal{H}'_{k}) \le y$ for all $k\ge k_0$,
and hence $\lim_{k \to \infty} d(\mathcal{H}'_{k}) = y$.
On the other hand, since $|\partial \mathcal{H}'_{k}| = |\partial \mathcal{H}_{k}|$ for all $k\ge 1$, $\lim_{k \to \infty} d(\partial \mathcal{H}'_{k}) = x$.
Therefore, $\left( \mathcal{H}_{k}' \right)_{k=1}^{\infty}$ is a good sequence of $\mathcal{F}$-free $r$-graphs that realizes $(x_0, y)$,
and hence $(x_0, y) \in \Omega(\mathcal{F})$.
\end{proof}

Recall that $\textrm{ex}(n, \mathcal{F}_{1}) \le \textrm{ex}(n, \mathcal{F}_{2})$ whenever $\mathcal{F}_{2} \subset \mathcal{F}_{1}$.
By the definition of $g(\mathcal{F})$, a similar inequality also holds for $g(\mathcal{F})$.
\begin{obs}\label{subset-inequality}
Let $r \ge 3$.
Suppose that $\mathcal{F}_1$ and $\mathcal{F}_2$ are two families of $r$-graphs with $\mathcal{F}_1\subset \mathcal{F}_2$.
Then $\Omega(\mathcal{F}_2) \subset \Omega(\mathcal{F}_1)$.
In particular, $g({\mathcal{F}_2},x) \le g({\mathcal{F}_1},x)$ for all $x \in {\rm proj}\Omega(\mathcal{F}_2)$.
\end{obs}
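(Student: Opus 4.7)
The plan is to unwind the definitions directly: the hypothesis $\mathcal{F}_1\subset\mathcal{F}_2$ means that any $\mathcal{F}_2$-free $r$-graph is automatically $\mathcal{F}_1$-free, and then to push this pointwise containment through the definition of $\Omega(\cdot)$.

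First I would observe the following elementary fact: if $\mathcal{H}$ contains no copy of any $F\in\mathcal{F}_2$, then since $\mathcal{F}_1\subset\mathcal{F}_2$, every $F\in\mathcal{F}_1$ is also a member of $\mathcal{F}_2$, hence $\mathcal{H}$ contains no copy of any $F\in\mathcal{F}_1$ either. In other words, the class of $\mathcal{F}_2$-free $r$-graphs is a subset of the class of $\mathcal{F}_1$-free $r$-graphs.

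Next, to prove the containment $\Omega(\mathcal{F}_2)\subset\Omega(\mathcal{F}_1)$, take any $(x,y)\in\Omega(\mathcal{F}_2)$. By Definition \ref{def-feasible-region}(d), there is a good sequence $(\mathcal{H}_k)_{k=1}^{\infty}$ of $\mathcal{F}_2$-free $r$-graphs with $\lim_{k\to\infty}d(\partial\mathcal{H}_k)=x$ and $\lim_{k\to\infty}d(\mathcal{H}_k)=y$. By the previous paragraph, each $\mathcal{H}_k$ is also $\mathcal{F}_1$-free, so the very same sequence witnesses $(x,y)\in\Omega(\mathcal{F}_1)$.

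For the ``in particular'' statement, fix $x\in{\rm proj}\Omega(\mathcal{F}_2)$. By Proposition~\ref{closed} the region $\Omega(\mathcal{F}_2)$ is closed, so the maximum defining $g(\mathcal{F}_2,x)$ is attained, i.e.\ $(x,g(\mathcal{F}_2,x))\in\Omega(\mathcal{F}_2)$. Applying the containment just established, $(x,g(\mathcal{F}_2,x))\in\Omega(\mathcal{F}_1)$, which first shows $x\in{\rm proj}\Omega(\mathcal{F}_1)$ and then, from the definition of $g(\mathcal{F}_1,x)$ as the maximum $y$ with $(x,y)\in\Omega(\mathcal{F}_1)$, yields $g(\mathcal{F}_1,x)\ge g(\mathcal{F}_2,x)$. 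Honestly, there is no real obstacle here: the statement is a monotonicity check, and the only minor subtlety is invoking closedness of $\Omega(\mathcal{F}_2)$ so that the maximum in the definition of $g$ is attained; if one preferred, one could alternatively take a sequence $(x,y_n)\in\Omega(\mathcal{F}_2)$ with $y_n\nearrow g(\mathcal{F}_2,x)$ and apply the same argument without appealing to Proposition~\ref{closed}.
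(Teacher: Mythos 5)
Your proof is correct and is exactly the argument the paper implicitly intends; the paper leaves this as an unproven observation (merely remarking that it is analogous to the monotonicity of $\textrm{ex}(n,\mathcal{F})$), and your unwinding of the definitions, together with the appeal to Proposition~\ref{closed} so that the maximum in the definition of $g(\mathcal{F}_2,x)$ is attained, is the natural and standard way to fill it in.
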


Now we are ready to prove Proposition \ref{universal-upper-bound}.
\begin{proof}[Proof of Proposition \ref{universal-upper-bound}]
By Observation \ref{subset-inequality}, it suffices to show that ${\rm proj}\Omega(\emptyset) = [0,1]$
and $g(\emptyset, x) = x^{r/(r-1)}$ for all $x \in [0,1]$.
The first part is easy, since the complete $r$-graph on $n$ vertices has shadow density $1$,
and it follows from Observation \ref{projection-x-interval} that ${\rm proj}\Omega(\emptyset) = [0,1]$.

Now we consider the second part.
First we show that $g(\emptyset, x) \le x^{r/(r-1)}$ for all $x \in [0,1]$.
Let $\left(\mathcal{H}_{k}\right)_{k = 1}^{\infty}$ be a good sequence of $r$-graph that realizes $(x,y)$.
For every $k \ge 1$ let $\alpha_{k}$ denote the real number that satisfies $|\partial \mathcal{H}_{k}| = \binom{\alpha_k v(\mathcal{H}_{k})}{r-1}$.
By the Kruskal-Katona theorem, $|\mathcal{H}_{k}| \le \binom{\alpha_k v(\mathcal{H}_{k})}{r}$ for all $k \ge 1$.
By assumption and $\lim_{k \to \infty}v(\mathcal{H}_k) = \infty$,
\[
x = \lim_{k \to \infty} \frac{|\partial \mathcal{H}_{k}|} {\binom{v(\mathcal{H}_{k})}{r-1}} =
\lim_{k \to \infty} \frac{\binom{\alpha_k v(\mathcal{H}_{k})}{r-1}} {\binom{v(\mathcal{H}_{k})}{r-1}}=
\lim_{k \to \infty} (\alpha_{k})^{r-1},
\]
which implies that $\lim_{k \to \infty} \alpha_{k} = x^{1/(r-1)}$.
Therefore, by assumption,
\[
y = \lim_{k \to \infty} \frac{|\mathcal{H}_{k}|} {\binom{v(\mathcal{H}_{k})}{r-1}} \le
\lim_{k \to \infty} \frac{\binom{\alpha_k v(\mathcal{H}_{k})}{r}} {\binom{v(\mathcal{H}_{k})}{r}}=
\lim_{k \to \infty} (\alpha_{k})^{r} = x^{\frac{r}{r-1}},
\]
and this proves that $g(\emptyset, x) \le x^{r/(r-1)}$ for all $x \in [0,1]$.

Next we show that $g(\emptyset, x) \ge x^{r/(r-1)}$ for all $x \in [0,1]$.
Choose an arbitray $x \in [0,1]$ and let $\alpha = x^{1/(r-1)}$.
Let $\mathcal{H}_{n}(\alpha)$ denote the vertex disjoint union of a complete $r$-graph on $\alpha n$ vertices and a set of $(1-\alpha)n$ isolated vertices.
Then we claim that $\left(\mathcal{H}_k(\alpha)\right)_{k = 1}^{\infty}$ is a good sequence of $r$-graphs that realizes $(x,x^{r/(r-1)})$.
Indeed,
\[
\lim_{k \to \infty} \frac{|\partial \mathcal{H}_{k}(\alpha)|} {\binom{n}{r-1}} =
\lim_{k \to \infty} \frac{\binom{\alpha n}{r-1}} {\binom{n}{r-1}} =
\alpha^{r-1} = x,
\]
and
\[
\lim_{k \to \infty} \frac{|\mathcal{H}_{k}(\alpha)|} {\binom{n}{r}} =
\lim_{k \to \infty} \frac{\binom{\alpha n}{r}} {\binom{n}{r}} =
\alpha^{r} = x^{\frac{r}{r-1}},
\]
and it follows from the definition that $g(\emptyset, x) \ge x^{r/(r-1)}$ for all $x \in [0,1]$.
\end{proof}

\subsection{Continuity and differentiability}
In this section we will prove Theorem~\ref{left-cont-and-diff} and some other related corollaries.
We will use the following theorem in our proofs.

\begin{thm}[see Section 3 of Chapter 3, \cite{SS05l}]\label{monotone-function}
Let $f: \mathbb{R} \to \mathbb{R}$ be a monotone function.
Then $f$ has at most countably many discontinuities of the first kind and no discontinuity of the second kind.
Moreover, $f$ is almost everywhere differentiable.
\end{thm}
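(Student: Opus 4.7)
Replacing $f$ by $-f$ if necessary, I may assume $f$ is non-decreasing. The three claims (no discontinuity of the second kind, at most countably many of the first kind, and a.e.\ differentiability) will be handled in that order; the first two are soft consequences of monotonicity, whereas the last requires a genuine measure-theoretic covering argument.

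\textbf{One-sided limits and countable discontinuities.} For any $x \in \mathbb{R}$, the set $\{f(t) : t < x\}$ is bounded above by $f(x)$, so $L(x) := \sup_{t<x} f(t)$ exists in $\mathbb{R}$. Given $\epsilon > 0$, pick $t_0 < x$ with $f(t_0) > L(x) - \epsilon$; monotonicity then gives $L(x) - \epsilon < f(t) \le L(x)$ for every $t \in (t_0,x)$, proving $\lim_{t \to x^-} f(t) = L(x)$. The right-hand limit is symmetric (as an infimum of $\{f(t): t > x\}$), so every discontinuity of $f$ is of the first kind, which establishes the second conclusion. Moreover, at any discontinuity the sandwich $f(x^-) \le f(x) \le f(x^+)$ combined with discontinuity forces $f(x^-) < f(x^+)$ (equality throughout would make $f$ continuous at $x$), so every discontinuity is in fact a jump. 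Assigning to each discontinuity $x$ a rational $q_x \in (f(x^-), f(x^+))$ yields an injection into $\mathbb{Q}$, since $x_1 < x_2$ implies $f(x_1^+) \le f(x_2^-)$ and hence $q_{x_1} < q_{x_2}$. Thus the set of discontinuities is at most countable.

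\textbf{Almost everywhere differentiability.} This is the heart of the theorem. I plan to follow the classical Vitali-covering approach. Define the four Dini derivatives $D^+f, D_+f, D^-f, D_-f$ as the upper/lower right/left secant limits at $x$. Since $D_+f \le D^+f$ and $D_-f \le D^-f$ hold trivially, it suffices to prove the cross inequalities $D^+f \le D_-f$ and $D^-f \le D_+f$ a.e., together with $D^+f < \infty$ a.e. For each pair of rationals $p<q$, set $E_{p,q} = \{x : D_-f(x) < p < q < D^+f(x)\}$. Each $x \in E_{p,q}$ admits arbitrarily small left-intervals $[x-h,x]$ with $f(x)-f(x-h) < ph$ and arbitrarily small right-intervals $[x,x+k]$ with $f(x+k)-f(x) > qk$. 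Apply the Vitali covering lemma twice: first extract a disjoint family of left-intervals covering $E_{p,q}$ up to a null set, whose $f$-increments sum to at most $p$ times their total length; then inside the union of those intervals extract a disjoint family of right-intervals again covering (almost all of) $E_{p,q}$, whose $f$-increments exceed $q$ times nearly the same total length. Monotonicity lets us telescope increments across the nested right-intervals, forcing $q\ell \le p\ell + \epsilon$ for the common length $\ell$, a contradiction unless $E_{p,q}$ has outer measure zero. Taking the countable union over rational pairs gives $D^+f \le D_-f$ a.e.; the symmetric argument yields $D^-f \le D_+f$ a.e. Finiteness of $D^+f$ a.e.\ follows from the bound that, on any $[a,b]$, disjoint intervals with secant slope exceeding $M$ have total length at most $M^{-1}(f(b)-f(a))$, letting $M \to \infty$. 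All four Dini derivatives thus coincide and are finite almost everywhere, so $f'$ exists almost everywhere.

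\textbf{Main obstacle.} The entire weight lies in the Vitali-covering comparison on $E_{p,q}$. The delicate step is nesting the two families of intervals so that the left $f$-increments (bounded by $p\ell$) and the right $f$-increments (exceeding $q\ell'$) refer to essentially the same portion of the line, so that telescoping across disjoint subintervals yields the contradiction $q \le p$. The Vitali covering lemma itself --- that any Vitali cover of a set in $\mathbb{R}$ admits a countable disjoint subfamily covering the set up to a null set --- is the essential analytic prerequisite and would have to be cited or proved separately; everything else in the argument is either elementary order-theoretic manipulation or routine estimation.
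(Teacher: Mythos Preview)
The paper does not provide its own proof of this statement; it is quoted as a known result with a reference to \cite{SS05l} (Stein--Shakarchi, \emph{Real Analysis}, Chapter~3, Section~3) and is used as a black box in the proof of Theorem~\ref{left-cont-and-diff}. There is therefore nothing in the paper to compare your argument against.

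That said, your proposal is correct and is essentially the classical proof one finds in the cited reference: the existence of one-sided limits via monotone suprema/infima rules out discontinuities of the second kind; the injection of jump intervals into $\mathbb{Q}$ gives countability; and Lebesgue's theorem on a.e.\ differentiability of monotone functions is obtained through the Dini-derivative/Vitali-covering argument you sketch. The outline is sound, with the Vitali covering lemma being the one nontrivial external ingredient, exactly as you identify.
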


The following lemma is the main tool in our proofs.
\begin{lemma}\label{main-inequality-monotone}
Let $r\ge 3$ and $\mathcal{F}$ be a family of $r$-graphs.
Then
\[
\left( g({\mathcal{F}},x+h) \right)^{\frac{r-1}{r}} \le
\left( g({\mathcal{F}},x) \right)^{\frac{r-1}{r}} + \frac{\left( g({\mathcal{F}},x)  \right)^{\frac{r-1}{r}}}{x} h
\]
for all $x\in {\rm proj}\Omega(\mathcal{F})\setminus \{0\}$ and all $h\ge 0$ with $x+h \in {\rm proj}\Omega(\mathcal{F})$.
\end{lemma}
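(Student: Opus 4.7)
The plan is to prove this by an ``inflation by isolated vertices'' argument: given a good sequence that realizes $(x+h, g(\mathcal{F},x+h))$, I will produce a new good sequence of $\mathcal{F}$-free $r$-graphs that realizes $(x,y)$ for a specific $y$, and then use $g(\mathcal{F},x) \ge y$ to deduce the stated bound. The key observation is that padding an $r$-graph with isolated vertices preserves $\mathcal{F}$-freeness (no new edges are created) and leaves $|\mathcal{H}|$ and $|\partial \mathcal{H}|$ unchanged, while scaling the relevant binomial coefficients in the denominators.

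Concretely, let $(\mathcal{H}_k)_{k=1}^{\infty}$ be a good sequence of $\mathcal{F}$-free $r$-graphs realizing $(x+h, g(\mathcal{F},x+h))$; this exists by Proposition~\ref{closed}. Set
\[
s := \left(\frac{x+h}{x}\right)^{1/(r-1)} - 1,
\]
and let $\mathcal{H}_k'$ be obtained from $\mathcal{H}_k$ by adjoining $\lfloor s\, v(\mathcal{H}_k)\rfloor$ new isolated vertices. Since $v(\mathcal{H}_k) \to \infty$, we have $v(\mathcal{H}_k')/v(\mathcal{H}_k) \to 1+s$, and therefore
\[
\lim_{k \to \infty} d(\partial \mathcal{H}_k') = \frac{x+h}{(1+s)^{r-1}} = x,
\qquad
\lim_{k \to \infty} d(\mathcal{H}_k') = \frac{g(\mathcal{F},x+h)}{(1+s)^{r}} = g(\mathcal{F},x+h)\left(\frac{x}{x+h}\right)^{r/(r-1)}.
\]
Thus $(\mathcal{H}_k')_{k \ge 1}$ is a good sequence of $\mathcal{F}$-free $r$-graphs realizing the point $(x, y)$ with $y = g(\mathcal{F},x+h)(x/(x+h))^{r/(r-1)}$, so $(x,y) \in \Omega(\mathcal{F})$ and hence $g(\mathcal{F}, x) \ge y$ by definition.

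Raising both sides of $g(\mathcal{F},x) \ge g(\mathcal{F},x+h)(x/(x+h))^{r/(r-1)}$ to the $(r-1)/r$ power and multiplying through by $(x+h)/x$ yields exactly
\[
g(\mathcal{F},x+h)^{(r-1)/r} \le g(\mathcal{F},x)^{(r-1)/r}\cdot \frac{x+h}{x} = g(\mathcal{F},x)^{(r-1)/r} + \frac{g(\mathcal{F},x)^{(r-1)/r}}{x}\, h,
\]
which is the desired inequality. The only mild obstacle is the integrality of $\lfloor s\, v(\mathcal{H}_k)\rfloor$, but since $v(\mathcal{H}_k)\to\infty$ the rounding error is $O(1/v(\mathcal{H}_k))$ and does not affect the limits; everything else reduces to an elementary asymptotic computation with binomial coefficients.
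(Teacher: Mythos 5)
Your proof is correct and is essentially the paper's proof: the paper likewise pads a good sequence realizing $(x+h, g(\mathcal{F},x+h))$ with a proportion $\alpha = ((x+h)/x)^{1/(r-1)} - 1$ of isolated vertices, reads off the resulting limit point $(x, (x/(x+h))^{r/(r-1)} g(\mathcal{F},x+h)) \in \Omega(\mathcal{F})$, and then rearranges $g(\mathcal{F},x) \ge (x/(x+h))^{r/(r-1)} g(\mathcal{F},x+h)$ into the stated bound. The only cosmetic difference is that the paper suppresses the floor-function bookkeeping that you spell out at the end.
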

\begin{proof}
Suppose that $x+h \in {\rm proj}\Omega(\mathcal{F})$.
Choose
\[
\alpha = \left( \frac{x+h}{x} \right)^{\frac{1}{r-1}} -1.
\]
Let $\left( \mathcal{H}_k\right)_{k=1}^{\infty}$ be a good sequence of  $\mathcal{F}$-free $r$-graphs that realizes $(x+h,g({\mathcal{F}},x+h))$.
For every $k\ge 1$ let $n_k = v(\mathcal{H}_{k})$ and
let $\mathcal{H}'_k$ be obtained from $\mathcal{H}_k$ by adding a set of $\alpha n_{k}$ isolated vertices and let $n'_{k} = (1+\alpha) n_{k}$.
Then,
\[
\lim_{k\to \infty}\frac{|\partial\mathcal{H}'_k|}{\binom{n'_{k}}{r-1}} =
\lim_{k\to \infty}\frac{|\partial\mathcal{H}_k|}{\binom{(1+\alpha) n_{k}}{r-1}} = \frac{x+h}{(1+\alpha)^{r-1}} = x,
\]
and
\[
\lim_{k\to \infty}\frac{|\mathcal{H}'_k|}{\binom{n'_{k}}{r}} =
\lim_{k\to \infty}\frac{|\mathcal{H}_k|}{ \binom{(1+\alpha) n_{k}}{r}}=
\frac{g({\mathcal{F}},x+h)}{(1+\alpha)^r} = \left( \frac{x}{x+h}\right)^{\frac{r}{r-1}}  g({\mathcal{F}},x+h).
\]
Therefore, $\left( \mathcal{H}'_k\right)_{k=1}^{\infty}$ a good sequence of  $\mathcal{F}$-free $r$-graphs that realizes
\[
\left( x,\left( \frac{x}{x+h}\right)^{\frac{r}{r-1}}  g({\mathcal{F}},x+h) \right).
\]
Consequently,
\begin{align}\label{general-continuious-key-lemma}
g({\mathcal{F}},x) \ge \left( \frac{x}{x+h}\right)^{\frac{r}{r-1}}  g({\mathcal{F}},x+h),
\end{align}
which gives
\[
\left( g({\mathcal{F}},x+h) \right)^{\frac{r-1}{r}} \le
\left( g({\mathcal{F}},x) \right)^{\frac{r-1}{r}} + \frac{\left( g({\mathcal{F}},x)  \right)^{\frac{r-1}{r}}}{x} h.
\]
\end{proof}

\begin{coro}\label{down-left-cont}
Let $r \ge 3$ and $\mathcal{F}$ be a family of $r$-graphs.
Then for any $x \in {\rm proj}\Omega(\mathcal{F})\setminus\{0\}$ and any $\delta > 0$, there exists $\epsilon > 0$
such that $g(\mathcal{F},x') > g(\mathcal{F},x) - \delta$ for all $x' \in (x-\epsilon, x)$.
\end{coro}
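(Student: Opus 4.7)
The plan is to derive this corollary as a direct consequence of Lemma \ref{main-inequality-monotone}, which already captures the one-sided control we need. The lemma gives an upper bound on $g(\mathcal{F}, x+h)$ in terms of $g(\mathcal{F}, x)$; by applying it at the perturbed point $x'$ rather than at $x$, we get a lower bound on $g(\mathcal{F}, x')$ in terms of $g(\mathcal{F}, x)$, which is precisely what left-continuity from below requires.

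Concretely, I would fix $x \in \mathrm{proj}\,\Omega(\mathcal{F}) \setminus \{0\}$ and $\delta > 0$, and take any $x' \in (0, x)$, noting that $x' \in \mathrm{proj}\,\Omega(\mathcal{F})$ by Observation \ref{projection-x-interval}. Setting $h = x - x' \geq 0$, Lemma \ref{main-inequality-monotone} applied at $x'$ gives
\[
\bigl(g(\mathcal{F}, x)\bigr)^{\frac{r-1}{r}} \;\leq\; \bigl(g(\mathcal{F}, x')\bigr)^{\frac{r-1}{r}} + \frac{\bigl(g(\mathcal{F}, x')\bigr)^{\frac{r-1}{r}}}{x'}(x - x') \;=\; \bigl(g(\mathcal{F}, x')\bigr)^{\frac{r-1}{r}} \cdot \frac{x}{x'}.
\]
Rearranging yields the key inequality
\[
g(\mathcal{F}, x') \;\geq\; g(\mathcal{F}, x) \cdot \left(\frac{x'}{x}\right)^{\frac{r}{r-1}}.
\]

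Now the conclusion is a routine continuity argument. If $g(\mathcal{F}, x) = 0$, then $g(\mathcal{F}, x') \geq 0 > g(\mathcal{F}, x) - \delta$, so any $\epsilon > 0$ works. Otherwise, since $(x'/x)^{r/(r-1)} \to 1$ as $x' \to x^-$, I would choose $\epsilon > 0$ small enough that $\bigl((x-\epsilon)/x\bigr)^{r/(r-1)} > 1 - \delta/g(\mathcal{F}, x)$, which forces $g(\mathcal{F}, x') > g(\mathcal{F}, x) - \delta$ for every $x' \in (x - \epsilon, x)$.

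There is no real obstacle here: all the substantive work was already done in proving Lemma \ref{main-inequality-monotone} via the blow-up/padding construction. The only mild point to keep track of is the case $g(\mathcal{F}, x) = 0$ (or the case when $g(\mathcal{F}, x) \leq \delta$, where the desired bound is essentially trivial from nonnegativity of $g$), so that the division by $g(\mathcal{F}, x)$ in the choice of $\epsilon$ is safe.
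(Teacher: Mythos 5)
Your proof is correct and follows essentially the same route as the paper: both derive the key inequality $g(\mathcal{F},x') \ge g(\mathcal{F},x)\,(x'/x)^{r/(r-1)}$ (which is exactly the paper's inequality~(\ref{general-continuious-key-lemma}) with $x$ replaced by $x'$ and $h = x-x'$) and then conclude by continuity of $(x'/x)^{r/(r-1)}$ as $x' \to x^-$. The only cosmetic difference is that the paper names an explicit $\epsilon = \delta x/3$ and invokes $(1-t)^a \ge 1-at$ together with $g \le 1$, whereas you argue via the limit and handle the $g(\mathcal{F},x)=0$ case separately.
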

\begin{proof}
We may assume that $\delta < 1$.
Choose $\epsilon  = \delta x /3$ and let $x' \in (x-\epsilon, x)$.
Then $(\ref{general-continuious-key-lemma})$ gives
\[
\begin{split}
g({\mathcal{F}},x') & \ge \left( \frac{x'}{x}\right)^{\frac{r}{r-1}}  g({\mathcal{F}},x) \\
& =\left( 1 - \frac{x-x'}{x}\right)^{\frac{r}{r-1}}  g({\mathcal{F}},x) \\
& \ge \left( 1 - \frac{2 \epsilon}{x}\right)  g({\mathcal{F}},x)
=  g({\mathcal{F}},x) - \frac{2 g({\mathcal{F}},x)\epsilon}{x}
> g({\mathcal{F}},x) - \delta,
\end{split}
\]
where the second inequality follows from the fact that $(1-x)^a \ge 1 - ax$ for all $x \in [0,1]$ and all $a \ge 1$.
\end{proof}

Proposition \ref{closed} together with Corollary \ref{down-left-cont} will show that $g(\mathcal{F})$ does not contain removable discontinuities.
\begin{coro}\label{no-reomvable-discont}
Let $r \ge 3$ and $\mathcal{F}$ be a family of $r$-graphs.
Then $g(\mathcal{F})$ does not contain removable discontinuities.
\end{coro}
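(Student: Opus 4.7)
\begin{pf}[Proof proposal for Corollary \ref{no-reomvable-discont}]
The plan is to show the strictly stronger statement that $g(\mathcal{F})$ is left-continuous at every point $x_0 \in {\rm proj}\,\Omega(\mathcal{F})\setminus\{0\}$; this immediately rules out removable discontinuities, since such a discontinuity would require the left limit to equal the right limit while being distinct from $g(\mathcal{F},x_0)$.

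Fix $x_0 \in {\rm proj}\,\Omega(\mathcal{F})\setminus\{0\}$ and set
\[
\underline{L} := \liminf_{x \to x_0^-} g(\mathcal{F},x), \qquad \overline{L} := \limsup_{x \to x_0^-} g(\mathcal{F},x).
\]
First I would establish $\underline{L} \ge g(\mathcal{F},x_0)$ as a direct consequence of Corollary \ref{down-left-cont}: for every $\delta > 0$ there is $\epsilon > 0$ such that $g(\mathcal{F},x') > g(\mathcal{F},x_0) - \delta$ whenever $x' \in (x_0 - \epsilon, x_0)$, so letting $\delta \to 0$ gives the required bound.

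Next I would establish the reverse inequality $\overline{L} \le g(\mathcal{F},x_0)$ by invoking Proposition \ref{closed}. Choose a sequence $x_n \to x_0^-$ with $x_n \in {\rm proj}\,\Omega(\mathcal{F})$ and $g(\mathcal{F},x_n) \to \overline{L}$. By definition of $g$, each pair $(x_n, g(\mathcal{F},x_n))$ lies in $\Omega(\mathcal{F})$, so by Proposition \ref{closed} the limit point $(x_0, \overline{L})$ lies in $\Omega(\mathcal{F})$ as well. By the definition of $g(\mathcal{F})$ as the maximum of $y$ with $(x_0,y) \in \Omega(\mathcal{F})$, this forces $\overline{L} \le g(\mathcal{F},x_0)$.

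Combining these two inequalities yields $\underline{L} = \overline{L} = g(\mathcal{F},x_0)$, i.e.\ $\lim_{x \to x_0^-} g(\mathcal{F},x) = g(\mathcal{F},x_0)$, so $g(\mathcal{F})$ is left-continuous at $x_0$. Now suppose $g(\mathcal{F})$ had a removable discontinuity at some point $x_0$: by definition the one-sided limits at $x_0$ would exist and coincide with some common value $L$, and $L \ne g(\mathcal{F},x_0)$. But left-continuity already forces $L = g(\mathcal{F},x_0)$, a contradiction. There is no real obstacle here; the argument is essentially a packaging of the two ingredients, and I expect that the same reasoning, with a small adjustment handling the boundary point $x_0 = 0$ via Observation \ref{projection-x-interval}, will also serve as the first component of the proof of Theorem \ref{left-cont-and-diff}.
\end{pf}
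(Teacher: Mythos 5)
Your proof is correct, and it uses the same two ingredients as the paper (Corollary~\ref{down-left-cont} to bound the left limit from below, Proposition~\ref{closed} to bound it from above), but the route is genuinely different in structure. The paper argues by contradiction: it assumes a removable discontinuity, sets $y_0$ to be the common one-sided limit, uses closedness to get $(x_0,y_0)\in\Omega(\mathcal{F})$ hence $g(\mathcal{F},x_0)>y_0$, and then plugs $\delta=(g(\mathcal{F},x_0)-y_0)/2$ into Corollary~\ref{down-left-cont} to contradict $y_0$ being the left limit. You instead prove the strictly stronger fact that $g(\mathcal{F})$ is left-continuous at every $x_0>0$ by a clean $\liminf$/$\limsup$ sandwich, and read off the corollary for free. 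Your version has two modest advantages: it does not need to assume a priori that one-sided limits exist (the paper's argument inherits that hypothesis from the definition of a removable discontinuity), and it already establishes the left-continuity component of Theorem~\ref{left-cont-and-diff}, which the paper instead obtains by a more roundabout route through Theorem~\ref{monotone-function} and a case analysis on jump discontinuities. Your remark about handling the endpoint $x_0=0$ is sound but actually unnecessary for the corollary as stated, since no removable discontinuity can occur at $0$ (there is no left limit to speak of), which the paper also notes when it writes $x_0>0$.
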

\begin{proof}
Suppose that $x_0 \in {\rm proj}\Omega(\mathcal{F})$ is a removable discontinuity of $g(\mathcal{F})$.
Then $x_0 > 0$ and $\lim_{x \to x_0^{-}} g(\mathcal{F}, x) = \lim_{x \to x_0^{+}} g(\mathcal{F}, x) \neq g(\mathcal{F}, x_0)$.
Let $y_0 = \lim_{x \to x_0^{-}} g(\mathcal{F}, x)$.
By Proposition \ref{closed}, $(x_0, y_0) \in \Omega(\mathcal{F})$, and by the definition of $g(\mathcal{F})$, $g(\mathcal{F}, x_0) > y_0$.
Letting $\delta = (g(\mathcal{F}, x_0) - y_0)/2$ in Corollary \ref{down-left-cont}, we obtain
\[
y_0 = \lim_{x \to x_0^{-}} g(\mathcal{F}, x) > g(\mathcal{F}, x_0) - \delta = \frac{g(\mathcal{F}, x_0) + y_0}{2} > y_0,
\]
a contradiction.
\end{proof}

Now we are ready to prove Theorem \ref{left-cont-and-diff}.
\begin{proof}[Proof of Theorem \ref{left-cont-and-diff}]
First we show that $g(\mathcal{F})$ is almost everywhere differentiable.
Let $f(x) =  \left(g(\mathcal{F},x)\right)^{\frac{r-1}{r}} - x$.
It follows from Lemma \ref{main-inequality-monotone} and Theorem \ref{universal-upper-bound} that
\[
\begin{split}
\left(g(\mathcal{F},x+h)\right)^{\frac{r-1}{r}} & \le
\left(g(\mathcal{F},x)\right)^{\frac{r-1}{r}} + \frac{\left(g(\mathcal{F},x)\right)^{\frac{r-1}{r}}}{x}  h \\
& \le \left(g(\mathcal{F},x)\right)^{\frac{r-1}{r}} + \frac{\left(x^{\frac{r}{r-1}}\right)^{\frac{r-1}{r}}}{x}  h \\
& = \left(g(\mathcal{F},x)\right)^{\frac{r-1}{r}} + h,
\end{split}
\]
which implies that $f$ is decreasing on ${\rm proj}\Omega(\mathcal{F})$.
By Theorem \ref{monotone-function}, $f$ is almost everywhere differentiable, and so is $g(\mathcal{F})$.

Next, we show that $g(\mathcal{F})$ has at most countably many jump discontinuities.
By Theorem \ref{monotone-function}, $f$ has at most countably many discontinuities of the first kind,
and so does $g(\mathcal{F})$ since $g(\mathcal{F},x) = \left( f(x)+x \right)^{r/(r-1)}$ for all $x \in {\rm proj}\Omega(\mathcal{F})$.
Corollary \ref{down-left-cont} shows that $g(\mathcal{F})$ does not have a removable discontinuity,
therefore, $g(\mathcal{F})$ has at most countably many jump discontinuities.

Finally, we show that $g(\mathcal{F})$ is left-continuous.
Let $x_0 \in {\rm proj}\Omega(\mathcal{F})$ be a discontinuity of $g(\mathcal{F})$.
By the previous result, $x_0$ can only be a jump discontinuity.
Let $y^{-}_0 = \lim_{x \to x^{-}_0} g(\mathcal{F},x)$ and $y^{+}_0 = \lim_{x \to x^{+}_0} g(\mathcal{F},x)$.
By Proposition \ref{closed}, $(x_0, y_{0}^{-})\in \Omega(\mathcal{F})$ and $(x_0, y_{0}^{+}) \in \Omega(\mathcal{F})$.
So, it suffices to show that $y_{0}^{-} > y_{0}^{+}$.
Indeed, suppose that $y_0^{+}> y_{0}^{-}$.
Then, by the definition of $g(\mathcal{F})$ we would have $g(\mathcal{F},x_0) = y_{0}^{+}$.
Letting $\delta = (y_{0}^{+} - y_{0}^{-})/2$ in Corollary \ref{down-left-cont}, we obtain
\[
y_0^{-} = \lim_{x \to x^{-}_0} g(\mathcal{F},x) > g(\mathcal{F},x_0) - \delta = \frac{y_{0}^{-} + y_{0}^{+}}{2} > y_0^{-},
\]
a contradiction, and this completes the proof.
\end{proof}

The proof of Theorem \ref{left-cont-and-diff} also gives the following corollary.
\begin{coro}\label{increasing-implies-continuious}
Let $r \ge 3$ and $\mathcal{F}$ be a family of $r$-graphs.
Suppose that $x_0 \in {\rm proj}\Omega(\mathcal{F})$ is a discontinuity of $g(\mathcal{F})$.
Then both $\lim_{x \to x^{-}_0} g(\mathcal{F},x)$ and $\lim_{x \to x^{+}_0} g(\mathcal{F},x)$ exist and
$\lim_{x \to x^{-}_0} g(\mathcal{F},x) > \lim_{x \to x^{+}_0} g(\mathcal{F},x)$.
In particular, if $g(\mathcal{F})$ is increasing on $[c_1,c_2]$ for some $c_2 > c_1 \ge 0$,
then $g(\mathcal{F})$ is continuous on $[c_1,c_2]$.
\end{coro}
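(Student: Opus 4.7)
The plan is to observe that the bulk of this corollary is already contained in what was established during the proof of Theorem \ref{left-cont-and-diff}, and then to deduce the ``in particular'' statement as a short contrapositive argument.

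For the first assertion, I would first invoke Theorem \ref{left-cont-and-diff} (or more directly the monotonicity of $f(x)=\left(g(\mathcal{F},x)\right)^{(r-1)/r}-x$ established in its proof via Lemma \ref{main-inequality-monotone}, combined with Theorem \ref{monotone-function}) to conclude that $g(\mathcal{F})$ has no discontinuity of the second kind. Hence at any discontinuity $x_0\in{\rm proj}\Omega(\mathcal{F})$ both one-sided limits $y_0^{-}=\lim_{x\to x_0^{-}}g(\mathcal{F},x)$ and $y_0^{+}=\lim_{x\to x_0^{+}}g(\mathcal{F},x)$ exist. To then show $y_0^{-}>y_0^{+}$, I would repeat the argument used at the end of the proof of Theorem \ref{left-cont-and-diff}: Proposition \ref{closed} gives $(x_0,y_0^{-}),(x_0,y_0^{+})\in\Omega(\mathcal{F})$, so $g(\mathcal{F},x_0)\ge\max\{y_0^{-},y_0^{+}\}$; if instead $y_0^{+}\ge y_0^{-}$ then $g(\mathcal{F},x_0)=y_0^{+}$ (by Corollary \ref{no-reomvable-discont}, since a jump is not removable, this forces strict inequality $y_0^{+}>y_0^{-}$). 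Applying Corollary \ref{down-left-cont} with $\delta=(y_0^{+}-y_0^{-})/2>0$ produces
\[
y_0^{-}=\lim_{x\to x_0^{-}}g(\mathcal{F},x)>g(\mathcal{F},x_0)-\delta=\frac{y_0^{-}+y_0^{+}}{2}>y_0^{-},
\]
a contradiction.

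For the ``in particular'' part, I would argue by contradiction: suppose $g(\mathcal{F})$ is increasing on $[c_1,c_2]$ but has a discontinuity at some $x_0\in[c_1,c_2]$. By the first half of the corollary, $y_0^{-}$ and $y_0^{+}$ both exist and $y_0^{-}>y_0^{+}$. However, monotonicity forces $y_0^{-}\le y_0^{+}$, since for any $x'<x_0<x''$ in $[c_1,c_2]$ we have $g(\mathcal{F},x')\le g(\mathcal{F},x'')$, and taking $x'\to x_0^{-}$ and $x''\to x_0^{+}$ preserves the inequality. This contradiction shows $g(\mathcal{F})$ is continuous throughout $[c_1,c_2]$.

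The main (and essentially only) obstacle is handling the endpoint $x_0=0$ correctly, since Corollary \ref{down-left-cont} and Lemma \ref{main-inequality-monotone} are stated for $x\in{\rm proj}\Omega(\mathcal{F})\setminus\{0\}$; but at $x_0=0$ a left limit makes no sense inside ${\rm proj}\Omega(\mathcal{F})=[0,\hat{c}]$, so the discontinuity case does not arise there. Aside from this minor boundary check, the proof is a direct packaging of results already established.
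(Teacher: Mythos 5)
Your proof is correct and reconstructs essentially the argument the paper has in mind when it says the corollary "also" follows from the proof of Theorem~\ref{left-cont-and-diff}: the existence of one-sided limits comes from the monotonicity of $f(x)=g(\mathcal{F},x)^{(r-1)/r}-x$ together with Theorem~\ref{monotone-function}, the inequality $\lim_{x\to x_0^-}g>\lim_{x\to x_0^+}g$ is exactly the contradiction argument at the end of the left-continuity proof (using Proposition~\ref{closed}, Corollary~\ref{no-reomvable-discont}, and Corollary~\ref{down-left-cont}), and the "in particular" statement is a short contrapositive using monotonicity. One remark: your intermediate claim "$g(\mathcal{F},x_0)=y_0^+$" is not needed (and not directly justified from what you cite) --- the weaker fact $g(\mathcal{F},x_0)\ge y_0^+$, which follows immediately from $(x_0,y_0^+)\in\Omega(\mathcal{F})$, already suffices to run the Corollary~\ref{down-left-cont} contradiction; the paper's own phrasing in Theorem~\ref{left-cont-and-diff} shares the same slight looseness, so this is a cosmetic issue rather than a gap.
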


\section{A point of discontinuity}
In this section we will prove Theorem \ref{example-discont} by defining a family $\mathcal{D}$ of $3$-graphs,
and showing that $g({\mathcal{D}})$ is discontinuous at $x=2/3$.

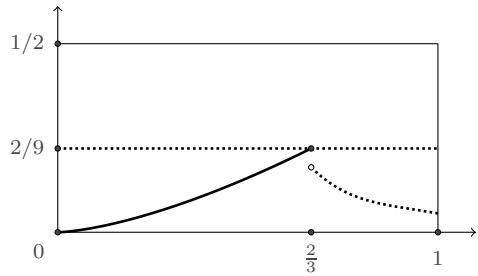
\begin{figure}[htbp]
\centering
\begin{tikzpicture}[xscale=5,yscale=5]
\draw [->] (0,0)--(1.1,0);
\draw [->] (0,0)--(0,0.6);
\draw (0,0.5)--(1,0.5);
\draw (1,0)--(1,0.5);
\draw [line width=1pt,dash pattern=on 1pt off 1.2pt,domain=0:1] plot(\x,{2/9});
\draw[line width=1pt]
(2/3,2/9)--(0.6630236470626231,0.2204032031146704)
--(0.6585443512860356,0.21817345951646344)--(0.654065055509448,0.21595128623168966)
--(0.6495857597328606,0.21373670913866868)--(0.645106463956273,0.21152975438293659)
--(0.6406271681796856,0.20933044838187664)--(0.636147872403098,0.2071388178294629)
--(0.6316685766265105,0.20495488970112097)--(0.6271892808499231,0.2027786912587088)
--(0.6227099850733355,0.2006102500556217)--(0.6182306892967481,0.19844959394202646)
--(0.6137513935201605,0.19629675107022665)--(0.609272097743573,0.19415174990016584)
--(0.6047928019669855,0.19201461920507196)--(0.600313506190398,0.1898853880772468)
--(0.5958342104138105,0.1877640859340081)--(0.591354914637223,0.1856507425237863)
--(0.5868756188606356,0.18354538793238254)--(0.582396323084048,0.18144805258939525)
--(0.5779170273074605,0.17935876727481742)--(0.573437731530873,0.1772775631258134)
--(0.5689584357542855,0.17520447164368083)--(0.5644791399776979,0.17313952470100336)
--(0.5599998442011105,0.17108275454900254)--(0.5555205484245229,0.16903419382509416)
--(0.5510412526479355,0.1669938755606583)--(0.5465619568713479,0.164961833189029)
--(0.5420826610947604,0.16293810055371402)--(0.537603365318173,0.16092271191685042)
--(0.5331240695415854,0.15891570196790752)--(0.528644773764998,0.1569171058326439)
--(0.5241654779884104,0.1549269590823307)--(0.519686182211823,0.1529452977432499)
--(0.5152068864352354,0.15097215830647748)--(0.5107275906586479,0.14900757773796589)
--(0.5062482948820604,0.14705159348893285)--(0.5017689991054729,0.14510424350657278)
--(0.4972897033288854,0.1431655662451021)--(0.4928104075522979,0.14123560067715077)
--(0.48833111177571037,0.13931438630551737)--(0.48385181599912286,0.1374019631752988)
--(0.4793725202225354,0.13549837188641337)--(0.4748932244459478,0.1336036536065323)
--(0.4704139286693603,0.1317178500844364)--(0.46593463289277287,0.12984100366381837)
--(0.46145533711618536,0.1279731572975465)--(0.45697604133959785,0.12611435456241368)
--(0.45249674556301034,0.12426463967439019)--(0.44801744978642283,0.12242405750440374)
--(0.4435381540098353,0.12059265359467064)--(0.43905885823324786,0.11877047417560284)
--(0.43457956245666024,0.11695756618331694)--(0.43010026668007284,0.11515397727777338)
--(0.4256209709034853,0.1133597558615753)--(0.4211416751268978,0.11157495109945828)
--(0.4166623793503103,0.1097996129385029)--(0.4121830835737228,0.10803379212910673)
--(0.4077037877971353,0.10627754024675189)--(0.4032244920205478,0.10453090971460692)
--(0.39874519624396026,0.10279395382700543)--(0.3942659004673728,0.10106672677384512)
--(0.3897866046907853,0.09934928366595529)--(0.38530730891419773,0.09764168056148004)
--(0.3808280131376103,0.09594397449333411)--(0.3763487173610227,0.09425622349778398)
--(0.37186942158443526,0.0925784866442176)--(0.36739012580784774,0.0909108240661645)
--(0.36291083003126023,0.0892532969936367)--(0.3584315342546727,0.08760596778686161)
--(0.35395223847808527,0.08596889997148711)--(0.34947294270149776,0.0843421582753402)
--(0.3449936469249102,0.08272580866683046)--(0.34051435114832274,0.08111991839509318)
--(0.33603505537173517,0.07952455603197399)--(0.3315557595951477,0.07793979151596783)
--(0.3270764638185602,0.07636569619822708)--(0.3225971680419727,0.0748023428907691)
--(0.3181178722653852,0.07324980591701827)--(0.31363857648879767,0.07170816116483066)
--(0.3091592807122102,0.07017748614216043)--(0.30467998493562265,0.06865786003553928)
--(0.3002006891590352,0.06714936377155435)--(0.2957213933824477,0.0656520800815259)
--(0.2912420976058602,0.06416609356960143)--(0.28676280182927266,0.06269149078450184)
--(0.28228350605268515,0.06122836029517568)--(0.27780421027609764,0.05977679277063996)
--(0.27332491449951013,0.058336881064308725)--(0.2688456187229227,0.05690872030314095)
--(0.26436632294633516,0.05549240798196618)--(0.25988702716974765,0.05408804406338191)
--(0.25540773139316014,0.05269573108365302)--(0.25092843561657263,0.05131557426508493)
--(0.24644913983998512,0.04994768163538859)--(0.2419698440633976,0.04859216415460634)
--(0.23749054828681013,0.04724913585022642)--(0.23301125251022262,0.045918713961177304)
--(0.2285319567336351,0.04460101909146803)--(0.2240526609570476,0.04329617537431961)
--(0.21957336518046008,0.042004310647729165)--(0.21509406940387255,0.04072555664250949)
--(0.2106147736272851,0.03946004918396895)--(0.20613547785069758,0.03820792840853079)
--(0.20165618207411007,0.03696933899674451)--(0.19717688629752256,0.035744430424320926)
--(0.19269759052093502,0.03453335723302326)--(0.18821829474434756,0.03333627932348077)
--(0.18373899896776005,0.03215336227225981)--(0.17925970319117254,0.03098477767583987)
--(0.17478040741458506,0.02983070352450204)--(0.17030111163799752,0.028691324609560667)
--(0.16582181586141004,0.027566832967862585)--(0.16134252008482253,0.02645742836805515)
--(0.15686322430823502,0.025363318843811742)--(0.15238392853164753,0.024284721280009886)
--(0.14790463275506,0.023221862058822252)--(0.1434253369784725,0.0221749777738319)
--(0.138946041201885,0.02114431602166764)--(0.1344667454252975,0.020130136282327293)
--(0.12998744964871,0.019132710901390154)--(0.1255081538721225,0.01815232618980085)
--(0.12102885809553499,0.017189283659967112)--(0.11654956231894749,0.01624390142069605)
--(0.11207026654235998,0.015316515758221273)--(0.10759097076577247,0.014407482936513701)
--(0.10311167498918497,0.013517181257608283)--(0.09863237921259746,0.01264601343232816)
--(0.09415308343600996,0.01179440932425909)--(0.08967378765942245,0.010962829146125673)
--(0.08519449188283494,0.01015176720926047)--(0.08071519610624744,0.009361756355687817)
--(0.07623590032965993,0.008593373241477962)--(0.07175660455307244,0.007847244693951734)
--(0.06727730877648493,0.00712405544089494)--(0.06279801299989743,0.006424557617874988)
--(0.05831871722330992,0.0057495826171624865)--(0.053839421446722414,0.005100056076844675)
--(0.04936012567013491,0.0044770171693905856)--(0.04488082989354741,0.003881643919684239)
--(0.040401534116959896,0.0033152872187856555)--(0.03592223834037239,0.0027795178016977455)
--(0.03144294256378489,0.0022761933405199843)--(0.026963646787197385,0.0018075583218622703)
--(0.02248435101060988,0.0013764007788228095)--(0.018005055234022373,0.0009863159625249872)
--(0.01352575945743487,0.000642194917737317)--(0.009046463680847366,0.00035127127796605074)
--(0.00456716790425986,0.00012600704929554806);
\draw[line width=1pt,dash pattern=on 1pt off 1.2pt] (2/3,2/9-0.05) to [out=315,in=170] (1,0.05);
\begin{scriptsize}
\draw [fill=uuuuuu] (2/3,2/9) circle (0.2pt);
\draw [fill=white] (2/3,2/9-0.05) circle (0.2pt);
\draw [fill=uuuuuu] (2/3,0) circle (0.2pt);
\draw[color=uuuuuu] (2/3,0-0.07) node {$\frac{2}{3}$};
\draw [fill=uuuuuu] (1,0) circle (0.2pt);
\draw[color=uuuuuu] (1,0-0.07) node {$1$};
\draw [fill=uuuuuu] (0,0) circle (0.2pt);
\draw[color=uuuuuu] (0-0.05,0-0.05) node {$0$};
\draw [fill=uuuuuu] (0,2/9) circle (0.2pt);
\draw[color=uuuuuu] (0-0.08,2/9) node {$2/9$};
\draw [fill=uuuuuu] (0,1/2) circle (0.2pt);
\draw[color=uuuuuu] (0-0.08,1/2) node {$1/2$};
\end{scriptsize}
\end{tikzpicture}
\caption{The function $g(\mathcal{D})$ is discontinuous at $x =2/3$.}
\end{figure}

First we define a $3$-graph $\mathcal{S}_n$ on $[n]$ as follows.
Fix $u\in [n]$, let
\[
\mathcal{S}_n = \left\{uvw: vw\in \binom{[n]\setminus\{u\}}{2} \right\},
\]
and note that $\mathcal{S}_n$ is a star with $|\mathcal{S}_n| = \binom{n-1}{2}$.

\begin{dfn}\label{family-D}
Let $\mathcal{D}$ be the collection of all $3$-graphs $F \in \mathcal{K}_{4}^{3}$ such that $F\not\subset\mathcal{S}_n$ for all $n\ge 4$.
\end{dfn}
Note that $\mathcal{D} \neq \emptyset$ as $H_{4}^{3} \in \mathcal{D}$.
Since $\mathcal{S}_n$ is $\mathcal{D}$-free and $\lim_{n\to \infty}|\partial \mathcal{S}_n|/\binom{n}{2} = 1$,
by Observation \ref{projection-x-interval}, ${\rm proj}\Omega(\mathcal{D}) = [0,1]$.

Since $T_{3}(n,3)$ is $\mathcal{K}_{4}^{3}$-free, $\textrm{ex}(n,\mathcal{D}) \ge t_{3}(n,3)$.
On the other hand, $\textrm{ex}(n,\mathcal{D}) \le \textrm{ex}(n,H_{4}^{3})$, which, by \cite{PI13},
is at most $t_{3}(n,3)$ when $n$ is sufficiently large.
Therefore, we obtain the following result.
\begin{thm}\label{Turan-number-D}
Let $n$ be sufficiently large.
Then $\textrm{ex}(n,\mathcal{D}) = t_{3}(n,3)$
and $T_{3}(n,3)$ is the unique $\mathcal{D}$-free $3$-graph with $n$ vertices and $t_{3}(n,3)$ edges.
\end{thm}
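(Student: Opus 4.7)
The statement is essentially already assembled from two known Tur\'an results, and the plan is to sandwich $\textrm{ex}(n,\mathcal{D})$ between $\textrm{ex}(n,\mathcal{K}_{4}^{3})$ and $\textrm{ex}(n,H_{4}^{3})$, transferring the uniqueness from Pikhurko's exact theorem. The first thing I would record is the trivial monotonicity: since $\mathcal{D}\subset \mathcal{K}_{4}^{3}$ by Definition~\ref{family-D}, every $\mathcal{K}_{4}^{3}$-free $3$-graph is $\mathcal{D}$-free, and since $H_{4}^{3}\in \mathcal{D}$ every $\mathcal{D}$-free $3$-graph is $H_{4}^{3}$-free. These give the chain $\textrm{ex}(n,\mathcal{K}_{4}^{3}) \le \textrm{ex}(n,\mathcal{D}) \le \textrm{ex}(n,H_{4}^{3})$.

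Next I would invoke the endpoints of this chain. By Mubayi~\cite{MU06}, $T_{3}(n,3)$ is $\mathcal{K}_{4}^{3}$-free with exactly $t_{3}(n,3)$ edges, so $\textrm{ex}(n,\mathcal{D}) \ge t_{3}(n,3)$. By Pikhurko~\cite{PI13}, $\textrm{ex}(n,H_{4}^{3}) = t_{3}(n,3)$ for all sufficiently large $n$, which yields the matching upper bound and hence $\textrm{ex}(n,\mathcal{D}) = t_{3}(n,3)$.

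Before doing any of this I would pause and verify the one tiny assertion being used, namely that $H_{4}^{3}\in \mathcal{D}$. We already have $H_{4}^{3}\in \mathcal{K}_{4}^{3}$, so it suffices to rule out $H_{4}^{3}\subset \mathcal{S}_{n}$ for every $n\ge 4$. If such an embedding existed, then all six edges of $H_{4}^{3}$ would share a common vertex $u$. In $H_{4}^{3}$, each of the three ``expansion'' vertices lies in only one edge, so $u$ would have to be one of the four core vertices of the underlying $K_{4}$; but each core vertex is incident to only three of the six edges, a contradiction.

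For uniqueness, let $\mathcal{H}$ be any $\mathcal{D}$-free $3$-graph on $n$ vertices (with $n$ large) that attains $|\mathcal{H}|=t_{3}(n,3)$. Then $\mathcal{H}$ is $H_{4}^{3}$-free and has exactly $\textrm{ex}(n,H_{4}^{3})$ edges, so by the uniqueness part of Pikhurko's theorem~\cite{PI13} we get $\mathcal{H}=T_{3}(n,3)$; conversely $T_{3}(n,3)$ is $\mathcal{K}_{4}^{3}$-free and therefore $\mathcal{D}$-free. There is no genuine obstacle in this argument: the deep content lives entirely in \cite{PI13}, and the only new observation is the elementary verification that $H_{4}^{3}$ does not embed in a star, which is what separates $\mathcal{D}$ from $\mathcal{K}_{4}^{3}$ while keeping $H_{4}^{3}$ in the family.
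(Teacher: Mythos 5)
Your proof is correct and follows essentially the same route as the paper: the lower bound comes from $T_{3}(n,3)$ being $\mathcal{K}_{4}^{3}$-free together with $\mathcal{D}\subset\mathcal{K}_{4}^{3}$, and the upper bound and uniqueness transfer directly from Pikhurko's exact result for $H_{4}^{3}$ via $H_{4}^{3}\in\mathcal{D}$. The only thing you spell out that the paper merely asserts (just after Definition~\ref{family-D}) is the elementary check that $H_{4}^{3}$ does not embed in a star, and that verification is correct.
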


Theorem \ref{Turan-number-D} implies that $g(\mathcal{D},x) \le 2/9$ for all $x \in [0,1]$ and equality holds for $x = 2/3$.
Therefore, in order to prove Theorem \ref{example-discont} it suffices to prove the following result.
\begin{thm}\label{nonstable-D}
There exists an absolute constant $\delta_0 > 0$ such that the following is true
for all $\epsilon \in (0, 10^{-8})$ and sufficiently large $n$.
Suppose that $\mathcal{H}$ is a $\mathcal{D}$-free $3$-graph on $n$ vertices with $|\partial \mathcal{H}| = (1/3+\epsilon) n^2$.
Then $|\mathcal{H}| \le (1/27 - \delta_0)n^3$.
\end{thm}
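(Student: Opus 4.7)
The plan is to prove the contrapositive: if $|\mathcal{H}|>(1/27-\delta_{0})n^{3}$ for a sufficiently small absolute constant $\delta_{0}$, then $|\partial\mathcal{H}|$ cannot lie in the range $(n^2/3,\,(1/3+10^{-8})n^2)$. Because $H_{4}^{3}\in\mathcal{D}$, the hypergraph $\mathcal{H}$ is $H_{4}^{3}$-free, so Pikhurko's stability theorem \cite{PI13} furnishes a partition $V(\mathcal{H})=V_1\sqcup V_2\sqcup V_3$ with $|V_i|=n/3+o(n)$ and $|\mathcal{H}\,\triangle\, T(V_1,V_2,V_3)|\le \eta n^3$, where $\eta=\eta(\delta_0)$ can be made arbitrarily small by choosing $\delta_0$ small. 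Since pairs of vertices in distinct parts number at most $n^{2}/3+o(n^{2})$, the assumed shadow size $(1/3+\epsilon)n^2$ forces at least $\epsilon n^2-o(n^2)$ \emph{intra-part} pairs (both endpoints in the same $V_i$) to lie in $\partial\mathcal{H}$.

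The core of the argument is the following structural obstruction from $\mathcal{D}$-freeness. Suppose $\{a,b\}\in\partial\mathcal{H}$ with $a,b\in V_1$ and covering edge $\{a,b,c\}\in\mathcal{H}$, say $c\in V_2$ (the cases $c\in V_1$ and $c\in V_3$ being analogous). Then for every $e\in V_3$, at least one of the three pairs $\{a,e\},\{b,e\},\{c,e\}$ must fail to lie in $\partial\mathcal{H}$; otherwise, picking transversal edges $\{a,\ast,e\},\{b,\ast,e\},\{\ast,c,e\}$ from $\mathcal{H}$ and combining them with $\{a,b,c\}$ gives a four-edge subgraph in $\mathcal{K}_{4}^{3}$ on core $\{a,b,c,e\}$ whose edges share no common vertex, hence a member of $\mathcal{D}$ inside $\mathcal{H}$, a contradiction. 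Thus $\bar N_a\cup \bar N_b\cup \bar N_c=V_3$ (where $\bar N_v$ denotes the shadow non-neighbourhood of $v$ inside $V_3$), so at least one of $a,b,c$ has $\ge n/9$ non-neighbours in $V_3$; call any such vertex \emph{atypical}.

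A standard averaging argument shows that the total number of atypical vertices is at most $C\eta n$ for an absolute constant $C$, since each atypical vertex witnesses $\Omega(n^2)$ missing transversal edges while the total deficiency is at most $\eta n^3$. By the obstruction above, every intra-part pair in $\partial\mathcal{H}$ either has an atypical endpoint, or is covered by an edge whose third vertex is atypical. Pairs of the first type number at most $O(\eta n)\cdot n=O(\eta n^2)$, while for pairs of the second type one uses the sharper identity $L_v\cap L_{v'}=\emptyset$ (obtained by applying the same four-edge $\mathcal{K}_{4}^{3}$ argument to cores $\{a,v,v',z\}$ with $a\in V_1$, $v,v'\in V_2$, $z\in V_3$), which forces the two bipartite links between $V_1$ and $V_3$ to be disjoint and so severely restricts how many intra-part pairs a single atypical witness can cover. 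Combining these yields a bound of the form $C'\eta n^2$ on the total number of intra-part pairs in $\partial\mathcal{H}$. Fixing $\delta_0>0$ so small that $C'\eta(\delta_0)<10^{-8}$, then taking $n$ large enough to absorb the $o(n^2)$ error, contradicts the lower bound $\epsilon n^2-o(n^2)$ whenever $\epsilon\in(0,10^{-8})$, completing the proof. The principal difficulty will be the sharp counting of intra-part pairs of the second type, where endpoints $a,b$ are both typical and only the witness $c$ is atypical: a naive bound is $O(|A|\cdot n^2)=O(\eta n^3)$, which is useless here, so the disjoint-link identity together with near-completeness of typical links afforded by stability must be exploited to reduce this to $O(\eta n^2)$.
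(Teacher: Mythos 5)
Your central structural obstruction is false as stated. You claim that whenever $\{a,b\}\in\partial\mathcal{H}$ has covering edge $\{a,b,c\}\in\mathcal{H}$ with $c\in V_2$, then for \emph{every} $e\in V_3$ at least one of $\{a,e\},\{b,e\},\{c,e\}$ is absent from $\partial\mathcal{H}$, because otherwise transversal covering edges together with $\{a,b,c\}$ would give a member of $\mathcal{K}_4^3$ ``whose edges share no common vertex.'' That last assertion does not follow: the covering edges of $\{a,e\}$ and $\{b,e\}$ may well be $\{a,c,e\}$ and $\{b,c,e\}$, and the covering edge of $\{c,e\}$ some $\{z,c,e\}$, in which case the four edges $\{a,b,c\},\{a,c,e\},\{b,c,e\},\{z,c,e\}$ all contain $c$, form a subgraph of a star, and hence are \emph{not} in $\mathcal{D}$. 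The star $\mathcal{S}_n$ centered at $c$ already shows that all six pairs on $\{a,b,c,e\}$ can be in $\partial\mathcal{H}$ of a $\mathcal{D}$-free $3$-graph. Nothing in your setup lets you choose covering edges avoiding $c$, so the claim collapses; the same gap infects your later $L_v\cap L_{v'}=\emptyset$ step, which is obtained by ``the same four-edge argument.''

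The paper's Claim~\ref{4-partite-subgp} repairs exactly this by working only with pairs $\{u_1,u_2\}$ of degree $d(u_1u_2)\ge 10$ in $\mathcal{H}$: from among $\ge 10$ covering edges of $\{u_1,u_2\}$ one can pick $E_{u_1u_2}$ disjoint from $E_{v_1v_2}$, and it is precisely this disjointness (not the transversality of covering edges) that rules out a common vertex and forces the six-edge configuration into $\mathcal{D}$. The resulting set $L$ of such high-degree pairs is then an intersecting family (via the observation that each $E\in\mathcal{E}_{v_1v_2}$ must meet every $\{u_1,u_2\}\in L$), hence $|L|<n$ by Erd\H{o}s--Ko--Rado. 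Separately, Claim~\ref{few-small-degree-edge} uses $|\mathcal{H}|>(1/27-\delta)n^3$ to show that only $O(n^2/40000)$ pairs in the maximum $3$-partite subgraph $K$ of $\partial\mathcal{H}$ have degree $\le 10$. Your proposal has no analogue of either ingredient --- no degree threshold, and no bound on low-degree pairs --- so the ``principal difficulty'' you flag at the end is downstream of a step that is already wrong. (A secondary difference: the paper counts bad/missing pairs with respect to a maximum $3$-partite subgraph of $\partial\mathcal{H}$ rather than the stability partition of $\mathcal{H}$, which makes the $|B|\ge|M|+\epsilon n^2$ bookkeeping clean; you can in principle use the stability partition instead, but only after introducing the degree threshold.)
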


The proof of Theorem \ref{nonstable-D} uses a stability result for $\mathcal{D}$-free $3$-graphs,
which can be easily obtained from a stability theorem for ${H}_{\ell + 1}^{r}$-free $r$-graphs proved by Pikhurko \cite{PI13}.
\begin{thm}[Stability]\label{stability-D}
For every $\xi > 0$ there exists $\delta > 0$ (we may assume that $\delta \le \xi$) and $n_{0} = n_0(\xi)$ such that the following holds for all $n \ge n_0$.
Suppose that $\mathcal{H}$ is a $\mathcal{D}$-free $3$-graph on $n$ vertices with $|\mathcal{H}| \ge (1/27 - \delta)n^3$.
Then $V(\mathcal{H})$ has a partition $V_{1} \cup V_{2} \cup V_{3}$ such that all but at most $\xi n^3$ edges in $\mathcal{H}$
have exactly one vertex in each $V_{i}$.
\end{thm}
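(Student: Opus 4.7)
The plan is to deduce Theorem~\ref{stability-D} as an immediate corollary of Pikhurko's stability theorem for $H_{\ell+1}^r$-free $r$-graphs in \cite{PI13}, specialised to $\ell = r = 3$. The linchpin is the observation already recorded after Definition~\ref{family-D}: since $H_4^3 \in \mathcal{D}$, every $\mathcal{D}$-free $3$-graph is automatically $H_4^3$-free. Thus any hypothesis imposed on a $\mathcal{D}$-free $3$-graph is inherited by an $H_4^3$-free $3$-graph, and we can directly invoke the existing stability result on that larger class without reproving anything.

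Given $\xi > 0$, I would apply Pikhurko's theorem with the same parameter $\xi$ to obtain constants $\delta_P > 0$ and $n_P = n_P(\xi)$ such that every $H_4^3$-free $3$-graph $\mathcal{H}$ on $n \ge n_P$ vertices with $|\mathcal{H}| \ge t_3(n,3) - \delta_P n^3$ admits a tripartition $V(\mathcal{H}) = V_1 \cup V_2 \cup V_3$ for which all but at most $\xi n^3$ edges of $\mathcal{H}$ are transversal (contain exactly one vertex in each $V_i$). Using $t_3(n,3) = n^3/27 - O(n^2)$, the hypothesis $|\mathcal{H}| \ge (1/27 - \delta)n^3$ satisfies $|\mathcal{H}| \ge t_3(n,3) - \delta n^3 - O(n^2) \ge t_3(n,3) - \delta_P n^3$ whenever $\delta \le \delta_P/2$ and $n$ is large enough that the $O(n^2)$ error term is dominated by $\delta n^3/2$.

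Finally, I would set $\delta := \min\{\xi, \delta_P/2\}$ (this also guarantees the mild requirement $\delta \le \xi$ recorded in the theorem statement) and choose $n_0(\xi) \ge n_P$ large enough to absorb the $O(n^2)$ error above. Then any $\mathcal{D}$-free $3$-graph $\mathcal{H}$ on $n \ge n_0$ vertices with $|\mathcal{H}| \ge (1/27 - \delta)n^3$ is $H_4^3$-free by the containment $H_4^3 \in \mathcal{D}$ and dense enough to meet Pikhurko's hypothesis, so his theorem returns the tripartition demanded by Theorem~\ref{stability-D}.

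Since the heavy extremal/stability work was already done in \cite{PI13}, there is no genuine obstacle here beyond bookkeeping. The only tasks are to verify the syntactic inclusion $H_4^3 \in \mathcal{D}$ (immediate from Definition~\ref{family-D} and the remark that follows it) and to translate between the normalisations $(1/27 - \delta)n^3$ and $t_3(n,3) - \delta_P n^3$, which is a routine adjustment of constants. The argument is short precisely because all of the genuine extremal input is imported as a black box.
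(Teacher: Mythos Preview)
Your proposal is correct and matches the paper's own treatment: the paper does not give a separate proof of Theorem~\ref{stability-D} but simply remarks that it ``can be easily obtained from a stability theorem for $H_{\ell+1}^{r}$-free $r$-graphs proved by Pikhurko~\cite{PI13},'' which is precisely the derivation you spell out via the containment $H_4^3 \in \mathcal{D}$.
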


Now we are ready to prove Theorem \ref{nonstable-D}.
\begin{proof}[Proof of Theorem \ref{nonstable-D}]
We prove Theorem \ref{nonstable-D} by contradiction.
Suppose that for all constant $\delta > 0$ and all integers $n_0$ there exists
$\epsilon = \epsilon(\delta) \in (0, 10^{-8})$ such that
there exists a $3$-graph $\mathcal{H}$ on $n > n_0$ vertices for some $n$ with $|\partial \mathcal{H}| = (1/3 + \epsilon)n^2$
and $|\mathcal{H}| > (1/27 - \delta) n^3$.

Choose $\xi > 0$ to be sufficiently small, and let $\delta >0$ and $n_0 = n_{0}(\xi)$ be given by Theorem \ref{stability-D}
and note that we may assume that $\delta \le \xi$.
By assumption, there exists $\epsilon \in (0, 10^{-8})$ and
a $\mathcal{D}$-free $3$-graphs $\mathcal{H}$ on $n > n_0$ vertices with $|\partial \mathcal{H}| = (1/3 + \epsilon)n^2$
and $|\mathcal{H}| > (1/27 - \delta) n^3$.
Apply Theorem \ref{stability-D} to $\mathcal{H}$.
We obtain a partition $V(\mathcal{H}) = V_1 \cup V_2 \cup V_3$ such that
all but at most $\xi n^3$ edges in $\mathcal{H}$ have exactly one vertex in each $V_{i}$.
Let $\mathcal{H}'$ denote the induced $3$-partite $3$-graph of $\mathcal{H}$ on $V_1 \cup V_2 \cup V_3$,
that is,
\[
\mathcal{H}' = \left\{ E \in \mathcal{H}: |E \cap V_i| = 1 \text{ for all } i \in [3]  \right\}.
\]
Note that
\begin{equation}\label{size-H'}
|\mathcal{H}'| > \frac{n^3}{27} - \delta n^3 - \xi n^3.
\end{equation}

\begin{claim}\label{size-Vi}
$\left| |V_{i}| - \frac{n}{3} \right| < 4 (\delta + \xi)^{1/2}n$ for all $i \in [3]$.
\end{claim}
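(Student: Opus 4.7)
The plan is to convert the near-optimal lower bound on $|\mathcal{H}'|$ into a near-balance condition on the partition $V_1 \cup V_2 \cup V_3$. Since every edge of $\mathcal{H}'$ has exactly one vertex in each part, $|V_1||V_2||V_3| \ge |\mathcal{H}'| > n^3/27 - (\delta+\xi)n^3$ by (\ref{size-H'}). I would write $|V_i| = n/3 + \alpha_i$, so $\alpha_1+\alpha_2+\alpha_3 = 0$ and $\alpha_i \in [-n/3,\, 2n/3]$. Expanding the product and substituting $\sum_{i<j}\alpha_i\alpha_j = -\tfrac{1}{2}\sum_i \alpha_i^2$ (which follows from squaring $\sum_i \alpha_i = 0$) yields
\[
|V_1||V_2||V_3| = \frac{n^3}{27} - \frac{n}{6}\sum_{i=1}^{3}\alpha_i^2 + \alpha_1\alpha_2\alpha_3,
\]
so that
\[
\frac{n}{6}\sum_{i=1}^{3}\alpha_i^2 - \alpha_1\alpha_2\alpha_3 \le (\delta+\xi)\,n^3. \qquad (\dagger)
\]

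Set $M := \max_i |\alpha_i|$; the aim is $M < 4\sqrt{\delta+\xi}\,n$. Without loss of generality, $|\alpha_1| = M$. The key observation is that, because $\alpha_2 + \alpha_3 = -\alpha_1$ and $|\alpha_2|, |\alpha_3| \le M$, both $\alpha_2$ and $\alpha_3$ must share the sign opposite to that of $\alpha_1$ — otherwise one of them would have absolute value strictly exceeding $M$. Consequently $\alpha_2^2 + \alpha_3^2 \ge (\alpha_2+\alpha_3)^2/2 = M^2/2$, so $\sum_i \alpha_i^2 \ge 3M^2/2$, and $\alpha_2\alpha_3 \ge 0$ together with AM--GM gives $\alpha_2\alpha_3 \le (\alpha_2+\alpha_3)^2/4 = M^2/4$; hence $|\alpha_1\alpha_2\alpha_3| \le M^3/4$. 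Plugging these into $(\dagger)$ yields $\tfrac{n}{4}M^2 - \tfrac{M^3}{4} \le (\delta+\xi)n^3$, i.e., $M^2(n-M) \le 4(\delta+\xi)n^3$. Since $|V_i| \ge 0$ forces $M \le 2n/3$, we have $n - M \ge n/3$, so $M^2 \le 12(\delta+\xi)n^2$ and therefore $M \le 2\sqrt{3}\sqrt{\delta+\xi}\,n < 4\sqrt{\delta+\xi}\,n$, as required.

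The only delicate point is the sign of the cubic correction $\alpha_1\alpha_2\alpha_3$ in $(\dagger)$: if it were positive and of size comparable to $nM^2/6$, it could absorb the quadratic term and block the conclusion. The maximality of $|\alpha_1|$ forces $\alpha_2,\alpha_3$ into the opposite half-line, which pins down $|\alpha_1\alpha_2\alpha_3| \le M^3/4$; this is lower order than $nM^2/4$ so long as $M < n$, and the trivial bound $M \le 2n/3$ coming from $|V_i| \ge 0$ is more than enough. Once this sign issue is resolved, $(\dagger)$ reduces to a quadratic inequality in $M$ and the conclusion is immediate.
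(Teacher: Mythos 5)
Your proof is correct, but it takes a genuinely different route from the paper's. You bound $|\mathcal{H}'|$ by the symmetric product $|V_1||V_2||V_3|$, shift to the centered variables $\alpha_i = |V_i| - n/3$, and expand the cubic exactly, using $\sum_i\alpha_i = 0$ to kill the linear term and rewrite the quadratic term as $-\tfrac{1}{2}\sum_i\alpha_i^2$. The one subtle point---that the error term $\alpha_1\alpha_2\alpha_3$ cannot cancel the $\tfrac{n}{6}\sum\alpha_i^2$ term---you resolve by a sign analysis: the maximizer $\alpha_1$ forces $\alpha_2,\alpha_3$ onto the opposite half-line, which both lower-bounds $\sum\alpha_i^2$ by $\tfrac{3}{2}M^2$ and upper-bounds $|\alpha_1\alpha_2\alpha_3|$ by $M^3/4$. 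The paper instead fixes a single index $i$ and uses the asymmetric bound $|\mathcal{H}'| \le |V_i|\,(n-|V_i|)^2/4$, which already maximizes $|V_j||V_k|$ subject to $|V_j|+|V_k| = n-|V_i|$; this reduces immediately to a univariate cubic inequality in $\alpha = |V_i|$ with no sign bookkeeping. Interestingly, both approaches arrive at the same final reduced inequality, namely $t^2(n-t) \le 4(\delta+\xi)n^3$ with $t\in[-n/3,2n/3]$, and both close it the same way using $n - t \ge n/3$. The paper's per-index argument is shorter because it sidesteps the cross-term; your symmetric approach is a bit longer but makes the cubic structure of the deficit $n^3/27 - |V_1||V_2||V_3|$ fully explicit, which some readers may find more transparent. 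Both are valid; neither generalizes more readily than the other to $r > 3$ without modification.
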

\begin{proof}
Fix $1 \le i \le 3$ and let $\alpha = |V_{i}|$.
Then $|\mathcal{H}'| \le \alpha(n-\alpha)^2/4$ and $(9)$ gives
\[
\frac{\alpha(n-\alpha)^2}{4} > \frac{n^3}{27} - \delta n^3 -\xi n^3,
\]
which implies $n/3 - 4 (\delta + \xi)^{1/2}n < \alpha <n/3 + 4 (\delta + \xi)^{1/2}n$.
\end{proof}

Let $G = \partial \mathcal{H}$ and $G' = \partial \mathcal{H}'$.
Note that $\mathcal{H}' \subset \mathcal{H}$, $G' \subset G$, and $G'$ is $3$-partite.
Let $K$ be a $3$-partite subgraph of $G$ with the maximum number of edges among all $3$-partite subgraphs of $G$,
and let $X_1,X_2,X_3$ denote the three parts of $K$.

\begin{claim}\label{size-G'-and-K}
$|K| \ge |G'| > \frac{n^2}{3} - 5(\delta + \xi)^{1/2}n^2$.
\end{claim}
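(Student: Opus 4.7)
The plan is to split the claim into two parts: the trivial inequality $|K|\ge |G'|$, and the substantive lower bound $|G'|> n^2/3 - 5(\delta+\xi)^{1/2}n^2$. The first is immediate because every edge of $\mathcal{H}'$ has one vertex in each $V_i$, so every pair in $G'=\partial\mathcal{H}'$ lies in some $V_i\times V_j$; that is, $G'$ is itself a 3-partite subgraph of $G$ witnessed by the partition $V_1\cup V_2\cup V_3$, so by the maximality defining $K$ we get $|K|\ge |G'|$.

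For the lower bound on $|G'|$, I would measure how close $\mathcal{H}'$ is to the complete 3-partite 3-graph on $V_1\cup V_2\cup V_3$. Let $M=|V_1||V_2||V_3|-|\mathcal{H}'|$ count the missing 3-partite triples. By AM-GM $|V_1||V_2||V_3|\le n^3/27$, which combined with (\ref{size-H'}) gives $M<(\delta+\xi)n^3$. The key counting step is: if $(a,b)\in V_i\times V_j$ fails to lie in $G'$, then for every $c\in V_k$ (with $\{i,j,k\}=\{1,2,3\}$) the triple $\{a,b,c\}$ is absent from $\mathcal{H}'$, and the contributions of distinct bad pairs to $M$ are disjoint. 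Hence at most $M/|V_k|$ pairs of $V_i\times V_j$ are missing from $G'$, and summing over the three sections gives
\[
|G'|\;\ge\;\sum_{\{i,j\}\subset[3]}|V_i||V_j|\;-\;M\left(\tfrac{1}{|V_1|}+\tfrac{1}{|V_2|}+\tfrac{1}{|V_3|}\right).
\]

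To finish I would invoke Claim \ref{size-Vi}. Writing $|V_i|=n/3+\theta_i n$ with $\sum_i\theta_i=0$ and $|\theta_i|<4(\delta+\xi)^{1/2}$, the identity $0=(\sum_i\theta_i)^2=\sum_i\theta_i^2+2\sum_{i<j}\theta_i\theta_j$ shows $\sum_{i<j}|V_i||V_j|\ge n^2/3-24(\delta+\xi)n^2$, while $|V_i|\ge n/4$ (forced by Claim \ref{size-Vi} once $\xi$ is small) makes the penalty at most $12(\delta+\xi)n^2$. The only mild subtlety, and I suspect the reason the target exponent is $1/2$ rather than $1$, is absorbing these linear-in-$(\delta+\xi)$ errors into $5(\delta+\xi)^{1/2}n^2$; this simply requires $\xi$ small enough that $(\delta+\xi)^{1/2}$ dominates the explicit constant produced above, a calibration already supplied by taking $\xi$ small at the start of the proof of Theorem \ref{nonstable-D}.
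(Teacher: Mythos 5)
Your proof is correct, but it takes a genuinely different route from the paper. The paper's argument is a single double-count: each pair of $G'$ extends to an edge of $\mathcal{H}'$ in at most $\max_k|V_k|$ ways (by choosing the third vertex from the remaining part), so $3|\mathcal{H}'| \le |G'|\cdot\max_k|V_k|$; combining this with (\ref{size-H'}) and the upper bound from Claim \ref{size-Vi} gives $|G'| > \bigl(n^3/9 - 3(\delta+\xi)n^3\bigr)/\bigl(n/3 + 4(\delta+\xi)^{1/2}n\bigr) > n^2/3 - 5(\delta+\xi)^{1/2}n^2$ in one step. You instead work with the complement: you count missing $3$-partite triples $M=|V_1||V_2||V_3|-|\mathcal{H}'|$, use AM--GM with (\ref{size-H'}) to get $M<(\delta+\xi)n^3$, observe that each missing $3$-partite pair in $V_i\times V_j$ forces $|V_k|$ distinct missing triples so there are at most $M/|V_k|$ of them per section, and then bound $\sum_{i<j}|V_i||V_j|$ from below via the $\theta_i$-parameterization of Claim \ref{size-Vi}. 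Your route requires a bit more machinery (AM--GM, the identity $\sum_{i<j}\theta_i\theta_j=-\tfrac12\sum\theta_i^2$, and a lower floor $|V_i|\ge n/4$), but it actually produces the sharper error term $O\bigl((\delta+\xi)n^2\bigr)$ rather than $O\bigl((\delta+\xi)^{1/2}n^2\bigr)$; the paper's count is shorter and inherits the square-root error directly from Claim \ref{size-Vi}'s denominator. Both absorb their constants into $5(\delta+\xi)^{1/2}n^2$ once $\xi$ is taken small at the start of the proof of Theorem \ref{nonstable-D}, so both are valid.
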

\begin{proof}
Counting the number of edges in $\mathcal{H}'$ we obtain
\[
|G'| \left( \frac{n}{3}+ 4(\delta + \xi)^{1/2} n \right) > 3 |\mathcal{H}'| \overset{(\ref{size-H'})}{>} \frac{n^3}{9} - 3\left( \delta+\xi \right)n^3,
\]
which implies $|G'| > n^2/3 - 5(\delta + \xi)^{1/2} n^2$.
Since $G'$ is also a $3$-partite subgraph of $G$,
by the maximality of $K$, we obtain $|K| \ge |G'|$.
\end{proof}

\begin{claim}\label{size-of-V'i}
$\left| |X_i| - \frac{n}{3} \right| < 4 \left(\delta + \xi\right)^{1/4}n$ for all $i \in [3]$.
\end{claim}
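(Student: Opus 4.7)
The plan is to derive the near-balanced partition from the near-optimal size of the max $3$-partite subgraph $K$ of $G$, using the fact that the product-sum $\sum_{i<j}|X_i||X_j|$ is maximized exactly at the balanced partition.

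First I would set $\alpha_i = |X_i|$ for $i \in [3]$ and observe that since $K$ is $3$-partite, each edge of $K$ joins two distinct parts, so
\[
|K| \le \alpha_1\alpha_2 + \alpha_1\alpha_3 + \alpha_2\alpha_3.
\]
Combined with Claim \ref{size-G'-and-K}, this gives
\[
\sum_{1\le i<j\le 3}\alpha_i\alpha_j > \frac{n^2}{3} - 5(\delta+\xi)^{1/2}n^2.
\]

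Next I would exploit the constraint $\alpha_1+\alpha_2+\alpha_3 = n$ by writing $\alpha_i = n/3 + \beta_i$ with $\sum_i\beta_i = 0$. A direct expansion yields the identity
\[
\sum_{i<j}\alpha_i\alpha_j = \frac{n^2}{3} + \frac{n}{3}\sum_i\beta_i \cdot 2 + \sum_{i<j}\beta_i\beta_j = \frac{n^2}{3} - \frac{1}{2}\sum_i \beta_i^2,
\]
where in the last step I used $\sum_i\beta_i=0$ together with $2\sum_{i<j}\beta_i\beta_j = (\sum_i\beta_i)^2 - \sum_i\beta_i^2$. Substituting into the previous inequality gives
\[
\sum_i\beta_i^2 < 10(\delta+\xi)^{1/2}n^2,
\]
and in particular $|\beta_i| < \sqrt{10}\,(\delta+\xi)^{1/4}n < 4(\delta+\xi)^{1/4}n$ for every $i$, which is the desired bound.

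The argument is essentially a one-line quadratic estimate once the correct identity is in place; no genuine obstacle arises, since Claim \ref{size-G'-and-K} supplies exactly the near-extremal input needed. The only care required is tracking constants so that $\sqrt{10}$ fits under the stated constant $4$, and noting that the power of $(\delta+\xi)$ drops from $1/2$ to $1/4$ upon taking square roots, which is why the claim weakens the error exponent compared to Claim \ref{size-Vi}.
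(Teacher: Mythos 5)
Your proof is correct and follows essentially the same approach as the paper: both feed the lower bound on $|K|$ from Claim~\ref{size-G'-and-K} into a quadratic upper bound for the number of edges in a $3$-partite graph with parts of sizes $|X_1|,|X_2|,|X_3|$ and solve for the resulting constraint on the part sizes. The paper fixes a single $\alpha'=|X_i|$ and bounds $|K|\le\alpha'(n-\alpha')+(n-\alpha')^2/4$, which after completing the square is $\tfrac{n^2}{3}-\tfrac{3}{4}(\alpha'-n/3)^2$; your symmetric parametrization $\alpha_i=n/3+\beta_i$ with $\sum\beta_i=0$ yields $|K|\le\tfrac{n^2}{3}-\tfrac12\sum\beta_i^2$ and controls all three deviations at once, which is a marginally cleaner bookkeeping of the same estimate and in fact gives the slightly sharper constant $\sqrt{10}$ in place of the paper's implicit $\sqrt{20/3}$ per coordinate.
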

\begin{proof}
Fix $i \in [3]$ and let $\alpha' = |X_i|$.
By Claim \ref{size-G'-and-K},
\[
\alpha' (n-\alpha') + \frac{(n-\alpha')^2}{4} \ge |K| \ge |G'| > \frac{n^2}{3} - 5(\delta + \xi)^{1/2} n^2,
\]
which implies $n/3 - 4 \left(\delta + \xi \right)^{1/4}n < \alpha' <n/3 + 4 \left(\delta + \xi \right)^{1/4}n$.
\end{proof}

For $uv \in K$ the degree of $uv$ in $\mathcal{H}$ is $d(uv) := |\{E \in \mathcal{H}: \{u,v\} \subset E\}|$.
Our next claim shows that most edges in $K$ have a large degree.
\begin{claim}\label{few-small-degree-edge}
The number of edges in $K$ that have degree at most $10$ in $\mathcal{H}$ is at most $n^2/40000$.
\end{claim}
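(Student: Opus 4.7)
The plan is to exploit the nearly $3$-partite structure already established. For every pair $uv \in K$, say $u \in X_i$ and $v \in X_j$ with $i \neq j$, I introduce the \emph{$3$-partite degree}
\[
d^{\mathrm{tri}}(uv) \;=\; |\{w \in X_k : uvw \in \mathcal{H}\}|,
\]
where $X_k$ is the remaining part of the partition. Trivially $d^{\mathrm{tri}}(uv) \le d(uv)$, so if
\[
T \;=\; \bigl\{uv \in K : d^{\mathrm{tri}}(uv) \le 10\bigr\},
\]
then the set $S$ of pairs $uv \in K$ with $d(uv) \le 10$ satisfies $S \subseteq T$, and it suffices to prove $|T| \le n^2/40000$.

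The second step is a standard double-counting. Since every edge of $\mathcal{H}'$ contains exactly one vertex in each $X_t$, each edge of $\mathcal{H}'$ is counted by $d^{\mathrm{tri}}$ at all three of its contained pairs, so
\[
\sum_{uv \in K} d^{\mathrm{tri}}(uv) \;=\; 3|\mathcal{H}'|.
\]
I split this sum according to membership in $T$, bounding $d^{\mathrm{tri}}(uv) \le 10$ on $T$ and $d^{\mathrm{tri}}(uv) \le |X_k| \le n/3 + 4(\delta+\xi)^{1/4}n$ (by Claim \ref{size-of-V'i}) on $K \setminus T$. Using $|K| \le |G| = (1/3+\epsilon)n^2$ together with the lower bound $3|\mathcal{H}'| > n^3/9 - 3(\delta+\xi)n^3$ obtained by tripling $(\ref{size-H'})$, this gives
\[
\frac{n^3}{9} - 3(\delta+\xi)n^3 \;\le\; 10|T| + \Bigl(\frac{n}{3} + 4(\delta+\xi)^{1/4}n\Bigr)\bigl(|K| - |T|\bigr).
\]

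Rearranging to isolate $|T|$ and dividing by the resulting coefficient of order $n$ produces a bound of the form $|T| \le \bigl(4(\delta+\xi)^{1/4} + \epsilon\bigr)n^2 + o(n^2)$. Since $\epsilon < 10^{-8}$ and $\xi$ is a parameter chosen at the very beginning of the proof (with $\delta \le \xi$), taking $\xi$ small enough (independently of $n$) and $n$ large enough makes the right-hand side at most $n^2/40000$, completing the argument. The only mildly delicate point is bookkeeping the constants so that the explicit numerical bound $n^2/40000$ actually drops out; this amounts to imposing a concrete (and easily satisfied) upper bound on the free parameter $\xi$, and no essential obstacle arises.
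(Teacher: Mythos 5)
Your overall plan — double-count crossing edges via the ``tri-partite degree'' $d^{\mathrm{tri}}$ and split pairs of $K$ into low-degree and high-degree — is the same idea the paper uses. But there is a genuine gap in the double-counting identity. You claim
\[
\sum_{uv \in K} d^{\mathrm{tri}}(uv) \;=\; 3|\mathcal{H}'|,
\]
but this is false in general, because $\mathcal{H}'$ is the induced $3$-partite subgraph with respect to the partition $V_1\cup V_2\cup V_3$ coming from the stability theorem, whereas $d^{\mathrm{tri}}$ is defined with respect to the partition $X_1\cup X_2\cup X_3$ coming from the maximum $3$-partite subgraph $K$ of $G$. These two partitions need not coincide, so an edge of $\mathcal{H}'$ need not have one vertex in each $X_t$ (it could even lie entirely inside a single $X_i$ and contribute nothing to the sum). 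The correct statement is $\sum_{uv\in K} d^{\mathrm{tri}}(uv)=3|\mathcal{H}_X'|$, where $\mathcal{H}_X'=\{E\in\mathcal{H}: |E\cap X_i|=1 \text{ for all }i\}$, because $K$ contains every crossing pair of $G$ with respect to $X$, and every pair of an $X$-crossing edge of $\mathcal{H}$ lies in $\partial\mathcal{H}=G$.

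The fix is straightforward and brings you back to what the paper actually does: every edge of $\mathcal{H}\setminus\mathcal{H}_X'$ contains a pair inside some $X_i$, hence a pair in $G\setminus K$, so
\[
|\mathcal{H}_X'| \;\ge\; |\mathcal{H}| - (|G|-|K|)n \;\ge\; \left(\tfrac{1}{27}-\delta\right)n^3 - \left(\epsilon + 5(\delta+\xi)^{1/2}\right)n^3,
\]
using Claim \ref{size-G'-and-K} and $|G|=(1/3+\epsilon)n^2$. Feeding $3|\mathcal{H}_X'|$ rather than $3|\mathcal{H}'|$ into your split and rearranging gives $|T|\lesssim \big(10\epsilon + O((\delta+\xi)^{1/4})\big)n^2$, which is at most $n^2/40000$ once $\xi$ is small and $\epsilon<10^{-8}$. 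This is exactly the accounting the paper performs: its displayed upper bound on $|\mathcal{H}|$ is $(|G|-|K|)n$ for the non-$X$-crossing edges plus the high/low-degree split of $\tfrac13\sum_{uv\in K}d^{\mathrm{tri}}(uv)$ for the rest. So your route is not a different one — once the partition mix-up is repaired, it is the paper's argument.
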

\begin{proof}
Suppose not. Then the assumption that $|G| = |\partial\mathcal{H}| = (1/3 + \epsilon)n^2$ together with
Claims \ref{size-G'-and-K} and \ref{size-of-V'i} imply
\[
\begin{split}
|\mathcal{H}|
&\overset{\text{Claim \ref{size-of-V'i}}}{\le} \frac{1}{3}  \left( |K| - \frac{n^2}{40000} \right) \left( \frac{n}{3}+ 4(\delta + \xi)^{1/4}n \right)
 + \frac{10n^2}{40000}  + \left(|G| - |K| \right)n \\
&\overset{\text{Claim \ref{size-G'-and-K}}}{\le} \frac{1}{3} \left( \frac{n^2}{3}-\frac{n^2}{40000} \right)\left( \frac{n}{3}+ 4(\delta + \xi)^{1/4}n \right)
 + \frac{n^2}{4000} + \epsilon n^3 + 5(\delta + \xi)^{1/4} n^3\\
& < \frac{n^3}{27} - \frac{n^3}{500000},
\end{split}
\]
which contradicts the assumption that $|\mathcal{H}| > (1/27 - \delta)n^3$.
Here we used the fact that $\delta, \xi$ are sufficiently small, $n$ is sufficiently large, and $\epsilon < 10^{-8}$.
\end{proof}

The next claim shows that if $G$ has a large complete $4$-partite subgraph, then it contains many edges
that have degree at most $10$ in $\mathcal{H}$.
This is the only place where we use the definition of $\mathcal{D}$.
\begin{claim}\label{4-partite-subgp}
Let $v_1v_2\in G$ and $U_1, U_2 \subset V(\mathcal{H})\setminus\{v_1,v_2\}$.
Let
\[
L = \left\{ \{u_1,u_2\} : u_1\in U_1, u_2\in U_{2} \text{ and } d(u_1u_2)\ge 10 \right\}.
\]
Suppose that $v_{1}$ and $v_2$ are adjacent to all vertices in $U_1 \cup U_2$.
Then $L$ is an intersecting family, and hence $|L| < n$.
\end{claim}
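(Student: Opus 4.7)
The plan is to argue by contradiction: assume $L$ is not intersecting, so it contains two disjoint pairs $\{u_1,u_2\},\{u_1',u_2'\}$ with all four vertices distinct (and distinct from $v_1,v_2$, since $U_1\cup U_2\subset V(\mathcal{H})\setminus\{v_1,v_2\}$). I will exhibit a copy of some $F\in\mathcal{D}$ inside $\mathcal{H}$, contradicting the hypothesis that $\mathcal{H}$ is $\mathcal{D}$-free. The natural $4$-set to use as the core of $F$ will be $S=\{v_1,v_2,u_1,u_2\}$; all six pairs in $S$ lie in $G=\partial\mathcal{H}$ by hypothesis ($v_1v_2\in G$, the adjacency of $v_1,v_2$ to everything in $U_1\cup U_2$, and $\{u_1,u_2\}\in L\subset G$).

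The crucial step will show that every edge of $\mathcal{H}$ through the pair $v_1v_2$ must have its third vertex lying in $\{u_1,u_2\}$. Let $e_0=\{v_1,v_2,a_0\}\in\mathcal{H}$ cover $v_1v_2$, and suppose toward contradiction that $a_0\notin\{u_1,u_2\}$. Then I will invoke $d(u_1u_2)\ge 10$ to pick a vertex $w$ with $\{u_1,u_2,w\}\in\mathcal{H}$ and $w\notin\{v_1,v_2,a_0\}$; this is possible because at most three vertices need to be avoided among at least ten candidates. Setting $e_5=\{u_1,u_2,w\}$ gives $e_0\cap e_5=\emptyset$. I then pick arbitrary edges of $\mathcal{H}$ covering the remaining four pairs of $S$ (which exist because those pairs lie in $G$), and let $F$ be the resulting sub-$3$-graph of $\mathcal{H}$ of at most six edges. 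By construction $F\in\mathcal{K}_4^3$ with core $S$, and $\bigcap F\subseteq e_0\cap e_5=\emptyset$, so $F$ is not contained in any star $\mathcal{S}_n$; hence $F\in\mathcal{D}$, a contradiction.

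Thus every edge of $\mathcal{H}$ through $v_1v_2$ has its third vertex in $\{u_1,u_2\}$. Running the identical argument with the second disjoint pair, using the $4$-set $\{v_1,v_2,u_1',u_2'\}$ and $d(u_1'u_2')\ge 10$, forces the third vertex also to lie in $\{u_1',u_2'\}$. Since $\{u_1,u_2\}\cap\{u_1',u_2'\}=\emptyset$, no edge of $\mathcal{H}$ covers $v_1v_2$, contradicting $v_1v_2\in G$. Therefore $L$ is intersecting.

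The bound $|L|<n$ then follows from the standard classification of intersecting families of $2$-element sets: such a family is either a star of pairs through a common vertex (at most $n-1$ pairs) or a single triangle (three pairs), so $|L|\le\max(n-1,3)<n$ once $n\ge 4$. The main obstacle in the argument is recognizing that the quantitative degree hypothesis $d(u_1u_2)\ge 10$ (as opposed to merely $\ge 1$) is exactly what provides the slack needed to choose a third vertex of an edge through $u_1u_2$ that avoids the short forbidden list $\{v_1,v_2,a_0\}$, thereby producing vertex-disjoint edges $e_0,e_5$ and hence a non-star member of $\mathcal{K}_4^3$; the rest of the proof is essentially a symmetric use of the two disjoint pairs.
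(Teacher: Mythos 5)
Your proof is correct and follows essentially the same route as the paper's: fix a pair $\{u_1,u_2\}\in L$, show that any edge of $\mathcal{H}$ through $v_1v_2$ disjoint from $\{u_1,u_2\}$ can be paired (using $d(u_1u_2)\ge 10$ to dodge the three forbidden vertices) with a disjoint edge through $u_1u_2$, producing a member of $\mathcal{K}_4^3$ with empty intersection and hence a copy of some $F\in\mathcal{D}$; then apply this to both disjoint pairs to rule out any edge through $v_1v_2$. The only cosmetic difference is the final bound, where the paper cites Erd\H{o}s--Ko--Rado and you use the elementary star/triangle classification of intersecting pair families — both give $|L|<n$.
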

\begin{proof}
Let $u_1u_2 \in L$ and
\[
\mathcal{E}_{v_1v_2} = \left\{E \in \mathcal{H}: \{v_1,v_2\} \subset E \right\}.
\]
We claim that every set $E \in \mathcal{E}_{v_1v_2}$ satisfies $E \cap \{u_1,u_2\} \neq \emptyset$.
Indeed, suppose that there exists $E_{v_1v_2} \in \mathcal{E}_{v_1v_2}$ with $E_{v_1v_2} \cap \{u_1,u_2\} = \emptyset$.
Since $d(u_1u_2)\ge 10$, there exists $E_{u_1u_2} \in \mathcal{H}$ such that $\{u_1,u_2\} \in E_{u_1u_2}$ and
$E_{u_1u_2} \cap E_{v_1v_2} = \emptyset$.
Let $E_{v_1u_1}, E_{v_1u_2}, E_{v_2u_1}$, and $E_{v_2u_2}$ be edges in $\mathcal{H}$ that cover $v_1u_1,v_1u_2, v_2u_1,v_2u_2$, respectively,
and let $F_1$ denote the $3$-graph with edge set
\[
\{E_{v_1v_2}, E_{v_1u_1}, E_{v_1u_2}, E_{v_2u_1}, E_{v_2u_2}, E_{u_1u_2}\}.
\]
Note that $F_1 \subset \mathcal{H}$ and $F_1 \in \mathcal{K}_{4}^{3}$.
However, since $E_{u_1u_2} \cap E_{v_1v_2} = \emptyset$, $F_1 \not\subset \mathcal{S}_n$ for any $n$, and hence $F_1 \in \mathcal{D}$,
which is a contradiction.
Therefore, every set $E \in \mathcal{E}_{v_1v_2}$ satisfies $E \cap \{u_1,u_2\} \neq \emptyset$.

Suppose that $L$ contains another edge $w_1w_2$ that is disjoint from $u_1u_2$.
Then, the same argument as above implies that every set $E \in \mathcal{E}_{v_1v_2}$ satisfies $E \cap \{w_1,w_2\} \neq \emptyset$.
Therefore, every set $E \in \mathcal{E}_{v_1v_2}$ satisfies $E \cap \{u_1,u_2\} \neq \emptyset$ and $E \cap \{w_1,w_2\} \neq \emptyset$,
which is impossible since $E$ is a $3$-set.
Therefore, $L$ is intersecting and it follows from the Erd\H{o}s-Ko-Rado theorem \cite{EKR} that $|L| < n$.
\end{proof}

Our goal in the rest of the proof is to find $v_1v_2\in G$ and $U_1, U_2 \subset V(\mathcal{H})\setminus\{v_1,v_2\}$ with $|U_1||U_2|$ large,
such that $v_{1}$ and $v_2$ are adjacent to all vertices in $U_1 \cup U_2$.
Then, by Claim \ref{4-partite-subgp}, many edges in the induced subgraph of $K$ on $U_1 \cup U_2$ would have degree at most $10$,
which contradicts Claim \ref{few-small-degree-edge}.

Let
\[
B = \left\{ uv \in G : \{u,v\} \subset X_{i} \text{ for some } i \in [3]\right\},
\]
and
\[
M = \left\{ \{u,v\} \in \binom{V(\mathcal{H})}{2}\setminus K : u\in X_i, v\in X_j \text{ for some } i,j\in [3] \text{ and } i\neq j\right\}.
\]
Sets in $B$ are called bad edges of $K$ and sets in $M$ are called missing edges of $K$.
For $v \in V(\mathcal{H})$ let $d_{M}(v)$ denote the number of missing edges that contain $v$.
By Claim \ref{size-G'-and-K},
\begin{align}\label{munber-missing-edges}
|M| \le 5(\delta + \xi)^{1/2}n^2.
\end{align}
On the other hand, the assumption $|G| = n^2/3 + \epsilon n^2$ implies
\begin{equation}\label{bad-and-missing-pairs}
|B| \ge |M| + \epsilon n^2.
\end{equation}
Let $B_{i}$ be the collection of bad edges in $G$ that are completely contained in $X_i$ for $i\in[3]$.
Without loss of generality, we may assume that $|B_1| \ge |B|/3$.
Let $\Delta$ denote the maximum degree of $B_{1}$.

\medskip

\noindent\textbf{Case 1}: $\Delta < n/100$.\\
Then there are at least $|B_1|/(2\Delta) > 15 |B| /n$ pairwise disjoint edges in $B_1$.
Fix $uv \in B_1$.
Let $U_i(uv) = N_{K}(u) \cap N_{K}(v) \cap X_2$ for $i \in \{2,3\}$ and
let $K_{uv}$ denote the induced subgraph of $K$ on $U_2(uv) \cup U_3(uv)$.
By Claim \ref{4-partite-subgp}, all but at most $n$ edges in $K_{uv}$ have degree at most $10$ in $\mathcal{H}$.
It follows that
\[
|U_2(uv)| |U_3(uv)| \overset{\text{Claim \ref{few-small-degree-edge}}}{\le}
\frac{n^2}{40000} + n + |M| \overset{(\ref{munber-missing-edges})}{\le}
\frac{n^2}{40000} + n + 5(\delta + \xi)^{1/2} n^2 < \frac{n^2}{30000}.
\]
Therefore, by Claim \ref{size-of-V'i},
\[
|N_{K}(u) \cap N_{K}(v)| < \frac{n}{3} + 4(\delta + \xi)^{1/4} n + \frac{n^2/30000}{{n}/{3} + 4(\delta + \xi)^{1/4} n}
<  \frac{n}{3} + 4(\delta + \xi)^{1/4} n  + \frac{n}{10000},
\]
and it follows from Inclusion-Exclusion and Claim \ref{size-of-V'i} that
\begin{align}\label{discon-upper-bound-degree-sum}
d_{K}(u) + d_{K}(v) &= |N_{K}(u) \cup N_{K}(v)| + |N_{K}(u) \cap N_{K}(v)| \notag\\
& \le 2\left( \frac{n}{3} + 4(\delta + \xi)^{1/4} n \right) +   \frac{n}{3} + 4(\delta + \xi)^{1/4} n  + \frac{n}{10000} \notag \\
& < \frac{101 n }{100}.
\end{align}
Note that
\[
d_{K}(u) + d_{M}(u) + d_{K}(v) + d_{M}(v) = 2 \left( |X_2|+ |X_3| \right),
\]
which implies
\begin{align}
|M|  \ge \sum_{uv \in B_1}\left( d_{M}(u) + d_{M}(v) \right)
& \ge \frac{15|B|}{n} \left(2 \left( |X_2|+ |X_3| \right) - d_{K}(u) - d_{K}(v) \right) \notag \\
&\overset{\text{Claim \ref{size-of-V'i} and (\ref{discon-upper-bound-degree-sum})}}{>}
\frac{15|B|}{n} \left( \frac{4n}{3} -\frac{102n}{100} \right) >
4 |B| \overset{(\ref{bad-and-missing-pairs})}{>} |M|, \notag
\end{align}
a contradiction.

\medskip

\noindent\textbf{Case 2}: $\Delta \ge n/100$.\\
Then choose a vertex $v_{1} \in X_{1}$ with degree $\Delta$.
Let $N_{i} = N_{K}(v_{1}) \cap X_{i}$ for $1 \le  i \le 3$.
The maximality of $K$ implies that $|N_{2}| \ge \Delta$ and $|N_{3}| \ge \Delta$,
since otherwise we could move $v_{1}$ into $V_2$ or $V_3$ to get a larger $3$-partite subgraph of $G$.
Choose $v_2 \in N_{1}$ and let $U_i(v_1v_2) =  N_{K}(v_2) \cap N_i$ for $i \in \{2,3\}$.
Similar to Case 1, we have $|U_2(v_1v_2)||U_3(v_1v_2)| \le {n^2}/{30000}$.
Therefore, $v_2$ is not adjacent (in $K$) to at least $n/200$ vertices in $N_{2} \cup N_{3}$,
which implies
\[
|M| \ge \sum_{u \in N_{1}} d_{M}(u) \ge
\frac{n}{100} \times \frac{n}{200} = \frac{n^2}{20000} > 5(\delta + \xi)^{1/2} n^2 \overset{(\ref{munber-missing-edges})}{\ge} |M|,
\]
a contradiction.
\end{proof}


\section{Cancellative hypergraphs}
In this section we will prove Theorems \ref{cancellative-r-left} and \ref{cancellative-3-right}.
First let us present some useful lemmas.

Let $\mathcal{H}$ be an $r$-graph.
The link of $v$ in $\mathcal{H}$ is
\[
L_{\mathcal{H}}(v) = \left\{ A\in \binom{V(\mathcal{H})}{r-1} : \{v\}\cup A \in \mathcal{H} \right\}.
\]
Let $d_{\mathcal{H}}(v) = |L_{\mathcal{H}}(v)|$.
For a subset $S\subset V(\mathcal{H})$ let $\sigma_{\mathcal{H}}(S) = \sum_{v\in S}d_{\mathcal{H}}(v)$.
When it is clear from context we will omit the subscript $\mathcal{H}$.

\begin{lemma}\label{cancel-induction-lemma}
Let $r\ge 3$ and let $\mathcal{H}$ be a cancellative $r$-graph.
Then, for any $v\in V(\mathcal{H})$ the link $L(v)$ is a cancellative $(r-1)$-graph.
\end{lemma}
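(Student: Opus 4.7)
The natural approach is to prove the contrapositive: starting from a forbidden configuration in $L(v)$, I would lift it to a forbidden configuration in $\mathcal{H}$ by adjoining the vertex $v$ to each edge. This exploits the fact that a common element is inert under symmetric difference.

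In detail, suppose $L(v)$ is not cancellative, so there exist three distinct $(r-1)$-edges $A',B',C' \in L(v)$ with $A' \triangle B' \subset C'$, and (as the members of $\mathcal{T}_{r-1}$ require) these lie on at most $2r-3$ vertices. By the definition of the link, the three sets
\[
A = A' \cup \{v\}, \quad B = B' \cup \{v\}, \quad C = C' \cup \{v\}
\]
are distinct $r$-edges of $\mathcal{H}$, and they span at most $(2r-3)+1 = 2r-2 \le 2r-1$ vertices. Since $v \in A \cap B$, the element $v$ cancels in the symmetric difference, giving
\[
A \triangle B \;=\; A' \triangle B' \;\subset\; C' \;\subset\; C.
\]
Hence $\{A,B,C\}$ is a subgraph of $\mathcal{H}$ isomorphic to a member of $\mathcal{T}_r$, contradicting the assumption that $\mathcal{H}$ is cancellative.

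I do not anticipate any real obstacle here: the argument is essentially one line once one notes that $v$ drops out of $A \triangle B$ while remaining inside $C$. The only small bookkeeping is checking that the three lifted edges are genuinely distinct (immediate from the distinctness of $A',B',C'$) and that the vertex count stays within the $2r-1$ bound required by the definition of $\mathcal{T}_r$ (also immediate from the $2r-3$ bound for $\mathcal{T}_{r-1}$).
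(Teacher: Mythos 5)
Your argument is exactly the paper's proof (with the primed/unprimed roles of the link edges and the lifted edges swapped): adjoin $v$ to each of the three link edges witnessing a violation, observe that $v$ cancels from the symmetric difference while remaining in the third edge, and conclude. The extra bookkeeping you add on distinctness and the $2r-1$ vertex bound is correct and makes the argument slightly more explicit than the paper's one-liner.
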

\begin{proof}
Suppose that there exist $A,B,C\in L(v)$ such that $A\triangle B\subset C$.
Let $A'=A\cup \{v\}$, $B'=B\cup \{v\}$ and $C'=C\cup \{v\}$, and note that $A', B', C' \in \mathcal{H}$.
Then, $A'\triangle B'\subset C'$, which is a contradiction.
\end{proof}

\begin{lemma}\label{cancel-disjoint-lemma}
Let $r\ge 3$ and let $\mathcal{H}$ be a cancellative $r$-graph.
Suppose that $\{u,v\} \subset V(\mathcal{H})$ is covered by an edge in $\mathcal{H}$.
Then $L(u)\cap L(v)=\emptyset$.
\end{lemma}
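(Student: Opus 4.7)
The plan is to argue by contradiction: assume there is a common link $(r-1)$-set $A \in L(u) \cap L(v)$ and extract from it, together with the edge $E$ covering $\{u,v\}$, a forbidden configuration from $\mathcal{T}_r$.

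First I would set up the three candidate edges. By hypothesis there exists $E \in \mathcal{H}$ with $\{u,v\} \subset E$. If the link intersection were nonempty, pick $A \in L(u) \cap L(v)$; by the definition of a link, $u \notin A$, $v \notin A$, and both $B := \{u\} \cup A$ and $C := \{v\} \cup A$ are edges of $\mathcal{H}$. A direct computation of the symmetric difference then gives $B \triangle C = \{u,v\}$, and since $\{u,v\} \subset E$ by assumption, we obtain $B \triangle C \subset E$.

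Next I would verify the vertex-count condition in the definition of $\mathcal{T}_r$. We have $|B \cup C| = |A| + 2 = (r-1) + 2 = r+1$, and since $\{u,v\} \subset E \cap (B \cup C)$, the new vertices contributed by $E$ number at most $r - 2$. Hence
\[
|B \cup C \cup E| \le (r+1) + (r-2) = 2r-1,
\]
so $\{B, C, E\}$ spans at most $2r-1$ vertices, is a triple of distinct edges of $\mathcal{H}$ (distinctness of $B$ from $E$ holds because $v \in E \setminus B$, and similarly $u \in E \setminus C$, and $B \neq C$ because $u \neq v$), and satisfies $B \triangle C \subset E$. Thus the subgraph of $\mathcal{H}$ induced on $B \cup C \cup E$ contains a member of $\mathcal{T}_r$, contradicting the hypothesis that $\mathcal{H}$ is cancellative.

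I do not anticipate a serious obstacle here: the statement is essentially an unpacking of the definitions. The only mild care required is the vertex-count bookkeeping to land inside $2r-1$ vertices (which uses crucially that $\{u,v\} \subset E$, not merely that $u, v$ are separately in the vertex set), and the observation that the three edges produced are genuinely distinct. Both are handled in the step above.
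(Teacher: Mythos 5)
Your proof is correct and follows essentially the same route as the paper: take a hypothetical common $(r-1)$-set in $L(u)\cap L(v)$, form the two edges through $u$ and through $v$, observe their symmetric difference is $\{u,v\}\subset E$, and conclude. You simply add the (harmless but wise) bookkeeping that the paper leaves implicit, namely that the three edges are distinct and span at most $2r-1$ vertices, so the configuration genuinely lies in $\mathcal{T}_r$.
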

\begin{proof}
Suppose that there exists $E\in L(u)\cap L(v)$.
Let $A=E\cup\{u\}$ and $B=E\cup \{v\}$, and note that $A, B\in \mathcal{H}$.
Then $A\triangle B=\{u,v\}$, which by assumption is covered by another edge $C$ in $\mathcal{H}$,
a contradiction.
\end{proof}

Lemma \ref{cancel-disjoint-lemma} gives the following corollary.
\begin{coro}\label{cancel-clique-corol}
Let $r\ge 3$ and $\mathcal{H}$ be a cancellative $r$-graph.
Let $S \subset V(\mathcal{H})$ and suppose that $(\partial_{r-2}\mathcal{H})[S]$ is a complete graph.
Then,
\[
\sum_{v\in S}d(v) \le |\partial\mathcal{H}|.
\]
\end{coro}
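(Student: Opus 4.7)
The plan is to observe that $\sum_{v\in S} d(v)$ can be interpreted as the size of a disjoint union of subsets of $\partial \mathcal{H}$, and then apply Lemma \ref{cancel-disjoint-lemma} to establish the disjointness.

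First I would note that for any $v \in V(\mathcal{H})$, the link $L(v)$ is a subset of $\partial \mathcal{H}$: if $A \in L(v)$, then by definition $\{v\} \cup A \in \mathcal{H}$, so $A$ is an $(r-1)$-subset of an edge, hence $A \in \partial \mathcal{H}$. In particular, $\bigcup_{v \in S} L(v) \subseteq \partial \mathcal{H}$.

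Next I would use the hypothesis that $(\partial_{r-2}\mathcal{H})[S]$ is a complete graph. This means that every pair $\{u,v\} \subset S$ is contained in some edge of $\mathcal{H}$; that is, every such pair is covered. Applying Lemma \ref{cancel-disjoint-lemma} to each such pair gives $L(u) \cap L(v) = \emptyset$ whenever $u, v \in S$ are distinct. Hence the family $\{L(v) : v \in S\}$ is pairwise disjoint.

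Combining these two observations,
\[
\sum_{v \in S} d(v) \;=\; \sum_{v \in S} |L(v)| \;=\; \Bigl|\bigsqcup_{v \in S} L(v)\Bigr| \;\le\; |\partial \mathcal{H}|,
\]
which is precisely the desired inequality. There is no serious obstacle here: the entire content of the corollary is packaged into Lemma \ref{cancel-disjoint-lemma}, and the completeness of $(\partial_{r-2}\mathcal{H})[S]$ is exactly the hypothesis required to invoke that lemma for every pair in $S$.
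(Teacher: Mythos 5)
Your proof is correct and follows essentially the same approach as the paper: apply Lemma \ref{cancel-disjoint-lemma} to each pair in $S$ (made applicable by the completeness of $(\partial_{r-2}\mathcal{H})[S]$) to get pairwise disjointness of the links, then bound the size of their union by $|\partial\mathcal{H}|$.
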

\begin{proof}
Suppose that $S = \{v_1, \ldots, v_{s}\}$.
Lemma \ref{cancel-disjoint-lemma} implies that the links $L(v_1), \ldots, L(v_{s})$ are pairwise edge disjoint.
Since $\bigcup_{i=1}^{s} L(v_{i}) \subset \partial \mathcal{H}$, we have $\sum_{v\in S}d(v) \le |\partial\mathcal{H}|$.
\end{proof}

\subsection{Proof of Theorem \ref{cancellative-r-left}}
In this section we will prove Theorem \ref{cancellative-r-left}, but
instead of proving it directly we will prove the following stronger statement.

\begin{thm}\label{fisher-ryan-cancel}
Let $r \ge 2$ and let $\mathcal{H}$ be a cancellative $r$-graph.
Then
\[
|\mathcal{H}| \le \left(\frac{|\partial \mathcal{H}|}{r} \right)^{\frac{r}{r-1}}.
\]
\end{thm}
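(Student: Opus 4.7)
The plan is to prove Theorem \ref{fisher-ryan-cancel} by induction on $r$. The base case $r=2$ is Mantel's theorem: for a triangle-free graph $\mathcal{H}$ with $n^\ast = |\partial\mathcal{H}|$ non-isolated vertices, $|\mathcal{H}|\le (n^\ast)^2/4 = (|\partial\mathcal{H}|/2)^2$. For the inductive step, fix $r\ge 3$, set $m=|\mathcal{H}|$ and $e=|\partial\mathcal{H}|$, and combine two bounds on moments of the degree sequence $(d(v))_{v\in V(\mathcal{H})}$.

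The first bound comes from Corollary \ref{cancel-clique-corol}: every edge $E\in \mathcal{H}$ is automatically a clique in $\partial_{r-2}\mathcal{H}$, so $\sigma(E)\le e$, and summing over $E$ yields
\[
\sum_v d(v)^2 \;=\; \sum_{E\in\mathcal{H}} \sigma(E)\;\le\; me.
\]
The second bound comes from the inductive hypothesis: by Lemma \ref{cancel-induction-lemma}, each link $L(v)$ is a cancellative $(r-1)$-graph, so
\[
d(v)=|L(v)|\;\le\;\left(\frac{|\partial L(v)|}{r-1}\right)^{(r-1)/(r-2)},\qquad\text{i.e.}\qquad d(v)^{(r-2)/(r-1)}\;\le\;\frac{|\partial L(v)|}{r-1}.
\]
A direct double count gives $\sum_v |\partial L(v)| = (r-1)e$ (each $B\in\partial\mathcal{H}$ is counted once as $B\setminus\{v\}$ for every $v\in B$), hence
\[
\sum_v d(v)^{(r-2)/(r-1)}\;\le\; e.
\]

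To close the argument I will apply H\"older, or equivalently the log-convexity of $L^p$-norms, interpolating between the exponents $p_1=(r-2)/(r-1)$ and $p_2=2$ to land at $p=1$. Solving $1=(1-t)/p_1+t/p_2$ forces $t=2/r$ (and $1-t=(r-2)/r$), and H\"older then gives
\[
\sum_v d(v)\;\le\;\Bigl(\sum_v d(v)^{(r-2)/(r-1)}\Bigr)^{(r-1)/r}\Bigl(\sum_v d(v)^2\Bigr)^{1/r}\;\le\; e^{(r-1)/r}(me)^{1/r}\;=\;e\,m^{1/r}.
\]
Since $\sum_v d(v)=rm$, this rearranges to $rm^{(r-1)/r}\le e$, that is, $m\le(e/r)^{r/(r-1)}$, as desired.

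The main obstacle is recognising the correct pair of moments: the ``edge-clique'' inequality from Corollary \ref{cancel-clique-corol} naturally produces the second moment, while the inductive hypothesis on links naturally produces the fractional moment at exponent $(r-2)/(r-1)$; these are precisely the two exponents for which log-convexity interpolates to the first moment with weights $(r-1)/r$ and $1/r$, matching the target exponent on $m$. Any other combination either retains a factor depending on the vertex count (and hence is not scale-invariant in the way the conclusion demands) or only reaches a strictly weaker bound, so this pairing is the key technical discovery.
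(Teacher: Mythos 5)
Your proof is correct, and it takes a genuinely different — and arguably cleaner — route than the paper's. Both arguments use induction on $r$, Lemma \ref{cancel-induction-lemma} (links are cancellative) to feed the inductive hypothesis, and Corollary \ref{cancel-clique-corol} as the crucial structural fact about cancellative hypergraphs. But from there the two proofs diverge. The paper first writes $|\mathcal{H}| = \tfrac{1}{r}\sum_v d(v)^{1/(r-1)} d(v)^{(r-2)/(r-1)}$, replaces $d(v)^{(r-2)/(r-1)}$ by $|\partial L(v)|/(r-1)$ via the inductive hypothesis, applies Jensen (power-mean) to the resulting sum reorganized over $S \in \partial\mathcal{H}$, and then needs to bound $\sum_{S \in \partial\mathcal{H}}\sigma(S)$; this last step is the delicate part, requiring Lemma \ref{cancel-disjoint-lemma} together with an optimization over $\hat{\sigma} = \max_{E \in \mathcal{H}}\sigma(E)$ to arrive at $\sum_{S \in \partial\mathcal{H}}\sigma(S) \le \tfrac{r-1}{r}|\partial\mathcal{H}|^2$. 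You instead bound the two degree-sequence moments $\sum_v d(v)^{(r-2)/(r-1)}$ and $\sum_v d(v)^2$ directly — the former via the inductive hypothesis and the double count $\sum_v|\partial L(v)| = (r-1)|\partial\mathcal{H}|$, the latter by applying Corollary \ref{cancel-clique-corol} to each edge $E \in \mathcal{H}$ (rather than to shadow elements) to get $\sum_{E\in\mathcal{H}}\sigma(E) = \sum_v d(v)^2 \le |\mathcal{H}|\,|\partial\mathcal{H}|$ — and then closes with a single H\"older interpolation at exponents $(r-2)/(r-1)$ and $2$ with weights $(r-1)/r$ and $1/r$. Both routes land on exactly the same bound $|\mathcal{H}| \le (|\partial\mathcal{H}|/r)^{r/(r-1)}$. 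What your version buys is that it avoids the $\hat\sigma$ optimization entirely and replaces it with a one-line application of Corollary \ref{cancel-clique-corol}, at the cost of recognising the right pair of moments to interpolate — exactly as you observe. Every step checks out: the identity $\sum_{E\in\mathcal{H}}\sigma(E) = \sum_v d(v)^2$ is a correct double count; the applicability of Corollary \ref{cancel-clique-corol} to $S = E \in \mathcal{H}$ is valid since any edge induces a clique in $\partial_{r-2}\mathcal{H}$; $\sum_v |\partial L(v)| = (r-1)|\partial\mathcal{H}|$ follows from the bijection $(v,A) \leftrightarrow \{v\}\cup A$ between pairs $A \in \partial L(v)$ and incidences in $\partial\mathcal{H}$; and the H\"older exponents $p = r/(r-1)$, $q = r$ are the unique conjugate pair that converts the two moment bounds into a bound on $\sum_v d(v) = r|\mathcal{H}|$.
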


First we show that Theorem \ref{fisher-ryan-cancel} implies Theorem \ref{cancellative-r-left}.
\begin{proof}[Proof of Theorem \ref{cancellative-r-left}]
Let us consider the lower bound first.
Let $\alpha \in [0,1]$ and let $\mathcal{H}_{n}(\alpha)$ be the vertex disjoint union of $T_{r}(\alpha n, r)$ and a set of $(1-\alpha)n$ isolated vertices.
It is clear that $\mathcal{T}_{r} \not\subset \mathcal{H}_{n}(\alpha)$.
Let
\[
x = \lim_{n \to \infty} \frac{|\partial \mathcal{H}_{n}(\alpha)|}{\binom{n}{r-1}} =
\lim_{n \to \infty} \frac{r \left( \alpha n/r \right)^{r-1}}{\binom{n}{r-1}} = \frac{\alpha^{r-1} (r-1)!}{r^{r-2}},
\]
and
\[
y = \lim_{n\to \infty} \frac{|\mathcal{H}_{n}(\alpha)|}{\binom{n}{r}} =
\lim_{n\to \infty} \frac{\left(\alpha n/r \right)^{r}}{\binom{n}{r}} = \frac{\alpha^r (r-1)!}{r^{r-1}}.
\]
Then, $y = \left( {x^{r}}/{r!} \right)^{{1}/{(r-1)}}$.
Letting $\alpha$ vary from $0$ to $1$, we obtain
$g(\mathcal{T}_{r},x) \ge \left( {x^{r}}/{r!} \right)^{{1}/{(r-1)}}$ for all $x \in [0,(r-1)! / r^{r-2}]$.

Next we prove the upper bound.
Suppose that $\left( \mathcal{H}_{k} \right)_{k=1}^{\infty}$ is a good sequence of cancellative $r$-graphs that realizes $(x,y)$.
Let $x_{k} = (r-1)!|\partial \mathcal{H}_{k}| / \left( v(\mathcal{H}_{k}) \right)^{r-1}$ and
$y_{k} = r!|\mathcal{H}_{k}| / \left( v(\mathcal{H}_{k}) \right)^{r}$ for all $k \ge 1$.
Then Theorem \ref{fisher-ryan-cancel} gives
\[
\frac{y_{k}\left( v(\mathcal{H}_{k}) \right)^{r}}{r!}  \le \left( \frac{x_{k}\left( v(\mathcal{H}_{k}) \right)^{r-1}}{r(r-1)!} \right)^{\frac{r}{r-1}},
\]
which implies
\[
y_k \le \left( \frac{(x_{k})^r}{r!} \right)^{\frac{1}{r-1}}.
\]
Letting $k \to \infty$, we obtain $y \le  \left( {x^r}/{r!} \right)^{{1}/{r-1}}$, and this completes the proof.
\end{proof}

Now we prove Theorem \ref{fisher-ryan-cancel}.
We will use the following fact.
\begin{fact}\label{inequalities}
Let $X$ be a collection of non-negative real numbers and $a \in [0,1]$.
Then
\begin{equation}\label{Jensen-1}
\sum_{x \in X} x^a \le |X| \left( \frac{\sum_{x \in X} x}{|X|} \right)^a = |X|^{1-a} \left( \sum_{x \in X} x \right)^a,
\end{equation}
and
\begin{equation}\label{Jensen-2}
\left( \sum_{x \in X} x \right)^2 \le |X| \sum_{x \in X} x^2.
\end{equation}
\end{fact}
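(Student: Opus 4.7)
The plan is to derive both inequalities from standard convexity arguments; neither requires anything beyond undergraduate analysis.

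For (\ref{Jensen-1}), I would use Jensen's inequality. The function $\varphi(t)=t^a$ is concave on $[0,\infty)$ for $a\in[0,1]$, since its second derivative $a(a-1)t^{a-2}$ is nonpositive on $(0,\infty)$ (the boundary cases $a=0$ and $a=1$ reduce to the trivial identities $\sum 1=|X|$ and $\sum x=\sum x$, so we may assume $a\in(0,1)$). Applying Jensen's inequality to $\varphi$ with the uniform probability measure on $X$ gives
\[
\frac{1}{|X|}\sum_{x\in X} x^a \;\le\; \varphi\!\left(\frac{1}{|X|}\sum_{x\in X} x\right) \;=\; \left(\frac{\sum_{x\in X} x}{|X|}\right)^a.
\]
Multiplying through by $|X|$ and rewriting $|X|\cdot|X|^{-a}=|X|^{1-a}$ yields precisely (\ref{Jensen-1}).

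For (\ref{Jensen-2}), I would simply invoke the Cauchy--Schwarz inequality with the constant vector $(1,1,\dots,1)$:
\[
\left(\sum_{x\in X} x\right)^{2} \;=\; \left(\sum_{x\in X} 1\cdot x\right)^{2} \;\le\; \left(\sum_{x\in X} 1^{2}\right)\!\left(\sum_{x\in X} x^{2}\right) \;=\; |X|\sum_{x\in X} x^{2}.
\]
Alternatively, (\ref{Jensen-2}) follows immediately from (\ref{Jensen-1}) by taking $a=1/2$ with the multiset $\{x^{2}:x\in X\}$ in place of $X$ and then squaring both sides, which reveals (\ref{Jensen-2}) as the special case of the power mean inequality comparing the arithmetic and quadratic means.

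There is essentially no obstacle here: both inequalities are classical and the only verifications are the concavity of $t\mapsto t^{a}$ on $[0,\infty)$ for $a\in[0,1]$ and the direct application of Cauchy--Schwarz. Since all elements of $X$ are nonnegative, no sign issues arise in applying $\varphi$, and no restriction on $|X|$ is needed beyond the implicit assumption that $X$ is finite.
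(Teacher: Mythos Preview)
Your proof is correct. The paper itself does not prove this statement: it is stated as a \emph{Fact} without proof, with the equation labels \texttt{Jensen-1} and \texttt{Jensen-2} signaling that the authors regard these as standard instances of Jensen's inequality and Cauchy--Schwarz, exactly as you have argued.
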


\begin{proof}[Proof of Theorem \ref{fisher-ryan-cancel}]
We proceed by induction on $r$. When $r=2$, this is just Mantel's theorem, so we may assume that $r\ge 3$.

By Lemma \ref{cancel-induction-lemma}, $L(v)$ is a cancellative $(r-1)$-graph for all $v\in V(\mathcal{H})$.
Therefore, by the induction hypothesis,
\begin{align}\label{cancel-induction-inequality}
d(v) \le \left( \frac{|\partial L(v)|}{r-1} \right)^{\frac{r-1}{r-2}}.
\end{align}
It follows that
\begin{align}\label{size-H-inequality}
|\mathcal{H}|  =\frac{1}{r}\sum_{v\in V(\mathcal{H})}d(v)
& =\frac{1}{r}\sum_{v\in V(\mathcal{H})}\left(d(v)\right)^{\frac{1}{r-1}} \left(d(v)\right)^{\frac{r-2}{r-1}} \notag\\
&\overset{(\ref{cancel-induction-inequality})}{\le} \frac{1}{r(r-1)}\sum_{v\in V(\mathcal{H})}\left(d(v)\right)^{\frac{1}{r-1}}  |\partial L(v)|.
\end{align}
Notice that
\begin{align}\label{double-count-inequality}
\sum_{v\in V(\mathcal{H})} \left(d(v)\right)^{\frac{1}{r-1}} |\partial L(v)|
& =  \sum_{v \in V(\mathcal{H})} \sum_{\substack{S \in \partial \mathcal{H} \notag\\ v \in S}} \left(d(v)\right)^{\frac{1}{r-1}} \\
& = \sum_{S\in \partial\mathcal{H}}\sum_{v\in S} \left(d(v)\right)^{\frac{1}{r-1}} \notag \\
&\overset{(\ref{Jensen-1})}{\le}
\left((r-1)|\partial\mathcal{H}| \right)^{\frac{r-2}{r-1}}\left(\sum_{S\in \partial\mathcal{H}}\sum_{v\in S}d(v) \right)^{\frac{1}{r-1}} \notag \\
& =\left((r-1)|\partial\mathcal{H}| \right)^{\frac{r-2}{r-1}}\left(\sum_{S\in \partial\mathcal{H}}\sigma(S) \right)^{\frac{1}{r-1}}.
\end{align}
Define $\hat{\sigma}=\max\left\{\sigma(H): H\in \mathcal{H} \right\}$
and suppose that $E \in \mathcal{H}$ satisfies $\sum_{v\in E}d(v) = \hat{\sigma}$.
Then,
\begin{align}\label{sigma-S-inequality}
\sum_{S\in \partial\mathcal{H}}\sigma(S)
&=  \sum_{S \in \bigcup_{v\in E}L(v)}  \sigma(S) + \sum_{S\in \partial\mathcal{H}\setminus\bigcup_{v\in E}L(v)}  \sigma(S) \notag\\
&\overset{\text{Lemma \ref{cancel-disjoint-lemma}}}{=}\sum_{v \in E}\sum_{S\in L(v)}\sigma(S)
+\sum_{S\in \partial\mathcal{H}\setminus\bigcup_{v\in E}L(v)}\sigma(S) \notag\\
& \le \sum_{v \in E}d(v) \left(\hat{\sigma}-d(v)\right)
+ \left( |\partial\mathcal{H}|-\hat{\sigma} \right)\hat{\sigma} \notag\\
&\overset{(\ref{Jensen-2})}{\le} \left( \sum_{v \in E}d(v)\right)\left(\hat{\sigma}- \frac{\sum_{v \in E}d(v)}{r}\right)
+ \left(|\partial\mathcal{H}|-\hat{\sigma} \right)\hat{\sigma} \notag\\
& = \hat{\sigma} \left(\hat{\sigma}- \frac{\hat{\sigma}}{r}\right) + \left(|\partial\mathcal{H}|-\hat{\sigma} \right)\hat{\sigma} \notag\\
& =\left(|\partial\mathcal{H}|-\frac{\hat{\sigma}}{r} \right)\hat{\sigma}.
\end{align}
Note that Corollary \ref{cancel-clique-corol} gives $\hat{\sigma} \le |\partial\mathcal{H}|$.
On the other hand,
since $\left(|\partial\mathcal{H}|- {\hat{\sigma}}/{r} \right)\hat{\sigma}$ is increasing in $\hat{\sigma}$
when $\hat{\sigma} \le r|\partial \mathcal{H}|/2$, it follows from $(\ref{sigma-S-inequality})$ and $r \ge 3$ that
\begin{align}\label{final-sigma-inequality-cancell}
\sum_{S\in \partial\mathcal{H}}\sigma(S) \le \left(|\partial\mathcal{H}|-\frac{\hat{\sigma}}{r} \right)\hat{\sigma}\le \frac{r-1}{r}|\partial\mathcal{H}|^2.
\end{align}
Plugging $(\ref{double-count-inequality})$ and $(\ref{final-sigma-inequality-cancell})$ into $(\ref{size-H-inequality})$, we obtain
\[
\begin{split}
|\mathcal{H}| & \le
\frac{1}{r(r-1)} \left( (r-1)|\partial \mathcal{H}| \right)^{\frac{r-2}{r-1}} \left( \frac{r-1}{r} |\partial \mathcal{H}|^2 \right)^{\frac{1}{r-1}}
= \left(\frac{|\partial \mathcal{H}|}{r} \right)^{\frac{r}{r-1}},
\end{split}
\]
and this completes the proof.
\end{proof}

\subsection{Proof of Theorem \ref{cancellative-3-right}}
In this section we will prove Theorem \ref{cancellative-3-right}.
As before, we will prove a stronger statement which implies Theorem \ref{cancellative-3-right}.

\begin{thm}\label{cancel-shadow-inequ-right}
Suppose that $\mathcal{H}$ is a cancellative $3$-graph on $n$ vertices.
Then
\[
|\mathcal{H}| \le \frac{\left(n^2 - 2|\partial\mathcal{H}|\right)|\partial\mathcal{H}|}{3n} + 3n^2.
\]
\end{thm}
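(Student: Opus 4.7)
Set $m = |\mathcal{H}|$, $s = |\partial\mathcal{H}|$, $G = \partial\mathcal{H}$, $d(v) = |L(v)|$, and $d_G(v) = \deg_G(v)$, so that $\sum_v d(v) = 3m$ and $\sum_v d_G(v) = 2s$. My plan is to extract two consequences of cancellativity by double counting. First, for every edge $E = \{a,b,c\} \in \mathcal{H}$ the triangle $\binom{E}{2}$ lies in $G$, so Corollary~\ref{cancel-clique-corol} applied with $S = E$ gives $\sigma(E) := d(a)+d(b)+d(c) \le s$; summing over $E \in \mathcal{H}$ and interchanging the order of summation yields
\[
\sum_v d(v)^2 \;=\; \sum_{E \in \mathcal{H}} \sigma(E) \;\le\; sm.
\]
Second, Lemma~\ref{cancel-disjoint-lemma} gives $L(u) \cap L(v) = \emptyset$ for every $\{u,v\} \in G$, whence $d(u)+d(v) \le s$; summing over edges of $G$ yields
\[
\sum_v d(v)\,d_G(v) \;=\; \sum_{\{u,v\} \in G} (d(u)+d(v)) \;\le\; s^2.
\]

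To combine these, I would start from the identity $n\sum_v d(v) = \sum_v d(v)d_G(v) + \sum_v d(v)(n-d_G(v))$, obtaining $3mn \le s^2 + \sum_v d(v)(n-d_G(v))$. Applying Cauchy--Schwarz together with the first displayed inequality bounds the remaining sum:
\[
\sum_v d(v)(n-d_G(v)) \;\le\; \sqrt{sm}\,\sqrt{\textstyle\sum_v (n - d_G(v))^2} \;\le\; \sqrt{smn(n^2-2s)} + O\!\bigl(n\sqrt{sm}\bigr),
\]
using $\sum_v (n-d_G(v))^2 = n^3 - 4sn + \sum_v d_G(v)^2$ and the crude bound $\sum_v d_G(v)^2 \le 2s(n-1)$. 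Solving the resulting quadratic inequality in $\sqrt{m}$ and applying AM--GM to split the cross term should yield the claimed bound $m \le s(n^2-2s)/(3n) + 3n^2$.

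The main obstacle I foresee is matching the sharp coefficient $1/(3n)$. The inequality $\sum_v d(v)^2 \le sm$ is tight only for balanced blow-ups of a triangle (extremal near $s = n^2/3$), while $d(u)+d(v) \le s$ is tight only for Steiner-type configurations (extremal near $s = \binom{n}{2}$); a naive Cauchy--Schwarz of the kind above loses a constant factor in each regime. To sharpen the coefficient, the actual proof likely needs either a weighted Cauchy--Schwarz balancing the two regimes, or an auxiliary use of the codegree identity $3m = \sum_{e \in G} c(e)$ together with the cancellative observation that each $W(e) := \{w : e \cup \{w\} \in \mathcal{H}\}$ is an independent set in $G$, giving $c(e) \le \alpha(\partial\mathcal{H})$. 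The additive slack $3n^2$ in the statement is then what absorbs the lower-order errors introduced in this final balancing.
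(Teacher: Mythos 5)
Your proposal has a genuine gap and does not recover the stated bound.

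The critical failure is the step $\sum_v d(v)\,d_G(v)=\sum_{\{u,v\}\in G}(d(u)+d(v))\le s^2$. Both of your cancellativity inputs are correct derivations, but this one is drastically lossy in exactly the regime the theorem is designed to handle. For a Steiner triple system on $n$ vertices one has $d(v)=(n-1)/2$ and $d_G(v)=n-1$, so $\sum_v d(v)\,d_G(v)\approx n^3/2$, while $s^2=\binom{n}{2}^2\approx n^4/4$; the bound loses a multiplicative factor of order $n$, not a constant. Plugging into your decomposition $3mn=\sum_v d(v)\,d_G(v)+\sum_v d(v)(n-d_G(v))$, the $s^2$ term alone already contributes $s^2/(3n)\approx n^3/12$ to the resulting upper bound on $m$. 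Tracing the quadratic you propose in $\sqrt{m}$, at $s=\binom{n}{2}$ (so $n^2-2s=n$) the inequality $3mn\le s^2+\sqrt{smn(n^2-2s)}$ is satisfied up to $m\approx n^3/12$, whereas the theorem asserts $m\le n(n-1)/6+3n^2\approx n^2/6$. You are off by a full factor of $n$, not by a constant that a weighted Cauchy--Schwarz or the additive slack $3n^2$ can absorb. Your closing observation that $W(e)$ is independent in $\partial\mathcal{H}$ is correct and gives $3m\le |\partial\mathcal{H}|\cdot\alpha(\partial\mathcal{H})$, which is indeed sharp for Steiner triple systems, but $\alpha(\partial\mathcal{H})$ is not controlled by $|\partial\mathcal{H}|$ alone, so it is not directly usable in this form.

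The paper proceeds by an entirely different route. It proves the more general Theorem~\ref{induction-thm-cancel}, which bounds $|\mathcal{H}[U]|$ in terms of $|\partial\mathcal{H}[U]|$ for any $U$, by induction on $|U|$. For shadow density at most $2/3$ it invokes the Fisher--Ryan-style bound of Theorem~\ref{fisher-ryan-cancel}. For larger shadow density it uses Tur\'an's theorem to extract a clique $S$ of size $\omega=\omega(\partial\mathcal{H}[U])$, applies the induction hypothesis to $U\setminus S$, and controls the edges of $\mathcal{H}[U]$ meeting $S$ via Corollary~\ref{cancel-clique-corol} (which gives $\sum_{v\in S}d(v)\le|\partial\mathcal{H}[U]|$), splitting into cases according to whether $\omega$ is large or small. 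The additive slack $3n^2$ is what absorbs the rounding errors at each inductive step. This inductive decomposition, rather than a one-shot global counting/Cauchy--Schwarz argument, is what captures the correct $O(n^2)$ behaviour near $s=\binom{n}{2}$.
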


First we show that Theorem \ref{cancel-shadow-inequ-right} implies Theorem \ref{cancellative-3-right}.
\begin{proof}[Proof of Theorem \ref{cancellative-3-right}]
Let us consider the lower bound first.

A $k$-vertex Steiner triple system ($STS$ for short) is a $3$-graph on $k$ vertices such that every pair of vertices is covered by exactly one edge.
It is known that a $k$-vertex $STS$ exists iff $k \equiv 1$ or $3$ (mod 6) (e.g. see \cite{RW03}).
Let $STS(k)$ denote the family of all Steiner triple systems on $k$ vertices.
Let $\mathcal{S}(n,k)$ denote the collection of all $3$-graphs on $n$ vertices that can be obtained from a $3$-graph $H\in STS(k)$
by blowing up every vertex in $H$ into a set of size either $\lf n/k \rf$ or $\lc n/k \rc$.
It is easy to see that every $3$-graph in $\mathcal{S}(n,k)$ is cancellative.

Fix an integer $k$ with $k \equiv 1$ or $3$ (mod 6).
Let $\mathcal{H}_{n} \in \mathcal{S}(n,k)$
and in order to keep the calculations simple let us assume that $k$ divides $n$.
Then
\[
\lim_{n \to \infty} \frac{|\partial \mathcal{H}_{n}|}{\binom{n}{2}} = \frac{(k-1)n^2/(2k)}{\binom{n}{2}} = \frac{k-1}{k},
\]
and
\[
\lim_{n\to \infty} \frac{|\mathcal{H}_{n}|}{\binom{n}{3}} = \frac{(k-1)n^3/(6k^2)}{\binom{n}{3}} = \frac{k-1}{k^2}.
\]
Therefore, the sequence $\left( \mathcal{H}_{n} \right)_{n = 1}^{\infty}$ realizes $\left({(k-1)}/{k}, {(k-1)}/{k^2} \right)$.
So, $g(\mathcal{T}_{3}, (k-1)/k) \ge (k-1)/k^2$ for all integers $k$ with $k \equiv 1$ or $3$ (mod 6).

Next we prove the upper bound.
Let $\left( \mathcal{H}_{k} \right)_{k = 1}^{\infty}$ be a good sequence of cancellative $3$-graph that realizes $(x,y)$.
Let $x_{k} = 2|\partial \mathcal{H}_{k}| / \left(v(\mathcal{H})\right)^2$ and $y_{k} = 6|\mathcal{H}_{k}| / \left(v(\mathcal{H})\right)^3$ for $k \ge 1$.
Then, it follows from Theorem \ref{cancel-shadow-inequ-right} that
\[
\frac{y_k \left(v(\mathcal{H}_{k})\right)^3}{6} \le
\frac{\left( \left(v(\mathcal{H}_{k})\right)^2 - x_k \left(v(\mathcal{H}_{k})\right)^2 \right) {x_{k} \left(v(\mathcal{H}_{k})\right)^2 }/{2}}{3 v(\mathcal{H}_{k})} + 3\left(v(\mathcal{H}_{k})\right)^2,
\]
which implies
\[
y_k \le x_k(1-x_k) + \frac{18}{v(\mathcal{H}_{k})}.
\]
Letting $k \to \infty$, we obtain $y \le x(1-x)$, and this completes the proof.
\end{proof}

The idea of the proof of Theorem \ref{cancel-shadow-inequ-right}
is to first choose $S \subset V(\mathcal{H})$ such that $\left(\partial\mathcal{H}\right)[S]$ is a clique.
Then we apply the induction hypothesis to $V(\mathcal{H})\setminus S$.
However, in order to do the induction we need to prove a stronger statement
which implies Theorem \ref{cancel-shadow-inequ-right}.

We will use $G$ to denote the graph $\partial \mathcal{H}$.
Let $U \subset V(\mathcal{H})$ and let $G_{U} = G[U]$ and $\mathcal{H}_{U} = \mathcal{H}[U]$.
\begin{thm}\label{induction-thm-cancel}
Let $\mathcal{H}$ be a cancellative $3$-graph on $n$ vertices.
Let $U\subset V(\mathcal{H})$ be a set of size $m$.
Suppose that $|G_{U}|=xm^2/2$ for some real number $x$ with $0 \le x \le (m-1)/m$.
Then,
\[
|\mathcal{H}_{U}|\le \frac{(1-x)x}{6}m^3+3m^2.
\]
\end{thm}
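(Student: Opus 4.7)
The plan is to proceed by strong induction on $m=|U|$. The base case, for $m$ bounded by some absolute constant $m_{0}$, is handled by the trivial bound $|\mathcal{H}_{U}|\le\binom{m}{3}\le m^{3}/6$, which is absorbed by the $3m^{2}$ slack for (say) $m\le 18$; for $m_{0}\le m\le$ some fixed value the slack can similarly be used.

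The two structural tools are: (i) Lemma \ref{cancel-induction-lemma}, which implies that $L_{\mathcal{H}_{U}}(v)$ is a triangle-free graph on $N_{G_{U}}(v)$, whence Mantel's theorem gives $d_{\mathcal{H}_{U}}(v)\le d_{G_{U}}(v)^{2}/4$; and (ii) Corollary \ref{cancel-clique-corol}, which yields $\sum_{v\in S}d_{\mathcal{H}_{U}}(v)\le|G_{U}|=xm^{2}/2$ for any clique $S$ in $G_{U}$. The inductive step splits into two regimes depending on $x$. For $x\le 2/3$, the plan is to remove a single vertex $v^{*}\in U$ of minimum $G_{U}$-degree $d$, so that $d\le xm$; Mantel bounds $d_{\mathcal{H}_{U}}(v^{*})\le d^{2}/4$, and the inductive hypothesis on $U\setminus\{v^{*}\}$ with $x'(m-1)^{2}=xm^{2}-2d$ gives
\[
|\mathcal{H}_{U}|\le\frac{d^{2}}{4}+\frac{(1-x')x'(m-1)^{3}}{6}+3(m-1)^{2}.
\]
After substituting for $x'$ and expanding, the inequality $|\mathcal{H}_{U}|\le(1-x)xm^{3}/6+3m^{2}$ reduces to a quadratic inequality in $d$; the constraint $d\le xm$ puts $d$ in the region where this holds, with the $6m-3$ gap from $3m^{2}-3(m-1)^{2}$ absorbing lower-order terms.

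For $x>2/3$, the Mantel bound is not sharp (in a Steiner triple system blow-up with $k>3$ classes one has $d_{\mathcal{H}_{U}}(v)\approx d_{G_{U}}(v)^{2}/(2(k-1))\ll d_{G_{U}}(v)^{2}/4$), and the single-vertex removal above fails. The plan in this regime is to remove a clique $S\subset U$ of $G_{U}$ of size $s$ with $s\approx 1/(1-x)$, whose existence is guaranteed by Tur\'an's theorem whenever $xm^{2}/2>(1-\tfrac{1}{s-1})m^{2}/2$. Corollary \ref{cancel-clique-corol} then gives the stronger bound $|\mathcal{H}_{U}|-|\mathcal{H}_{U\setminus S}|\le\sum_{v\in S}d_{\mathcal{H}_{U}}(v)\le xm^{2}/2$, and the induction hypothesis applied to $U\setminus S$ of size $m-s$ with density $x''=2|G_{U\setminus S}|/(m-s)^{2}$ finishes the step.

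The main obstacle is the delicate algebraic verification, especially in the $x>2/3$ regime, where the clique size $s$ must be calibrated so that $x''$ lies in the valid range and so that $xm^{2}/2+(1-x'')x''(m-s)^{3}/6+3(m-s)^{2}\le(1-x)xm^{3}/6+3m^{2}$. The Steiner triple system blow-ups serve as the tight extremal example: for $x=(k-1)/k$, Tur\'an guarantees a $K_{k}$, and a regular-degree computation shows that removing it preserves $x$ exactly (so $x''=x$), making the induction tight modulo the $3m^{2}$ slack. The boundary case $x=2/3$, where both arguments apply, matches the two asymptotics.
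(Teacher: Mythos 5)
Your high-level plan for $x>2/3$ (induction on $m$, find a clique in $G_U$ via Tur\'an, bound the removed edges by Corollary~\ref{cancel-clique-corol}, apply the inductive hypothesis to the complement) matches the paper's argument, and your identification of the Steiner blow-ups as the tight examples is also correct. However, there is a genuine missing ingredient: you never control $e_s := |G_U| - |G_{U\setminus S}|$, the number of pairs in $G_U$ meeting $S$, which is what determines the density $x''$ you feed into the inductive hypothesis. The combined bound you wish to verify is a function of $e_s$, and without an upper bound on $e_s$ the inequality cannot be closed --- informally, if $e_s$ is small the remaining graph is too dense, and the inductive bound overshoots. The paper resolves this by taking $S$ to be a clique of size $\omega = \omega(G_U)$ (not just one of size $\approx 1/(1-x)$) and exploiting maximality: every vertex of $U\setminus S$ is adjacent in $G_U$ to at most $\omega-1$ vertices of $S$, hence $e_s \le (\omega-1)(m-\omega)+\binom{\omega}{2}$. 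This bound, together with the observation that the error $\Delta$ is decreasing in $e_s$ on the relevant range, is exactly what lets one substitute the extremal $e_s$ and verify $\Delta>0$. The paper also has to split according to whether $\omega \ge m/10$ or $\omega < m/10$, since the actual clique number could be much larger than $1/(1-x)$; your proposal does not account for this either. Your phrase ``the clique size $s$ must be calibrated so that $x''$ lies in the valid range'' gestures at the difficulty but does not supply the structural fact that makes it go through.

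For $x\le 2/3$, your approach is genuinely different from the paper's. The paper does no induction in this range: it simply invokes the already-proved Theorem~\ref{fisher-ryan-cancel}, which gives $|\mathcal{H}_U| \le |G_U|^{3/2}/(3\sqrt{3}) = x^{3/2}m^3/(6\sqrt{6})$, and checks directly that this is at most $x(1-x)m^3/6 + 3m^2$ when $x\le 2/3$. Your proposed single-vertex removal with the Mantel bound $d_{\mathcal{H}_U}(v^*)\le d^2/4$ is plausible and in the leading order the deficit vanishes exactly at $x=2/3$ (the dichotomy point), but it would require its own careful verification of lower-order terms and is more work than citing the existing theorem.
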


In particular, letting $U = V(\mathcal{H})$ in Theorem \ref{induction-thm-cancel} we obtain
\[
|\mathcal{H}| \le \frac{\left( n^2 - 2|\partial \mathcal{H}|\right) |\partial \mathcal{H}| }{3n} + 3n^2,
\]
which is exactly Theorem \ref{cancel-shadow-inequ-right}.

The proof of Theorem \ref{induction-thm-cancel} is by induction on $m$.
Note that Theorem \ref{induction-thm-cancel} holds trivially for all $m\le 20$ since $\binom{m}{3} \le 3m^2$ for all $m\le 20$.
Also, by  Theorem \ref{fisher-ryan-cancel},
\[
|\mathcal{H}_{U}|\le \frac{|\partial\left( \mathcal{H}_{U} \right)|^{3/2}}{3\sqrt{3}}
\le \frac{|G_{U}|^{3/2}}{3\sqrt{3}} = \frac{x^{3/2}}{6\sqrt{6}}m^3,
\]
which is less than $x(1-x)m^3/6 + 3m^2$ when $x\le 2/3$.
Therefore, Theorem \ref{induction-thm-cancel} is true for all $x\le 2/3$, and hence we may assume that $x>2/3$ in the rest of the proof.

In the proof of Theorem \ref{induction-thm-cancel} we need the following extension of Tur\'{a}n's theorem.
The clique number $\omega(G)$ of a graph $G$ is the largest integer $\omega$ such that there is a copy of $K_{\omega}$ in $G$.
Tur\'{a}n's theorem implies that any $n$-vertex graph with no $K_{\omega+1}$ has at most $(\omega-1)n^2/(2\omega)$ edges.
\begin{thm}[\cite{TU41}]\label{Turan-theorem}
Let $G$ be an $n$-vertex graph with at least $xn^2/2$ edges for some real number $x \ge 0$.
Then $\omega(G) \ge \lc1/(1-x) \rc$.
\end{thm}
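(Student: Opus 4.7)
The statement is essentially a reformulation of Turán's theorem, so the plan is to invoke Turán directly and then do a short arithmetic manipulation to extract the clique bound in the form claimed.

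Let $\omega = \omega(G)$. By definition $G$ contains no copy of $K_{\omega+1}$, so the classical Turán theorem gives the edge bound
\[
|E(G)| \le \left(1 - \frac{1}{\omega}\right) \frac{n^2}{2}.
\]
Combining this with the hypothesis $|E(G)| \ge xn^2/2$ yields $x \le 1 - 1/\omega$, and rearranging gives $\omega \ge 1/(1-x)$. Since $\omega$ is a positive integer, we may replace the right-hand side by its ceiling, obtaining $\omega(G) \ge \lceil 1/(1-x) \rceil$, which is the desired conclusion.

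The only subtle point is boundary behavior: when $x = 0$ the bound reads $\omega(G) \ge 1$, which is automatic since $n \ge 1$; and as $x$ approaches $1$ the bound forces $\omega(G)$ to grow without bound, matching the fact that an almost-complete graph must contain an almost-complete clique. Since the classical Turán theorem is being cited as a black box, there is no real obstacle here; the entire argument fits in a few lines and the main content is simply packaging the familiar inequality $|E(G)| \le (1 - 1/\omega) n^2/2$ in a form convenient for the applications in Section 4.
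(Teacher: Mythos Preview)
Your proof is correct and essentially identical to the paper's own proof: both invoke Tur\'{a}n's theorem in the form $|E(G)| \le (\omega-1)n^2/(2\omega)$, compare with the hypothesis to get $\omega \ge 1/(1-x)$, and then round up since $\omega$ is an integer.
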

\begin{proof}
Let $\omega = \omega(G)$.
By Tur\'{a}n's theorem, $xn^2/2\le (\omega-1)n^2/(2\omega)$.
Simplifying this inequality we obtain $\omega\ge 1/(1-x)$.
Since $\omega$ is an integer, $\omega\ge \lc1/(1-x) \rc$.
\end{proof}

The idea in the proof of Theorem \ref{induction-thm-cancel} is to first apply Tur\'{a}n's theorem on $G_{U}$ to find a large clique, say on $S$,
and then apply the induction hypothesis to $T = U\setminus S$ to get an upper bound for $|\mathcal{H}_{T}|$.
In order to get an upper bound for $|\mathcal{H}_{U}|$ we just need to apply Corollary \ref{cancel-clique-corol} to $\mathcal{H}_U$
to get an upper bound for $|\mathcal{H}_U \setminus \mathcal{H}_T|$.

\begin{proof}[Proof of Theorem \ref{induction-thm-cancel}]
Suppose that $G_{U}$ contains a clique on $\omega$ vertices.
Then choose $S \subset U$ of size $\omega$ so that $G_{S} \cong K_{\omega}$.
Let $T = U \setminus S$.
Let $e_{s}$ denote the number of edges in $G_{U}$ that have nonempty intersection with $S$.
Applying the induction hypothesis to $T$ we obtain
\begin{align}\label{size-HT}
|\mathcal{H}_{T}| & \le  \frac{1}{6} {\frac{xm^2 - 2e_s}{(m-\omega)^2}\left( 1 - \frac{xm^2 - 2e_s}{(m-\omega)^2}\right)} (m-\omega)^3 + 3(m-\omega)^2 \notag \\
& = \frac{(xm^2 - 2e_s)\left((m-\omega)^2 - xm^2 +2e_s\right)}{6(m-\omega)} + 3(m-\omega)^2 \notag \\
& = \frac{-4e_s^2 + \left(4xm^2 - 2(m-\omega)^2\right)e_s + xm^2(m-\omega)^2 - x^2m^4 }{6(m-\omega)} + 3(m-\omega)^2.
\end{align}
On the other hand, Corollary \ref{cancel-clique-corol} gives
\begin{align}
|\mathcal{H}_{U} \setminus \mathcal{H}_{T}| \le \sum_{v\in S} d(v) \le |G_{U}| = \frac{x}{2}m^2. \notag
\end{align}
Let
\begin{align}\label{Delta}
\Delta = \frac{x(1-x)}{6} m^3 + 3m^2 - \left(|\mathcal{H}_{T}| +  \frac{x}{2}m^2\right),
\end{align}
and in order to prove Theorem \ref{induction-thm-cancel} it suffices to show that $\Delta \ge 0$.

Next, we will consider two cases depending on the size of $\omega(G_{U})$,
and in order to keep the calculations simple, we will omit the floor and ceiling signs.

\medskip

\noindent\textbf{Case 1:} $\omega(G_{U}) \ge m/10$.\\
Then, we may let $\omega = m/10$ in $(\ref{size-HT})$, which gives
\begin{align}\label{case1-HT}
|\mathcal{H}_{T}|
& \le \frac{1}{6} \frac{xm^2 - 2e_s}{\left(9m/10\right)^2} \left(1-\frac{xm^2 - 2e_s}{\left(9m/10\right)^2}\right) \left(\frac{9m}{10}\right)^3
+ 3\left(\frac{9m}{10}\right)^2 \notag \\
& = \frac{-4 e_s^2 - \left( 4xm^2 - 81m^2/50\right) e_s + 81xm^4/100 - x^2m^4}{27m/5} + \frac{243}{100}m^2.
\end{align}
It follows from $(\ref{Delta})$ and $(\ref{case1-HT})$ that
\begin{align}\label{cas1-delta-1}
\Delta & \ge \frac{2000\left(e_s^2 - \frac{200x-81}{200}m^2 e_s \right) + (45x+50x^2)m^4 +(1539-1350x)m^3}{2700m}.
\end{align}
Note that $e_s \le 9m^2/100 + \binom{m/10}{2} \le (200x - 81)m^2/400$ since $x > 2/3$.
On the other hand, since $9m^2/100 + \binom{m/10}{2} < (200x - 81)m^2/400$ when $x > 2/3$,
we may substitute $e_s = 9m^2/100 + \binom{m/10}{2}$ into $(\ref{cas1-delta-1})$ and obtain
\[
\Delta \ge \frac{ \left(100 x^2-290 x+190\right)m^3 + (2959-2500x) m^2 +10 m}{5400} > 0,
\]
which implies $|\mathcal{H}_{U}| \le x(1-x)m^3/6+3m^2$.

\medskip

\noindent\textbf{Case 2:} $\omega(G_{U}) < m/10$.\\
Then, let $\omega = \omega(G_{U})$ in $(\ref{size-HT})$.
A simple but crucial observation is that every vertex in $T$ is adjacent to at most $\omega-1$ vertices in $S$,
since otherwise there would be a copy of $K_{\omega+1}$ in $G_{U}$, which contradicts the definition of $\omega$.
Therefore,
\begin{align}\label{size-es}
e_{s} \le (\omega-1)(m-\omega)+\binom{\omega}{2}.
\end{align}
Plugging $(\ref{size-HT})$ into $(\ref{Delta})$ we obtain
\begin{align}\label{case2-delta-1}
\Delta & \ge \frac{4\left( e_s^2 -  \frac{2 xm^2 -(m-\omega)^2 }{2} e_s \right) + x^2 \omega m^3 }{6(m-\omega)}
+ \frac{x \omega m^2}{6} + 3(\omega m - \omega^2) -\frac{x}{2}m^2.
\end{align}
Since $x \ge 2/3$ and $\omega < m/10$, we have $\left( 2xm^2 - (m-\omega)^2 \right)/4 > (\omega-1)(m-\omega)+\binom{\omega}{2}$.
Since $e_s^2 -  \frac{2 xm^2 -(m-\omega)^2 }{2} e_s$ is decreasing in $e_s$ when $e_s \le \left( 2xm^2 - (m-\omega)^2 \right)/4$,
by $(\ref{size-es})$,
we may substitute $e_s = (\omega-1)(m-\omega)+\binom{\omega}{2}$ into $(\ref{case2-delta-1})$ and obtain
\begin{align}\label{case2-delta-2}
& \Delta > \frac{(1-x)\left(-\omega^2+(2-x)m\omega\right) m^2- (2-x)m^3 + (33m-50\omega)\omega m}{6(m-\omega)}.
\end{align}
Here, we omitted a positive lower order term $\left( x\omega m^2+ (2m-\omega)^2 + 17\omega^3 \right)/\left({6(m-\omega)}\right)$.
Notice that $(-\omega^2+(2-x)m\omega)$ is increasing in $\omega$ when $\omega\le (2-x)m/2$.
On the other hand, Tur\'{a}n's theorem together with our assumption give $1/(1-x)\le \omega < m/10 < (1-x/2)m$ when $x > 2/3$.
Since $-\omega^2+(2-x)m\omega$ is increasing in $\omega$ when $\omega \le (1-x/2)m$,
\begin{align}\label{case2-turan-inequality}
& (1-x)\left(-\omega^2+(2-x)m\omega\right) m^2- (2-x)m^3  \notag \\
\ge & (1-x)\left(-\left(\frac{1}{1-x} \right)^2+(2-x)m\left(\frac{1}{1-x} \right)\right) m^2- (2-x)m^3  \notag \\
= &  -\frac{m^2}{1-x}.
\end{align}
It follows from $(\ref{case2-delta-2})$ and $(\ref{case2-turan-inequality})$ that
\[
\begin{split}
\Delta & > \frac{ -\frac{m^2}{1-x} + \omega m^2 +(32m-50\omega)\omega m }{6(m-\omega)} \\
& = \frac{ \left(\omega - \frac{1}{1-x} \right)m^2 + 50 \left( \frac{32}{50}m - \omega \right)\omega m }{6(m-\omega)} > 0,
\end{split}
\]
which implies $|\mathcal{H}_{U}| \le x(1-x)m^3/6+3m^2$.
\end{proof}

\section{Hypergraphs without expansion of cliques.}
In this section we consider the feasible region of hypergraphs without expansion of cliques.
First we will prove the following result, from which Theorem \ref{fisher-ryan-hygp} can be easily obtained.

\begin{thm}\label{fisher-ryan-K-middle}
Let $\ell \ge r\ge 2$.
Let $\mathcal{H}$ be a $\mathcal{K}_{\ell+1}^{r}$-free $r$-graph.
Then
\[
\left(\frac{|\mathcal{H}|}{\binom{\ell}{r}} \right)^{1/r}\le \left(\frac{|\partial\mathcal{H}|}{\binom{\ell}{r-1}} \right)^{1/(r-1)}.
\]
\end{thm}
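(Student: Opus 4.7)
The plan is to prove Theorem \ref{fisher-ryan-K-middle} by induction on $r\ge 2$, uniformly over all $\ell\ge r$. The base case $r=2$ reduces to Tur\'an's theorem: since $\mathcal{H}$ is $K_{\ell+1}$-free, applying Tur\'an to the induced subgraph on the non-isolated vertices (which number at most $|\partial\mathcal{H}|$) gives $|\mathcal{H}|\le\binom{\ell}{2}(|\partial\mathcal{H}|/\ell)^{2}$, which is precisely the claim squared.

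For the inductive step $r\ge 3$, the central structural ingredient is a \emph{link lemma}: for every $v\in V(\mathcal{H})$, the link $L_{\mathcal{H}}(v)$ is a $\mathcal{K}_{\ell}^{r-1}$-free $(r-1)$-graph. Indeed, if $L_{\mathcal{H}}(v)$ contained a member $F$ of $\mathcal{K}_{\ell}^{r-1}$ with core $S$ of size $\ell$, then lifting every edge $E\in F$ to $E\cup\{v\}\in\mathcal{H}$ would produce a member of $\mathcal{K}_{\ell+1}^{r}$ inside $\mathcal{H}$ with core $S\cup\{v\}$: each pair inside $S$ is covered by some lifted $F$-edge (as $S$ is the core of $F$), and each pair $\{v,u\}$ with $u\in S$ is covered by the lift of any $F$-edge through $u$. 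Applying the induction hypothesis with parameters $(r-1,\ell-1)$ to each link yields
\[
|L(v)|^{r-2}\binom{\ell-1}{r-2}^{r-1}\le|\partial L(v)|^{r-1}\binom{\ell-1}{r-1}^{r-2}.
\]

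To pass from these per-link bounds to the claimed global inequality, I plan to combine them with the double-counting identities $\sum_{v}|L(v)|=r|\mathcal{H}|$ and $\sum_{v}|\partial L(v)|=(r-1)|\partial\mathcal{H}|$, and then apply a H\"older inequality tuned so that the binomial constants telescope through the coincidence $\binom{\ell}{r}/\binom{\ell-1}{r-1}=\binom{\ell}{r-1}/\binom{\ell-1}{r-2}=\ell/(\ell-r+1)$. The main obstacle is a power mismatch: the per-link inequality relates $|L(v)|^{r-2}$ to $|\partial L(v)|^{r-1}$, whereas the target needs $|\mathcal{H}|^{r-1}$ versus $|\partial\mathcal{H}|^{r}$, so a naive summation loses one factor on each side. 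Closing this gap requires a H\"older step with the paired exponents $((r-1)/(r-2),\,r-1)$ applied to a suitable bilinear decomposition of $\sum_{v}|L(v)|$, whose equality case should match the balanced blow-up of the Tur\'an graph $T_{r}(\ell,\ell)$---the extremal configuration for the Tur\'an problem that this inequality generalizes.
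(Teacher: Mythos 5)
Your inductive skeleton is exactly the paper's: induction on $r$ with base case $r=2$ via Tur\'an (the paper says ``Mantel'' but means Tur\'an for general $\ell$), and the link lemma that $L_{\mathcal{H}}(v)$ is $\mathcal{K}_{\ell}^{r-1}$-free, applied to each $v$ with parameters $(r-1,\ell-1)$. Your restatement of the inductive hypothesis, and the double-counting identities $\sum_v |L(v)| = r|\mathcal{H}|$ and $\sum_v |\partial L(v)| = (r-1)|\partial\mathcal{H}|$, are all correct.

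However, there is a genuine gap where you hand-wave: ``a H\"older step with the paired exponents $((r-1)/(r-2),\,r-1)$'' will not close the power mismatch you correctly identify. What actually happens after the double-counting
\[
\sum_{v} d(v)^{\frac{1}{r-1}}\,|\partial L(v)| \;=\; \sum_{E\in\partial\mathcal{H}}\sum_{v\in E} d(v)^{\frac{1}{r-1}}
\;\le\; \bigl((r-1)|\partial\mathcal{H}|\bigr)^{\frac{r-2}{r-1}}\Bigl(\sum_{E\in\partial\mathcal{H}}\sigma(E)\Bigr)^{\frac{1}{r-1}},
\]
where $\sigma(E)=\sum_{v\in E}d(v)$, is that the entire argument reduces to bounding $\sum_E\sigma(E)$. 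The naive bound one gets ``for free'' from $\mathcal{K}_{\ell+1}^r$-freeness is $\sigma(E)\le(\ell-r+1)|\partial\mathcal{H}|$ for each $E$, giving $\sum_E\sigma(E)\le(\ell-r+1)|\partial\mathcal{H}|^2$ --- but this is weaker by a factor of $\ell/(r-1)$ than what the theorem requires. The paper closes this with a separate combinatorial lemma (Lemma~\ref{fisher0ryan-K-main-lemma}), proved by an argument in the spirit of Fisher and Ryan: introduce the largest $z$ with $\sigma(R)\le(\ell-r+1)|\partial\mathcal{H}|-(\ell-|R|)z$ for all cliques $R$ in the $(r-2)$-th shadow, pick an extremal $R_0$, and exploit that $E\cup\bigl(N(E)\cap R_0\bigr)$ is again a clique for each shadow edge $E$. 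This second, non-inductive use of the clique-free structure is the crux and cannot be replaced by Jensen or H\"older, which only give upper bounds that point the wrong way. Incidentally, your claimed ``coincidence'' of binomial ratios is also numerically false: $\binom{\ell}{r}/\binom{\ell-1}{r-1}=\ell/r$ while $\binom{\ell}{r-1}/\binom{\ell-1}{r-2}=\ell/(r-1)$, neither equal to $\ell/(\ell-r+1)$; the correct constant enters through the $\bigl((\ell-r+1)(r-1)/\ell\bigr)^{1/(r-1)}$ factor supplied by the missing lemma.
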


In order to derive Theorem \ref{fisher-ryan-hygp} from Theorem \ref{fisher-ryan-K-middle} we need an easy observation.
Recall from (\ref{non-normal-shadow}) that for $i \le -1$,
\[
\partial_{i}\mathcal{H} = \left\{ A\in \binom{V(\mathcal{H})}{r-i}: \text{$\mathcal{H}[A]$ is a complete $r$-graph} \right\}.
\]

\begin{obs}\label{observa-fisher-ryan-K}
Let $r \ge 3$ and $\mathcal{H}$ be an $r$-graph.
If $0 \le i \le r-2$, then $\mathcal{H}$ is $\mathcal{K}_{\ell+1}^{r}$-free iff $\partial_{i} \mathcal{H}$ is $K_{\ell+1}^{r-i}$-free.
In particular, $\mathcal{H}$ is $\mathcal{K}_{\ell+1}^{r}$-free iff $\partial_{r-2} \mathcal{H}$ is $K_{\ell+1}$-free.
If $i \le -1$, then $\mathcal{H}$ is $\mathcal{K}_{\ell+1}^{r}$-free implies that $\partial_{i} \mathcal{H}$ is $\mathcal{K}_{\ell+1}^{r-i}$-free.
\end{obs}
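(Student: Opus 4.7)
The plan is to treat the two parameter ranges separately, in each case proving the required direction by contrapositive. The common idea is that membership in $\mathcal{K}_{\ell+1}^r$ is witnessed by a single piece of data --- an $(\ell+1)$-element core $S$ together with at most $\binom{\ell+1}{2}$ edges covering every pair in $S$ --- so transferring such a configuration between $\mathcal{H}$ and $\partial_i\mathcal{H}$ reduces to replacing each witnessing edge by a sub- or super-edge that still contains the relevant pair.

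For $0 \le i \le r-2$, suppose first that $\partial_i\mathcal{H}$ contains some $F' \in \mathcal{K}_{\ell+1}^{r-i}$ with core $S$. For every $E' \in F'$ pick, by the definition of $\partial_i\mathcal{H}$, some $\phi(E') \in \mathcal{H}$ with $E' \subset \phi(E')$. The resulting at most $\binom{\ell+1}{2}$ edges still cover every pair in $S$, so they form a member of $\mathcal{K}_{\ell+1}^r$ inside $\mathcal{H}$. Conversely, starting from $F \in \mathcal{K}_{\ell+1}^r$ inside $\mathcal{H}$ with core $S$, for each pair $\{u,v\}\subset S$ fix a witnessing edge $E_{uv} \in F$ and choose any $(r-i)$-subset $E'_{uv}\subset E_{uv}$ containing $\{u,v\}$; this is precisely where the hypothesis $2 \le r-i \le r$ is used. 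Each $E'_{uv}$ lies in $\partial_i\mathcal{H}$, and these sets witness a member of $\mathcal{K}_{\ell+1}^{r-i}$ inside $\partial_i\mathcal{H}$.

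For $i \le -1$ only the stated implication is needed. If $\partial_i\mathcal{H}$ contains some $F'$ from the corresponding family with core $S$, each $A \in F'$ satisfies that $\mathcal{H}[A]$ is complete. For every pair $\{u,v\}\subset S$ pick a covering edge $A_{uv} \in F'$ and any $r$-subset $E_{uv}\subset A_{uv}$ containing $\{u,v\}$ (possible since $r-i \ge r+1$); completeness forces $E_{uv} \in \mathcal{H}$, and these $r$-edges form a member of $\mathcal{K}_{\ell+1}^r$ in $\mathcal{H}$. The converse fails in this range because $\partial_i\mathcal{H}$ can be empty even when $\mathcal{H}$ contains $\mathcal{K}_{\ell+1}^r$ configurations, which explains why the observation only claims an implication. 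I anticipate no real obstacle; the only subtlety is the parameter accounting --- whether $r-i$ is small enough to fit inside an $r$-edge or large enough to contain one --- and this is exactly what dictates the split into the two ranges.
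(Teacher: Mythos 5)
Your proof is correct, and since the paper presents this as an unproved \emph{Observation} (there is no argument supplied), your careful unwinding of the definitions — transferring a $\mathcal{K}_{\ell+1}$-configuration between $\mathcal{H}$ and $\partial_i\mathcal{H}$ by shrinking or enlarging each witnessing edge while retaining the pair it covers — is precisely the implicit reasoning the authors had in mind. The parameter bookkeeping ($2 \le r-i \le r$ when $0\le i\le r-2$, and $r-i\ge r+1$ when $i\le -1$, in which case completeness of $\mathcal{H}[A]$ supplies the needed $r$-edges) and your counterexample to the converse for $i\le -1$ are both accurate.
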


Now we show how to prove Theorem \ref{fisher-ryan-hygp} using Theorem \ref{fisher-ryan-K-middle}.
\begin{proof}[Proof of Theorem \ref{fisher-ryan-hygp}]
Fix $r-\ell \le i \le r-2$.
Then by Observation \ref{observa-fisher-ryan-K},
$\partial_{i}\mathcal{H}$ is $\mathcal{K}_{\ell+1}^{r-i}$-free.
Since $\partial \left(\partial_{i}\mathcal{H}\right) \subset \partial_{i+1}\mathcal{H}$, it follows from Theorem \ref{fisher-ryan-K-middle} that
\[
\left( \frac{|\partial_{i}\mathcal{H}|}{\binom{\ell}{r-i}} \right)^{1/(r-i)} \le
\left( \frac{|\partial(\partial_{i}\mathcal{H}|)}{\binom{\ell}{r-i-1}} \right)^{1/(r-i-1)} \le
\left( \frac{|\partial_{i+1}\mathcal{H}|}{\binom{\ell}{r-i-1}} \right)^{1/(r-i-1)},
\]
and this completes the proof.
\end{proof}

To show that all inequalities in Theorem \ref{fisher-ryan-hygp} are tight, consider the following construction.
Fix $\alpha \in [0,1]$ and let $\mathcal{H}_{n}(\alpha)$ be
the vertex disjoint union of $T_{r}(\alpha n,\ell)$
and a set of $(1-\alpha) n$ isolated vertices.
It is clear that $\mathcal{H}_{n}(\alpha)$ is $\mathcal{K}_{\ell+1}^{r}$-free.
In order to keep the calculations simple, let us assume that $\alpha n$ is an integer that is an multiple of $\ell$.
For fixed $\ell-r \le i \le r-1$,
\[
|\partial_{i} \mathcal{H}_{n}(\alpha)| = \binom{\ell}{r-i}\left( \frac{\alpha n}{\ell} \right)^{r-i},
\]
and hence
\[
\left( \frac{|\partial_{i} \mathcal{H}_{n}(\alpha)|}{\binom{\ell}{r-i}} \right)^{\frac{1}{r-i}} = \frac{\alpha n}{\ell}.
\]
Therefore, all inequalities in Theorem \ref{fisher-ryan-hygp} are tight.

Notice that the construction above also proves the lower bound in Corollary \ref{feasible-region-K} and we omit the calculations here.

The proof of Theorem \ref{fisher-ryan-K-middle} uses some ideas in Fisher and Ryan's proof $\cite{FR92}$.
However we need to translate their proof into the language of hypergraphs, since an edge in $\partial_{i}\mathcal{H}$
might  not be equivalent to a copy of $K_{r-i}$ in $\partial_{r-2}\mathcal{H}$ for $-\ell \le i \le r-3$.
Define the clique set $\mathcal{K}_{\mathcal{H}}$ of $\mathcal{H}$ as
\[
\mathcal{K}_{\mathcal{H}}=\left\{A\subset V(\mathcal{H}): (\partial_{r-2} \mathcal{H})[A] \cong K_{|A|}\right\}.
\]
For every $E \in \partial \mathcal{H}$ let $N(E) = \{v \in V(\mathcal{H}): \{v\} \cup E \in \mathcal{H}\}$.
Recall from Section 4 that $\sigma(S) = \sum_{v\in S}d(v)$.
We first prove a lemma that will be used in the proof of Theorem \ref{fisher-ryan-K-middle}.

\begin{lemma}\label{fisher0ryan-K-main-lemma}
$\sum_{E \in \partial\mathcal{H}}\sigma(E)\le \frac{(\ell-r+1)(r-1)}{\ell}|\partial\mathcal{H}|^2$.
\end{lemma}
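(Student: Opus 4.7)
The plan is to mimic the proof of the cancellative inequality (\ref{sigma-S-inequality}) used in the proof of Theorem \ref{fisher-ryan-cancel}, with the disjointness supplied by Lemma \ref{cancel-disjoint-lemma} replaced by a bounded-multiplicity statement coming from the $\mathcal{K}_{\ell+1}^{r}$-free assumption. Everything will pivot on one clique observation, followed by an optimization.

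\textbf{Step 1: Clique multiplicity bound.} I first show that for any clique $S \subseteq V(\mathcal{H})$ in $G := \partial_{r-2}\mathcal{H}$ and any $E \in \partial\mathcal{H}$, the set $T := \{v \in S : E \in L(v)\}$ has $|T| \le \ell - r + 1$. Indeed $T \cap E = \emptyset$ (since $E \in L(v)$ forces $v \notin E$), the pairs inside $T$ are edges of $G$ (as $T \subseteq S$ is a clique), the pairs inside $E$ are edges of $G$ (since $\{v\} \cup E \in \mathcal{H}$ for any $v \in T$ makes $E$ an $(r-1)$-clique in $G$), and each cross-pair $\{t,e\}$ with $t \in T$, $e \in E$ is covered by the edge $\{t\} \cup E \in \mathcal{H}$. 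Hence $T \cup E$ is a clique in $G$ of size $|T| + r - 1$, and since $G$ is $K_{\ell+1}$-free by Observation \ref{observa-fisher-ryan-K}, we get $|T| + r - 1 \le \ell$. Summing over $E \in \bigcup_{v \in S} L(v) \subseteq \partial\mathcal{H}$ yields the analog of Corollary \ref{cancel-clique-corol}: $\sum_{v \in S} d(v) \le (\ell - r + 1)\,|\partial\mathcal{H}|$ for any clique $S$ of $G$.

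\textbf{Step 2: Reduction to a maximizing edge.} Pick $F^* \in \mathcal{H}$ with $\hat\sigma := \sigma(F^*) = \max_{F \in \mathcal{H}} \sigma(F)$. Since $F^*$ is an $r$-clique in $G$, Step 1 gives $\hat\sigma \le (\ell-r+1)|\partial\mathcal{H}|$. For each $E \in \partial\mathcal{H}$ and any $v \in N(E)$, $\sigma(E) = \sigma(\{v\} \cup E) - d(v) \le \hat\sigma - d(v)$; in particular $\sigma(E) \le \hat\sigma$, with the stronger bound $\sigma(E) \le \hat\sigma - d(v)$ whenever $v \in F^*$ and $E \in L(v)$.

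\textbf{Step 3: Splitting and optimization.} Following the cancellative template, I split
\[
\sum_{E \in \partial\mathcal{H}} \sigma(E) = \sum_{E \in \bigcup_{v \in F^*} L(v)} \sigma(E) + \sum_{E \in \partial\mathcal{H} \setminus \bigcup_{v \in F^*} L(v)} \sigma(E),
\]
bound the first sum by $\sum_{v \in F^*} d(v)(\hat\sigma - d(v)) \le \hat\sigma^{2}(r-1)/r$ (Cauchy--Schwarz applied to $\sum_{v \in F^*} d(v) = \hat\sigma$, with the overcounting of each $E$ controlled by the multiplicity bound $m(E) \le \ell - r + 1$ from Step 1), and bound the second by $\bigl(|\partial\mathcal{H}| - |\bigcup_{v \in F^*} L(v)|\bigr)\hat\sigma$, invoking the lower bound $|\bigcup_{v \in F^*} L(v)| \ge \hat\sigma/(\ell-r+1)$ that again follows from Step 1. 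The hard part will be the resulting optimization over $\hat\sigma \in [0, (\ell-r+1)|\partial\mathcal{H}|]$: the coefficient $\tfrac{r-1}{r} - \tfrac{1}{\ell-r+1}$ of $\hat\sigma^{2}$ changes sign as $\ell$ grows past $r$, so a case split on whether the optimum is interior or at the boundary $\hat\sigma = (\ell-r+1)|\partial\mathcal{H}|$ is needed, and the two estimates in Step 3 must be combined more tightly (using $m(E)$ both in $\bigcup L(v)$ and for $E$ outside it) to extract the sharp constant. Equality in the Tur\'an hypergraph $T_{r}(n,\ell)$, where one may verify $\hat\sigma = r(\ell-r+1)|\partial\mathcal{H}|/\ell$, provides the sanity check that this optimization should output exactly $\frac{(\ell-r+1)(r-1)}{\ell}|\partial\mathcal{H}|^{2}$.
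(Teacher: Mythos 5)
The paper's proof takes a genuinely different route, and your Step~3 has a gap that you already sense but that is in fact fatal to the ``pick a maximizing edge $F^*$ and split on its union of links'' template. Your Steps~1 and~2 are correct, and Step~1 is exactly the paper's observation that $|E| + |N(E) \cap S| \le \ell$ for any clique $S$. But in Step~3 the two estimates are not simultaneously tight: the lower bound $|\bigcup_{v\in F^*} L(v)| \ge \hat\sigma/(\ell-r+1)$ is tight only when each shared shadow edge has multiplicity exactly $\ell-r+1$, while the first-sum estimate $\sum_{E \in \bigcup L(v)}\sigma(E) \le \sum_{v} d(v)(\hat\sigma-d(v))$ treats the union as if the links were disjoint. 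Concretely, your combined bound is $f(\hat\sigma) := \hat\sigma^2\bigl(\tfrac{r-1}{r}-\tfrac{1}{\ell-r+1}\bigr) + |\partial\mathcal{H}|\hat\sigma$, which for $\ell>r$ is convex and maximized at the endpoint $\hat\sigma = (\ell-r+1)|\partial\mathcal{H}|$, giving $(\ell-r+1)^2\tfrac{r-1}{r}|\partial\mathcal{H}|^2$ — off from the target by a factor of order $\ell(\ell-r+1)/r$. Worse, even at your ``sanity-check'' value $\hat\sigma = r(\ell-r+1)|\partial\mathcal{H}|/\ell$ the bound $f$ evaluates to $\tfrac{r(\ell-r+1)}{\ell^2}(r(\ell-r+1)-1)|\partial\mathcal{H}|^2$, which strictly exceeds the target $\tfrac{(\ell-r+1)(r-1)}{\ell}|\partial\mathcal{H}|^2$ whenever $\ell>r$, so the sanity check actually fails for this plan. (It does work for $\ell=r$, which is just the cancellative case where the links really are disjoint.)

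The paper's proof gets around this with an idea not present in your proposal: instead of anchoring on one fixed maximizing edge $F^*$, it introduces the parameter $z$, the largest real with $\sigma(R) \le (\ell-r+1)|\partial\mathcal{H}| - (\ell-|R|)z$ for every clique $R$, picks an extremal clique $R_0$ attaining equality, and uses the telescoping identity $\sigma(E) = \sigma\bigl(E \cup (N(E)\cap R_0)\bigr) - \sigma(N(E)\cap R_0)$. Here the enlarged set $E \cup (N(E)\cap R_0)$ is still a clique, so the $z$-weighted upper bound applies to it, while the subtracted term is controlled from below via the Cauchy--Schwarz inequality $\sum_E \sigma(N(E)\cap R_0) \ge \sigma(R_0)^2/|R_0|$. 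In the end $z$ cancels and the sharp constant $\tfrac{(\ell-r+1)(r-1)}{\ell}$ drops out, essentially because the augmentation $E \mapsto E\cup(N(E)\cap R_0)$ varies with $E$ and automatically balances the two error sources your plan treats separately. This fractional-deficiency-over-all-cliques device is the missing ingredient, and it is not something the ``combine more tightly'' placeholder in your Step~3 can be expected to produce without it.
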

\begin{proof}
Let $S \subset V(\mathcal{H})$.
For every $v\in V(\mathcal{H})$ we have $d(v)=\sum_{E \in \partial\mathcal{H}}|N(E)\cap \{v\}|$.
So,
\begin{align}\label{K-sigma-equal}
\sigma(S)=\sum_{v\in S}d(v)=\sum_{v\in S}\sum_{E\in \partial\mathcal{H}}|N(E)\cap \{v\}|=\sum_{E\in \partial\mathcal{H}}|N(E)\cap S|.
\end{align}
On the other hand,
\begin{align}
\left(\sigma(S)\right)^2  =\left(\sum_{v\in S}d(v) \right)^2
&\overset{(\ref{Jensen-2})}{\le} |S|\sum_{v\in S}\left(d(v)\right)^2 = |S|\sum_{v\in S}\sum_{E\in L(v)}d(v) \notag\\
&=|S|\sum_{v\in S}\sum_{\substack{E \in \partial \mathcal{H} \\ v\in N(E)}}d(v)=|S|\sum_{E\in \partial\mathcal{H}}\sum_{v\in S\cap N(E)}d(v) \notag\\
&=|S|\sum_{E\in \partial\mathcal{H}}\sigma\left(N(E)\cap S\right), \notag
\end{align}
which implies
\begin{align}\label{K-sigma-lower-bound}
\sum_{E\in \partial\mathcal{H}}\sigma\left(N(E)\cap S\right) \ge \frac{\left(\sigma(S)\right)^2}{|S|}.
\end{align}
Now suppose that $S \in \mathcal{K}_{\mathcal{H}}$.
Since $\mathcal{H}$ is $\mathcal{K}_{\ell+1}^{r}$-free, $|E|+|N(E)\cap S|\le \ell$ for all $E \in \partial\mathcal{H}$.
It follows from $(\ref{K-sigma-equal})$ that
\begin{align}\label{K-sigma-upper-bound}
\sigma(S) = \sum_{T\in \partial\mathcal{H}} |N(T)\cap S| \le (\ell-r+1)|\partial\mathcal{H}|.
\end{align}
Let $z$ be the largest real number such that $\sigma(R)\le (\ell-r+1)|\partial\mathcal{H}|-(\ell-|R|)z$ for all $R\in \mathcal{K}_{\mathcal{H}}$.
Let $R_0\in  \mathcal{K}_{\mathcal{H}}$ such that
\begin{align}\label{K-sigma-R0}
\sigma(R_0)= (\ell-r+1)|\partial\mathcal{H}|-(\ell-|R_0|)z.
\end{align}
For every $E \in \partial\mathcal{H}$, $E \cup \left(N(E)\cap R_0\right)\in \mathcal{K}_{\mathcal{H}}$,
therefore,
\begin{align}\label{K-sum-sigma-upper-bound}
\sum_{ E\in \partial\mathcal{H} } \sigma(E) & =
\sum_{ E\in \partial\mathcal{H} } \left( \sigma(E\cup (N(E)\cap R_0)) - \sigma(N(E)\cap R_0 ) \right) \notag\\
& \le \sum_{ E\in \partial\mathcal{H} } \left( (\ell-r+1)|\partial\mathcal{H}|
- ( \ell-|E\cup (N(E)\cap R_0)| )z - \sigma( N(E)\cap R_0 ) \right) \notag\\
& \le \sum_{ E\in \partial\mathcal{H} } \left( (\ell-r+1) \left(|\partial\mathcal{H}| - z\right)
+ |N(E)\cap R_0|z - \sigma( N(E)\cap R_0 ) \right) \notag\\
& =(\ell-r+1)(|\partial\mathcal{H}|-z)|\partial\mathcal{H}|
+ z\sum_{E\in \partial\mathcal{H}}|N(E)\cap R_0|-\sum_{E\in \partial\mathcal{H}}\sigma(N(E)\cap R_0) \notag\\
&\overset{(\ref{K-sigma-upper-bound}),(\ref{K-sigma-lower-bound})}{\le}
(\ell-r+1)(|\partial\mathcal{H}|-z)|\partial\mathcal{H}| + z\sigma(R_0)-\frac{\left(\sigma(R_0)\right)^2}{|R_0|} \notag\\
&\overset{(\ref{K-sigma-R0})}{=} (\ell-r+1)(|\partial\mathcal{H}|-2z)|\partial\mathcal{H}| + z^2 \ell
 -\frac{((\ell-r+1)|\partial\mathcal{H}| - z\ell)^2}{|R_0|}.
\end{align}
Since $|R_0|\le \ell$, we may plug $|R_0| = \ell$ into $(\ref{K-sum-sigma-upper-bound})$ and $z$ will be cancelled in the calculation and hence
\begin{align}
\sum_{E \in \partial\mathcal{H}}\sigma(E)\le \frac{(\ell-r+1)(r-1)}{\ell}|\partial\mathcal{H}|^2. \notag
\end{align}
\end{proof}

Now we are ready to prove Theorem \ref{fisher-ryan-K-middle}.
\begin{proof}[Proof of Theorem \ref{fisher-ryan-K-middle}]
We proceed by induction on $r$.
The case $r=2$ is just Mantel's theorem, so we may assume that $r\ge 3$.

For every $v\in V(\mathcal{H})$ the link $L(v)$ is a $\mathcal{K}_{\ell}^{r-1}$-free $(r-1)$-graph,
therefore, by the induction hypothesis,
\begin{align}\label{K-induction-upper}
d(v) \le \binom{\ell-1}{r-1}\left( \frac{|\partial L(v)|}{\binom{\ell-1}{r-2}} \right)^{\frac{r-1}{r-2}}.
\end{align}
It follows that
\begin{align}\label{K-H-upper}
|\mathcal{H}|
=\frac{1}{r}\sum_{v\in V(\mathcal{H})}d(v)
& = \frac{1}{r}\sum_{v\in V(\mathcal{H})}\left(d(v)\right)^{\frac{1}{r-1}} \left(d(v)\right)^{\frac{r-2}{r-1}} \notag \\
& \overset{(\ref{K-induction-upper})}{\le}
\frac{\binom{\ell-1}{r-1}^{\frac{r-2}{r-1}}}{r\binom{\ell-1}{r-2}}\sum_{v\in V(\mathcal{H})} \left(d(v)\right)^{\frac{1}{r-1}} |\partial L(v)|.
\end{align}
Similar to $(\ref{double-count-inequality})$ in Section 4, we have
\begin{align}\label{K-double-count}
\sum_{v\in V(\mathcal{H})} \left(d(v)\right)^{\frac{1}{r-1}}|\partial L(v)|
& = \sum_{E\in \partial\mathcal{H}}\sum_{v\in E}\left(d(v)\right)^{\frac{1}{r-1}} \notag\\
&\overset{(\ref{Jensen-1})}{\le}
\left((r-1)|\partial\mathcal{H}| \right)^{\frac{r-2}{r-1}}\left(\sum_{E\in \partial\mathcal{H}}\sum_{v\in E}d(v) \right)^{\frac{1}{r-1}} \notag\\
& =\left((r-1)|\partial\mathcal{H}| \right)^{\frac{r-2}{r-1}}\left(\sum_{E\in \partial\mathcal{H}}\sigma(E) \right)^{\frac{1}{r-1}} \notag \\
&\overset{\text{Lemma \ref{fisher0ryan-K-main-lemma}}}{\le}
(r-1)\left(\frac{\ell-r+1}{\ell} \right)^{\frac{1}{r-1}}|\partial\mathcal{H}|^{\frac{r}{r-1}}.
\end{align}
It follows from $(\ref{K-H-upper})$ and $(\ref{K-double-count})$ that
\[
|\mathcal{H}|\le \binom{\ell}{r}\left(\frac{|\partial\mathcal{H}|}{\binom{\ell}{r-1}} \right)^{\frac{r}{r-1}}.
\]
\end{proof}

Now we show how to prove Corollary \ref{feasible-region-K} using Theorem \ref{fisher-ryan-hygp}.

\begin{proof}[Proof of Corollary \ref{feasible-region-K}]
Let $\left(\mathcal{H}_{k} \right)_{k=1}^{\infty}$ be a good sequence of $\mathcal{K}_{\ell + 1}^{r}$-free $r$-graphs that realizes $(x,y)$.
Let $x_{k} = (r-1)! |\partial \mathcal{H}_{k}| / \left( v(\mathcal{H}_{k}) \right)^{r-1}$ and
$y_{k} = r! |\mathcal{H}_{k}| / \left( v(\mathcal{H}_{k}) \right)^{r}$.
First, we show that ${\rm proj}\Omega(\mathcal{K}_{\ell+1}^{r}) = [0,(\ell)_{r-1} / \ell^{r-1}]$.

It follows from Theorem \ref{fisher-ryan-hygp} that
\[
\frac{x_k\left( v(\mathcal{H}_{k}) \right)^{r-1}}{(r-1)!} \le \binom{\ell}{r-1}\left(\frac{v(\mathcal{H}_k)}{\ell} \right)^{r-1},
\]
which implies $x_k \le (\ell)_{r-1} / \ell^{r-1}$.
Letting $k \to \infty$, we obtain $x \le (\ell)_{r-1} / \ell^{r-1}$.
Therefore, ${\rm proj}\Omega(\mathcal{K}_{\ell + 1}^{r}) \subset [0,(\ell)_{r-1} / \ell^{r-1}]$.
On the other hand, $\left( T_{r}(k,\ell) \right)_{k=1}^{\infty}$ shows that $(\ell)_{r-1} / \ell^{r-1} \in {\rm proj}\Omega(\mathcal{K}_{\ell + 1}^{r})$
and it follows from Observation \ref{projection-x-interval} that ${\rm proj}\Omega(\mathcal{K}_{\ell + 1}^{r}) = [0,(\ell)_{r-1} / \ell^{r-1}]$.

Next, we show the upper bound for $g(\mathcal{K}_{\ell + 1}^{r},x)$.
It follows from Theorem \ref{fisher-ryan-hygp} that
\[
\left( \frac{y_k \left( v(\mathcal{H}_{k}) \right)^{r}}{r!\binom{\ell}{r}} \right)^{\frac{1}{r}} \le
\left( \frac{x_k\left( v(\mathcal{H}_{k}) \right)^{r-1}}{(r-1)!\binom{\ell}{r-1}} \right)^{\frac{1}{r-1}},
\]
which implies $y_k \le (\ell - r+1) \left(x_k^r/(\ell)_r \right)^{1/(r-1)}$.
Letting $k \to \infty$, we obtain $y \le (\ell - r+1) \left(x^r/(\ell)_r \right)^{1/(r-1)}$.
Therefore, $g(\mathcal{K}_{\ell + 1}^{r},x) \le (\ell - r+1) \left(x^r/(\ell)_r \right)^{1/(r-1)}$
for all $x \in {\rm proj}\Omega(\mathcal{K}_{\ell + 1}^{r})$.

The construction for the lower bound is exactly the same as the construction for Theorem \ref{fisher-ryan-hygp},
and it shows that $g(\mathcal{K}_{\ell + 1}^{r},x) \ge (\ell - r+1) \left(x^r/(\ell)_r \right)^{1/(r-1)}$
for all $x \in {\rm proj}\Omega(\mathcal{K}_{\ell + 1}^{r})$.
Therefore, $g(\mathcal{K}_{\ell + 1}^{r},x) = (\ell - r+1) \left(x^r/(\ell)_r \right)^{1/(r-1)}$
for all $x \in {\rm proj}\Omega(\mathcal{K}_{\ell + 1}^{r})$.
\end{proof}

Let us present a lemma before proving Theorem \ref{feasible-region-H}.

\begin{lemma}\label{feasible-region-removal-lemma}
Let $r \ge 3$ and $\mathcal{F}_1, \mathcal{F}_2$ be two families of $r$-graphs with $\mathcal{F}_1 \subset \mathcal{F}_2$.
Suppose that every $n$-vertex $\mathcal{F}_1$-free $r$-graph can be made $\mathcal{F}_2$-free by removing at most $o(n^r)$ edges,
and $g(\mathcal{F}_2,x)$ is increasing on $[0,c]$ for some $c > 0$.
Then $g(\mathcal{F}_1,x) = g(\mathcal{F}_2,x)$ on $[0,c]$.
\end{lemma}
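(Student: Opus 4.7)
The plan is to prove the two inequalities $g(\mathcal{F}_1,x) \le g(\mathcal{F}_2,x)$ and $g(\mathcal{F}_2,x) \le g(\mathcal{F}_1,x)$ separately on $[0,c]$. The second inequality is immediate from Observation \ref{subset-inequality}, since $\mathcal{F}_1 \subset \mathcal{F}_2$ means every $\mathcal{F}_2$-free $r$-graph is automatically $\mathcal{F}_1$-free, and so $\Omega(\mathcal{F}_2) \subset \Omega(\mathcal{F}_1)$.

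For the nontrivial direction, fix $x \in [0,c]$, set $y := g(\mathcal{F}_1,x)$, and choose a good sequence $\left(\mathcal{H}_k\right)_{k=1}^{\infty}$ of $\mathcal{F}_1$-free $r$-graphs that realizes $(x,y)$. Write $n_k = v(\mathcal{H}_k)$, so $n_k \to \infty$. By hypothesis, for each $k$ we can delete a set $D_k \subset \mathcal{H}_k$ of at most $o(n_k^r)$ edges so that $\mathcal{H}_k' := \mathcal{H}_k \setminus D_k$ is $\mathcal{F}_2$-free. Since $|D_k|/\binom{n_k}{r} = o(1)$, we have $\lim_{k\to\infty} d(\mathcal{H}_k') = \lim_{k\to\infty} d(\mathcal{H}_k) = y$.

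The subtlety is that $\partial \mathcal{H}_k'$ need not equal $\partial \mathcal{H}_k$: removing edges can shrink the shadow, and a naive bound on this shrinkage is far too weak to conclude $d(\partial \mathcal{H}_k') \to x$. The key observation is that we do not need this. Since $\mathcal{H}_k' \subset \mathcal{H}_k$, we have $|\partial \mathcal{H}_k'| \le |\partial \mathcal{H}_k|$, hence $d(\partial \mathcal{H}_k') \le d(\partial \mathcal{H}_k)$. By Bolzano--Weierstrass, passing to a subsequence we may assume $\lim_{k\to\infty} d(\partial \mathcal{H}_k') = x'$ exists, and then $0 \le x' \le x \le c$. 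Thus $\left(\mathcal{H}_k'\right)_{k=1}^{\infty}$ is a good sequence of $\mathcal{F}_2$-free $r$-graphs realizing $(x',y)$, so $(x',y) \in \Omega(\mathcal{F}_2)$ and consequently $y \le g(\mathcal{F}_2, x')$.

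Finally we apply the monotonicity hypothesis: since $x' \le x$ and both lie in $[0,c]$, the assumption that $g(\mathcal{F}_2)$ is increasing on $[0,c]$ gives $g(\mathcal{F}_2, x') \le g(\mathcal{F}_2, x)$. Combining,
\[
g(\mathcal{F}_1, x) = y \le g(\mathcal{F}_2, x'),
\le g(\mathcal{F}_2, x),
\]
which together with the reverse inequality yields $g(\mathcal{F}_1, x) = g(\mathcal{F}_2, x)$ for all $x\in [0,c]$. The conceptual heart of the proof is thus the realization that the monotonicity of $g(\mathcal{F}_2)$ absorbs the possible loss of shadow density incurred when removing the $o(n^r)$ ``bad'' edges --- one does not need any control on how much the shadow shrinks, only that it does not increase.
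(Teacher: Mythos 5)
Your proof is correct and follows essentially the same route as the paper: remove $o(n^r)$ edges to produce an $\mathcal{F}_2$-free subgraph, pass to a subsequence so the shadow density converges to some $x' \le x$, and invoke the monotonicity of $g(\mathcal{F}_2)$ to compare $g(\mathcal{F}_2,x')$ with $g(\mathcal{F}_2,x)$. The only cosmetic difference is that the paper takes a maximum-size $\mathcal{F}_2$-free subgraph and then uses the removal hypothesis to argue no edge density is lost, whereas you apply the removal hypothesis directly when defining $\mathcal{H}_k'$ — the same idea, packaged slightly differently.
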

\begin{proof}
Since $\mathcal{F}_{1} \subset \mathcal{F}_2$, it follows from Observation \ref{subset-inequality}
that $g(\mathcal{F}_{2},x) \le g(\mathcal{F}_1,x)$ for all $x \in {\rm proj}\Omega(\mathcal{F}_2)$.
So it suffices to show that $g(\mathcal{F}_{2},x) \ge g(\mathcal{F}_1,x)$ for all $x \in [0,c]$.
Let $(x_0,y_0) \in \Omega(\mathcal{F}_1)$ with $x_0 \in [0,c]$ and $y_0 = g(\mathcal{F}_1,x_0)$.
By definition, there exists a sequence of $\mathcal{F}_1$-free $r$-graphs $\left( \mathcal{H}_{k}\right)_{k = 1}^{\infty}$
with $\lim_{k \to \infty}d(\partial\mathcal{H}_{k}) = x_0$ and $\lim_{k \to \infty}d(\mathcal{H}_{k}) = y_0$.

For every $k \ge 1$
let $\mathcal{H}'_{k}$ be a subgraph of $\mathcal{H}_k$ that is $\mathcal{F}_2$-free and of maximum size,
and let $x'_{k} = d(\partial \mathcal{H}'_{k})$ and $y'_{k} = d(\mathcal{H}'_{k})$.
By the Bolzano-Weierstrass theorem, $\left( x'_{k}, y'_{k} \right)_{k = 1}^{\infty}$ contains a convergent subsequence
$\left( x'_{t_k}, y'_{t_k} \right)_{k = 1}^{\infty}$.
Let $x_0' = \lim_{k \to \infty}  x'_{t_k} $ and $y_0' = \lim_{k \to \infty}  y'_{t_k}$,
and it is easy to see from the definition of $\mathcal{H}'_{k}$ that $x_0' \le x_0$ and $y_0' \le y_0$.
Since $\left( \mathcal{H}'_{t_k}\right)_{k = 1}^{\infty}$ is a good sequence of $\mathcal{F}_2$-free $r$-graphs that realizes $(x'_0, y'_0)$,
we obtain $(x'_0, y'_0) \in \Omega(\mathcal{F}_2)$.

By assumption, for every $\epsilon > 0$ there exists $n(\epsilon)$ such that
$\mathcal{H}_{k}$ can be made $\mathcal{F}_2$-free by removing at most $\epsilon \left(v(\mathcal{H}_{k})\right)^r$ edges
whenever $v(\mathcal{H}_{k}) \ge n(\epsilon)$.
Since $\lim_{k \to \infty} v(\mathcal{H}_{k}) = \infty$, there exists $k(\epsilon)$ such that $v(\mathcal{H}_{k}) \ge n(\epsilon)$
for all $k \ge k(\epsilon)$, and hence $|\mathcal{H}'_{k}| \ge |\mathcal{H}_k| - \epsilon \left(v(\mathcal{H}_{k})\right)^r$ for all $k \ge k(\epsilon)$.
Therefore, $y_0' \ge y_0 - r! \epsilon$.
Letting $\epsilon \to 0$, we obtain $y_0' \ge y_0$, and hence $y_0' = y_0$.
Therefore, $(x'_0, y_0) \in \Omega(\mathcal{F}_2)$.
By the assumption that $g(\mathcal{F}_2)$ is increasing on $[0,c]$, we obtain
\[
g(\mathcal{F}_2, x_0) \ge g(\mathcal{F}_2, x'_0) \ge y_0 = g(\mathcal{F}_1, x_0).
\]
Since $x_0$ was chosen arbitrarily from $[0,c]$, $g(\mathcal{F}_2, x) \ge g(\mathcal{F}_1, x)$ for all $x\in[0,c]$,
and this completes the proof.
\end{proof}

Now we prove Theorem \ref{feasible-region-H} using Corollary \ref{feasible-region-K}.
\begin{proof}[Proof of Theorem \ref{feasible-region-H}]
It was shown by Pikhurko (see the proof of Lemma 3 in \cite{PI13}) that every $H_{\ell + 1}^{r}$-free $r$-graph
on $n$-vertices can be made $\mathcal{K}_{\ell + 1}^{r}$-free by removing at most $o(n^r)$ edges.
On the other hand, Corollary \ref{feasible-region-K} shows that $g(\mathcal{K}_{\ell + 1}^{r})$ is increasing on $[0,(\ell)_{r-1} / \ell^{r-1}]$.
So, it follows from Lemma \ref{feasible-region-removal-lemma} that
\[
g(H_{\ell + 1}^{r},x) = g(\mathcal{K}_{\ell + 1}^{r},x) = (\ell - r+1) \left(\frac{x^r}{(\ell)_r} \right)^{\frac{1}{r-1}}
\]
for all $x \in [0,(\ell)_{r-1} / \ell^{r-1}]$.
\end{proof}

\section{Concluding remarks}
In this paper we proved that for any $r \ge 3$ and any family $\mathcal{F}$ of $r$-graphs
the function $g(\mathcal{F})$ has at most countably many discontinuities.
We also constructed a family $\mathcal{D}$ of $3$-graphs such that $g(\mathcal{D})$ is discontinuous at $x = 2/3$.
It seems natural to ask the following question.

\begin{prob}\label{infinity-many-discontinuity}
Can $g(\mathcal{F})$ have infinitely many discontinuities?
\end{prob}

In Section 4 we proved several results about $g(\mathcal{T}_{r})$ for $r \ge 3$.
Even for $r =3$ the function $g(\mathcal{T}_3)$ is already shown to have many intersecting properties,
and is closely related to Steiner triple systems.
The following question seems difficult for $x$ not of the form $(k-1)/k$ with $k \equiv 1$ or $3$ (mod $6$).

\begin{prob}\label{determine-cancellative-3}
Determine $g(\mathcal{T}_{3},x)$ for all $x \in (2/3,1]$.
\end{prob}

Let us show a lower bound for $g(\mathcal{T}_{3},x)$ for $x \in (2/3, 6/7]$.

Let $\mathbb{F}$ denote the Fano Plane, i.e., $\mathbb{F}$ is a $3$-graph on $7$ vertices with edge set
\[
\{123, 345,561,174,275,376,246\}.
\]
Let $\alpha \in [1/7,1/3]$ and $\beta = (1-3\alpha)/4$.
Let $\mathcal{H}_n(\alpha)$ be obtained from $\mathbb{F}$ by
blowing up each vertex in $\{1,2,3\}$ into a set of size of $\alpha n$ and blowing up each vertex in $\{4,5,6,7\}$ into a set of size of $\beta n$.
Let
\begin{equation}\label{const-cancel-3-x}
x = \lim_{n \to \infty} \frac{|\partial \mathcal{H}_n(\alpha) |}{\binom{n}{2}}
  = 6 \alpha^2 + 12 \beta^2 + 24 \alpha \beta = \frac{3}{4}(1+2\alpha-7\alpha^2),
\end{equation}
and
\begin{equation}\label{const-cancel-3-y}
y = \lim_{n \to \infty} \frac{|\mathcal{H}_n(\alpha) |}{\binom{n}{3}}
  = 6 \alpha^3 + 36\alpha \beta^2 = \frac{3}{4}\alpha (3-18\alpha+35\alpha^2).
\end{equation}
Then, $(\ref{const-cancel-3-x})$ and $(\ref{const-cancel-3-y})$ give
\begin{equation}\label{cancel-steiner-lower-bound}
y = \frac{1}{147} \left(-70 \sqrt{18x^2 - 21 x^3} +63 x+60 \sqrt{18-21 x}-36\right),
\end{equation}
which implies
\begin{equation}
g(\mathcal{T}_{3},x) \ge \frac{1}{147} \left(-70 \sqrt{18x^2 - 21 x^3} +63 x+60 \sqrt{18-21 x}-36\right) \notag
\end{equation}
for all $x\in [2/3,6/7]$.

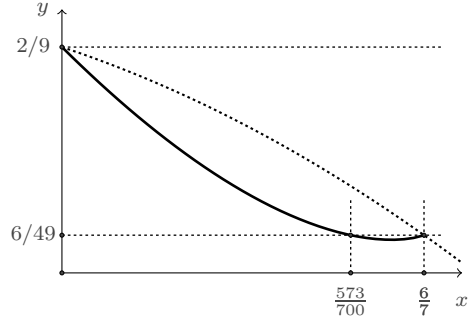
\begin{figure}[htbp]
\centering
\begin{tikzpicture}[xscale=25,yscale=25]
\draw [line width=0.5pt,dash pattern=on 1pt off 1.2pt,domain=2/3:6/7+0.01] plot(\x,2/9);
\draw [line width=0.5pt,dash pattern=on 1pt off 1.2pt,domain=2/3:6/7+0.01] plot(\x,6/49);
\draw [line width=0.5pt,dash pattern=on 1pt off 1.2pt] (6/7,6/49-0.02)--(6/7,6/49+0.02);
\draw [line width=0.5pt,dash pattern=on 1pt off 1.2pt] (573/700,6/49-0.02)--(573/700,6/49+0.02);
\draw [->] (2/3,6/49-0.02)--(6/7+0.02,6/49-0.02);
\draw [->] (2/3,6/49-0.02)--(2/3,2/9+0.02);
\draw[line width=1pt]
(0.666667,0.222222)--(0.671667,0.217269)--(0.676667,0.212411)--(0.681667,0.20765)--(0.686667,0.202986)--
(0.691667,0.198421)--(0.696667,0.193957)--(0.701667,0.189595)--(0.706667,0.185336)--(0.711667,0.181184)--
(0.716667,0.177138)--(0.721667,0.173202)--(0.726667,0.169377)--(0.731667,0.165665)--(0.736667,0.162068)--
(0.741667,0.15859)--(0.746667,0.155232)--(0.751667,0.151997)--(0.756667,0.148888)--(0.761667,0.145908)--
(0.766667,0.143061)--(0.771667,0.140349)--(0.776667,0.137778)--(0.781667,0.135351)--(0.786667,0.133073)--
(0.791667,0.130949)--(0.796667,0.128985)--(0.801667,0.127187)--(0.806667,0.125563)--(0.811667,0.124122)--
(0.816667,0.122872)--(0.821667,0.121826)--(0.826667,0.120998)--(0.831667,0.120404)--(0.836667,0.120067)--
(0.841667,0.120018)--(0.846667,0.120299)--(0.851667,0.120986)--(0.856667,0.122268);
\draw[line width=0.8pt,dash pattern=on 1pt off 1.2pt]
(0.666667,0.222222)--(0.671667,0.220531)--(0.676667,0.218789)--(0.681667,0.216997)--(0.686667,0.215156)--
(0.691667,0.213264)--(0.696667,0.211322)--(0.701667,0.209331)--(0.706667,0.207289)--(0.711667,0.205197)--
(0.716667,0.203056)--(0.721667,0.200864)--(0.726667,0.198622)--(0.731667,0.196331)--(0.736667,0.193989)--
(0.741667,0.191597)--(0.746667,0.189156)--(0.751667,0.186664)--(0.756667,0.184122)--(0.761667,0.181531)--
(0.766667,0.178889)--(0.771667,0.176197)--(0.776667,0.173456)--(0.781667,0.170664)--(0.786667,0.167822)--
(0.791667,0.164931)--(0.796667,0.161989)--(0.801667,0.158997)--(0.806667,0.155956)--(0.811667,0.152864)--
(0.816667,0.149722)--(0.821667,0.146531)--(0.826667,0.143289)--(0.831667,0.139997)--(0.836667,0.136656)--
(0.841667,0.133264)--(0.846667,0.129822)--(0.851667,0.126331)--(0.856667,0.122789)--(0.861667,0.119197)--
(0.866667,0.115556)--(0.871667,0.111864)--(0.876667,0.108122);
\begin{scriptsize}
\draw [fill=uuuuuu] (2/3,6/49-0.02) circle (0.03pt);
\draw [fill=uuuuuu] (573/700,6/49) circle (0.03pt);
\draw [fill=uuuuuu] (2/3,6/49) circle (0.03pt);
\draw[color=uuuuuu] (2/3-0.015,6/49) node {$6/49$};
\draw [fill=uuuuuu] (2/3,2/9) circle (0.03pt);
\draw[color=uuuuuu] (2/3-0.015,2/9) node {$2/9$};
\draw [fill=uuuuuu] (6/7,6/49-0.02) circle (0.03pt);
\draw[color=uuuuuu] (6/7,6/49-0.035) node {$\frac{6}{7}$};
\draw [fill=uuuuuu] (6/7,6/49) circle (0.03pt);
\draw[color=uuuuuu] (6/7,6/49-0.035) node {$\frac{6}{7}$};
\draw [fill=uuuuuu] (573/700,6/49-0.02) circle (0.03pt);
\draw[color=uuuuuu] (573/700,6/49-0.035) node {$\frac{573}{700}$};
\draw[color=uuuuuu] (6/7+0.02,6/49-0.035) node {$x$};
\draw[color=uuuuuu] (2/3-0.01,2/9+0.02) node {$y$};
\end{scriptsize}
\end{tikzpicture}
\caption{The lower bound for $g(\mathcal{T}_3, x)$ given by $(\ref{cancel-steiner-lower-bound})$.}
\end{figure}

The construction above gives an algebraic curve connecting $(2/3,2/9)$ and $(6/7,6/49)$.
Using a similar method, one can construct an algebraic curve defined by
\begin{equation}\label{cancel-steiner-general-lower-bound}
y = \frac{2\sqrt{3}(k+3)(k-1-kx)^{\frac{3}{2}}}{3k^2\sqrt{k-3}}+\frac{3kx-2k+2}{k^2}
\end{equation}
to connect $(2/3,2/9)$ and $((k-1)/k,(k-1)/k^2)$ for all $k \equiv 1$ or $3$ (mod $6$).
However, we do not know how to construct curves to connect $((k-1)/k,(k-1)/k^2)$ and $((k'-1)/k',(k'-1)/k'^2)$
for all $k,k' \ge 7$ and $k,k' \equiv 1$ or $3$ (mod $6$).
Also, there is an interesting phenomenon that
\[
\left\{ ((k-1)/k,(k-1)/k^2): \text{$k \ge 7$ and $k \equiv 1$ or $3$ (mod $6$)} \right\}
\]
are local maximums of the function given by $(\ref{cancel-steiner-general-lower-bound})$.
Therefore, we pose the following question.

\begin{prob}\label{steiner-points-maximum}
For every $k\ge 7$ with $k \equiv 1$ or $3$ (mod $6$),
is the point $((k-1)/k,(k-1)/k^2)$ a local maximum of $g(\mathcal{T}_{3})$?
\end{prob}

In \cite{LM19B}, we prove the following stability theorem about the points $((k-1)/k,(k-1)/k^2)$ in $\Omega(\mathcal{T}_3)$,
which we think might be helpful for Problems \ref{determine-cancellative-3} and \ref{steiner-points-maximum}.
\begin{thm}[Stability, \cite{LM19B}]\label{stability-steiner}
Let $k$ be an integer with $k\equiv 1$ or $3$ (mod $6$) and $\mathcal{H}$ be a cancellative $3$-graph on $n$ vertices.
For every $\delta>0$ there exists an $\epsilon>0$ and $n_0$ such that the following holds for all $n\ge n_0$.
Suppose that $|\partial\mathcal{H}|\ge (1-\epsilon)(k-1)n^2/(2k)$ and $|\mathcal{H}|\ge (1-\epsilon)(k-1)n^3/(6k^2)$.
Then $\mathcal{H}$ can be transformed into a subgraph of a $3$-graph in $\mathcal{S}(n,k)$ by removing at most $\delta n^3$ edges.
\end{thm}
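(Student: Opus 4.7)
My plan is to combine the upper bound of Theorem \ref{cancel-shadow-inequ-right}, stability for Tur\'an's theorem applied to the shadow $G := \partial\mathcal{H}$, and the cancellative condition to force a Steiner triple system structure on a partition of $V(\mathcal{H})$.

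From Theorem \ref{cancel-shadow-inequ-right} and the hypotheses, setting $x := 2|\partial\mathcal{H}|/n^2$ we obtain $x = (k-1)/k + O(\sqrt{\epsilon})$, so $|\partial\mathcal{H}|$ is within $o(n^2)$ of $\textrm{ex}(n, K_{k+1}) = (1-1/k)\binom{n}{2}$. I would then show that the number of copies of $K_{k+1}$ in $G$ is $o(n^{k+1})$: any such copy $S$ satisfies $\sum_{v \in S}d_\mathcal{H}(v) \le |\partial\mathcal{H}|$ by Corollary \ref{cancel-clique-corol}, and a supersaturation / averaging argument shows that too many copies would create a deficit in $\sum_v d_\mathcal{H}(v) = 3|\mathcal{H}|$, contradicting near-tightness. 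After deleting $o(n^2)$ shadow edges to eliminate the $K_{k+1}$ copies, the Erd\H{o}s-Simonovits stability theorem supplies a partition $V(\mathcal{H}) = V_1 \cup \cdots \cup V_k$ with $|V_i| = n/k + o(n)$ and with only $o(n^2)$ shadow edges lying inside the parts; comparing this to $|\partial\mathcal{H}|$ further forces $\partial\mathcal{H}[V_l, V_{l'}]$ to be nearly complete bipartite for every $l \neq l'$.

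Each within-part shadow pair lies in at most $n$ edges of $\mathcal{H}$, so at most $o(n^3)$ edges of $\mathcal{H}$ are non-transversal to the partition; call the transversal subhypergraph $\mathcal{H}^{*}$. Define a reduced $3$-graph $R$ on vertex set $[k]$ by declaring $\{i,j,l\} \in R$ whenever $|\mathcal{H}^{*}[V_i,V_j,V_l]| \ge \eta (n/k)^3$ for a suitable $\eta = \eta(\delta) > 0$. The crucial step is to show that $R$ is linear. Suppose $\{i,j,l\}, \{i,j,l'\} \in R$ with $l \neq l'$. By double counting, many pairs $(u,v) \in V_i \times V_j$ admit large extension sets $W \subset V_l$ and $W' \subset V_{l'}$ with $\{u,v,w\}, \{u,v,w'\} \in \mathcal{H}$ for all $w \in W$ and $w' \in W'$; the near-completeness of $\partial\mathcal{H}[V_l, V_{l'}]$ then guarantees a pair $(w,w') \in W \times W'$ with $\{w,w'\} \in \partial\mathcal{H}$, so $\{w,w'\} \subset C$ for some $C \in \mathcal{H}$. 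Setting $A := \{u,v,w\}$ and $B := \{u,v,w'\}$, one finds $A \triangle B = \{w,w'\} \subset C$ in $\mathcal{H}$, contradicting cancellativity.

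Linearity forces $|R| \le k(k-1)/6$, with equality iff $R \in STS(k)$, and the near-tightness $|\mathcal{H}^{*}| = (k-1)n^3/(6k^2) - o(n^3) = (k(k-1)/6)(n/k)^3 - o(n^3)$ forces equality (hence the congruence $k \equiv 1$ or $3 \pmod 6$ of the hypothesis). A routine cleanup then shows each block $\mathcal{H}^{*}[V_i,V_j,V_l]$ with $\{i,j,l\} \in R$ is $(1-o(1))$-dense, while the non-$R$ blocks carry $O(\eta n^3)$ edges in total. Building the blow-up $\mathcal{H}' \in \mathcal{S}(n,k)$ from $R$ together with the partition (rebalancing parts by $o(n)$ vertices to match the sizes $\lfloor n/k \rfloor$ and $\lceil n/k \rceil$, which moves $o(n^3)$ edges) we conclude that $\mathcal{H}$ becomes a subgraph of $\mathcal{H}'$ after removing at most $\delta n^3$ edges, provided $\epsilon, \eta \ll \delta$. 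The main obstacle is the $K_{k+1}$-counting step: converting the vertex-degree inequality of Corollary \ref{cancel-clique-corol} into a global bound on the number of $K_{k+1}$ copies in $G$ requires a delicate double count that simultaneously uses the near-tightness of $|\partial\mathcal{H}|$ and of $|\mathcal{H}|$, and it is here that the two hypotheses of the theorem must be played against each other most carefully.
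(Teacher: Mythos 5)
The paper does not contain a proof of Theorem~\ref{stability-steiner}: it is quoted verbatim from the companion paper~\cite{LM19B}, so there is no internal argument to compare your sketch against. Evaluating the sketch on its own merits, the overall architecture (pin down $|\partial\mathcal{H}|$ via Theorem~\ref{cancel-shadow-inequ-right}, apply Erd\H{o}s--Simonovits stability to the shadow, form a reduced $3$-graph $R$ on $[k]$, force $R$ to be a Steiner triple system via cancellativity, then rebalance parts) is a sensible plan, and the cleanup steps at the end are routine.

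There are, however, two genuine gaps. The first is the one you flag: converting Corollary~\ref{cancel-clique-corol} into a bound on the number of $K_{k+1}$'s in $\partial\mathcal{H}$. The inequality $\sum_{S}\sum_{v\in S}d(v)\le N|\partial\mathcal{H}|$ over all $K_{k+1}$ copies $S$ yields a contradiction only if those copies are spread over vertices of roughly average degree; a priori they could concentrate on a small set of low-degree vertices without violating $\sum_v d(v)=3|\mathcal{H}|$, so one must additionally show that almost all vertices have degree $(1\pm o(1))\bar d$ and that the low-degree set cannot support many $K_{k+1}$'s. This is the heart of the argument, not a routine double count. The second gap is in the linearity of $R$ and is unflagged: if $\{i,j,l\},\{i,j,l'\}\in R$, you claim that many pairs $(u,v)\in V_i\times V_j$ admit large extension sets $W\subset V_l$ and $W'\subset V_{l'}$ simultaneously, but the density threshold $\eta$ only controls \emph{average} link sizes $a(u,v),b(u,v)$ in each block separately; the supports $\{(u,v):a(u,v)>0\}$ and $\{(u,v):b(u,v)>0\}$ could be essentially disjoint (this is consistent with both blocks having $\Theta(\eta)(n/k)^3$ edges as long as $\eta\le 1/2$), in which case no single $(u,v)$ produces $A=\{u,v,w\}$, $B=\{u,v,w'\}$ with a covering edge $C\supset\{w,w'\}$. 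Invoking the later near-density of blocks of $R$ to fix this is circular, since that density is derived from $|R|\le k(k-1)/6$, which itself used linearity. One needs either a density-free proof of linearity (e.g.\ a quantitative use of Lemma~\ref{cancel-disjoint-lemma} on $\partial\mathcal{H}$-adjacent pairs in $V_l\times V_{l'}$) or an independent argument that the blocks of $R$ are dense before linearity is invoked.
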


We also have an exact result for the points $((k-1)/k, (k-1)/k^2)$.
Let $s(n,k) = \max\{|\mathcal{H}| : \mathcal{H} \in \mathcal{S}(n,k)\}$.
\begin{thm}[\cite{LM19B}]\label{exact-steiner}
Let $k$ be an integer that satisfies $k\equiv 1$ or $3$ (mod $6$) and
$\mathcal{H}$ be a cancellative $3$-graph on $n$ vertices with $n$ sufficiently large.
Suppose that $|\partial\mathcal{H}| = t_{2}(n,k)$.
Then $|\mathcal{H}|\le s(n,k)$, and equality holds only if $\mathcal{H} \in \mathcal{S}(n,k)$.
\end{thm}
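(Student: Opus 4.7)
The plan is to combine the asymptotic bound from Theorem \ref{cancel-shadow-inequ-right} with the stability result Theorem \ref{stability-steiner} and a symmetrization-style choice of the nearby extremal structure. To begin, every $\mathcal{H}_0 \in \mathcal{S}(n,k)$ with balanced partition $V_1 \cup \cdots \cup V_k$ and defining STS $S$ on $[k]$ satisfies $|\partial \mathcal{H}_0| = t_2(n,k)$, since $S$ covers each cross pair of classes exactly once. Plugging $|\partial \mathcal{H}| = t_2(n,k)$ into Theorem \ref{cancel-shadow-inequ-right} gives $|\mathcal{H}| \le s(n,k) + O(n^2)$, so if $|\mathcal{H}| \le s(n,k) - \delta n^3$ for a fixed $\delta > 0$ we are done. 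Otherwise apply Theorem \ref{stability-steiner} to obtain some $\mathcal{H}_0 \in \mathcal{S}(n,k)$ with $|\mathcal{H} \setminus \mathcal{H}_0| \le \xi n^3$ for an auxiliary $\xi = \xi(\delta) \to 0$, and among all such $\mathcal{H}_0$ choose one maximizing $|\mathcal{H} \cap \mathcal{H}_0|$, with partition $V_1 \cup \cdots \cup V_k$ and STS $S$.

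Call an edge in $\mathcal{H} \setminus \mathcal{H}_0$ a \emph{deviation}: of \emph{type (i)} if two of its vertices lie in a common $V_i$, and of \emph{type (ii)} if it is a transversal over a triple $\{i,j,\ell\} \notin S$. Each deviation contributes a pair to $\partial \mathcal{H} \setminus \partial \mathcal{H}_0$ (an internal pair in some $V_i$ in case (i), a non-Steiner cross pair in case (ii)), so the equality $|\partial \mathcal{H}| = |\partial \mathcal{H}_0| = t_2(n,k)$ forces each such extra pair to be matched by a missing Steiner cross pair of $\partial \mathcal{H}_0$. The optimal choice of $\mathcal{H}_0$ translates, via a standard swap lemma, into the statement that no vertex $v$ fits the expected link of any alternative class $V_j$ strictly better than its current class; combining this with $|\mathcal{H} \setminus \mathcal{H}_0| \le \xi n^3$ yields that every vertex is \emph{typical}, in the sense that $|L_\mathcal{H}(v) \triangle L_{\mathcal{H}_0}(v)| \le \eta n^2$ for $\eta = \sqrt{\xi}$.

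The main obstacle is eliminating the last deviations. For a type (i) deviation $\{u,v,w\}$ with $u, v \in V_i$, Lemma \ref{cancel-disjoint-lemma} gives $L_\mathcal{H}(u) \cap L_\mathcal{H}(v) = \emptyset$, while typicality forces $L_\mathcal{H}(u)$ and $L_\mathcal{H}(v)$ to each contain all but $\eta n^2$ of the common pair set $L_{\mathcal{H}_0}(u) = L_{\mathcal{H}_0}(v) = \bigcup \{V_j \times V_\ell : \{i,j,\ell\} \in S\}$ of size $(k-1)n^2/(2k^2)$; these two conclusions contradict each other for $\eta$ small enough. Type (ii) deviations are ruled out in parallel: combining a non-Steiner transversal with two Steiner-triple edges through typical vertices yields a copy of $K_4^{3-}$ or $F_5$, violating the cancellative hypothesis. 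Hence $\mathcal{H} \subseteq \mathcal{H}_0$, giving $|\mathcal{H}| \le s(n,k)$; equality forces $\mathcal{H} = \mathcal{H}_0 \in \mathcal{S}(n,k)$.
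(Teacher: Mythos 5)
This theorem is stated in the paper's concluding remarks and cited from the companion paper \cite{LM19B}; the present paper contains no proof of it, so there is no internal argument to compare yours against. I will therefore evaluate your sketch on its own terms. The overall shape — asymptotic bound from Theorem \ref{cancel-shadow-inequ-right}, stability from Theorem \ref{stability-steiner}, then a local clean-up killing deviations via Lemma \ref{cancel-disjoint-lemma} — is the right one for a result of this kind. But two of the load-bearing steps are not sound as written.

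First, your claim that a type (ii) deviation, a transversal edge over a non-Steiner triple $\{i,j,\ell\}$, "contributes a pair to $\partial\mathcal{H}\setminus\partial\mathcal{H}_0$" is simply false. Every cross pair $\{u,v\}$ with $u\in V_i$, $v\in V_j$, $i\neq j$ already lies in $\partial\mathcal{H}_0$: the Steiner system $S$ covers each pair of classes $\{i,j\}$ by exactly one triple $\{i,j,\ell'\}\in S$, so $\{u,v\}$ is covered by $\{u,v,w'\}\in\mathcal{H}_0$ for any $w'\in V_{\ell'}$. There is no such thing as a "non-Steiner cross pair" in the shadow. Consequently the counting identity $|\partial\mathcal{H}|=|\partial\mathcal{H}_0|$ controls only the internal pairs coming from type (i) deviations, and type (ii) deviations must be ruled out purely by the cancellative condition, as you do later, not by shadow accounting.

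Second, and more seriously, the passage from "choose $\mathcal{H}_0\in\mathcal{S}(n,k)$ maximizing $|\mathcal{H}\cap\mathcal{H}_0|$" to "every vertex $v$ satisfies $|L_\mathcal{H}(v)\triangle L_{\mathcal{H}_0}(v)|\le\eta n^2$" is not a corollary of a standard swap lemma together with the aggregate bound $|\mathcal{H}\setminus\mathcal{H}_0|\le\xi n^3$. Maximality of the overlap tells you that no vertex fits some \emph{other} class strictly better; it does not tell you that every vertex fits its \emph{own} class well, and a vertex of small $\mathcal{H}$-degree is atypical in every class regardless of how $\mathcal{H}_0$ is chosen. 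The aggregate bound $\sum_v|L_\mathcal{H}(v)\triangle L_{\mathcal{H}_0}(v)|=O((\delta+\xi)n^3)$ only controls the average and permits $\Theta(\sqrt{\xi}\,n)$ bad vertices with link-discrepancy $\Theta(\sqrt{\xi}\,n^2)$ each, which would break your type (i) argument on exactly those vertices. There are further complications the sketch elides: moving a single vertex between classes destroys the balancedness required by $\mathcal{S}(n,k)$ (so one must swap pairs), and the optimization over $\mathcal{S}(n,k)$ also ranges over the underlying Steiner system, which a vertex-swap argument alone does not address. Establishing per-vertex local structure — typically by first isolating the small set of bad vertices, handling them separately, and then cleaning up — is the bulk of the work in an exact theorem of this kind, and the sketch treats it as automatic. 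Once typicality is genuinely in hand, your use of Lemma \ref{cancel-disjoint-lemma} for type (i) and the $\mathcal{T}_3$-argument for type (ii) are correct in outline.
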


For $r\ge 4$. There is very little known about upper and lower bounds for $g(\mathcal{T}_{r},x)$ for $x > (r-1)! / r^{r-2}$.
We pose the following question.
\begin{prob}\label{cancellative-r}
Let $r \ge 4$ and $x > (r-1)! / r^{r-2}$.
Improve the upper bound for $g(\mathcal{T}_{r},x)$,
and construct cancellative $r$-graphs to give good lower bounds for $g(\mathcal{T}_{r},x)$.
\end{prob}

Given our poor understanding of hypergraph Tur\'{a}n problems,
determining the feasible region of other families of hypergraphs would also be of interest.
In particular, we pose the following two questions.
\begin{prob}\label{region-H}
Determine the feasible region of $H_{\ell+1}^{r}$ for $r \ge 3$ and $\ell \ge r$.
\end{prob}

\begin{prob}\label{region-F}
Determine the feasible region of the Fano Plane.
\end{prob}

In $\cite{LM19}$, we give an example of a (finite) family $\mathcal{F}$, for which $g(\mathcal{F})$ has two global maximums.
In particular, our example shows that $g(\mathcal{F})$ can be non-unimodal.

\begin{thm}[\cite{LM19}]\label{nonunimodal}
There exists a (finite) family $\mathcal{M}$ of $3$-graphs such that $g(\mathcal{M},x) \le 4/9$ for all $x \in {\rm proj}\Omega(\mathcal{M})$,
and equality holds iff $x \in \{5/6,8/9\}$.
\end{thm}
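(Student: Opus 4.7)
The plan is to design a finite family $\mathcal{M}$ of $3$-graphs whose Tur\'an problem admits exactly two essentially different extremal constructions of edge density $4/9$, with distinct limit shadow densities $5/6$ and $8/9$, and then use stability to rule out all other values of $x$.

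\medskip

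First I exhibit the two constructions. Take $V=V_1\cup V_2$ with $|V_1|=n/3$ and $|V_2|=2n/3$ and let $\mathcal{H}^B_n$ be all triples with exactly one vertex in $V_1$; a direct calculation gives $d(\mathcal{H}^B_n)\to (n/3)\binom{2n/3}{2}/\binom{n}{3}\to 4/9$, while the only pairs outside $\partial\mathcal{H}^B_n$ are $\binom{V_1}{2}$, so $d(\partial\mathcal{H}^B_n)\to 1-1/9=8/9$. For the point $(5/6,4/9)$ I would use a more intricate blow-up $\mathcal{H}^A_n$ of a carefully chosen seed $3$-graph whose Lagrangian equals $2/27$ but whose non-shadow set spreads over several vertex classes of combined density $1/6$ (for instance a partitioned construction on three or four parts with asymmetric sizes chosen so that the uncovered pairs form blocks of combined density $1/6$, using the identity $2\alpha\beta=1/6$ for suitable $\alpha,\beta$). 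The family $\mathcal{M}$ is then defined as the finite list of minimal $3$-graphs forbidden by both templates; by construction each $\mathcal{H}^A_n$ and $\mathcal{H}^B_n$ is $\mathcal{M}$-free.

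\medskip

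Next I establish the universal upper bound $g(\mathcal{M},x)\le 4/9$. The natural route is the Lagrangian method: prove that every $\mathcal{M}$-free $3$-graph $G$ satisfies $L(G)\le 2/27$, then apply the standard blow-up / weighting argument that bounds the edge density of any $\mathcal{M}$-free $\mathcal{H}_n$ by $6\sup_G L(G)=4/9$ in the limit. A flag-algebra certificate is an equally viable route, and the design of $\mathcal{M}$ ensures that the only Lagrangian maximizers are the seeds of $\mathcal{H}^A_n$ and $\mathcal{H}^B_n$; hence $g(\mathcal{M},x)\le 4/9$ for all $x\in{\rm proj}\Omega(\mathcal{M})$.

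\medskip

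Finally I would prove a two-template stability theorem: for every $\delta>0$ there exists $\eta>0$ such that any $\mathcal{M}$-free $3$-graph $\mathcal{H}$ on sufficiently many vertices with $d(\mathcal{H})>4/9-\eta$ can be transformed into a subgraph of either $\mathcal{H}^A_n$ or $\mathcal{H}^B_n$ (after relabeling) by changing at most $\delta n^3$ edges. Since shadow density is continuous under $o(n^3)$ edge modifications, any good $\mathcal{M}$-free sequence realizing $(x,4/9)$ must be asymptotically a subgraph of one of the two templates, forcing $x\in\{5/6,8/9\}$. The main obstacle is this two-template stability step: unlike the classical hypergraph stability theorems used earlier in the paper (for $\mathcal{T}_3$ via Theorem \ref{stability-D}-style arguments and for $\mathcal{K}_{\ell+1}^r$), which produce a single extremal template and symmetrize near-extremal graphs toward it, here the argument must correctly decide which of the two templates a given near-extremal $\mathcal{H}$ is close to, and rule out ``hybrid'' $\mathcal{M}$-free structures that interpolate between the two, which would otherwise produce intermediate shadow densities and defeat the exact characterization $\{5/6,8/9\}$.
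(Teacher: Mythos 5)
This theorem is stated in the paper only as a citation to \cite{LM19}; the paper does not prove it, so there is no in-paper argument to compare your proposal against. Judged on its own, the high-level strategy you describe — two extremal blow-up templates at $(5/6,4/9)$ and $(8/9,4/9)$, a Lagrangian (or flag algebra) bound giving $g(\mathcal{M},x)\le 4/9$, and a two-template stability argument forcing equality only at $x\in\{5/6,8/9\}$ — is the right shape of argument, and your computation for $\mathcal{H}^B_n$ realizing $(8/9,4/9)$ is correct.

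That said, the proposal has three genuine gaps, and the first two matter because a ``proof'' of an existence theorem that does not exhibit the objects is not a proof. First, the second template $\mathcal{H}^A_n$ is never produced, and the heuristic you offer ($2\alpha\beta=1/6$ on three or four asymmetric parts) does not in fact yield the point $(5/6,4/9)$: if the uncovered pairs lie between two parts of density $\alpha,\beta$ then $2\alpha\beta\le 1/2$ but the edge density of the natural constructions one tries on two, three, or four parts comes out wrong (e.g.\ $3\alpha(1-\alpha)^2$ at $\alpha^2=1/6$ gives roughly $0.43$, not $4/9$). A construction that does hit $(5/6,4/9)$ exactly is a balanced blow-up of a $3$-graph $H$ on $6$ vertices with $16$ edges whose shadow is all of $\binom{[6]}{2}$: then the blow-up has edge density $16\cdot(1/6)^3\cdot 6=16/36=4/9$ and misses exactly the $6\cdot(1/6)^2=1/6$ proportion of pairs inside parts, giving shadow density $5/6$. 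Second, $\mathcal{M}$ itself is never defined. Taking ``all minimal $3$-graphs not contained in either template'' is not obviously a finite family (the theorem asserts finiteness), and even if it were, there is no a priori reason the resulting Tur\'an density equals $4/9$; both facts must be proved, and the choice of the seed $3$-graph $H$ must be made so that the Lagrangian/flag-algebra bound actually closes — your phrase ``the design of $\mathcal{M}$ ensures that the only Lagrangian maximizers are the seeds'' assumes exactly what is to be arranged and verified. Third, you correctly identify two-template stability (and the need to exclude $\mathcal{M}$-free hybrids that would realize intermediate shadow densities at near-extremal edge density) as the crux, but the proposal stops precisely at that step without any argument, so the ``iff'' direction — that equality holds \emph{only} at $x\in\{5/6,8/9\}$ — is entirely unproved.
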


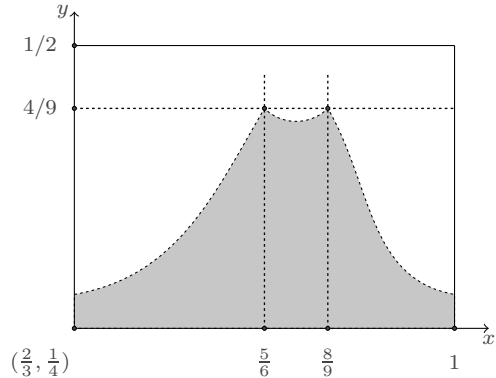
\begin{figure}[htbp]
\centering
\begin{tikzpicture}[xscale=15,yscale=15]
\draw [line width=0.5pt,dash pattern=on 1pt off 1.2pt,domain=2/3:1] plot(\x,4/9);
\draw [line width=0.5pt,dash pattern=on 1pt off 1.2pt] (5/6,1/4)--(5/6,4/9+0.03);
\draw [line width=0.5pt,dash pattern=on 1pt off 1.2pt] (8/9,1/4)--(8/9,4/9+0.03);
\draw [->] (2/3,1/4)--(1+0.03,1/4);
\draw [->] (2/3,1/4)--(2/3,1/2+0.03);
\draw (1,1/4)--(1,1/2);
\draw (2/3,1/2)--(1,1/2);
\draw[fill=sqsqsq,fill opacity=0.25, dash pattern=on 1pt off 1.2pt]
(2/3,1/4+0.03) to [out = 10, in = 240] (5/6,4/9) to [out = 315, in = 225] (8/9,4/9) to [out = 300, in = 170] (1,1/4+0.03)
-- (1,1/4) -- (2/3,1/4) -- (2/3,1/4+ 0.03);
\begin{scriptsize}
\draw [fill=uuuuuu] (5/6,4/9) circle (0.05pt);
\draw [fill=uuuuuu] (8/9,4/9) circle (0.05pt);
\draw [fill=uuuuuu] (2/3,1/4) circle (0.05pt);
\draw[color=uuuuuu] (2/3-0.03,1/4-0.03) node {$(\frac{2}{3}, \frac{1}{4})$};
\draw [fill=uuuuuu] (2/3,1/2) circle (0.05pt);
\draw[color=uuuuuu] (2/3-0.03,1/2) node {$1/2$};
\draw [fill=uuuuuu] (2/3,4/9) circle (0.05pt);
\draw[color=uuuuuu] (2/3-0.03,4/9) node {$4/9$};
\draw [fill=uuuuuu] (5/6,1/4) circle (0.05pt);
\draw[color=uuuuuu] (5/6,1/4-0.03) node {$\frac{5}{6}$};
\draw [fill=uuuuuu] (8/9,1/4) circle (0.05pt);
\draw[color=uuuuuu] (8/9,1/4-0.03) node {$\frac{8}{9}$};
\draw [fill=uuuuuu] (1,1/4) circle (0.05pt);
\draw[color=uuuuuu] (1,1/4-0.03) node {$1$};
\draw[color=uuuuuu] (1+0.03,1/4-0.01) node {$x$};
\draw[color=uuuuuu] (2/3-0.01,1/2+0.03) node {$y$};
\end{scriptsize}
\end{tikzpicture}
\caption{$g(\mathcal{M})$ has two global maximums by Theorem \ref{nonunimodal}.}
\end{figure}

Theorem \ref{nonunimodal} suggests the following natural problem which we hope to address in the future.

\begin{prob}\label{many-global-maximum}
Fix $r\ge 3$ and $t>0$.
Does there exists a (finite) family $\mathcal{F}$ of $r$-graphs and reals
$0 < x_1 < x'_1 < x_2 < \cdots < x'_{t-1} <x_t$ such that $g(\mathcal{F},x_i) = \pi(\mathcal{F})$
for all $i \in [t]$ and $g(\mathcal{F},x_i') < \pi(\mathcal{F})$ for all $i \in [t-1]$.
\end{prob}

\begin{figure}[htbp]
\centering
\begin{tikzpicture}[xscale=8,yscale=8]
\draw [->] (0,0)--(1+0.05,0);
\draw [->] (0,0)--(0,0.5+0.05);
\draw (0,0.5)--(1,0.5);
\draw (1,0)--(1,0.5);
\draw [line width=0.8pt,dash pattern=on 1pt off 1.2pt,domain=0:1] plot(\x,1/3);
\draw [line width=0.5pt,dash pattern=on 1pt off 1.2pt] (0.2,0) -- (0.2,1/3);
\draw [line width=0.5pt,dash pattern=on 1pt off 1.2pt] (0.3,0) -- (0.3,1/3);
\draw [line width=0.5pt,dash pattern=on 1pt off 1.2pt] (0.4,0) -- (0.4,1/3);
\draw [line width=0.5pt,dash pattern=on 1pt off 1.2pt] (0.5,0) -- (0.5,1/3);
\draw [line width=0.5pt,dash pattern=on 1pt off 1.2pt] (0.6,0) -- (0.6,1/3);
\draw [line width=0.5pt,dash pattern=on 1pt off 1.2pt] (0.7,0) -- (0.7,1/3);
\draw [line width=0.5pt,dash pattern=on 1pt off 1.2pt] (0.8,0) -- (0.8,1/3);
\draw [line width=0.5pt,dash pattern=on 1pt off 2.5pt] (0.25,0) -- (0.25,1/3-0.04);
\draw [line width=0.5pt,dash pattern=on 1pt off 2.5pt] (0.35,0) -- (0.35,1/3-0.04);
\draw [line width=0.5pt,dash pattern=on 1pt off 2.5pt] (0.45,0) -- (0.45,1/3-0.04);
\draw [line width=0.5pt,dash pattern=on 1pt off 2.5pt] (0.55,0) -- (0.55,1/3-0.04);
\draw [line width=0.5pt,dash pattern=on 1pt off 2.5pt] (0.65,0) -- (0.65,1/3-0.04);
\draw [line width=0.5pt,dash pattern=on 1pt off 2.5pt] (0.75,0) -- (0.75,1/3-0.04);

\draw[fill=sqsqsq,fill opacity=0.25, dash pattern=on 1pt off 1.2pt]
(0,0)  to [out = 45, in = 250] (0.2, 1/3) to [out = 290, in = 180] (0.25,1/3-0.04) to [out = 0, in = 250] (0.3, 1/3)
to [out = 290, in = 180] (0.35,1/3-0.04) to [out = 0, in = 250] (0.4, 1/3)
to [out = 290, in = 180] (0.45,1/3-0.04) to [out = 0, in = 250] (0.5, 1/3)
to [out = 290, in = 180] (0.55,1/3-0.04) to [out = 0, in = 250] (0.6, 1/3)
to [out = 290, in = 180] (0.65,1/3-0.04) to [out = 0, in = 250] (0.7, 1/3)
to [out = 290, in = 180] (0.75,1/3-0.04) to [out = 0, in = 250] (0.8, 1/3)
to [out = 290, in = 135] (1,0.1) to (1,0);

\begin{scriptsize}
\draw [fill=uuuuuu] (1,0) circle (0.1pt);
\draw[color=uuuuuu] (1,0-0.05) node {$1$};
\draw [fill=uuuuuu] (0,0) circle (0.1pt);
\draw[color=uuuuuu] (0-0.03,0-0.03) node {$0$};
\draw [fill=uuuuuu] (0,1/2) circle (0.1pt);
\draw[color=uuuuuu] (0-0.05,1/2+0.05) node {$y$};
\draw[color=uuuuuu] (1+0.05,0-0.05) node {$x$};

\draw [fill=uuuuuu] (0.2,1/3) circle (0.1pt);
\draw [fill=uuuuuu] (0.2,0) circle (0.1pt);
\draw[color=uuuuuu] (0.2,0-0.03) node {$x_1$};

\draw [fill=uuuuuu] (0.3,1/3) circle (0.1pt);
\draw [fill=uuuuuu] (0.3,0) circle (0.1pt);
\draw[color=uuuuuu] (0.3,0-0.03) node {$x_2$};

\draw [fill=uuuuuu] (0.4,1/3) circle (0.1pt);
\draw [fill=uuuuuu] (0.4,0) circle (0.1pt);
%
\draw [fill=uuuuuu] (0.5,1/3) circle (0.1pt);
\draw [fill=uuuuuu] (0.5,0) circle (0.1pt);
%
\draw [fill=uuuuuu] (0.6,1/3) circle (0.1pt);
\draw [fill=uuuuuu] (0.6,0) circle (0.1pt);
%
\draw [fill=uuuuuu] (0.7,1/3) circle (0.1pt);
\draw [fill=uuuuuu] (0.7,0) circle (0.1pt);

\draw [fill=uuuuuu] (0.8,1/3) circle (0.1pt);
\draw [fill=uuuuuu] (0.8,0) circle (0.1pt);
\draw[color=uuuuuu] (0.8,0-0.03) node {$x_t$};


\draw [fill=uuuuuu] (0.25,1/3-0.04) circle (0.1pt);
\draw [fill=uuuuuu] (0.25,0) circle (0.1pt);
\draw[color=uuuuuu] (0.25,0-0.03) node {$x'_1$};

\draw [fill=uuuuuu] (0.35,1/3-0.04) circle (0.1pt);
\draw [fill=uuuuuu] (0.35,0) circle (0.1pt);
\draw[color=uuuuuu] (0.35,0-0.03) node {$x'_2$};

\draw [fill=uuuuuu] (0.45,1/3-0.04) circle (0.1pt);
\draw [fill=uuuuuu] (0.45,0) circle (0.1pt);
%
\draw [fill=uuuuuu] (0.55,1/3-0.04) circle (0.1pt);
\draw [fill=uuuuuu] (0.55,0) circle (0.1pt);
%
\draw [fill=uuuuuu] (0.65,1/3-0.04) circle (0.1pt);
\draw [fill=uuuuuu] (0.65,0) circle (0.1pt);

\draw [fill=uuuuuu] (0.75,1/3-0.04) circle (0.1pt);
\draw [fill=uuuuuu] (0.75,0) circle (0.1pt);
\end{scriptsize}
\end{tikzpicture}
\caption{Can $g(\mathcal{F})$ has many global maximums?}
\end{figure}
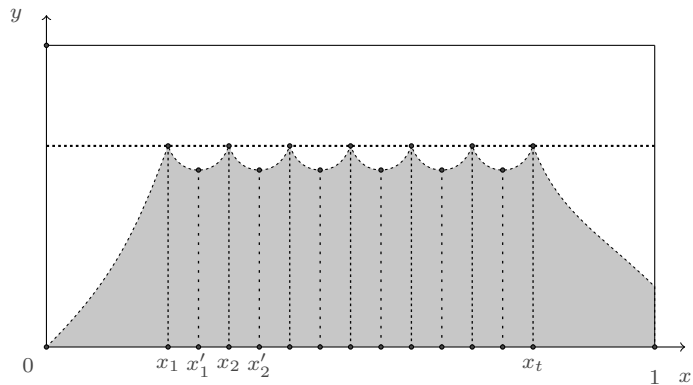
\section{Acknowledgement}
We would like to thank Minghui Ma for discussions on Theorem \ref{left-cont-and-diff} and for informing us about \cite{SS05l}.

\section{Appendix}
Here we prove the following result, which extends Theorem \ref{example-discont} to $r$-graphs with $r \ge 4$.

\begin{thm}\label{discontinity-D-r-graph}
For every $r \ge 3$ there exists a family $\mathcal{D}^{r}$ of $r$-graphs with ${\rm proj}\Omega(\mathcal{D}^{r}) = [0,1]$
and $g(\mathcal{D}^{r},(r-1)!/r^{r-2}) = r!/r^{r}$, but there exists an absolute constant $\delta_0 > 0$
such that $g(\mathcal{D}^{r},(r-1)!/r^{r-2}+\epsilon) < r!/r^{r} - \delta_0$ for all $0 < \epsilon \le 1- (r-1)!/r^{r-2}$.
\end{thm}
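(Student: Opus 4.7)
The plan is to mirror Definition \ref{family-D} by letting $\mathcal{D}^r$ be the collection of all $F \in \mathcal{K}_{r+1}^r$ with $F \not\subset \mathcal{S}_n$ for every $n \ge r+1$, where $\mathcal{S}_n$ is the $r$-uniform star $\{E \in \binom{[n]}{r}: u \in E\}$ with fixed center $u$. Since $H_{r+1}^r \in \mathcal{D}^r$, the family is nonempty; stars are $\mathcal{D}^r$-free with shadow densities tending to $1$, so by Observation \ref{projection-x-interval}, ${\rm proj}\Omega(\mathcal{D}^r) = [0,1]$. Combining \cite{MU06} with \cite{PI13} yields $\textrm{ex}(n, \mathcal{D}^r) = t_r(n,r)$ for large $n$, with $T_r(n,r)$ the unique extremal example; since $T_r(n,r)$ realizes the point $(x_0, y_0) := ((r-1)!/r^{r-2}, r!/r^{r})$, this immediately gives $g(\mathcal{D}^r, x_0) = y_0$.

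The main task is a quantitative nonstability estimate extending Theorem \ref{nonstable-D}: there exist $\epsilon_0, \delta_0 > 0$ (depending only on $r$) such that for all sufficiently large $n$, any $\mathcal{D}^r$-free $r$-graph $\mathcal{H}$ on $n$ vertices with $|\partial\mathcal{H}| \ge (x_0 + \epsilon)\binom{n}{r-1}$ for some $\epsilon \in (0, \epsilon_0)$ satisfies $|\mathcal{H}| \le (y_0 - \delta_0)\binom{n}{r}$. Once this is established, the uniform gap extends to the full range $\epsilon \in (0, 1 - x_0]$: by Pikhurko's stability in \cite{PI13}, any $\mathcal{D}^r$-free $r$-graph of density close to $y_0$ is close to $T_r(n,r)$ (whose shadow density is $x_0$), so $g(\mathcal{D}^r, x) < y_0$ at every $x \ne x_0$; upper-semicontinuity of $g(\mathcal{D}^r)$ on the compact interval $[x_0 + \epsilon_0, 1]$ (derived from closedness of $\Omega(\mathcal{D}^r)$ via Proposition \ref{closed}) then produces a uniform gap there, which we combine with $\delta_0$ to yield a single absolute constant.

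The proof of the nonstability estimate parallels Section 3. Assume for contradiction that $|\mathcal{H}| \ge (y_0 - o(1))\binom{n}{r}$. Pikhurko's stability theorem produces a partition $V_1 \cup \cdots \cup V_r$ with all but $o(n^r)$ crossing edges, and a short calculation along the lines of Claim \ref{size-Vi} gives $||V_i| - n/r| = o(n)$. Let $K$ denote the set of fully-crossing $(r-1)$-tuples in $\partial \mathcal{H}$, let $B := \partial \mathcal{H} \setminus K$ (the bad tuples meeting some part in at least two vertices), and let $M$ be the missing fully-crossing tuples. Since $|K| + |M| \approx x_0 \binom{n}{r-1}$ while $|K| + |B| = |\partial \mathcal{H}| \ge (x_0 + \epsilon)\binom{n}{r-1}$, we obtain $|B| - |M| = \Omega(\epsilon\, n^{r-1})$.

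The key combinatorial step is an $r$-uniform analog of Claim \ref{4-partite-subgp}: fix a pair $\{v_1, v_2\} \subset V(\mathcal{H})$ contained in some edge of $\mathcal{H}$, and consider the family of high-degree $(r-2)$-element link configurations across other parts; this family must be intersecting. Any two disjoint such links would allow us to piece together edges of $\mathcal{H}$ covering every pair of an $(r+1)$-set, producing a copy of some $F \in \mathcal{K}_{r+1}^r$ that cannot lie inside any star (forcing $F \in \mathcal{D}^r$), contradicting $\mathcal{D}^r$-freeness. The Erd\H{o}s-Ko-Rado theorem \cite{EKR} then bounds the intersecting family, after which a case analysis paralleling Cases 1 and 2 in the proof of Theorem \ref{nonstable-D} (driven by the maximum within-part degree of $B$) yields the desired contradiction. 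The main obstacle will be the bookkeeping: non-crossing edges admit many possible profiles depending on how their repeated vertices distribute among the parts, and the $r$-uniform degree analysis is more intricate than in the graph setting. The natural remedy is to iterate the graph-level argument by pinning $(r-2)$-tuples with one vertex from each of $r-2$ parts and reducing to a bipartite subproblem on the remaining two parts, exactly as in the $r=3$ argument; carefully controlling the propagation of the $o(n)$ slack from the stability partition through this iteration will be the chief technical challenge.
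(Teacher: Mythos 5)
Your construction of $\mathcal{D}^r$, the argument for ${\rm proj}\,\Omega(\mathcal{D}^r)=[0,1]$ via stars, and the identification $g(\mathcal{D}^r,(r-1)!/r^{r-2}) = r!/r^r$ via \cite{MU06} and \cite{PI13} all match the paper, and your sketch of the nonstability estimate (stability partition, crossing vs.\ bad vs.\ missing tuples, the disjoint-high-degree-links argument, the case split on the maximum bad degree, the ``pin $r-2$ vertices and reduce to a bipartite subproblem'' trick) is essentially what the appendix carries out. Two smaller remarks: for $r \ge 4$ the link family in the analogue of Claim~\ref{4-partite-subgp} is not intersecting but merely has matching number at most $r-2$ (the Erd\H{o}s--Ko--Rado bound is used only at $r=3$), and the paper performs the bad/missing decomposition on the $2$-uniform graph $\partial_{r-2}\mathcal{H}$ rather than directly on $(r-1)$-tuples, which streamlines exactly the bookkeeping you flag as the main technical challenge.

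The genuine gap is in your plan for large $\epsilon$. You propose to prove the nonstability estimate only for $\epsilon \in (0,\epsilon_0)$ and to cover $\epsilon \ge \epsilon_0$ by arguing that Pikhurko stability forces the shadow density of any near-extremal $\mathcal{D}^r$-free $r$-graph close to $x_0$, so that $g(\mathcal{D}^r,x)<y_0$ pointwise on $(x_0,1]$, after which upper semicontinuity and compactness of $[x_0+\epsilon_0,1]$ give a uniform gap there. The first link in this chain does not hold: stability gives edit distance $o(n^r)$ to $T_r(n,r)$, and since $o(n^r) \gg n^{r-1}$, the $o(n^r)$ permitted non-crossing edges may cover $\Theta(n^{r-1})$ additional $(r-1)$-tuples, enough to shift the shadow density by a constant. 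So ``$g(\mathcal{D}^r,x)<y_0$ for every $x>x_0$'' is not a freebie from Pikhurko stability; it is precisely the pointwise content of Theorem~\ref{discon-D-r-main-thm}, and establishing it even at a single $x$ bounded away from $x_0$ already requires the full bounded-matching analysis of the non-crossing edges that you reserved for the small-$\epsilon$ regime. The paper avoids the split entirely: the appendix proof of Theorem~\ref{discon-D-r-main-thm} is run for every $\epsilon \in (0,\,1-(r-1)!/r^{r-2}]$ in one pass, and nothing in the calculation uses an upper bound on $\epsilon$ --- the only role $\epsilon$ plays is as the lower bound $|B_K| \ge \tfrac{(r-3)!}{r^{r-2}}|M_K| + \epsilon(r-3)!\,n^2$ in Claim~\ref{discon-r-B-M}. (The restriction $\epsilon<10^{-8}$ in Theorem~\ref{example-discont} is, as the paper explicitly remarks, a cosmetic simplification, not a necessity.)
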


\begin{dfn}\label{dfn-D-r}
Let $\mathcal{D}^{r}$ be the collection of all $r$-graphs $F \in \mathcal{K}_{r+1}^{r}$
such that $F \not\subset \mathcal{S}_{n}$ for all $n \ge r$.
\end{dfn}

Since $H_{r+1}^{r} \in \mathcal{D}^{r} \subset \mathcal{K}_{r+1}^{r}$, by results in \cite{MU06} and \cite{PI13} we obtain the following results.
\begin{thm}\label{Turan-number-D-r}
Let $n$ be sufficiently large.
Then, $ex(n,\mathcal{D}^{r}) = t_{r}(n,r)$ and
$T_{r}(n,r)$ is the unique $\mathcal{D}^{r}$-free $r$-graph with $n$ vertices and $t_{r}(n,r)$ edges.
\end{thm}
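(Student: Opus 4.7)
The plan is to prove Theorem \ref{Turan-number-D-r} by a simple sandwiching argument using the two previously known extremal results: Mubayi's theorem \cite{MU06} that $\text{ex}(n,\mathcal{K}_{r+1}^{r}) = t_{r}(n,r)$ with $T_r(n,r)$ being the unique extremizer, and Pikhurko's theorem \cite{PI13} that $\text{ex}(n,H_{r+1}^{r}) = t_{r}(n,r)$ for sufficiently large $n$ with $T_r(n,r)$ being the unique extremizer. The two key structural observations about $\mathcal{D}^{r}$ are that (a) $\mathcal{D}^{r} \subset \mathcal{K}_{r+1}^{r}$ by definition, and (b) $H_{r+1}^{r} \in \mathcal{D}^{r}$, which is asserted in the paragraph introducing Definition \ref{dfn-D-r}. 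These two facts will immediately squeeze $\text{ex}(n,\mathcal{D}^{r})$ between the two known values.

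For the lower bound, observation (a) implies that every $\mathcal{K}_{r+1}^{r}$-free $r$-graph is automatically $\mathcal{D}^{r}$-free, so
\[
\text{ex}(n,\mathcal{D}^{r}) \ge \text{ex}(n,\mathcal{K}_{r+1}^{r}) = t_{r}(n,r),
\]
witnessed by $T_{r}(n,r)$. For the upper bound, observation (b) together with $\mathcal{D}^{r} \subset \mathcal{K}_{r+1}^{r}$ shows that every $\mathcal{D}^{r}$-free $r$-graph must be $H_{r+1}^{r}$-free (since containing $H_{r+1}^{r}$ would give a copy of a graph in $\mathcal{D}^{r}$). Applying Pikhurko's theorem to such an $r$-graph yields $|\mathcal{H}| \le \text{ex}(n,H_{r+1}^{r}) = t_{r}(n,r)$ for all sufficiently large $n$, closing the sandwich.

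For uniqueness, suppose $\mathcal{H}$ is a $\mathcal{D}^{r}$-free $r$-graph on $n$ vertices with $|\mathcal{H}| = t_{r}(n,r)$. By the argument above, $\mathcal{H}$ is $H_{r+1}^{r}$-free, so Pikhurko's uniqueness clause forces $\mathcal{H} = T_{r}(n,r)$ once $n$ is sufficiently large. Conversely, $T_{r}(n,r)$ is $\mathcal{K}_{r+1}^{r}$-free and hence $\mathcal{D}^{r}$-free, so it is indeed a valid extremizer.

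There is no real obstacle here: all the heavy lifting has been done by \cite{MU06} and \cite{PI13}, and the proof amounts to verifying the two containment relations $\mathcal{D}^{r} \subset \mathcal{K}_{r+1}^{r}$ and $H_{r+1}^{r} \in \mathcal{D}^{r}$. The only minor point to be careful with is that the upper bound requires $n$ to be large enough for Pikhurko's theorem to apply, which is precisely the hypothesis of the statement; the lower bound and the $\mathcal{K}_{r+1}^{r}$ direction are valid for all $n$. I would therefore keep the proof to a few lines, citing the two previous theorems and deducing both the equality and the uniqueness from the sandwich.
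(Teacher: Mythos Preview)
Your proposal is correct and matches the paper's approach exactly: the paper derives Theorem~\ref{Turan-number-D-r} in one line from the chain $H_{r+1}^{r} \in \mathcal{D}^{r} \subset \mathcal{K}_{r+1}^{r}$ together with the results of \cite{MU06} and \cite{PI13}, which is precisely the sandwiching argument you describe. One tiny cleanup: for the upper bound you only need $H_{r+1}^{r} \in \mathcal{D}^{r}$, not the containment $\mathcal{D}^{r} \subset \mathcal{K}_{r+1}^{r}$.
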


\begin{thm}[Stability]\label{stability-D-r}
For every $\xi >0$ there exists $\delta >0$ and $n_0$ such that the following holds for all $n\ge n_0$.
Any $\mathcal{D}^{r}$-free $r$-graph $\mathcal{H}$ with $n$ vertices and at least $n^r/r^r - \delta n^r$ edges has a partition
$V(\mathcal{H}) = V_1 \cup \cdots \cup V_r$ such that all but at most $\xi n^r$ edges in $\mathcal{H}$
have exactly one vertex in each $V_i$.
\end{thm}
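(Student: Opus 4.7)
The plan is to derive Theorem \ref{stability-D-r} as an essentially immediate corollary of Pikhurko's stability theorem for $H_{r+1}^r$-free $r$-graphs established in \cite{PI13}. The key point is that $\{H_{r+1}^{r}\} \subset \mathcal{D}^r$, so the $\mathcal{D}^r$-free condition is strictly stronger than the $H_{r+1}^r$-free condition, and hence any stability statement for the latter transfers verbatim to the former.

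First I would verify the membership $H_{r+1}^{r} \in \mathcal{D}^{r}$. That $H_{r+1}^r \in \mathcal{K}_{r+1}^r$ is noted in the paper; it remains to check $H_{r+1}^r \not\subset \mathcal{S}_n$ for any $n \ge r$. This is immediate since no single vertex lies in all $\binom{r}{2}$ edges of $H_{r+1}^r$: each of the $r$ ``core'' vertices lies in only $r-1 < \binom{r}{2}$ edges (for $r \ge 3$), and each added expansion vertex lies in exactly one edge. Consequently, by Observation \ref{subset-inequality} applied with $\mathcal{F}_1 = \{H_{r+1}^r\} \subset \mathcal{F}_2 = \mathcal{D}^r$, every $\mathcal{D}^r$-free $r$-graph is $H_{r+1}^r$-free.

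Given $\xi > 0$, I would then invoke Pikhurko's stability theorem from \cite{PI13} (the argument behind Lemma~3 of that paper, which the present paper already cites for the same purpose in the proof of Theorem \ref{feasible-region-H}): for a sufficiently small $\delta' > 0$ and sufficiently large $n$, any $n$-vertex $H_{r+1}^r$-free $r$-graph with at least $t_r(n, r) - \delta' n^r$ edges admits a vertex partition $V_1 \cup \cdots \cup V_r$ with all but at most $\xi n^r$ edges crossing. Since $t_r(n, r) = n^r/r^r + O(n^{r-1})$, any $r$-graph $\mathcal{H}$ satisfying the hypothesis $|\mathcal{H}| \ge n^r/r^r - \delta n^r$ also satisfies $|\mathcal{H}| \ge t_r(n, r) - \delta' n^r$ once $\delta$ is chosen slightly smaller than $\delta'$ and $n_0$ absorbs the lower-order term. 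Since $\mathcal{H}$ is $H_{r+1}^r$-free by the previous paragraph, the desired partition is produced.

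The main ``obstacle'' is purely bibliographic: one must read the argument in \cite{PI13} and confirm that it delivers the stability statement in precisely the form we require (typically this is done by combining the exact Tur\'an result with a removal-lemma or supersaturation step). Since the Tur\'an number satisfies $\textrm{ex}(n, \mathcal{D}^r) = t_r(n, r) = \textrm{ex}(n, H_{r+1}^r)$ (Theorem \ref{Turan-number-D-r}) and the unique extremal configuration $T_r(n, r)$ is identical for the two extremal problems, no additional stability work beyond invoking \cite{PI13} as a black box is necessary.
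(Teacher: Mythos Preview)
Your proposal is correct and matches the paper's approach exactly: the paper does not give a separate proof of this theorem but simply states it as a consequence of Pikhurko's stability result in \cite{PI13}, using the observation that $H_{r+1}^{r} \in \mathcal{D}^{r} \subset \mathcal{K}_{r+1}^{r}$. One minor slip: $H_{r+1}^r$ has $r+1$ core vertices and $\binom{r+1}{2}$ edges (not $r$ and $\binom{r}{2}$), with each core vertex lying in $r$ edges; since $r < \binom{r+1}{2}$ for $r \ge 2$, your conclusion that no vertex meets every edge, and hence $H_{r+1}^r \not\subset \mathcal{S}_n$, remains valid.
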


Since $\mathcal{S}_n$ is $\mathcal{D}^r$-free,
it follows from Observation \ref{projection-x-interval} that ${\rm proj}\Omega(\mathcal{D}^r) = [0,1]$.
Theorem \ref{Turan-number-D-r} implies that $g(\mathcal{D}^{r},x) \le r!/r^{r}$ for all $x \in [0,1]$ and equality holds for $x = (r-1)!/r^{r-2}$.
Therefore, in order to prove Theorem \ref{discontinity-D-r-graph}, it suffices to prove the following theorem.

\begin{thm}\label{discon-D-r-main-thm}
There exists an absolute constant $\delta_0 > 0$ such that the following is true
for all $0 < \epsilon \le 1- (r-1)!/r^{r-2}$ and sufficiently large $n$.
Suppose that $\mathcal{H}$ is a $\mathcal{D}^r$-free $r$-graph on $n$ vertices with $|\partial \mathcal{H}| = (1/r^{r-2} + \epsilon) n^{r-1}$.
Then $|\mathcal{H}| \le (1/r^{r} - \delta_0) n^r$.
\end{thm}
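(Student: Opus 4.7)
\medskip

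\noindent\textit{Proof proposal for Theorem \ref{discon-D-r-main-thm}.}
My plan is to mimic the argument used for Theorem \ref{nonstable-D}, replacing graph-theoretic arguments on $\partial \mathcal{H}$ with arguments on the $(r-1)$-graph $\partial\mathcal{H}$ and a careful use of the defining property of $\mathcal{D}^r$. Assume for contradiction that for arbitrarily small $\delta>0$ there exist $\epsilon\in(0, 1-(r-1)!/r^{r-2}]$ and a $\mathcal{D}^r$-free $r$-graph $\mathcal{H}$ on $n>n_0$ vertices satisfying $|\partial\mathcal{H}|=(1/r^{r-2}+\epsilon)n^{r-1}$ and $|\mathcal{H}|>(1/r^r-\delta)n^r$. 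Choose $\xi>0$ small and let $\delta=\delta(\xi)$ from Theorem \ref{stability-D-r}; this yields a partition $V(\mathcal{H})=V_1\cup\cdots\cup V_r$ all but $\xi n^r$ of whose edges are rainbow with respect to this partition. Standard double counting (as in Claim \ref{size-Vi}) forces $||V_i|-n/r|< C(\delta+\xi)^{1/2}n$ for each $i$.

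Next, let $G=\partial\mathcal{H}$ and consider the induced rainbow $(r-1)$-subgraph $G'\subset G$ supported on the partition. Taking $K$ to be a max cross $(r-1)$-subgraph of $G$ (i.e.\ supported on a partition $X_1\cup\cdots\cup X_r$ chosen to maximize the number of $(r-1)$-sets meeting each part at most once), the maximality plus $|G'|\ge n^{r-1}/r^{r-2}-O((\delta+\xi)^{1/2})n^{r-1}$ give $|K|\ge |G'|$ and, iterating the balance argument as in Claim \ref{size-of-V'i}, $||X_i|-n/r|< C'(\delta+\xi)^{1/(2^{r-1})}n$ for each $i$. An edge-counting estimate analogous to Claim \ref{few-small-degree-edge} then shows that all but at most $\eta n^{r-1}$ of the $(r-1)$-edges in $K$ have codegree at least $10$ in $\mathcal{H}$, where $\eta$ is a small absolute constant depending only on $r$.

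The key new ingredient is the generalization of Claim \ref{4-partite-subgp}. Here I would argue: fix a pair $\{v_1,v_2\}$ lying in a common edge of $\mathcal{H}$, and let $Y\subset V(\mathcal{H})\setminus\{v_1,v_2\}$ be a set of vertices such that every pair among $\{v_1,v_2\}\times Y$ is covered in $\mathcal H$. Define the family
\[
\mathcal L \;=\; \bigl\{\{u_1,\dots,u_{r-1}\}\subset Y : \text{every pair inside $\{v_1,v_2,u_1,\dots,u_{r-1}\}$ is covered in $\mathcal{H}$, and the $(r-1)$-set $\{u_1,\dots,u_{r-1}\}$ has codegree $\ge 10$ in $\mathcal{H}$}\bigr\}.
\]
Any member $\{u_1,\dots,u_{r-1}\}\in\mathcal L$ together with $\{v_1,v_2\}$ gives an $(r+1)$-set whose $\binom{r+1}{2}$ pairs are each covered by an edge of $\mathcal{H}$, so they span an $F\in\mathcal K_{r+1}^r$ in $\mathcal H$; the high codegree lets us choose the edge through $\{u_1,\dots,u_{r-1}\}$ to avoid the edge through $\{v_1,v_2\}$, so $F\notin \mathcal S_n$, forcing $F\in\mathcal D^r$, a contradiction. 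Hence the ``second'' covering edge through any pair in the core must meet every member of $\mathcal L$ nontrivially, and an intersecting-family argument (applying the Hilton--Milner-type cascade, or iteratively the Erd\H{o}s--Ko--Rado bound on the $(r-1)$-uniform trace) yields $|\mathcal L|=O(n^{r-2})$.

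Armed with this cap on $|\mathcal L|$, the endgame parallels Cases 1 and 2 of the proof of Theorem \ref{nonstable-D}. Let $B$ be the set of bad (non-crossing) $(r-1)$-edges of $G$ and $M$ the set of missing crossing $(r-1)$-edges, so that $(\ref{bad-and-missing-pairs})$ generalizes to $|B|\ge |M|+\epsilon n^{r-1}$ while $|M|=O((\delta+\xi)^{1/2})n^{r-1}$. Split according to whether the maximum ``bad degree'' in one of the parts is small (find many pairwise disjoint bad $(r-1)$-edges and apply the intersecting-family bound to each, counting missing edges) or large (locate a high-bad-degree vertex, exploit the maximality of $K$ to force many neighbors in several parts, and again invoke the bound on $|\mathcal L|$). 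Either case produces $|M|>4|B|$ or an analogous contradiction with $|M|=O((\delta+\xi)^{1/2})n^{r-1}$, contradicting $|M|\ge |B|-\epsilon n^{r-1}$. The main obstacle I anticipate is the intersecting-family step for $(r-1)$-uniform $\mathcal L$: while EKR gives $|\mathcal L|\le\binom{n-1}{r-2}$, one actually needs a slightly stronger $o(n^{r-2})$ or carefully tuned additive bound to survive Case 2 when the ``high bad degree'' vertex has degree only slightly above a threshold, so I expect most of the technical weight of the proof to sit there rather than in the stability or partition-size lemmas.
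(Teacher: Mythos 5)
Your high-level plan (stability from Theorem \ref{stability-D-r}, balance of the parts, a ``few low-degree edges'' lemma, an intersecting-family bound derived from the $\mathcal{D}^r$ structure, and a bad-versus-missing-edge dichotomy) matches the architecture of the paper's argument, but the object you propose to work with is different, and this is where the proposal diverges from what the paper does and where it develops gaps. The paper does \emph{not} pass to the $(r-1)$-uniform shadow $\partial\mathcal{H}$; it sets $G=\partial_{r-2}\mathcal{H}$, which is an ordinary graph, and then runs the whole combinatorial analysis at the graph level: $K$ is a maximum $r$-partite subgraph of a \emph{graph}, bad and missing edges are \emph{pairs}, the ``few small-degree edges'' claim (your analogue of Claim \ref{few-small-degree-edge}) is about pairs with low codegree, and the intersecting-family family is pair-uniform (a family $L$ of pairs with matching number $\le r-2$, hence $|L|\le(r-2)n$). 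The $(r-1)$-uniform shadow enters only once, in the counting step that turns the hypothesis $|\partial\mathcal{H}|=(1/r^{r-2}+\epsilon)n^{r-1}$ into a graph-level inequality $|B_K|\ge\frac{(r-3)!}{r^{r-2}}|M_K|+\epsilon(r-3)!n^2$ (Claim \ref{discon-r-B-M}). Keeping everything at the graph level makes the bad-degree dichotomy, the maximality-of-$K$ argument in the high-degree case, and the intersecting-family step essentially as clean as in the $r=3$ case.

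Two concrete issues with your route. First, your key claim is incongruent with your own framework: you fix a \emph{pair} $\{v_1,v_2\}$ and look at an $(r-1)$-uniform family $\mathcal{L}$ in a set $Y$, but you earlier declared $B$ and $M$ to be sets of $(r-1)$-edges and propose to iterate over ``many pairwise disjoint bad $(r-1)$-edges.'' You therefore need to extract, from each bad $(r-1)$-set, a same-part pair, argue it is covered (it is, since the $(r-1)$-set is in $\partial\mathcal{H}$), and then carry the resulting $(r-1)$-uniform deficiency back into a bound on $|M|$ as $(r-1)$-sets; none of this bookkeeping is sketched, and in the high-bad-degree case the analogue of ``maximality of $K$ forces $|N_i|\ge\Delta$'' becomes a statement about gains and losses of \emph{crossing $(r-1)$-sets} under moving a vertex, which is considerably more delicate than the graph version the paper exploits. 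Second, the obstacle you flag is the wrong one: your $\mathcal{L}$ is not shown to be intersecting, only to have matching number $\le r-2$ (any $r$-edge through $\{v_1,v_2\}$ has $r-2$ free slots, so it cannot meet $r-1$ pairwise disjoint members of $\mathcal{L}$), and a bounded-matching-number bound already gives $|\mathcal{L}|=O(n^{r-2})$ for large $n$, which is $o(n^{r-1})$ and plenty. So the intersecting-family step is fine; the hard work is in the $(r-1)$-uniform bad-degree dichotomy and the translation between $(r-1)$-sets and pairs, which the paper sidesteps entirely by working with $\partial_{r-2}\mathcal{H}$ from the start.
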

\begin{proof}
Suppose not.
Then for every $\delta > 0$ there exists $\epsilon > 0$ and sufficiently large $n$ so that
there is a $\mathcal{D}^{r}$-free $r$-graph on $n$ vertices with $|\partial \mathcal{H}| = (1/r^{r-2} + \epsilon) n^{r-1}$
and $|\mathcal{H}| > (1/r^{r} - \delta) n^r$.

Let $\xi > 0$ be sufficiently small and let $\delta > 0$ (we may assume that $\delta \le \xi$) and $n_0$ be given by Theorem \ref{stability-D-r}.
By assumption there exists $\epsilon > 0$ and
a $\mathcal{D}^{r}$-free $r$-graph on $n$ vertices with
\begin{align}\label{discon-D-r-size-shadow-H}
|\partial \mathcal{H}| = \left( \frac{1}{r^{r-2}} + \epsilon \right) n^{r-1}
\end{align}
and
\begin{align}\label{discon-D-r-size-H}
|\mathcal{H}| > \left( \frac{1}{r^{r}} - \delta \right) n^r.
\end{align}

By Theorem \ref{stability-D-r}, $\mathcal{H}$ has a partition
$V(\mathcal{H}) = V_1 \cup \cdots \cup V_r$ such that all but at most $\xi n^r$ edges in $\mathcal{H}$ have exactly one vertex in each $V_i$.
Let $\mathcal{H}'$ denote the induced $r$-partite subgraph of $\mathcal{H}$ with parts $V_1,\ldots, V_r$.
Let $G = \partial_{r-2}\mathcal{H}$ and $G' = \partial_{r-2}\mathcal{H}'$.
Notice that $G'$ is an $r$-partite subgraph of $G$ and
\begin{align}\label{discon-D-r-size-H'}
|\mathcal{H}'| > \frac{n^r}{r^r} - (\delta + \xi)n^{r}.
\end{align}

\begin{claim}\label{discon-r-size-of-Vi}
$\left| |V_i| - \frac{n}{r}\right| < 2{r^{\frac{r-2}{2}}}\left( \delta + \xi \right)^{\frac{1}{2}} n$ for all $i \in [r]$.
\end{claim}
\begin{proof}
Fix $i \in [r]$ and let $\alpha = |V_i|$.
Then,
\begin{align}
\alpha \left( \frac{n-\alpha}{r-1}\right)^{r-1} \ge \alpha \prod_{j \in [r]\setminus \{i\}}|V_j|
\ge |\mathcal{H}'| \overset{(\ref{discon-D-r-size-H'})}{\ge} \frac{n^r}{r^r} - \left( \delta + \xi \right)n^r,  \notag
\end{align}
which implies that
\begin{align}
\frac{n}{r} - 2{r^{\frac{r-2}{2}}}\left( \delta + \xi \right)^{\frac{1}{2}} n
< \alpha <
\frac{n}{r} + 2{r^{\frac{r-2}{2}}}\left( \delta + \xi \right)^{\frac{1}{2}} n. \notag
\end{align}
\end{proof}

For $\{u,v\} \subset V(\mathcal{H})$ the link of $\{u,v\}$ in $\mathcal{H}$ is
\[
L_{\mathcal{H}}(uv) := \left\{ A \in \binom{V(\mathcal{H})}{r-2}: \{u,v\}\cup A \in \mathcal{H} \right\},
\]
and the degree of $\{u,v\}$ in $\mathcal{H}$ is  $d_{\mathcal{H}}(uv) := |L_{\mathcal{H}}(uv)|$.
When it is clear from the text we will omit the subscript $\mathcal{H}$.

Let $K$ be an $r$-partite subgraph of $G$ with the maximum number of edges and let $V'_1,\ldots,V'_r$ denote the $r$ parts of $K$.

\begin{claim}\label{discon-r-size-of-K-G'}
$|K| \ge |G'| > \frac{r-1}{2r}n^{2} - 2r^{\frac{r+2}{2}}\left( \delta + \xi \right)^{\frac{1}{2}} n^2$.
\end{claim}
\begin{proof}
By Claim \ref{discon-r-size-of-Vi},
\begin{align}
|G'| \left( \frac{n}{r} + 2{r^{\frac{r-2}{2}}}\left( \delta + \xi \right)^{\frac{1}{2}} n \right)^{r-2}
 \ge \sum_{uv \in G'}d_{\mathcal{H}'}(u,v) & = \binom{r}{2} |\mathcal{H}'| \notag \\
& \overset{(\ref{discon-D-r-size-H'})}{\ge} \binom{r}{2}\left( \frac{n^r}{r^r} - \left( \delta + \xi \right)n^r \right),  \notag
\end{align}
which gives
\begin{align}
|G'| & \ge \frac{\binom{r}{2}\left( \frac{n^r}{r^r} - \left( \delta + \xi \right)n^r \right)}{\left( \frac{n}{r} + 2{r^{\frac{r-2}{2}}}\left( \delta + \xi \right)^{\frac{1}{2}} n \right)^{r-2} }  \notag \\
& > \frac{\binom{r}{2}\left( \frac{n^r}{r^r} - \left( \delta + \xi \right)n^r \right)\left( 1 - 2{r^{\frac{r}{2}}}\left( \delta + \xi \right)^{\frac{1}{2}} \right)^{r-2}}{\left( n/r \right)^{r-2} } \notag \\
& > \frac{\binom{r}{2}\left( \frac{n^r}{r^r} - \left( \delta + \xi \right)n^r \right)\left( 1 - 3(r-2){r^{\frac{r}{2}}}\left( \delta + \xi \right)^{\frac{1}{2}} \right)}{\left( n/r \right)^{r-2} } \notag \\
& > \frac{\binom{r}{2} \frac{n^r}{r^r}- 2{r^{\frac{r}{2}}}\left( \delta + \xi \right)^{\frac{1}{2}} \frac{n^r}{r^{r-3}}}{\left( n/r \right)^{r-2} } \notag \\
& > \frac{r-1}{2r}n^{2} - 2r^{\frac{r+2}{2}}\left( \delta + \xi \right)^{\frac{1}{2}} n^2. \notag
\end{align}
It follows from the maximality of $K$ that $|K| \ge |G'|$.
\end{proof}

\begin{claim}\label{discon-r-size-of-Vi'}
$\left| |V'_i| - \frac{n}{r}\right| < 4r^{\frac{r+2}{4}}\left( \delta + \xi \right)^{\frac{1}{4}}n$ for all $i \in [r]$.
\end{claim}
\begin{proof}
Fix $ i\in [r]$ and let $\beta = |V_i'|$.
Then,
\begin{align}
\beta \left( n-\beta \right) + \frac{r-2}{2(r-1)}\left( n-\beta \right)^2 \ge |K|
\overset{\text{Claim \ref{discon-r-size-of-K-G'}}}{>}
\frac{r-1}{2r}n^{2} - 2r^{\frac{r+2}{2}}\left( \delta + \xi \right)^{\frac{1}{2}} n^2, \notag
\end{align}
which implies
\begin{align}
\frac{n}{r} - 4r^{\frac{r+2}{4}}\left( \delta + \xi \right)^{\frac{1}{4}} n
< \beta <
\frac{n}{r} + 4r^{\frac{r+2}{4}}\left( \delta + \xi \right)^{\frac{1}{4}} n. \notag
\end{align}
\end{proof}

Define
\begin{align}
B_{K} = \left\{\{u,v\} \in K: \{u,v\} \subset V'_i \text{ for some } i\in [r] \right\}, \notag
\end{align}
\begin{align}
B_{G'} = \left\{\{u,v\} \in G': \{u,v\} \subset V_i \text{ for some } i\in [r] \right\}, \notag
\end{align}
\begin{align}
M_{K} = \left\{\{u,v\} \in \binom{V(\mathcal{H})}{2} \setminus K: u \in V'_i, v \in V'_{j} \text{ for some } i, j \in [r]\text{ and } i\neq j \right\}, \notag
\end{align}
and
\begin{align}
M_{G'} = \left\{\{u,v\} \in \binom{V(\mathcal{H})}{2} \setminus G': u \in V_i, v \in V_{j} \text{ for some } i, j \in [r]\text{ and } i\neq j \right\}. \notag
\end{align}
Sets in $B_K$ (resp. $B_G'$) are called bad edges in $K$ (resp. $G'$) and sets in $M_{K}$ (resp. $M_{G'}$) are called missing edges
in $K$ (resp. $G'$).
It follows from Claim \ref{discon-r-size-of-K-G'} and Tur\'{a}n's theorem that
\begin{align}\label{discon-r-size-of-MK}
|M_{K}| < 2r^{\frac{r+2}{2}}\left( \delta + \xi \right)^{\frac{1}{2}} n^2,
\end{align}
and
\begin{align}\label{discon-r-size-of-MG'}
|M_{G'}| < 2r^{\frac{r+2}{2}}\left( \delta + \xi \right)^{\frac{1}{2}} n^2.
\end{align}

\begin{claim}\label{discon-r-B-M}
$|B_{K}| \ge \frac{(r-3)!}{r^{r-2}} |M_{K}| + \epsilon (r-3)! n^{2}$.
\end{claim}
\begin{proof}
Let $\mathcal{K}$ be the complete $r$-partite $(r-1)$-graph with parts $V'_1 \cup \cdots \cup V'_{r}$.
Then,
\begin{align}
|\mathcal{K} \setminus \partial \mathcal{H}|
&\overset{\text{Claim \ref{discon-r-size-of-Vi'}}}{\ge}
\frac{1}{\binom{r-1}{2}}|M_{K}| \binom{r-2}{r-3}\left( \frac{n}{r} - 4r^{\frac{r+2}{4}}\left( \delta + \xi \right)^{\frac{1}{4}}n \right)^{r-3}  \notag \\
&  = \frac{2|M_{K}|}{r-1}\left( \frac{n}{r} - 4r^{\frac{r+2}{4}}\left( \delta + \xi \right)^{\frac{1}{4}}n \right)^{r-3}, \notag
\end{align}
which implies
\begin{align}
& |B_{K}|\binom{n}{r-3} + \frac{n^{r-1}}{r^{r-2}} - \frac{2|M_{K}|}{r-1}\left( \frac{n}{r} - 4r^{\frac{r+2}{4}}\left( \delta + \xi \right)^{\frac{1}{4}}n \right)^{r-3} \notag \\
 \ge & |\partial \mathcal{H} \setminus \mathcal{K}| + |\partial \mathcal{H} \cap \mathcal{K}|
 = |\partial \mathcal{H}|
 \overset{(\ref{discon-D-r-size-shadow-H})}{=} \frac{n^{r-1}}{r^{r-2}} + \epsilon n^{r-1}, \notag
\end{align}
and it follows that
\begin{align}
|B_{K}| & \ge (r-3)! \left( \frac{2|M_{K}|}{r-1}\left( \frac{1}{r} - 4r^{\frac{r+2}{4}}\left( \delta + \xi \right)^{\frac{1}{4}} \right)^{r-3}+ \epsilon n^{2} \right) \notag \\
& > (r-3)! \left( \frac{2}{r-1}\left( \frac{1}{r^{r-3}} - 8(r-3)r^{\frac{r+2}{4}}\left( \delta + \xi \right)^{\frac{1}{4}} \right)|M_K|+ \epsilon n^{2} \right) \notag \\
& > \frac{(r-3)!}{r^{r-2}} |M_K| + \epsilon (r-3)! n^{2}. \notag
\end{align}
\end{proof}

Let
\[
S_{K} = \left\{ \{u,v\}: u \in V'_i, v \in V'_j \text{ for some }i, j \in [r], i\neq j, d_{\mathcal{H}}(uv) \le r^3 \binom{n}{r-3} \right\},
\]
and
\[
S_{G'} = \left\{ \{u,v\}: u \in V_i, v \in V_j \text{ for some }i, j \in [r], i\neq j, d_{\mathcal{H}}(uv) \le r^3 \binom{n}{r-3} \right\}.
\]

\begin{claim}\label{discon-r-upper-bound-SK}
$|S_K| <  \frac{r^r}{(r-2)!} |B_K| + \delta r^r n^2$.
\end{claim}
\begin{proof}
Similar to the proof of Claim \ref{discon-r-B-M},
\begin{align}\label{discon-r-upper-bound-H}
|\mathcal{H}| & = \frac{1}{\binom{r}{2}} \sum_{uv \in G}d(u,v) \notag\\
& \le \sum_{uv \in B_K}d(u,v) + \frac{n^r}{r^r} - \frac{1}{\binom{r}{2}}|S_{K}|\left( \left(\frac{n}{r} - 4r^{\frac{r+2}{4}}\left( \delta + \xi \right)^{\frac{1}{4}} n \right)^{r-2} - r^3\binom{n}{3}\right)\notag \\
& <  |B_K|\binom{n}{r-2} + \frac{n^r}{r^r} - \frac{1}{\binom{r}{2}} |S_{K}| \frac{n^{r-2}}{2r^{r-2}} \notag \\
& <  \frac{|B_K|}{(r-2)!}n^{r-2} + \frac{n^r}{r^r} -\frac{|S_K|}{r^{r}} n^{r-2} \notag
\end{align}
which implies
\begin{align}
\frac{|B_K|}{(r-2)!}n^{r-2} + \frac{n^r}{r^r} -\frac{|S_K|}{r^{r}} n^{r-2}
> |\mathcal{H}|
\overset{(\ref{discon-D-r-size-H})}{>} \frac{n^r}{r^r} - \delta n^r, \notag
\end{align}
and it follows that
\begin{align}
|S_{K}| < \frac{r^r}{(r-2)!} |B_K| + \delta r^r n^2. \notag
\end{align}
\end{proof}

\begin{claim}\label{discon-r-upper-bound-SG'}
$|S_{G'}| < r^{r}(\delta + \xi)n^{2}$.
\end{claim}
\begin{proof}
Similar to the proof of Claim \ref{discon-r-B-M},
\begin{align}
\frac{n^{r}}{r^r} - \frac{1}{\binom{r}{2}}|S_{G'}|\left( \left( \frac{n}{r} - 4r^{\frac{r+2}{4}}\left( \delta + \xi \right)^{\frac{1}{4}} n \right)^{r-2} - r^{3}\binom{n}{r-3} \right) \ge |\mathcal{H}'|
\overset{(\ref{discon-D-r-size-H'})}{>} \frac{n^r}{r^r} - (\delta + \xi)n^{r}, \notag
\end{align}
which implies
\begin{align}
|S_{G'}| & \le \frac{ (\delta + \xi)n^{r}}{\frac{1}{\binom{r}{2}}\left( \left( \frac{n}{r} - 4r^{\frac{r+2}{4}}\left( \delta + \xi \right)^{\frac{1}{4}} n \right)^{r-2} - r^{3}\binom{n}{r-3} \right)}
< \frac{ (\delta + \xi)n^{r}}{\frac{1}{\binom{r}{2}}\frac{n^{r-2}}{2r^{r-2}}} < r^{r}(\delta + \xi)n^{2}. \notag
\end{align}
\end{proof}

\begin{claim}\label{discon-r+1-partite-subgp-of-K}
Suppose that $K$ (resp. $G'$) contains a subgraph on $\{u\} \cup \{v\} \cup U_2 \cup \cdots \cup U_r$
with $u,v \in V'_1$ (resp. $u,v \in V_1$) and $U_i \subset V_i'$ (resp. $U_i \subset V_i$) and $|U_i| \ge 2(\delta + \xi)^{\frac{1}{8}} n$
for $2 \le i \le r$, such that $uv \in G$ and $u,v$ are adjacent (in $G$) to all vertices in $U_2 \cup \cdots \cup U_r$.
Then all but at most $8r^{\frac{r+2}{2}}(\delta + \xi)^{\frac{3}{8}} n^2$ pairs in $U_i \times U_j$ has degree (in $\mathcal{H}$)
at most $r^3 \binom{n}{r-3}$ for all $\{i,j\} \subset \{2,\ldots, r\}$.
\end{claim}
\begin{proof}
Without loss of generality we may assume that $i = r-1, j = r$.
First, let us show how to obtain a subgraph of $K$ on $\{u\}\cup \{v\}\cup \{v_2\} \cup \cdots \cup \{v_{r-2}\} \cup U_{r-1}^{r-1} \cup U_{r}^{r-1}$
with $v_i \in V'_{i}$ for $2 \le i \le r-2$ and $U_{j}^{r-1} \subset U_{j}$ for $j = r-1,r$, and moreover,
\[
|U_{i}^{r-1}| \ge |U_{i}| -  2r^{\frac{r+2}{2}}(\delta + \xi)^{\frac{3}{8}} n > (\delta + \xi)^{\frac{1}{8}}n, \text{ for } i \in \{r-1,r\}.
\]

Let $U_{i}^{2} = U_{i}$ for $2 \le i \le r$.
We claim that there exists $v_2 \in V'_2$ such that
\[
|N_{K}(v_2) \cap U_{i}^{2}| \ge |U_{i}^{2}| - 2r^{\frac{r+2}{2}}(\delta + \xi)^{\frac{3}{8}} n.
\]
Otherwise,
\begin{align}
|M_{K}| \ge \sum_{u \in U_{2}^{2}}d_{M_{K}}(u)
\ge (\delta + \xi)^{\frac{1}{8}}n \times 2r^{\frac{r+2}{2}}(\delta + \xi)^{\frac{3}{8}} n
>  2r^{\frac{r+2}{2}}(\delta + \xi)^{\frac{1}{2}} n , \notag
\end{align}
which contradicts $(\ref{discon-r-size-of-MK})$.

Now suppose that we have chosen $\{v_2, \ldots, v_{j-1}\}$ for some $2 \le j \le r-3$.
Then, let $U_{i}^{j} = N_{K}(v_{j-1}) \cap U_{i}^{j-1}$ and choose $v_j \in U_j^{j}$ so that
\begin{align}
|N_{K}(v_{j}) \cap U_{i}^{j}|  \ge |U_{i}^{j}| - 2r^{\frac{r+2}{2}}(\delta + \xi)^{\frac{3}{8}} n, \text{ for all } j \le i \le r. \notag
\end{align}
Similar to the argument above, there exists such a vertex $v_j \in U_j^{j}$.

Repeating this process until $j = r-2$.
Then, we obtain a subgraph of $K$ on $\{u\}\cup \{v\}\cup \{v_2\} \cup \cdots \cup \{v_{r-2}\} \cup U_{r-1}^{r-1} \cup U_{r}^{r-1}$
so that $u,v,v_2,\ldots, v_{r-2}$ are adjacent to each other and all vertices in $U_{r-1}^{r-1} \cup U_{r}^{r-1}$,
and
\[
|U_{i}^{r-1}| > |U_{i}| - 2r^{\frac{r+4}{2}}(\delta + \xi)^{\frac{3}{8}} n, \text{ for } i \in \{r-1,r\}.
\]

Let
\[
L = \left\{\{w,w'\}\in U_{r-1}^{r-1} \times U_{r}^{r-1} : d_{\mathcal{H}}(ww') \ge r^3 \binom{n}{r-3}   \right\}.
\]
Next, we show that the matching number of $L$, denoted by $\nu(L)$, is at most $r-2$.

Let $ww' \in L$ and
\[
\mathcal{E}_{uv} = \left\{E \in \mathcal{H}: \{u,v\} \in E \right\}.
\]
We claim that every $E \in \mathcal{E}_{uv}$ satisfies $E \cap \{u,v\} \neq \emptyset$.
Indeed, suppose that there exists  $E_{uv} \in \mathcal{E}_{uv}$ with $E \cap \{u,v\} = \emptyset$.
Since $d_{\mathcal{H}}(ww') \ge r^3 \binom{n}{r-3} \ge r^3 \binom{n}{r-3}$ and $n$ is sufficiently large,
by result in \cite{FR13}, $L_{\mathcal{H}}(ww')$ contains at least $r^3$ pairwise disjoint set.
So, we can choose $E_{ww'} \in \mathcal{H}$ such that $\{w,w'\} \in E_{ww'}$ and $E_{ww'} \cap E_{uv} = \emptyset$.
For every $\{a,b\} \subset \{u,v,v_2,\ldots,v_{r-2},w,w'\}$ and $\{a,b\} \not\in \{\{u,v\},\{w,w'\}\}$,
choose $E_{ab} \in \mathcal{H}$ such that $\{a,b\} \subset E_{ab}$.
Let $F_1$ be the $r$-graph with edges set
\[
\{E_{uv},E_{ww'}\} \cup \left\{E_{ab}: \text{$\{a,b\} \subset \{u,v,v_2,\ldots,v_{r-2},w,w'\}$ and $\{a,b\} \not\in \{\{u,v\},\{w,w'\}\}$} \right\}.
\]
Notice that $F_1 \subset \mathcal{H}$ and $F_1 \in \mathcal{K}_{r+1}^{r}$.
Since $E_{uv} \cap E_{ww'} = \emptyset$, $F_1 \in \mathcal{D}^{r}$, which is a contradiction.
Therefore, every $E \in \mathcal{E}_{uv}$ satisfies $E \cap \{u,v\} \neq \emptyset$.

Suppose that $\nu(L) \ge r-1$ and let $\{w_i w_i' \in L: i \in [r-1] \}$ be a set of pairwise disjoint edges.
The argument above implies that $E_{uv} \cap \{w_i,w_i'\} \neq \emptyset$ for all $i \in [r-1]$,
which is impossible since $E_{uv}$ is an $r$-set.
Therefore, $\nu(L) \le r-2$ and it follows that $|L| \le (r-2)n$, and hence
there are at most
\begin{align}
(r-2)n + 2r^{\frac{r+4}{2}}(\delta + \xi)^{\frac{3}{8}} n
\times 2 \left( \frac{n}{r} + 4r^{\frac{r+2}{4}}\left( \delta + \xi \right)^{\frac{1}{4}}n \right)
<  8r^{\frac{r+2}{2}}(\delta + \xi)^{\frac{3}{8}} n^2 \notag
\end{align}
pairs in $U_{r-1} \times U_{r}$ have degree greater than $r^{3}\binom{n}{r-3}$.
\end{proof}

\begin{claim}\label{discon-r-upper-bound-BK}
$|B_{K}| \le 128r^{\frac{r+4}{2}}\left( \delta + \xi \right)^{\frac{1}{2}} n^2$.
\end{claim}
\begin{proof}
Suppose that $|B_{K}| > 128r^{\frac{r+4}{2}}\left( \delta + \xi \right)^{\frac{1}{2}} n^2$.
Then, by the maximality of $K$,
\begin{align}
|B_{G'}| \ge |B_K| > 128r^{\frac{r+4}{2}}\left( \delta + \xi \right)^{\frac{1}{2}} n^2. \notag
\end{align}
Let $B_{G'}^{1} = B_{G'} \cap \binom{V_1}{2}$ and without loss of generality we may assume that
\begin{align}
|B_{G'}^{1}| \ge \frac{|B_{G'}|}{r} > 128r^{\frac{r+2}{2}}\left( \delta + \xi \right)^{\frac{1}{2}} n^2. \notag
\end{align}
Then, by Claim \ref{discon-r-size-of-Vi}, $B_{G'}^{1}$ contains at least
\begin{align}
\frac{|B_{G'}^{1}|}{2 \left( \frac{n}{r} + 2 r^{\frac{r-2}{2}}\left( \delta + \xi \right)^{\frac{1}{2}} \right)}
> \frac{128r^{\frac{r+2}{2}}\left( \delta + \xi \right)^{\frac{1}{2}} n^2}{4 n /r}
= 32r^{\frac{r+4}{2}}\left( \delta + \xi \right)^{\frac{1}{2}} n \notag
\end{align}
pairwise disjoint edges.

Let $uv \in B_{G'}^{1}$ and $N_i = N_{G'}(u) \cap N_{G'}(v) \cap V_i$ for $2 \le i \le r$.
Suppose that $|N_{G'}(u) \cap N_{G'}(v)| \ge \frac{2r-3}{2r}n$, then $\min\{|N_i| : 2\le i \le r\} \ge n/3r$.
Applying Claim \ref{discon-r+1-partite-subgp-of-K} to the induced subgraph of $G'$ on $\{u\}\cup \{v\} \cup N_2 \cup \cdots N_r$ gives
\begin{align}
|S_{G'}| \ge \left(\frac{n}{3r} \right)^2 - 8r^{\frac{r+2}{2}}(\delta + \xi)^{\frac{3}{8}} n^2
> \frac{n^2}{18r^2}, \notag
\end{align}
which contradicts Claim \ref{discon-r-upper-bound-SG'}.
Therefore, $|N_{G'}(u) \cap N_{G'}(v)| < \frac{2r-3}{2r}n$, and hence
\[
d_{G'}(u) + d_{G'}(v) = |N_{G'}(u) \cup N_{G'}(v)| + |N_{G'}(u) \cap N_{G'}(v)| < \sum_{i=2}^{r}|V_{i}| + \frac{2r-3}{2r}n.
\]
It follows that
\begin{align}
|M_{G'}| & \ge \sum_{uv \in B_{G'}^{1}} \left(d_{M_{G'}}(u) + d_{M_{G'}}(v) \right)  \notag \\
& \ge 32r^{\frac{r+4}{2}}\left( \delta + \xi \right)^{\frac{1}{2}} n \left( (r-1)\left(\frac{n}{r} - 4r^{\frac{r+2}{4}}\left( \delta + \xi \right)^{\frac{1}{4}}n \right) - \frac{2r-3}{2r}n \right) \notag \\
& > 32r^{\frac{r+4}{2}}\left( \delta + \xi \right)^{\frac{1}{2}} n \times \frac{n}{4r} \notag \\
& >  8r^{\frac{r+2}{2}}\left( \delta + \xi \right)^{\frac{1}{2}} n^2
\overset{(\ref{discon-r-size-of-MG'})}{>} |M_{G'}|, \notag
\end{align}
a contradiction.
\end{proof}

By Claims \ref{discon-r-upper-bound-SK} and \ref{discon-r-upper-bound-BK},
\begin{align}\label{discon-r-upper-bound-SK-2}
|S_K| & < \frac{r^r}{(r-2)!} |B_K| + \delta r^r n^2
< \frac{256 r^{\frac{3r+4}{2}}}{(r-2)!} \left( \delta + \xi \right)^{\frac{1}{2}} n^2.
\end{align}

Let $B_{K}^{1} = B_{K} \cap \binom{V_1'}{2}$ and without of generality we may assume that
\begin{align}\label{discon-r-upper-bound-BK1}
|B_{K}^1| \ge \frac{|B_{K}|}{r}.
\end{align}
Let $\Delta = \max\{d_{B_{K}^1}(v): v\in V_1'\}$.

\medskip

\noindent \textbf{Case 1:} $\Delta > 4 r^{\frac{r+10}{4}} (\delta + \xi)^{\frac{1}{8}} n$. \\
Let $v \in V_1'$ with $d_{B_{K}^1}(v) = \Delta$ and let $N_{i} = N_{K}(v) \cap V_i'$.
Since $K$ is a maximum $r$-partite subgraph of $G$, $|N_{i}| \ge \Delta$ for all $i \in [t]$.
Fix $u \in N_1$ and let $U_{i} = N_{K}(u) \cap N_i$ for $2 \le i \le r$.
If $|U_i| \ge \Delta /2 > 2 r^{\frac{r+10}{4}} (\delta + \xi)^{\frac{1}{8}} n$ for all $2 \le i \le r$,
then by Claim \ref{discon-r+1-partite-subgp-of-K},
\begin{align}
|S_{K}| \ge \left( 2 r^{\frac{r+10}{4}} (\delta + \xi)^{\frac{1}{8}} n \right)^2 - 8r^{\frac{r+2}{2}}(\delta + \xi)^{\frac{3}{8}} n^2
> 2 r^{\frac{r+10}{2}} (\delta + \xi)^{\frac{1}{4}} n^2, \notag
\end{align}
which contradicts $(\ref{discon-r-upper-bound-SK-2})$.
Therefore, $|U_i| <  \Delta /2$ for some $2 \le i \le r$, and hence
\begin{align}
|M_{K}| & = \sum_{u \in N_1}d_{M_{K}}(u)
\ge \Delta \left( \Delta - \frac{\Delta}{2} \right)
> 2 \left( 2 r^{\frac{r+10}{4}} (\delta + \xi)^{\frac{1}{8}} n \right)^2
= 8 r^{\frac{r+10}{2}} (\delta + \xi)^{\frac{1}{4}} n^2 , \notag
\end{align}
which contradicts $(\ref{discon-r-size-of-MK})$.

\bigskip

\noindent \textbf{Case 2:} $\Delta \le 4 r^{\frac{r+10}{4}} (\delta + \xi)^{\frac{1}{8}} n$. \\
Then, $B_{K}^{1}$ contains at least $\frac{|B_{K}^{1}|}{8 r^{\frac{r+10}{4}} (\delta + \xi)^{\frac{1}{8}} n}$ pairwise disjoint sets.
Let $uv \in B_{K}^{1}$.
If $|N_{K}(u) \cap N_{K}(v)| \ge \frac{2r-3}{2r}n$, then similar to the proof of Claim \ref{discon-r-upper-bound-BK},
\begin{align}
|S_{K}| \ge \left( \frac{n}{3r} \right)^2 - 8r^{\frac{r+2}{2}}(\delta + \xi)^{\frac{3}{8}} n^2
> \frac{n^2}{18r^2}, \notag
\end{align}
which contradicts $(\ref{discon-r-upper-bound-SK-2})$.
Therefore, $|N_{K}(u) \cap N_{K}(v)| < \frac{2r-3}{2r}n$, and hence
\[
d_{K}(u) + d_{K}(v) = |N_{K}(u) \cup N_{K}(v)| + |N_{K}(u) \cap N_{K}(v)| < \sum_{i=2}^{r}|V_{i}'| + \frac{2r-3}{2r}n.
\]
It follows that
\begin{align}
|M_{K}| & \ge \sum_{uv \in B_{K}^{1}} \left(d_{M_K}(u) + d_{M_K}(v) \right)  \notag \\
& \ge \frac{|B_{K}^{1}|}{8 r^{\frac{r+10}{4}} (\delta + \xi)^{\frac{1}{8}} n} \left( (r-1)\left(\frac{n}{r} - 4r^{\frac{r+2}{4}}\left( \delta + \xi \right)^{\frac{1}{4}}n \right) - \frac{2r-3}{2r}n \right) \notag \\
& > \frac{|B_{K}^{1}|}{8 r^{\frac{r+10}{4}} (\delta + \xi)^{\frac{1}{8}} n} \times \frac{n}{4r} \notag \\
& >  \frac{|B_{K}^{1}|}{32 r^{\frac{r+14}{4}} (\delta + \xi)^{\frac{1}{8}}}
\overset{\text{Claim \ref{discon-r-B-M}}}{>} |M_K|, \notag
\end{align}
a contradiction.
\end{proof}

\bibliographystyle{abbrv}
\bibliography{feasible_region}
\end{document}